\numberwithin{equation}{section}
\definecolor{verylight}{gray}{0.97}
\definecolor{light}{gray}{0.9}
\definecolor{medium}{gray}{0.85}
\newcommand{\helv}{%
\fontfamily{phv}\fontseries{b}\fontsize{9}{11}\selectfont}
\def\s{$\mbox{\c{s}}$}
\def\opn#1#2{\def#1{\operatorname{#2}}} 
\opn\Ker{Ker} \opn\Coker{Coker}  \opn\Hom{Hom} \opn\Im{Im}
\opn\End{End} \opn\Aut{Aut} \opn\defect{def} \opn\ord{ord}
\opn\id{id} \opn\dim{dim} \opn\det{det} \opn\tr{tr} \opn\grad{grad} \opn\lcm{lcm}
\opn\min{min} \opn\max{max} 
\opn\Span{Span}   \opn\rang{rang}  \opn\id{id} \opn\Ass{Ass} \opn\Min{Min}
\opn\GL{GL} \opn\SL{SL} \opn\mod{mod} \opn\diag{diag}
\opn\min{min} \opn\sgn{sgn} \opn\ini{in_<}  \opn\Mon{Mon} \opn\LC{LC_<} \opn\Hom{Hom} \opn\Ext{Ext} \opn\gini{gin_{<_{rev}}} \opn\gin{gin_{<}}
\opn\LT{LT_<}
\opn\s{supp} \opn\Tor{Tor} \opn\link{link} \opn\depth{depth} \opn\pd{pd} \opn\reg{reg} 
\newcommand{\compactfundif}{\textup{C}^{1}_{\textup{c}}(\mathbb{R}^{2}_{>0})}
\newcommand{\vanishfundif}{\textup{C}_{\textup{0}}^{1}(\mathbb{R}^{2}_{>0})}
\newcommand{\compactfun}{\textup{C}_{\textup{c}}(\mathbb{R}^{2}_{>0})}
\newcommand{\vanishfun}{\textup{C}_{\textup{0}}(\mathbb{R}^{2}_{>0})}
\newcommand{\alltimesolradon}{\textup{C}([0,\infty);\mathscr{M}_{+}(\mathbb{R}_{>0}^{2}))}
\newcommand{\fintimesolradondif}{\textup{C}^{1}([0,T];\mathscr{M}_{+}(\mathbb{R}_{>0}^{2}))}
\newcommand{\supp}{\textup{supp}}
\newcommand{\der}{\textup{d}}
\date{}
\title{Coagulation equations for non-spherical clusters}
\author[1]{Iulia Cristian}
\author[2]{Juan J. L. Vel\'{a}zquez}
\affil[1]{Institute for Applied Mathematics, University of Bonn, Endenicher Allee 60, 53115 Bonn, Germany
\href{mailto:cristian@iam.uni-bonn.de}{cristian@iam.uni-bonn.de}}
\affil[2]{Institute for Applied Mathematics, University of Bonn, Endenicher Allee 60, 53115 Bonn, Germany
\href{mailto:velazquez@iam.uni-bonn.de}{velazquez@iam.uni-bonn.de}}
\begin{document}
\maketitle
\newtheorem{teo}{Theorem}[section]
\newtheorem{ex}[teo]{Example}
\newtheorem{prop}[teo]{Proposition}
\newtheorem{obss}[teo]{Observations}
\newtheorem{cor}[teo]{Corollary}
\newtheorem{lem}[teo]{Lemma}
\newtheorem{prob}[teo]{Problem}
\newtheorem{conj}[teo]{Conjecture}
\newtheorem{exs}[teo]{Examples}
\newtheorem{alg}[teo]{\bf Algorithm}

\theoremstyle{definition}
\newtheorem{defi}[teo]{Definition}

\theoremstyle{remark}
\newtheorem{rmk}[teo]{Remark}
\newtheorem{ass}[teo]{Assumption}
\setlength{\baselineskip}{6mm}

\def\tombstone{$\;$\rule[-0.5mm]{2mm}{3.5mm} }


\newcommand {\func}[3] {$#1:#2\longrightarrow #3$}


\DeclareGraphicsRule{.tif}{bmp}{pcx}{}

\pagenumbering{arabic}

\begin{abstract}
   In this work, we study the long time asymptotics of a coagulation model which describes the evolution of a system of particles characterized by their volume and surface area. The aggregation mechanism takes place in two stages: collision and fusion of particles. During the collision stage, the two particles  merge at a contact point. The newly formed particle has volume and area equal to the sum of the respective quantities of the two colliding particles. After collision, the fusion phase begins and during it the geometry of the interacting particles is modified in such a way that the volume of the total system is preserved and the surface area is reduced. During their evolution, the particles must satisfy the isoperimetric inequality. Therefore, the distribution of particles in the volume and area space is supported in the region where $\{a\geq (36\pi)^{\frac{1}{3}}v^{\frac{2}{3}}\}$. We assume the coagulation kernel has a weak dependence on the area variable. We prove existence of self-similar profiles for some choices of the functions describing the fusion rate for which the particles have a shape that is close to spherical. On the other hand, for other fusion mechanisms and suitable choices of initial data, we show that the particle distribution describes a system of ramified-like particles.
      
\end{abstract}

\tableofcontents

\section{Introduction}

Most of the works on coagulation equations assume that the particles are characterized by a single variable, usually the particle volume (or equivalent quantities like polymer length), see for instance \cite{norris1999,1916ZPhy...17..557S,STEWART,gelation}. Nevertheless, other parameters that might provide insight about the geometry or other features of the particle are usually omitted. In this paper, we study the mathematical properties of a class of coagulation equations in which the aggregating particles are characterized by two degrees of freedom, namely the volume $v$ and the surface area $a$. This type of models was introduced in \cite{KOCH1990419} (see also \cite{book1} for a detailed discussion about its properties). More precisely, the model that we consider in this paper is the following:
\begin{align}
\partial_{t}f(a,v,t)+\partial_{a}[r(a,v)(c_{0}v^{\frac{2}{3}}-a)f(a,v,t)]=\mathbb{K}[f](a,v,t), \textup{  }  c_{0}:=(36\pi)^{\frac{1}{3}},
\label{strongfusioneq}
\end{align}
where 
\begin{align*}
    \mathbb{K}[f](a,v,t):=&\frac{1}{2}\int_{(0,a)\times(0,v)}K(a-a',v-v',a',v')f(a',v',t)f(a-a',v-v',t)\der v'\der a' \\
    &-\int_{(0,\infty)^{2}}K(a,v,a',v')f(a,v,t)f(a',v',t)\der v' \der a'.
\end{align*}

In this model, $f$ is the density of the particles in the space of area and volume for any given time $t\geq 0$. The coagulation operator $\mathbb{K}[f]$ is the classical coagulation operator that was introduced by Smoluchowski (see \cite{1916ZPhy...17..557S}). However, a difference is the fact that this operator now describes the evolution of particles characterized by both volume and surface area. Notice that the coagulation operator gives the coagulation rate of particles which evolve according to the following mechanism:
\begin{align*}
(a_{1},v_{1})+(a_{2},v_{2})\longrightarrow (a_{1}+a_{2},v_{1}+v_{2}).
\end{align*}
It is assumed that the particles attach to each other at their contact point and therefore in this way both the total area and volume of the particles involved in the process are preserved (see Figure \ref{fig1}).

The main difference between (\ref{strongfusioneq}) and the standard one-dimensional coagulation model is the presence of the term $\partial_{a}[r(a,v)(c_{0}v^{\frac{2}{3}}-a)f(a,v,t)]$. We call $\partial_{a}[r(a,v)(c_{0}v^{\frac{2}{3}}-a)f(a,v,t)]$ the "fusion term". This describes an evolution of the particles towards a spherical shape (see Figure \ref{fig1}). The dynamics generated by this term preserves the total number and volume of the particles. The term $c_{0}v^{\frac{2}{3}}-a$ indicates that the area of the particles tends to be reduced as long as it is larger than that of a sphere $c_{0}v^{\frac{2}{3}}$. In particular, spherical particles remain spherical and they do not evolve at all due to the fusion term. This fusion mechanism holds, for example, for the merging of droplets consisting of highly viscous fluids (see \cite{KOCH1990419}). 
\\

\begin{figure}[h]
\centering
\includegraphics[width=0.8\textwidth]{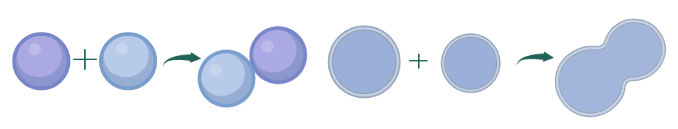}
\caption{Coagulation mechanism (left). Fusion mechanism (right)}\label{fig1}
\end{figure}

Additionally, $r(a,v)$ will indicate the fusion rate and describes how quickly the particles evolve towards the spherical shape and thus has units of the inverse of the fusion time. In the particular case when $r\equiv 0$, fusion does not occur and particles attach at contact points forming a ramified-like system in time. On the contrary, if the fusion rate is much faster than the coagulation rate, the particles tend to become spherical immediately after colliding (see Figure \ref{fig2}). A distinction between these two cases is not possible in the standard one-dimensional model. 
\\

\begin{figure}[h]%
\centering
\includegraphics[width=0.8\textwidth]{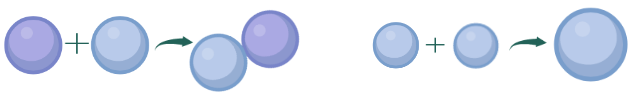}
\caption{Absence of fusion (left). Instantaneous fusion (right)}\label{fig2}
\end{figure}

As stated in \cite{KOCH1990419}, in aerosols, changes of temperature or adding impurities to the system can lead to different fusion rates, showing that the non-spherical shape of the particles plays a significant role. The main goal of this paper is to see how much of the mathematical theory for the one-dimensional coagulation equation can be carried on to the two-dimensional case and to observe the new mathematical phenomena that this model leads us to.

We remark that the particles must satisfy the isoperimetric inequality, therefore the density $f$ should be supported in the region where $\{a\geq c_{0}v^{\frac{2}{3}}\}$. Moreover, the evolution generated by (\ref{strongfusioneq}) has the property that it preserves the set of measures supported in this region. For simplicity, we define the set
\begin{align}\label{set iso}
    \tilde{\textup{S}}:=\{(a,v)\in(0,\infty)^{2},a\geq c_{0}v^{\frac{2}{3}}\}.
\end{align}

To obtain a better understanding of how fusion affects interactions between particles, we can check that it gives a decrease in the total surface area. We multiply by $a$ in (\ref{strongfusioneq}) and integrate formally, obtaining
\begin{align}
\partial_{t}\int_{(0,\infty)^{2}}af(a,v,t)\der a \der v=\int_{(0,\infty)^{2}}r(a,v)[c_{0}v^{\frac{2}{3}}-a]f(a,v,t)\der a \der v\leq 0, \label{decreasing_area_example}
\end{align}
since $f$ is supported in the region where the isoperimetric inequality is satisfied. 

We assume $r(a,v)$ behaves like a power law of $a$ and $v$. This covers the case of coalescence of viscous liquid spheres (see \cite{KOCH1990419}), where the fusion time depends on the diameter. For the coagulation kernel $K$, we assume that it has a weak dependence on the surface area of the interacting particles, but it can have a power law behaviour in the volume of the coalescing particles.

It is well-known that the solutions of coagulation equations behave often as self-similar solutions. Using the fact that the solutions of (\ref{strongfusioneq}) preserve the total volume of the particles, it is natural to look for solutions of (\ref{strongfusioneq}) of the form:
\begin{align}
f(a,v,t)=\frac{1}{(1+t)^{\frac{8}{3}\xi}}g\bigg(\frac{a}{(1+t)^{\frac{2}{3}\xi}},\frac{v}{(1+t)^{\xi}}\bigg) \textup{ for } \xi=\frac{1}{1-\gamma}. \label{scale}
\end{align}
Notice that the total surface area of solutions of the form (\ref{scale}) is not preserved, as it can be expected due to the presence of the fusion term in (\ref{strongfusioneq}).

\subsubsection*{Assumptions on the coagulation kernel}

The reason for the self-similar behavior in the case of the one-dimensional coagulation equation is that the coagulation rate scales like a power law of the particle size. In the case of particles characterized by volume and area, if the particle volume is scaled by a factor $\lambda$ (without modifying the geometry), then the diameter is scaled with a factor $\lambda^{\frac{1}{3}}$ and the area scales like $\lambda^{\frac{2}{3}}$. This suggests the following assumptions for the coagulation kernel:
\begin{align}
K(\lambda^{\frac{2}{3}}a,\lambda v,\lambda^{\frac{2}{3}}a',\lambda v') & =\lambda^{\gamma}K(a,v,a',v'), \label{homogeneity}
\end{align}
for all $(a,v,a',v')\in(0,\infty)^{4}$, $\lambda\in(0,\infty)$ and $\gamma\in [0,1).$ We assume $\gamma<1$ in order to avoid gelation and to obtain volume-conserving solutions. $\gamma>0$ means physically that the coagulation rate increases with the particle size.

Since collision does not change if we permute the colliding particles, i.e. $(a,v)\leftrightarrow(a',v')$, the coagulation kernel must satisfy the following symmetry property:
\begin{align}
    K(a,v,a',v') & =K(a',v',a,v),\label{kersym1}
\end{align}
for all $(a,v,a',v')\in(0,\infty)^{4}$.

We work with non-negative continuous kernels on $(0,\infty)^{4}$ that, in addition to the properties already stated, i.e. (\ref{homogeneity}) and (\ref{kersym1}), have the following bounds:
\begin{align}
K_{1}(v^{-\alpha}v'^{\beta}+v'^{-\alpha}v^{\beta})\leq K(a,v,a',v')\leq K_{0}(v^{-\alpha}v'^{\beta}+v'^{-\alpha}v^{\beta}), \label{lower_bound_kernel}
\end{align}
for some $K_{1},K_{0}>0$, for all $a,v,a',v'$ and for the following coefficients:
\begin{align}
  &\alpha>0 \textup{ and } \gamma=\beta-\alpha\in[0,1) \textup{ and } \beta\in (0,1). \label{alpha non neg}
  \end{align}

Notice that condition (\ref{lower_bound_kernel}) implies that the kernel has a weak dependence on the area variable, but $K$ is not necessarily independent of the area variable.

Most of the results of the paper are obtained for coagulation kernels $K$ with bounds (\ref{lower_bound_kernel}) with $\alpha>0$. In that case, since the coagulation rate is very large for small particles, we can expect $g$ (defined in (\ref{scale})) to be bounded (and small) for small values of $v$. In particular, for the self-similar profiles $g$ obtained when $\alpha>0$, we have $M_{0,d}(g):=\int_{(0,\infty)^{2}}v^{d}g(a,v) \der a\der v<\infty,$ for all $d\in\mathbb{R}$. This is analogous to what happens in the one-dimensional case for the standard coagulation model, where it is known that there exists self-similar profiles for which all the moments with negative powers of $v$ are bounded if $\alpha>0$. For details, see, for example \cite[Chapter  10.2.4, Theorem 10.2.17]{book} or \cite{articlefournier}.
 On the contrary, for the one-dimensional coagulation equation, for coagulation kernels satisfying (\ref{lower_bound_kernel}) with $\alpha=0$, the self-similar profiles can be singular for small values of $v$ and we can expect to have boundedness only for the moments containing powers of the form $v^{d}$, with $d\geq \gamma$, cf. the previously stated result in \cite{book}. An analogous situation takes place for the two-dimensional coagulation model considered in this paper. To illustrate this situation, we show some results concerning self-similar profiles of (\ref{strongfusioneq}) for coagulation kernels $K$ satisfying (\ref{lower_bound_kernel}) with $\alpha=0$. Specifically, we restrict ourselves to the case when
 \begin{align}
  \alpha=0 \textup{ and } \gamma=\beta\in(0;\frac{2}{3}). \label{alpha zero}
\end{align}
The reason to restrict ourselves to the case when $\gamma<\frac{2}{3}$ is because for this range of values it will be easier to obtain estimates for $M_{1,0}(g):=\int_{(0,\infty)^{2}}ag(a,v)\der a \der v$. Due to the isoperimetric inequality, this estimate implies an estimate for the moment $M_{0,\frac{2}{3}}$. Since we expect to have estimates only for moments $M_{0,d}$, with $d\geq \gamma$, it is natural to assume $\gamma<\frac{2}{3}$.

\subsubsection*{Assumptions on the fusion kernel}

Concerning the fusion kernel $r$, we assume that $r\in\textup{C}^{1}(\mathbb{R}_{>0}^{2})$ and that there exist constants $R_{0},R_{1}>0$ such that:
\begin{align}
   R_{0}a^{\mu}v^{\sigma}\leq r(a,v)\leq R_{1}a^{\mu}v^{\sigma}, \label{fusion_form}
\end{align}
for all $(a,v)\in(0,\infty)^{2}$ and some coefficients $\mu,\sigma\in\mathbb{R}$. 

In order to keep the self-similar structure, in other words, to have solutions of (\ref{strongfusioneq}) with the particular form (\ref{scale}), we require in addition:
\begin{align}
    r(\lambda^{\frac{2}{3}}a,\lambda v)=\lambda^{\gamma-1}r(a,v), \label{fusion_homogeneity}
\end{align}
for all $\lambda\in(0,\infty)$ and 
\begin{align}\label{same_rescaling}
    \frac{2}{3}\mu+\sigma=\gamma-1.
\end{align}
The condition (\ref{same_rescaling}) means that the fusion term and the coagulation term in (\ref{strongfusioneq}) rescale in a similar manner as the particle sizes are rescaled (keeping the geometry property). 

The following technical assumption on the kernel $r$ is needed for the existence of self-similar profiles:
\begin{align}\label{ode_fusion}
\begin{cases}
[\partial_{a}r(a,v)-\mu a^{-1} r(a,v)](a-c_{0}v^{\frac{2}{3}})+r(a,v)\geq 0,\textup{ and } \partial_{a}r(a,v)\leq B a^{-1}r(a,v),& \textup{ if } \mu>0;  \\
\partial_{a}r(a,v)(a-c_{0}v^{\frac{2}{3}})+r(a,v)\geq 0, \textup{ and } \partial_{a}r(a,v)\leq B a^{-1}r(a,v),&\textup{ if } \mu\leq 0,
\end{cases}
\end{align}
for all $(a,v)\in(0,\infty)^{2},$ with $a\geq c_{0}v^{\frac{2}{3}}$ and for some constant $B>0$. A particular case used in applications that satisfies the above mentioned properties is when $r(a,v)=a^{\mu}v^{\sigma}$, with $\mu\geq -1$ and $\sigma$ satisfying (\ref{same_rescaling}).
\subsubsection*{Physical interpretation of the results}
The main result that we prove in this article is that depending on the choice of the exponents $ \mu$ and $ \sigma$, we can have different behaviors for the solutions of the described model. 

For $\mu>0$, there exist volume-conserving self-similar solutions with the form (\ref{scale}). For these solutions, the fusion term is comparable to the coagulation term for particles of large sizes. On the other hand, we will obtain in the case when $\mu<0$ that we can have two different behaviors depending on the choice of initial data, see Figure \ref{fig4}. In particular, for some suitable initial data, the fusion term plays a negligible role compared to the coagulation operator $\mathbb{K}[f]$ in (\ref{strongfusioneq}). We will term the long-time behavior of the particle distribution $f$ in this case as ramification. 

In order to explain why we use this terminology, it is convenient to introduce the following notation. Given $H=H(a,v),$ we write
\begin{align*}
\langle H\rangle(t) :=\frac{\int_{(0,\infty)^{2}}H(a,v)f(a,v,t)\der v \der a}{\int_{(0,\infty)^{2}}f(a,v,t)\der v \der a}, \textup{ for any time } t\geq 0.
\end{align*}
 More precisely, for $\mu<0$ and keeping in mind that the fusion does not change the total volume, if we start with a distribution of particles for which 
 \begin{align*}
     \frac{\langle a\rangle (0)}{\big(\langle v\rangle (0)\big)^{\frac{2}{3}}}\geq \lambda_{0},
 \end{align*}
 for $\lambda_{0}$ sufficiently large, then we obtain the following behavior
\begin{align}\label{sphere excluded}
\frac{\langle a\rangle (t)}{\big(\langle v\rangle (t)\big)^\frac{2}{3}}\rightarrow\infty \textup{ as } t\rightarrow\infty.
\end{align}

Notice that (\ref{sphere excluded}) implies that for most of the particles the surface area is much larger than the area of a sphere with the same volume. It is relevant to notice that in the case of self-similar solutions of (\ref{strongfusioneq}) with the form (\ref{scale}), we have
\begin{align*}
\frac{\langle a\rangle (t)}{\big(\langle v\rangle (t)\big)^\frac{2}{3}}\approx 1 \textup{ as } t\rightarrow\infty.
\end{align*}

Actually, we will obtain a result stronger than (\ref{sphere excluded}). Namely, in the case $\mu<0,$ we obtain in addition
\begin{align}\label{stronger res}
\frac{c}{\langle v\rangle(0)}\leq \frac{\langle a\rangle(t)}{\langle v\rangle(t)}\leq \frac{\langle a\rangle(0)}{\langle v\rangle(0)},
\end{align}
for some constant $c>0$. Notice that (\ref{stronger res}) implies immediately (\ref{sphere excluded}) since, due to the coagulation of the particles, $\langle v\rangle (t)\rightarrow\infty$ as $t\rightarrow\infty$. We remark that (\ref{stronger res}) suggests that for most of the particles the surface area is comparable to the volume $a\approx v$, while (\ref{sphere excluded}) suggests that $a>>v^{\frac{2}{3}}$ as $t\rightarrow\infty$. In particular, particles satisfying $a>>v^{\frac{2}{3}}$ differ very much from spherical particles and they have a fractal-like, \textit{ramified} aspect, see Figure \ref{fig3}.
\\

\begin{figure}[h]%
\centering
\includegraphics[width=0.3\textwidth, height=3cm]{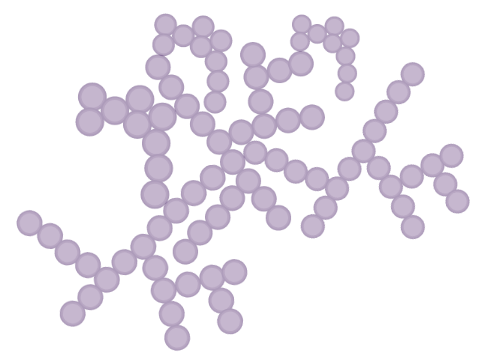}
\caption{Ramification}\label{fig3}
\end{figure}

However, if the fusion kernel $r$ is sufficiently large when $a$ and $v$ are of order one, we can obtain existence of self-similar profiles in the case $\mu< 0$. Actually, the proof covers in addition the case when $\mu=0$, which corresponds to the case of fusion kernels considered in \cite{KOCH1990419}. Thus, in the case $\mu<0$ we find two possible scenarios, see Figure \ref{fig4}.

\begin{figure}
\centering
 \begin{tikzcd}[ampersand replacement=\&]
  \& \underset{\textbf{large initial surface area}}{\textbf{$R_{0}\approx 1$} } \arrow{r} \& \textbf{ramification} \\
\fbox{\textbf{$\mu<0$}} \arrow{ur} \arrow{dr} \& \&   \\
 \& \textbf{$R_{0}\gg1$} \arrow{r} \& \textbf{ existence of self-similar profiles  }
\end{tikzcd}
\caption{Different scenarios in the case $\mu<0$}\label{fig4}
\end{figure}
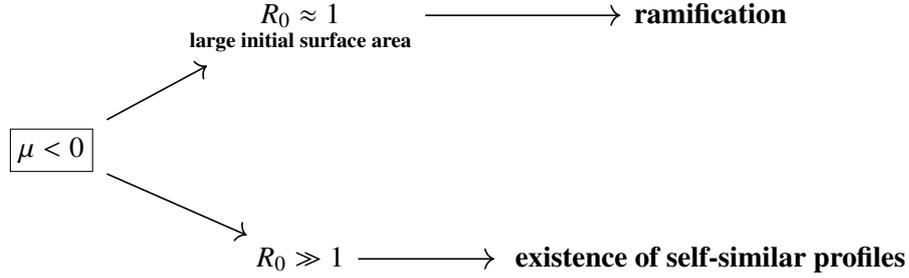

\subsubsection*{Multi-dimensional coagulation equations in the mathematical literature}
One can imagine situations in which the collision kernel $K$ and the fusion kernel $r$ do not rescale in the same manner as the particle size. In such situations we can expect to have one of the terms (fusion or coagulation) to be dominant for small size particles, and the other term to be the dominant one for large particles. The analysis of such type of models is also interesting from the point of view of applications to material science, (see \cite{book1}). 

If the fusion kernel $r$ is very large compared with the coagulation rate, we expect that the particles become spherical in very short times. Therefore, it is possible to approximate the solutions of (\ref{strongfusioneq}) by means of solutions of a coagulation model depending only on the variable $v$, i.e. an one-dimensional coagulation equation. The rigurous proof of this result is presented in \cite{cristian2023fast}.

Coagulation equations for particle distributions characterized by a single variable have been extensively studied. In particular, the long-time behavior for coagulation equations for which solutions can be explicitly computed has been studied in detail in  \cite{Menon_2004,menonpegoattractor,menonpegodynamical}. The existence of self-similar solutions for general classes of kernels has been obtained in \cite{ESCOBEDO,articlefournier}. Coagulation models including drift terms have been studied in several contexts. One example is the classical  Lifshitz-Slyozov-Wagner equation with encounters that was introduced in \cite{LIFSHITZ196135}. A rigorous analysis of the self-similar profiles for this model was studied in \cite{HERRMANN2009909, HERRMANN20092282, articlelau}. Models combining the effect of coagulation and particle growth have been studied extensively in the physical literature, cf. \cite[Chapter 11]{book1} and \cite{LUSHNIKOV2000651}. Rigorous mathematical results for these models can be found in \cite{gajewski}. 

Multi-dimensional coagulation equations have not been as extensively studied in the mathematical literature as the one-dimensional coagulation model.  Several discrete multi-component coagulation problems which are relevant in aerosol physics have been mentioned in \cite{wattis1}. A discrete version of the model in (\ref{strongfusioneq}) has been studied in \cite{wattis2}. The model considered in there includes coagulation of particles and an effect similar to the fusion of particles in (\ref{strongfusioneq}), which has been termed compaction. The diameter of the particles is restricted by the total number of monomers as well as by the isoperimetric inequality. The coagulation and the fusion rates are assumed to be constant. Due to this, the model considered in \cite{wattis2} is explicitly solvable using generating functions. The long-time behavior of the solutions which depends on the ratio between the fusion and coagulation kernels has been then analysed using the explicit formulas of the solutions. 

 In \cite{multicom1, multicom2,multicomponent3}, the mathematical properties of some classes of coagulation equations describing clusters that are composed of several types of monomers with different chemical composition are analysed. More recently, uniqueness of the solutions for the models of multi-component coagulation equations considered in \cite{multicom1, multicom2,multicomponent3} has been studied in \cite{throm}.

The main differences between the models studied in these papers and our model are the following:
\begin{itemize}
    \item The two variables used to describe the particles in this paper rescale in a different manner. Additionally, we consider coagulation kernels that do not have a strong dependence on the area variable. As a consequence,  the variables describing the clusters appear in a less symmetric manner;
    \item The proof in  \cite{multicom1, multicom2,multicomponent3} relies on the conservation of mass for each of the types of monomers. Due to the presence of the fusion term, the solutions of (\ref{strongfusioneq}) do not have two conserved quantities, but only the volume is conserved. 
\end{itemize}

\subsection{Notations and plan of the paper}

For $I\subset [0,\infty)^{2}$, we denote by $\textup{C}_{\textup{c}}(I)$ and $\textup{C}_{\textup{0}}(I)$ the space of continuous functions on $I$ with compact support and the space of continuous functions on $I$ which vanish at infinity, respectively, both endowed with the supremum norm. $\mathscr{M}_{+}(I)$ will denote the space of non-negative Radon measures, while $\mathscr{M}_{+,\text{b}}(I)$ will be the space of non-negative, bounded Radon measures, which we endow with the weak-$^{\ast}$ topology. We denote $\mathbb{R}_{>0}^{2}:=(0,\infty)^{2}$.
\

We make in addition the following simplifications:
\begin{itemize}
\item We use the notation $\eta:=(a,v)$. We will use interchangeably both notations for convenience. 
    \item We keep the notation $f(a,v)\der v \der a$ or $f(\eta)\der \eta$ for Radon measures, independently of the fact the measure may not be absolutely continuous with respect to the Lebesgue measure. 
    \item $M_{k,l}(f):=\int_{(0,\infty)^{2}}a^{k}v^{l}f(a, v)\der v \der a$, for some $k,l\in\mathbb{R}.$
    \item For a suitably chosen $\varphi:\mathbb{R}^{2}_{>0}\rightarrow \mathbb{R}$ and for $(a,v,a',v')\in(0,\infty)^{4}$, we will denote:
   \begin{align}
   \chi_{\varphi}(a,v,a',v')& :=\varphi(a+a',v+v')-\varphi(a,v)-\varphi(a',v'); \nonumber \\
   \langle \mathbb{K}[f],\varphi\rangle& :=\frac{1}{2}\int_{(0,\infty)^{2}}\int_{(0,\infty)^{2}} K(a,v,a',v')\chi_{\varphi} (a,v,a',v')f(a',v')f(a,v) \der v'\der a'\der v \der a. \nonumber
   \end{align}
   \item We use $C$ to denote a generic constant which may differ from line to line and depends only on the parameters characterizing the kernels $K$ and $r$.
      \item We use the symbols $\lesssim$ and $\gtrsim$ when the inequalities hold up to a constant, i.e. $f\lesssim g$ if and only if $f\leq C g$. In addition, for some $N\in\mathbb{N}$, we use the notation $i=\overline{1,N}$ to mean $i\in\mathbb{N},$ $1\leq i\leq N.$
\end{itemize}
\subsubsection*{Structure of the paper}
The structure of the paper is as follows. In Section \ref{setting}, we establish the setting and state the main definitions and results.

In Section \ref{sectionexistence}, we prove the existence of a self-similar solution when $\mu>0$. To this end, we first need to prove well-posedness for the time-dependent problem with a truncated kernel. It turns out that it does not seem feasible to obtain uniform estimates for large values of $a$ for the distribution $f$ if we use approximations of solutions that are compactly supported. In order to avoid this difficulty, we work with a space where large values of the area $a$ are controlled. Since in this space the fusion term will not be well defined, we work with a truncated version of the fusion term which increases linearly at infinity. In order to prove existence of a self-similar solution for the original problem, we need to obtain moment estimates that are uniform in the truncation parameters. The relevant moments to be estimated contain powers of $v$ and $a$. The moments containing only powers of $v$ can be estimated following the ideas in \cite{dust,ESCOBEDO} due to the fact that the fusion term does not affect the volume of particles. Nevertheless, the adaptation of the estimates in these papers is possible in spite of the fact that we have a coagulation model with two variables due to the choice of the space of functions non-compactly supported in the variable $a$ described before. The total area can be controlled making use of the contribution given by the fusion. This will prove to be enough to obtain existence of a self-similar profile. In Section \ref{sectionexistence}, we derive in addition estimates for higher order moments using an iterative argument. 

In Section \ref{case mu negative}, we show that we can have different behaviors for the solutions in the case when $\mu<0$, namely the existence of self-similar profiles as well as ramification. Ramification will be obtained by deriving estimates for the moments of the solutions of the time-dependent problem. On the other hand, in order to prove the existence of self-similar profiles in this case we cannot use the methods described in Section \ref{sectionexistence}. This is because of the fact that the estimates for large values of $a$ for positive $\mu$ are a consequence of the fast growth of the fusion ratio $r(\eta)$, which does not take place now. We will be able in this case to replace the fast growth of $r(\eta)$ with the presence of a sufficiently large constant in front of the fusion term.

\section{Setting and main results} \label{setting}

From the scaling (\ref{scale}), self-similar profiles with fusion satisfy formally the equation
\begin{align}
0=\frac{8}{3}g(\eta)+\frac{2}{3}a\partial_{a}g(\eta)+v\partial_{v}g(\eta)+(1-\gamma)\partial_{a}[r(\eta)(a-c_{0}v^{\frac{2}{3}})g(\eta)]+(1-\gamma)\mathbb{K}[g](\eta). \label{strong_self_sim}
\end{align}
In particular, if $g$ solves (\ref{strong_self_sim}) and $f$ satisfies (\ref{scale}), then $f$ solves (\ref{strongfusioneq}).

\
Since we work with physically relevant particles, i.e. the particles for which the isoperimetric inequality is satisfied, it is helpful to define the following space
\begin{align}
   \mathscr{M}^{I}_{+}(\mathbb{R}_{>0}^{2}):=\{h\in \mathscr{M}_{+}(\mathbb{R}_{>0}^{2})\text{  } | \text{  } h(\{a<c_{0}v^{\frac{2}{3}}\})=0\}. \label{isoperimetricinitial}
\end{align}
The superscript $I$ stands for isoperimetric. We endow the newly-defined space with the weak-$^{\ast}$ topology on $\mathscr{M}_{+}(\mathbb{R}_{>0}^{2})$. Similarly, we denote
\begin{align}
   \mathscr{M}^{I}_{+,b}(\mathbb{R}_{>0}^{2}):=\{h\in \mathscr{M}_{+,b}(\mathbb{R}_{>0}^{2})\text{  } | \text{  } h(\{a<c_{0}v^{\frac{2}{3}}\})=0\}.
\end{align}
In order to study the long-time behavior for the equation (\ref{strongfusioneq}), we analyse the time-dependent version of equation (\ref{strong_self_sim}). We will use the following concept of weak solutions for the time-dependent problem.
\begin{defi}\label{definitiontimedependent}
Assume $\alpha>0$. Let $g\in\textup{C}([0,\infty);\mathscr{M}^{I}_{+}(\mathbb{R}_{>0}^{2}))$. We say that $g$ is a solution for the weak version of the time-dependent fusion problem in self-similar variables if, for every $T>0$,
\begin{align*}
 \sup_{t\in[0,T]}  \int_{(0,\infty)^{2}}(v^{-\alpha}+v^{\beta})g(a,v,t)\der v\der a<\infty
    \end{align*}
and, for all $\varphi\in\textup{C}^{1}_{\textup{c}}([0,\infty);\compactfundif)$ and $t\in[0,\infty)$
\begin{align}
\int_{(0,\infty)^{2}}g(\eta,t)\varphi(\eta,t)\der \eta-\int_{(0,\infty)^{2}}g(\eta,0)&\varphi(\eta,0)\der \eta-\int_{0}^{t}\int_{(0,\infty)^{2}}g(\eta,s)\partial_{s}\varphi(\eta,s)\der \eta\der s=\nonumber\\ 
\int_{0}^{t}\int_{(0,\infty)^{2}}g(\eta,s)\varphi(\eta,s)\der\eta\der s-\frac{2}{3}\int_{0}^{t}\int_{(0,\infty)^{2}}g(\eta,s) a&\partial_{a}\varphi(\eta,s)\der\eta\der s-\int_{0}^{t}\int_{(0,\infty)^{2}}g(\eta,s)v\partial_{v}\varphi(\eta,s)\der \eta\der s \nonumber\\ 
+(1-\gamma)\int_{0}^{t}\langle \mathbb{K}[g](s),\varphi(s)\rangle\der s+(1-\gamma)&\int_{0}^{t}\int_{(0,\infty)^{2}}r( \eta)(c_{0}v^{\frac{2}{3}}-a)g(\eta,s)\partial_{a}\varphi(\eta,s)\der \eta\der s. \label{weak_form_time_dependent}
\end{align}
\end{defi}

Well-posedness of solutions of the form (\ref{weak_form_time_dependent}) has been studied in \cite{iulia}. In this paper, we focus on proving the existence of self-similar profiles and long-time behavior for solutions of equation (\ref{strongfusioneq}). We now give a precise meaning for (\ref{strong_self_sim}).
\begin{defi}\label{definitionselfsimilarprofile}
Assume $\alpha>0$. We will say that a measure $g\in \mathscr{M}^{I}_{+}(\mathbb{R}_{>0}^{2})$ is a self-similar profile for the two-dimensional coagulation-equation if
\begin{align}\label{solwelldef}
   \int_{(0,\infty)^{2}}(v^{-\alpha}+v^{\beta})g(a,v)\der v\der a<\infty 
    \end{align}
and for every $\varphi\in\textup{C}_{\textup{c}}^{1}(\mathbb{R}^{2}_{>0})$ the following equality is satisfied:
\begin{align}
\int_{(0,\infty)^{2}}g(\eta)\varphi(\eta)\der\eta-\frac{2}{3}\int_{(0,\infty)^{2}}g(\eta) & a\partial_{a}\varphi(\eta)\der\eta-\int_{(0,\infty)^{2}}g(\eta)v\partial_{v}\varphi(\eta)\der \eta\nonumber \\
+(1-\gamma)\langle \mathbb{K}[g],\varphi\rangle+(1-\gamma)\int_{(0,\infty)^{2}}  &r( \eta)(c_{0}v^{\frac{2}{3}}-a)g(\eta)\partial_{a}\varphi(\eta)\der \eta =0. \label{self_similar_equation}
\end{align}
\end{defi}

\subsection{The case $\mu>0$}
The following result states the existence of self-similar profiles in the case when $\alpha>0$.
\begin{teo}\label{themostimportanttheorem}
Let $\mu,\alpha >0$ and $v_{0}>0$. Assume $K$ is a continuous, non-negative kernel satisfying (\ref{homogeneity}), (\ref{kersym1}), (\ref{lower_bound_kernel}) and (\ref{alpha non neg}) and suppose that $r(a,v)$ satisfies (\ref{fusion_form}), (\ref{fusion_homogeneity}), (\ref{same_rescaling}) and (\ref{ode_fusion}). Then there exists a self-similar profile for the two-dimensional coagulation-equation in the sense of Definition \ref{definitionselfsimilarprofile} with $\int_{(0,\infty)^{2}}vg(a,v)\der v\der a=v_{0}$. In addition, $g$ satisfies $M_{n,k}(g)<\infty$, for all $n,k\in\mathbb{R}$.
 \end{teo}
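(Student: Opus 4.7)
The plan is to produce the self-similar profile $g$ as a stationary solution of the time-dependent self-similar problem in Definition \ref{definitiontimedependent}, via a Schauder-type fixed point theorem applied to the rescaled evolution semigroup after a suitable truncation, followed by removal of the truncation. This is the pattern used for self-similar profiles in the one-dimensional coagulation theory, but it must be adapted to the presence of the fusion term and to the two-variable structure.

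I would first approximate the problem by a truncated version in a space in which the dynamics is well-posed. Since the coagulation operator creates particles with arbitrarily large $a$ while the fusion term drives them back towards the isoperimetric boundary, compactly supported measures in $a$ are not preserved by the flow; instead, I would work in the space of measures $h\in\mathscr{M}^{I}_{+}(\mathbb{R}^{2}_{>0})$ with finite moments $M_{-\alpha,0}(h)$, $M_{0,\beta}(h)$ and $M_{1,0}(h)$, and replace $r(\eta)(c_{0}v^{2/3}-a)$ by a modification that agrees with it on $\{a\leq R\}$ and grows only linearly in $a$ at infinity. After a compatible truncation of the coagulation kernel for small and large $v$, the evolution is well-posed in this space by the results of \cite{iulia}, giving a continuous semigroup $\mathcal{T}_{t}$ that preserves the subset with prescribed total volume $M_{0,1}=v_{0}$.

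The heart of the argument is a family of uniform-in-truncation moment estimates. For moments $M_{0,k}(g)$, I would test the weak identity against $\varphi(\eta)=v^{k}$: since $\partial_{a}\varphi=0$ the fusion term drops out entirely, and the remaining identity is structurally one-variable. Using the upper bound in (\ref{lower_bound_kernel}) together with $\alpha>0$ and $\beta\in(0,1)$, the techniques of \cite{ESCOBEDO,dust} adapt to yield uniform bounds on $M_{0,k}$ for $k$ in a range containing $-\alpha$, $0$ and $\beta$. For moments involving positive powers of $a$, condition (\ref{ode_fusion}) is what allows one to show that testing against $\varphi(\eta)=a^{n}v^{k}$ produces, from the fusion term, a strictly negative contribution comparable to $-M_{n+\mu,k+\sigma}(g)$ plus terms of strictly lower order, since on $\tilde{\textup{S}}$ the positive part $c_{0}v^{2/3}a^{n-1}$ is dominated by $a^{n}$. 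Because $\mu>0$, this fusion sink dominates the streaming contribution of size $M_{n,k}(g)$ and, using the homogeneity (\ref{homogeneity}) together with H\"older inequalities, also the coagulation production (which is estimated by products of moments of strictly smaller total order). This closes the estimate and yields a uniform bound on $M_{n,k}(g)$ for every $n>0$ and $k\in\mathbb{R}$; iterating the same argument extends the bound to every real $n$ and $k$.

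Given these moment bounds, I would apply the Tychonoff--Schauder fixed point theorem to $\mathcal{T}_{T}$ for a fixed $T>0$, on the convex set of measures in $\mathscr{M}^{I}_{+}(\mathbb{R}^{2}_{>0})$ with $M_{0,1}=v_{0}$ and satisfying the derived uniform bounds. Continuity of $\mathcal{T}_{T}$ follows from the well-posedness, and compactness in the weak-$^{\ast}$ topology from the tightness at $v=0$, $v=\infty$ and $a=\infty$ provided by the three moments $M_{-\alpha,0}$, $M_{0,\beta}$ and $M_{1,0}$. The resulting fixed point is a stationary solution of the truncated self-similar problem; a second weak-$^{\ast}$ compactness argument, using the same uniform moment bounds, then removes the truncation and produces a self-similar profile in the sense of Definition \ref{definitionselfsimilarprofile}. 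The main obstacle I anticipate is the compatibility of the truncation with the $a$-moment identity: the linear-at-infinity modification of $r(\eta)(c_{0}v^{2/3}-a)$ required to make the semigroup well-posed in the chosen space is weaker than the true fusion, and the truncation must be tuned so that the sink $-M_{n+\mu,k+\sigma}$ remains effective on the relevant range of $a$ uniformly in $R$. It is at exactly this point that the hypothesis $\mu>0$ enters in an essential way, and the failure of this step for $\mu\leq 0$ is what forces the separate treatment in Section \ref{case mu negative}.
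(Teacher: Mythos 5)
Your proposal follows essentially the same route as the paper: truncate the fusion rate to linear growth at infinity and the kernel at small and large $v$, obtain a well-posed evolution in a measure space with controlled $v$- and $a$-moments, derive uniform moment bounds by testing first with $v^{k}$ (fusion drops out, argument is structurally one-variable) and then with $a^{n}v^{k}$ for $n>0$ (the fusion sink of order $-M_{n+\mu,k+\sigma}$ dominates the streaming term and the coagulation production because $\mu>0$, closed with a Young's inequality using (\ref{same_rescaling})), apply Tykonov's fixed point theorem on the invariant set with prescribed total volume, and remove the truncation by weak-$^{\ast}$ compactness. The one gap worth correcting is your closing claim that iterating the $n>0$ estimate ``extends the bound to every real $n$'': for $n<0$ one cannot test with $a^{n}$ (the weight degenerates as $a\to 0$), and the paper instead exploits the support constraint $a\geq c_{0}v^{\frac{2}{3}}$ to convert negative powers of $a$ into powers of $v$ via $a^{-n}\leq c_{0}^{-n}v^{-\frac{2}{3}n}$, reducing $M_{-n,k}$ to $M_{0,k-\frac{2}{3}n}$ and thereby to the one-variable estimates you already have.
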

The existence of self-similar profiles in the case $\mu>0$ can be explained since, in this regime, fusion overtakes coagulation for large values of $a$. Therefore, the fusion term keeps the particles with a shape that does not differ too much from that of spheres, and thus we can expect $a$ not to be too far away from $c_{0}v^{\frac{2}{3}}$.

We will use the following definition for self-similar profiles for coagulation kernels satisfying (\ref{alpha zero})
\begin{defi}\label{definitionselfsimilarprofile alpha zero}
Assume $\alpha=0$. We will say that a measure $g\in \mathscr{M}^{I}_{+}(\mathbb{R}_{>0}^{2})$ is a self-similar profile for the two-dimensional coagulation-equation if
\begin{align}\label{solwelldef alpha zero}
   \int_{(0,\infty)^{2}}(a+v^{\beta+1}+v^{\beta})g(a,v)\der v\der a<\infty 
    \end{align}
and for every $\varphi\in\textup{C}_{\textup{c}}^{1}(\mathbb{R}^{2}_{>0})$ the following equality is satisfied:
\begin{align}
\frac{2}{3}\int_{(0,\infty)^{2}}g(\eta)& av\partial_{a}\varphi(\eta)\der\eta+\int_{(0,\infty)^{2}}g(\eta)v^{2}\partial_{v}\varphi(\eta)\der \eta-(1-\gamma)\int_{(0,\infty)^{2}} vr( \eta)(c_{0}v^{\frac{2}{3}}-a)g(\eta)\partial_{a}\varphi(\eta)\der \eta\nonumber\\
& =(1-\gamma)\int_{(0,\infty)^{2}}\int_{(0,\infty)^{2}}K(\eta,\eta')g(\eta)g(\eta')v[\varphi(\eta+\eta')-\varphi(\eta)]\der \eta' \der \eta. \label{self_similar_equation for alpha zero}
\end{align}
\end{defi}
Notice that we obtain equation (\ref{self_similar_equation for alpha zero}) by replacing in (\ref{self_similar_equation}) the test function $\varphi$ with a test function of the form $v\varphi$. The new form of the equation is chosen since in the case $\alpha=0$ we expect the self-similar profiles to be singular for values of $v$ near zero and for that reason not all the terms in Definition \ref{definitionselfsimilarprofile} are well-defined. This also justifies why we are assuming condition (\ref{solwelldef alpha zero}) instead of (\ref{solwelldef}).
\begin{teo}\label{main teo case alpha zero}
Let $\alpha=0$. Assume $K$ is a continuous, non-negative kernel satisfying (\ref{homogeneity}), (\ref{kersym1}), (\ref{lower_bound_kernel}) and (\ref{alpha zero}). Suppose that $r(a,v)$ satisfies (\ref{fusion_form}), (\ref{fusion_homogeneity}), (\ref{same_rescaling}) and (\ref{ode_fusion}). If $\mu>0$, there exists a self-similar profile for the two-dimensional coagulation-equation, in the sense of Definition \ref{definitionselfsimilarprofile alpha zero}, satisfying  $M_{n,k}(g)<\infty$, for all $n\geq 0$ and $k\geq \gamma$.
\end{teo}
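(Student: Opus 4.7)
The plan is to deduce Theorem \ref{main teo case alpha zero} from Theorem \ref{themostimportanttheorem} by a regularization-and-limit procedure. First I would introduce, for every $\epsilon\in(0,\epsilon_{0})$, an approximating kernel $K_{\epsilon}$ that is continuous, non-negative, symmetric in the sense of (\ref{kersym1}), homogeneous of the same degree $\gamma$ in the sense of (\ref{homogeneity}), satisfies the bilateral bound (\ref{lower_bound_kernel}) with the strictly positive exponents $\alpha_{\epsilon}=\epsilon$ and $\beta_{\epsilon}=\gamma+\epsilon$, and reduces to $K$ locally uniformly on $\mathbb{R}_{>0}^{2}\times\mathbb{R}_{>0}^{2}$ as $\epsilon\to 0$. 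Theorem \ref{themostimportanttheorem} applied to the pair $(K_{\epsilon},r)$ then produces, for every $\epsilon$, a self-similar profile $g_{\epsilon}\in\mathscr{M}_{+}^{I}(\mathbb{R}_{>0}^{2})$ in the sense of Definition \ref{definitionselfsimilarprofile}, normalized by $M_{0,1}(g_{\epsilon})=v_{0}$ and with all its moments finite.

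The heart of the argument is to bound $M_{n,k}(g_{\epsilon})$ uniformly in $\epsilon$ for exactly the range $n\geq 0$ and $k\geq\gamma$ required by Definition \ref{definitionselfsimilarprofile alpha zero}. For the critical moment $M_{0,\gamma}$ I would test the self-similar equation (\ref{strong_self_sim}) for $g_{\epsilon}$ against a truncated regularization of $v^{\gamma}$, use the subadditivity $(v+v')^{\gamma}-v^{\gamma}-v'^{\gamma}\leq 0$ to turn $\langle\mathbb{K}[g_{\epsilon}],v^{\gamma}\rangle$ into a coercive sink, and close the estimate via the lower bound in (\ref{lower_bound_kernel}) together with $M_{0,1}(g_{\epsilon})=v_{0}$; this is the Escobedo--Mischler--Rodriguez Ricard strategy of \cite{ESCOBEDO}, carried over to two variables. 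For $M_{1,0}(g_{\epsilon})$, which by the isoperimetric support condition dominates $M_{0,\frac{2}{3}}$, I would test (\ref{strong_self_sim}) against $a$: the coagulation term vanishes by additivity of $a$ under binary merging, the self-similarity terms combine into a positive multiple of $M_{1,0}$, and the remaining fusion integral $\int r(\eta)(a-c_{0}v^{\frac{2}{3}})g_{\epsilon}\,\der\eta$ is non-negative. Using the lower bound $r\gtrsim a^{\mu}v^{\sigma}$ with $\mu>0$, the scaling identity $\frac{2}{3}\mu+\sigma=\gamma-1$, and the restriction $\gamma<\frac{2}{3}$, this fusion integral can be bounded below by a positive power of $M_{1,0}$ through a Hölder interpolation with $M_{0,1}$ and the isoperimetric inequality, producing a uniform upper bound for $M_{1,0}$. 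The bound on $M_{0,\gamma+1}$ follows by interpolation between $M_{0,\gamma}$ and a higher $v$-moment, and all further $M_{n,k}$ with $n\geq 0$ and $k\geq\gamma$ are obtained by the iterative bootstrap already developed in Section \ref{sectionexistence}.

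With the uniform moment estimates at hand, Prokhorov-type compactness in $\mathscr{M}_{+}(\mathbb{R}_{>0}^{2})$ produces a subsequence $g_{\epsilon_{j}}\rightharpoonup g$ weakly-$*$, with $g\in\mathscr{M}_{+}^{I}(\mathbb{R}_{>0}^{2})$ by closedness of the isoperimetric set under weak-$*$ convergence and $M_{n,k}(g)<\infty$ in the required range by lower semicontinuity. The uniform bound on $M_{0,\gamma+1}$ prevents escape of volume mass to $v=\infty$, and the weight $v$ in $M_{0,1}$ penalizes concentration at $v=0$, so $M_{0,1}(g)=v_{0}$ and in particular $g\not\equiv 0$. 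Finally, substituting test functions of the form $v\varphi$ with $\varphi\in\textup{C}_{\textup{c}}^{1}(\mathbb{R}_{>0}^{2})$ into the weak self-similar equation (\ref{self_similar_equation}) for each $g_{\epsilon}$ and simplifying as in the derivation of (\ref{self_similar_equation for alpha zero}), one sees that all terms become continuous under weak-$*$ convergence on compact subsets of $\mathbb{R}_{>0}^{2}$ thanks to the uniform moment bounds, and passage to the limit $\epsilon\to 0$ delivers (\ref{self_similar_equation for alpha zero}) for $g$. The multiplication by $v$ is essential: it absorbs the possible singularity of $g$ near $v=0$ that cannot be excluded when $\alpha=0$.

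The main obstacle is the uniform bound on $M_{0,\gamma}$. In the regime $\alpha>0$ treated in Theorem \ref{themostimportanttheorem} the negative-power moment $M_{0,-\alpha}$ provides a direct mechanism suppressing accumulation near $v=0$; at $\alpha=0$ this tool is unavailable and the argument must operate at the critical scaling exponent, relying on the Smoluchowski subadditivity cancellation and on the sharp restriction $\gamma<\frac{2}{3}$, which is precisely what allows the fusion-based $M_{1,0}$ estimate to close the system and hence what forces the theorem to be stated in the regime (\ref{alpha zero}).
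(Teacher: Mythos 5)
Your route — regularizing the kernel to $\alpha_{\epsilon}=\epsilon>0$, invoking Theorem~\ref{themostimportanttheorem} to produce profiles $g_{\epsilon}$, and sending $\epsilon\to 0$ — is genuinely different from the paper's. The paper does not deduce Theorem~\ref{main teo case alpha zero} from Theorem~\ref{themostimportanttheorem}; instead it re-runs the same truncated fixed-point construction directly in the $\alpha=0$ regime, using the alternative invariant set~(\ref{THESET2}), and then passes to the limit in $(\epsilon_{k},R_{k},\delta_{k})$. In the paper's passage to the limit, the factor $v$ in~(\ref{self_similar_equation for alpha zero}) plays exactly the role you assign to it, absorbing the possible singularity near $v=0$, and the tail contributions from $\{v<1/M\}$ and $\{v'<1/M\}$ are controlled using $M_{0,\beta}$, $M_{0,\beta+1}$, $M_{1,0}$ and $M_{1,-\delta_{1}}$ from Proposition~\ref{avnegativeprop}. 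Your approach buys a cleaner top-level structure, but it does not really save work: the uniform-in-$\epsilon$ moment estimates and the tail estimates needed to pass to the limit are essentially the same computations the paper carries out directly.

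There are two concrete gaps. First, the claim that $M_{n,k}(g_{\epsilon})$ is bounded uniformly in $\epsilon$ for all $n\geq 0$, $k\geq\gamma$ by ``the iterative bootstrap already developed in Section~\ref{sectionexistence}'' is not safe as stated. Several constants in that bootstrap degenerate as $\alpha\to 0$: in particular, Proposition~\ref{m-lprop} produces a bound for $M_{0,-l}$ of the form $M_{0,-l}\leq\max\{M_{0,-l}(g_{\textup{in}}),C_{0,-l}\}$ where the comparison exponent $1/\theta=1+\alpha/(1+l)\to 1$ and the resulting fixed point of the ODE is $((1+l)/C_{1})^{\theta/(1-\theta)}$, which blows up as $\alpha\to 0$ unless $C_{1}\geq 1+l$; and the proof of Proposition~\ref{iteration_exponential_decay} for $M_{n,0}$ with $\alpha>0$ routes through $M_{0,-n\alpha}$. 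The paper circumvents these for $\alpha=0$ by using different intermediate quantities (Young's inequality against $M_{1,0}$, $M_{1,\beta}$), but that version of the argument requires the upper bound in~(\ref{lower_bound_kernel}) with $\alpha=0$, which your $K_{\epsilon}$ does not satisfy with a uniform constant because of the mild singularity $v^{-\epsilon}v'^{\gamma+\epsilon}$. One can repair this by tracking the constants in Propositions~\ref{avnegativeprop}--\ref{iteration_exponential_decay} more carefully (the intermediate $v$-moments that appear, such as $M_{0,-\alpha_{\epsilon}-\sigma/\mu}$ and $M_{0,2/3-\delta_{1}}$, remain above $\gamma$ for small $\epsilon$ and so are uniformly controllable), but this requires verification, not citation. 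Second, your $M_{1,0}$ argument is stated imprecisely: the Hölder interpolation that turns $\int a^{\mu+1}v^{\sigma}g_{\epsilon}\,\der\eta$ into a superlinear lower bound in $M_{1,0}$ needs the moment $M_{0,-\sigma/\mu}$ (equal to $M_{0,(1-\gamma)/\mu+2/3}$), which may exceed $1$, not $M_{0,1}$; and for a stationary profile one must first justify testing~(\ref{self_similar_equation}) against the non-compactly-supported $\varphi\equiv a$, which is a separate truncation argument in the spirit of Proposition~\ref{extension compact support}. None of these obstructions appear insurmountable, but they are the substance of the proof, and the sketch passes over all of them.
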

\begin{rmk}
We observe that the moment estimates obtained in Theorem \ref{main teo case alpha zero} imply estimates for additional moments due to the fact that the self-similar profiles are supported in the isoperimetric region $\{a\geq c_{0}v^{\frac{2}{3}}\}$, namely $M_{-n,\gamma+\frac{2}{3}n}(g)\leq c_{0}^{-n}M_{0,\gamma}(g)<\infty$.
\end{rmk}
\subsection{The case $\mu\leq0$}
When $\mu$ is negative, fusion takes place at a slower pace for particles with large area. We have two different behaviors depending on the fusion rate.

If we start with a sufficiently large fusion rate, a regime similar to the one where fusion overtakes coagulation occurs and thus self-similar profiles exist in this case too.
\begin{teo}[Self-similarity in the case of slow fusion]\label{self sim mu neg}
Let $\mu\leq 0$ and $\alpha>0$. Assume $K$ is a continuous, non-negative kernel satisfying (\ref{homogeneity}), (\ref{kersym1}), (\ref{lower_bound_kernel}) and (\ref{alpha non neg}). Suppose that $r(a,v)$ satisfies (\ref{fusion_form}), (\ref{fusion_homogeneity}), (\ref{same_rescaling}) and (\ref{ode_fusion}). Then, there exists $\lambda>1$, depending only on $K_{0},K_{1}$ and $\gamma$, such that for any $v_{0}>0$, if $r(a,v)$ satisfies (\ref{fusion_form}) with $R_{0}\geq \lambda v_{0}$, then there exists a self-similar profile $g$ for the two-dimensional coagulation-equation, in the sense of Definition \ref{definitionselfsimilarprofile}, with total volume $\int_{(0,\infty)^{2}}vg(a,v)\der v \der a=v_{0}.$
\end{teo}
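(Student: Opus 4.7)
The plan is to follow the architecture of the proof of Theorem \ref{themostimportanttheorem}: I would build the self-similar profile as a fixed point of the time-averaged semigroup of the time-dependent self-similar problem of Definition \ref{definitiontimedependent}, after truncating the kernels and deriving moment estimates uniform in the truncation parameter. The essential new difficulty when $\mu\leq 0$ is the uniform area moment bound, which cannot be obtained from the growth of $r(a,v)$ in $a$ (as was the case for $\mu>0$ in Theorem \ref{themostimportanttheorem}); the role of that growth must be taken over by the largeness of $R_0$.

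I would begin by introducing truncated kernels $K_n,r_n$ satisfying (\ref{homogeneity})--(\ref{ode_fusion}) uniformly in $n$ and coinciding with $K,r$ on compact sets, and by applying the well-posedness result of \cite{iulia} (adapted as in Section \ref{sectionexistence}) to the corresponding truncated time-dependent self-similar problem. The bounds
\[M_{0,-\alpha}(g_n(\cdot,t))\leq C,\qquad M_{0,\beta}(g_n(\cdot,t))\leq C,\]
uniform in $n$ and $t$, are obtained exactly as in Theorem \ref{themostimportanttheorem}: they rely only on the coagulation kernel (\ref{lower_bound_kernel}) and on the volume conservation $M_{0,1}(g_n)=v_0$, and are thus insensitive to the sign of $\mu$.

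The critical step is the uniform bound on $M_{1,0}(g_n)$. Testing the weak formulation (\ref{weak_form_time_dependent}) against $\varphi(a,v)=a$ yields the identity
\[\partial_t M_{1,0}(g_n(\cdot,t))=\tfrac{1}{3}\,M_{1,0}(g_n(\cdot,t))\,-\,(1-\gamma)\int_{\mathbb{R}^{2}_{>0}}r_n(\eta)\bigl(a-c_0 v^{2/3}\bigr)g_n(\eta,t)\,d\eta,\]
in which the unstable linear term $+\tfrac{1}{3}M_{1,0}$ must be absorbed by the fusion dissipation. Splitting the integration region according to whether $a\leq 2c_0 v^{2/3}$ or not, the ``near-spherical'' portion of $M_{1,0}$ is controlled by $2c_0M_{0,2/3}(g_n)$ and hence by $v_0$ through Jensen's inequality (once the zeroth moment $M_{0,0}(g_n)$ is controlled via the identity $M_{0,0}\approx \tfrac{1-\gamma}{2}\langle\mathbb{K}[g_n],1\rangle$, whose bound depends on the coagulation kernel constants and on $v_0$). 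On the complementary region $\{a>2c_0v^{2/3}\}$ one has $a-c_0 v^{2/3}\geq a/2$, so the lower bound $r\geq R_0 a^{\mu}v^{\sigma}$, combined with the isoperimetric inequality, the homogeneity relation (\ref{same_rescaling}) and the $v$-moments already obtained, produces a Gr\"onwall-type closing inequality that forces $M_{1,0}(g_n(\cdot,t))\leq Cv_0$ uniformly in $n$ and $t$, provided that $R_0\geq \lambda v_0$ with $\lambda=\lambda(K_0,K_1,\gamma)$ sufficiently large.

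With the uniform moment estimates at hand, the convex set
\[\mathcal{C}:=\bigl\{h\in\mathscr{M}^{I}_{+}(\mathbb{R}^{2}_{>0})\;:\;M_{0,1}(h)=v_0,\ M_{0,-\alpha}(h)+M_{0,\beta}(h)+M_{1,0}(h)\leq C\bigr\}\]
is weak-$^{\ast}$ compact and invariant under the time-averaged semigroup of the truncated problem, and Tychonoff's fixed-point theorem yields a fixed point $g_n$, which by the argument of Section \ref{sectionexistence} corresponds to a self-similar profile of the truncated problem with total volume $v_0$. A weak-$^{\ast}$ passage to the limit $n\to\infty$ then produces the self-similar profile $g$ for (\ref{strongfusioneq}) in the sense of Definition \ref{definitionselfsimilarprofile}. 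The main obstacle is precisely the area moment estimate: this is where the hypothesis $R_0\geq \lambda v_0$ enters in an essential way, compensating quantitatively for the absence of the dissipative enhancement in $a$ which was available for $\mu>0$.
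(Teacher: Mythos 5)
Your overall architecture (truncate, get uniform moment estimates, use the invariant set to produce a fixed point via Tychonoff, then pass to the limit) is the same as the paper's, but the central step — the uniform area moment estimate — has a genuine gap that the paper resolves by a different and essential device.

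You propose to close a Gr\"{o}nwall inequality for $M_{1,0}$ directly, using the fusion dissipation $\sim -(1-\gamma)R_{0}\int a^{\mu+1}v^{\sigma}g$ together with $v$-moments. This cannot work for $\mu\leq 0$: on any fiber $v=\mathrm{const}$, $a^{\mu+1}$ is \emph{sublinear} in $a$ when $\mu<0$, so no choice of $R_{0}$ makes $a^{\mu+1}v^{\sigma}$ dominate $a$ for large $a$. The only Young-type inequality available is of the form $a\leq \overline{\lambda}_{\epsilon}a^{\mu+1}+\epsilon a^{2}$, and the error term $\epsilon a^{2}$ is unavoidable. Consequently the differential inequality for $M_{1,0}$ (or for $M_{1,-\alpha}$, $M_{1,\beta}$) does not close on its own: it picks up a $\delta_{1}M_{2,0}$ term. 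The paper therefore tracks the \emph{coupled} system of moments $M_{2,0}$, $M_{1,-\alpha}$ and $M_{1,\beta}$ (Proposition \ref{propinequalities} and Lemma \ref{lemmaweaksol mu neg}): $M_{2,0}$ enjoys a good $-\tfrac{1}{3}M_{2,0}$ damping from the transport, is driven by the nonlinear term bounded by $M_{1,-\alpha}M_{1,\beta}$, and feeds back only a $\delta_{1}$-small contribution into the equations for $M_{1,-\alpha}$ and $M_{1,\beta}$; a perturbative argument in $\delta_{1}$ then produces an invariant region. This three-moment coupling, and the consequent need to run the truncated well-posedness in the space $U_{\epsilon,R,2}$ (controlling $M_{2,0}$), is exactly the ingredient missing from your invariant set $\mathcal{C}$ and from your Gr\"{o}nwall argument; as written, the claimed bound $M_{1,0}\leq Cv_{0}$ does not follow.

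Two smaller points. First, the paper does not truncate $r$ at all here, precisely because $r(\eta)(c_{0}v^{2/3}-a)$ grows at most linearly in $a$ when $\mu\leq 0$; instead it modifies the transport cut-off to a function $\tilde{\Theta}_{\epsilon}(a,v)$ so that the truncated flow preserves the isoperimetric region. Second, the largeness of $\lambda$ (equivalently $R_{0}$) is used not merely to absorb $\tfrac{1}{3}M_{1,0}$, but to tune the constants in the Young-type inequalities \eqref{lambda big alpha}--\eqref{lambda big beta} so that the $\delta_{1}a^{2}$ error is as small as needed for the perturbative closure of the coupled system.
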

\begin{rmk}
 Notice that Theorem \ref{self sim mu neg} applies also in the case when $\mu=0$, which corresponds to the type of fusion kernels considered in \cite{KOCH1990419}.    
\end{rmk}

In order to better understand the stated results, we  give some heuristic arguments. We can explain the condition needed for Theorem \ref{self sim mu neg}, namely that $R_{0}$ in (\ref{fusion_form}) needs to be sufficiently large, in the following manner stated below. 

 First we can assume without loss of generality that $v_{0}=1$ by means of a rescaling argument (see Appendix \ref{formal rescaling properties}). With this rescaling, the theorem states that if $R_{0}\geq \lambda$,  for some sufficiently large constant $\lambda>0$, then there exists a self-similar profile. 

Assume for simplicity that $r(a,v)=R_{0} a^{\mu}$ and that $\mu<0$. In other words $r(a,v)=R_{0} a^{\frac{3(\gamma-1)}{2}}$ in order to be consistent with condition (\ref{fusion_form}). Additionally, we work with a coagulation kernel $K\equiv 1$. The most important part is to estimate the total surface area. This is since the moments in the $v$ variable can be bounded using standard arguments used in the study of coagulation equations.

Assume without loss of generality that $a\geq 2c_{0}v^{\frac{2}{3}}$ since the region $\{c_{0}v^{\frac{2}{3}}\leq a\leq 2c_{0}v^{\frac{2}{3}}\}$ is bounded using uniform moment estimates in the $v$ variable. The rigorous proof of Theorem \ref{self sim mu neg} will be a generalization of the following idea.

Denote by $A(t):=\int_{(0,\infty)^{2}}ag(\eta,t)\der\eta$. We test formally in (\ref{weak_form_time_dependent}) with $\varphi\equiv a$. Equation (\ref{weak_form_time_dependent}) 
 becomes
\begin{align}\label{self sim area simplified}
    \partial_{t}A(t)& = \frac{1}{3}A(t)+R_{0}\int_{(0,\infty)^{2}}a^{\mu}(c_{0}v^{\frac{2}{3}}-a)g(\eta,t)\der\eta\nonumber\\
    &\leq \frac{1}{3}A(t)-\frac{R_{0}}{2}\int_{(0,\infty)^{2}}a^{\mu+1}g(\eta,t)\der\eta.
\end{align}
Fix $\epsilon\in(0,1)$. Since $\mu<0$, by Young's inequality, we have that there exists some $\overline{\lambda}_{\epsilon}>0$, depending on $\epsilon$, sufficiently large such that
\begin{align}
    \frac{4}{3}a\leq \overline{\lambda}_{\epsilon} a^{\mu+1}+\epsilon a^{2}.
\end{align}
Choosing $R_{0}\geq 2\overline{\lambda}_{\epsilon}$, (\ref{self sim area simplified})  becomes
\begin{align}\label{epsilon small}
    \partial_{t}A(t)\leq -A(t)+\epsilon\int_{(0,\infty)^{2}}a^{2}g(\eta,t)\der\eta.
\end{align}
We then analyse the moment $M_{2,0}$. To this end, we test (\ref{self sim area simplified}) with $\varphi\equiv a^{2}$. Since $K\equiv 1$ and the fusion term is non-positive, we deduce
\begin{align}\label{equa}
\partial_{t}\int_{(0,\infty)^{2}}a^{2}g(\eta,t)\der \eta \leq - \frac{1}{3}\int_{(0,\infty)^{2}}a^{2}g(\eta,t)\der \eta+(1-\gamma)\bigg(\int_{(0,\infty)^{2}}ag(\eta,t)\der \eta\bigg)^{2}.
\end{align}
Combining (\ref{epsilon small}) and (\ref{equa}) and choosing $\epsilon$ to be sufficiently small, it follows that 
\begin{align}\label{hopefully last}
\partial_{t}\bigg(A(t)+\int_{(0,\infty)^{2}}a^{2}g(\eta,t)\der \eta \bigg)\leq -\frac{1}{6}\int_{(0,\infty)^{2}}a^{2}g(\eta,t)\der \eta-A(t)+(1-\gamma)\big(A(t)\big)^{2}.
\end{align}
In other words, using the notation $D(t):=\int_{(0,\infty)^{2}}a^{2}g(\eta,t)\der \eta+A(t)$, we obtain
\begin{align}\label{now this is}
\partial_{t}D(t)\leq -\frac{1}{6}D(t)+(1-\gamma)\big(D(t)\big)^{2}.
\end{align}
 If we take an initial condition $g_{\textup{in}}$ such that $D(0)\leq \frac{1}{12(1-\gamma)}$, we will have by (\ref{now this is}) that $D(t)\leq \frac{1}{12(1-\gamma)}$ is an invariant region in time. This enables us to use standard methods used in the study of coagulation equations to conclude that there exists a self-similar profile.

Notice that we use in the argument that  $R_{0}$ is large enough.

In order to prove the ramification result, we first prove the existence of a weak solution for the time-dependent fusion problem in self-similar variables which satisfies some suitable moment estimates.

\begin{prop}\label{prop ramification}
Assume $\alpha>0$ and $\mu<0$. Assume $K$ is a continuous, non-negative kernel satisfying (\ref{homogeneity}), (\ref{kersym1}), (\ref{lower_bound_kernel}) and (\ref{alpha non neg}). Suppose that $r(a,v)$ satisfies (\ref{fusion_form}), (\ref{fusion_homogeneity}), (\ref{same_rescaling}) and (\ref{ode_fusion}). Assume $g_{\textup{in}}\in\mathscr{M}_{+}^{I}(\mathbb{R}_{>0}^{2})$ with $\int_{(0,\infty)^{2}}vg_{\textup{in}}\der v \der a=v_{0}$. There exists a constant $C_{1}(v_{0})>0$, depending on $v_{0}$, $K_{0}$, $K_{1}$, and $\gamma$, cf. (\ref{homogeneity}) and (\ref{lower_bound_kernel}), such that,  if
\begin{align}\label{g in 2}
\int_{(0,\infty)^{2}}v^{x_{1}}g_{\textup{in}}(a,v)\der v \der a\leq C_{1}(v_{0}),
\end{align}
where $x_{1}\in\{\frac{\sigma}{|\mu|},\gamma-\frac{1}{3}\}$ and
\begin{align}\label{g in 1}
    \int_{(0,\infty)^{2}}(a^{\mu}+a^{2})(v^{-\alpha-1}+v^{\max\{1+\tilde{\epsilon},\sigma+\frac{2}{3}\}})g_{\textup{in}}(a,v)\der v \der a <\infty,
    \end{align}
for some $\tilde{\epsilon}\in(0,1)$, then there exists a weak solution of the time-dependent fusion problem $g$ as in Definition \ref{definitiontimedependent}  with $g\in\textup{C}([0,\infty);\mathscr{M}^{I}_{+}(\mathbb{R}_{>0}^{2}))$, which in addition satisfies the following moment estimates.
\begin{align} \label{cond2_non-existence}
\sup_{t\in[0,\infty)}\int_{(0,\infty)^{2}}v^{x_{1}}g(a,v,t)\der v \der a\leq C_{1}(v_{0})
\end{align}
 and, for every $T>0$,
\begin{align}\label{conditions_non-existence}
  \sup_{t\in[0,T]}  \int_{(0,\infty)^{2}}(a^{\mu}+a^{2})(v^{-\alpha-1}+v^{\max\{1+\tilde{\epsilon},\sigma+\frac{2}{3}\}})g(a,v,t)\der v \der a<\infty.
    \end{align}
\end{prop}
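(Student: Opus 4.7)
The plan is to construct approximate solutions by truncating the coagulation kernel and the fusion rate, derive moment estimates which are uniform in the truncation (and, for the smallness-type estimate, also uniform in time), and pass to the limit by compactness in $\textup{C}([0,T];\mathscr{M}^{I}_{+}(\mathbb{R}^{2}_{>0}))$ for each $T>0$.

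First, for $n\in\mathbb{N}$ introduce truncated kernels $K_{n}:=K\chi_{n}$ and $r_{n}:=r\chi_{n}$, where $\chi_{n}$ is a smooth cutoff equal to one on $\{1/n\leq v,v'\leq n\}$ and vanishing outside $\{1/(2n)\leq v,v'\leq 2n\}$. The well-posedness result from \cite{iulia} provides a solution $g_{n}\in\textup{C}([0,\infty);\mathscr{M}^{I}_{+}(\mathbb{R}_{>0}^{2}))$ to the corresponding truncated time-dependent fusion problem in self-similar variables with initial datum $g_{\textup{in}}$, preserving the isoperimetric support. The two families of moment estimates below will then be derived at the level of $g_{n}$.

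The second step is the uniform-in-time bound (\ref{cond2_non-existence}). Testing the weak formulation with $\varphi(a,v)=v^{x_{1}}$ annihilates the fusion term since $\partial_{a}\varphi=0$. Using (\ref{homogeneity}) and the upper bound in (\ref{lower_bound_kernel}), the drift terms $\frac{2}{3}a\partial_{a}\varphi+v\partial_{v}\varphi$ give a linear contribution proportional to $x_{1}$, while the coagulation bilinear term can be split using $\chi_{\varphi}$ and bounded in terms of $v_{0}=M_{0,1}(g_{n})$ together with $m_{x_{1}}(t):=M_{0,x_{1}}(g_{n})(t)$. For $x_{1}=\sigma/|\mu|$ and $x_{1}=\gamma-\frac{1}{3}$, the resulting differential inequality takes the form
\begin{align*}
\frac{d}{dt} m_{x_{1}}(t) \leq -\kappa_{1}\, m_{x_{1}}(t) + \kappa_{2}\, v_{0}^{p}\, m_{x_{1}}(t)^{q},
\end{align*}
with $q>1$ and constants $\kappa_{1},\kappa_{2}$ depending only on $K_{0},K_{1},\gamma$. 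Choosing $C_{1}(v_{0})$ as the smaller root of $\kappa_{1} x=\kappa_{2}v_{0}^{p}x^{q}$ yields an invariant region $\{m_{x_{1}}\leq C_{1}(v_{0})\}$, which gives (\ref{cond2_non-existence}) independently of $n$.

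The third step is the locally bounded moments (\ref{conditions_non-existence}). Test separately against $\varphi=a^{2}\psi(v)$ and $\varphi=a^{\mu}\psi(v)$ with $\psi\in\{v^{-\alpha-1},v^{\max\{1+\tilde\epsilon,\sigma+2/3\}}\}$. For $\varphi=a^{2}\psi$ the fusion term yields $2a(c_{0}v^{2/3}-a)\psi(v)\leq 0$ on $\{a\geq c_{0}v^{2/3}\}$, providing a negative contribution which absorbs the positive coagulation bilinear term (controlled via (\ref{lower_bound_kernel}) and the bounds from Step~2). For $\varphi=a^{\mu}\psi$ the hypothesis (\ref{ode_fusion}) is precisely what is required so that integration by parts of the fusion term in $a$ produces a coercive negative contribution, and Young's inequality closes the system of moment ODEs on each $[0,T]$. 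Tightness follows: near $v=0$ from $v^{-\alpha-1}$; near $v=\infty$ from $v^{\max\{1+\tilde\epsilon,\sigma+2/3\}}$; near $a=\infty$ from $a^{2}$; near $a=0$ from $a^{\mu}$ together with the isoperimetric support. Equicontinuity in $t$ follows from uniform bounds on $\partial_{t}g_{n}$ in a weighted dual space via the equation itself, and a version of Arzelà-Ascoli in $\textup{C}([0,T];\mathscr{M}^{I}_{+}(\mathbb{R}_{>0}^{2}))$ extracts a limit $g$ satisfying (\ref{weak_form_time_dependent}); each term passes to the limit thanks to the uniform moment estimates.

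The main obstacle is Step~2: the particular choices $x_{1}=\sigma/|\mu|$ and $x_{1}=\gamma-\frac{1}{3}$ are dictated by the scaling balance between the self-similar drift and the bilinear coagulation term, and one must verify that the threshold $C_{1}(v_{0})$ obtained this way is consistent for both values simultaneously and depends only on $v_{0},K_{0},K_{1},\gamma$. Once this invariant region is available, Steps~3 and~4 are relatively standard adaptations of the techniques developed for one-dimensional coagulation with drift, such as those in \cite{dust,ESCOBEDO}, to the present two-variable setting where the fusion term provides the required coercivity in the $a$ variable only through (\ref{ode_fusion}).
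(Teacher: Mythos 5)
Your outline captures the overall architecture (truncate, get uniform moment estimates, pass to the limit), but two of the key estimates in Steps 2 and 3 have the wrong sign, and a necessary ingredient is missing.

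\textbf{Step 2 sign error.} For $\varphi=v^{x_{1}}$ the weak formulation (\ref{weak_form_time_dependent}) yields
\begin{align*}
\partial_{t}M_{0,x_{1}}=(1-x_{1})M_{0,x_{1}}+(1-\gamma)\langle\mathbb{K}[g],v^{x_{1}}\rangle,
\end{align*}
and both admissible values of $x_{1}$ satisfy $x_{1}<1$ (since $\gamma<1$ gives $\gamma-\tfrac13<1$ and $\sigma/|\mu|=(\gamma-1)/|\mu|+\tfrac23<\tfrac23$). Hence the linear part has a \emph{positive} coefficient $1-x_{1}>0$, while $\chi_{v^{x_{1}}}(v,v')=(v+v')^{x_{1}}-v^{x_{1}}-v'^{x_{1}}\leq 0$ by concavity/subadditivity, so the coagulation contribution is \emph{nonpositive} and is what provides coercivity. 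Your proposed inequality $\dot m\leq -\kappa_{1}m+\kappa_{2}v_{0}^{p}m^{q}$ has both signs reversed; the correct structure is $\dot m\leq \kappa_{1}m-cm^{q}+c_{3}$ with $q>1$, exactly as in Proposition~\ref{m-lprop}, and the constant $c$ requires a uniform \emph{lower} bound on a low-order $v$-moment (in the paper, $\int v^{\beta}g$). Because the truncated initial datum $g_{\textup{in},R}=g_{\textup{in}}\mathbbm{1}_{[c_{0}\epsilon^{2/3},\infty)\times[\epsilon,2R)}$ no longer has volume exactly $v_{0}$, that lower bound must be proved separately (Proposition~\ref{lowerbound} in the paper); your argument uses $v_{0}=M_{0,1}(g_{n})$ as a fixed input, silently assuming exact volume conservation under truncation.

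\textbf{Step 3 sign error and unnecessary test.} For $\varphi=a^{\mu}\psi(v)$ with $\mu<0$ and $\psi\geq 0$ one has $\partial_{a}\varphi=\mu a^{\mu-1}\psi<0$, while on the isoperimetric support $r(\eta)(c_{0}v^{2/3}-a)\leq 0$; their product is $\geq 0$, so the fusion term is a \emph{positive} contribution, not a coercive negative one. Hypothesis (\ref{ode_fusion}) plays no role here: in the paper it guarantees monotonicity of the characteristic flow and well-posedness of the truncated ODE system, not a sign for this particular test function. Moreover this test is not needed at all: since $\mu<0$ and $a\geq c_{0}v^{2/3}$, one has $a^{\mu}\leq c_{0}^{\mu}v^{2\mu/3}$, so $\int a^{\mu}v^{d}g\leq c_{0}^{\mu}M_{0,d+2\mu/3}(g)$, and all the $a^{\mu}$-weighted quantities in (\ref{conditions_non-existence}) reduce directly to $v$-moment bounds. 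The paper obtains (\ref{conditions_non-existence}) by testing only with nonnegative powers of $a$ ($a$, $a^{2}$, $av^{\beta}$, etc.) and Gr\"onwall on $[0,T]$. Finally, truncating $r$ as $r_{n}=r\chi_{n}$ is harmless but unnecessary when $\mu<0$: the paper observes that $r(\eta)(c_{0}v^{2/3}-a)$ already grows at most linearly in $a$, so only the coagulation kernel and the transport near $v=0$ need cutting off (equation (\ref{regularized_form_equation_negative})).
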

\begin{rmk}
  We have that $C_{1}(v_{0})=\overline{C}_{1}v_{0}^{\xi(x_{1}-\gamma)}$, where $\overline{C}_{1}>0$ depends only on $K_{0}$, $K_{1}$, and $\gamma$, cf. (\ref{homogeneity}) and (\ref{lower_bound_kernel}), and  $\xi$ is as in (\ref{scale}). For more details explaining the rescaling properties of $g$, see Appendix \ref{formal rescaling properties}.
\end{rmk}
\begin{rmk}
The moment estimates in (\ref{conditions_non-existence}) are needed in order to prove that we are able to test the equation (\ref{weak_form_time_dependent}) with $\varphi\equiv a$. Actually, we do not need estimates for moments of the form $M_{2,0}$ in order to prove this, but we keep the form (\ref{conditions_non-existence}) in order to emphasize that more general moment estimates can be obtained.
    
\end{rmk}
If $\mu<0$ and we start with sufficiently large surface area, we can expect a fast growth in the area in self-similar variables since coagulation overtakes fusion for large particles. Notice that, in equation  (\ref{strongfusioneq}), if we ignore the fusion term, we obtain a particle distribution $f$ for which the area stays constant in time. The exponential growth stated in the next theorem is equivalent to a lower estimate for the total area associated to the distribution $f$, cf. (\ref{stronger res}).
\begin{teo}\label{remarkramification}
Assume $\alpha>0$ and $\mu<0$. Assume $K$ is a continuous, non-negative kernel satisfying (\ref{homogeneity}), (\ref{kersym1}),  
(\ref{lower_bound_kernel}) and (\ref{alpha non neg}). Suppose that $r(a,v)$ satisfies (\ref{fusion_form}), (\ref{fusion_homogeneity}), (\ref{same_rescaling}) and (\ref{ode_fusion}). Let $g\in\textup{C}([0,\infty);\mathscr{M}^{I}_{+}(\mathbb{R}_{>0}^{2}))$ be a solution of the time-dependent fusion problem as in Definition \ref{definitiontimedependent} with total volume of particles equal to $v_{0}$ and satisfying (\ref{cond2_non-existence}) and (\ref{conditions_non-existence}). Then the following holds: there exists a constant $C_{2}(v_{0})>0$, depending on $v_{0}$, $K_{0}$, $K_{1}$, $\gamma,$ $\mu$ and $\sigma$ , cf. (\ref{homogeneity}), (\ref{lower_bound_kernel}) and (\ref{fusion_form}), such that, if
\begin{align}\label{r1}
    R_{1}\leq v_{0},
\end{align}
where $R_{1}$ is as in (\ref{fusion_form}) and
\begin{align}\label{initial area large}
    \int_{(0,\infty)^{2}}ag_{\textup{in}}(a,v)\der v \der a\geq C_{2}(v_{0}),
\end{align}
then
\begin{align}\label{total area large}
    \int_{(0,\infty)^{2}}g(a,v,t)a\der v \der a\geq C_{\mu}C_{2}(v_{0})\textup{e}^{\frac{1}{3}t},
\end{align}
for some $C_{\mu}$ depending on $\mu$.
\end{teo}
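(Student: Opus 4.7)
The strategy is to test the weak formulation (\ref{weak_form_time_dependent}) against $\varphi \equiv a$ in order to obtain a closed differential inequality for the total area $A(t) := \int_{(0,\infty)^{2}} a\,g(\eta,t)\,d\eta$, and then to exploit $R_{1} \leq v_{0}$ (cf.\ (\ref{r1})) together with $A(0) \geq C_{2}(v_{0})$ (cf.\ (\ref{initial area large})) in order to bootstrap exponential growth at rate $1/3$. Formally plugging $\varphi \equiv a$ into (\ref{weak_form_time_dependent}) gives
\[
\frac{d}{dt} A(t) = \tfrac{1}{3}A(t) - (1-\gamma) \int_{(0,\infty)^{2}} r(\eta)\bigl(a - c_{0}v^{2/3}\bigr)\, g(\eta,t)\,d\eta,
\]
since the coagulation contribution vanishes identically, $\chi_{a}(\eta,\eta') = (a+a')-a-a' = 0$. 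As $a$ is not compactly supported, this identity has to be justified by testing instead against $\varphi_{R}(\eta) = a\,\chi_{R}(a)$ with a smooth cutoff at scale $R$; the finiteness on $[0,T]$ of $\int a^{2}v^{\sigma+2/3}g\,d\eta$ granted by (\ref{conditions_non-existence}) controls the cutoff error and legitimises the limit $R \to \infty$.

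Next I would bound the fusion term. Using $a-c_{0}v^{2/3}\leq a$ and $r(\eta)\leq R_{1}a^{\mu}v^{\sigma}$, the fusion contribution is dominated by $R_{1}\int a^{\mu+1}v^{\sigma}g\,d\eta$. In the sub-case $\mu\in(-1,0)$, the identity $(\mu+1)+|\mu|=1$ permits the factorisation $a^{\mu+1}v^{\sigma}g=(ag)^{\mu+1}(v^{\sigma/|\mu|}g)^{|\mu|}$, and H\"older's inequality with exponents $1/(\mu+1)$ and $1/|\mu|$ yields
\[
\int a^{\mu+1}v^{\sigma}g\,d\eta \leq A(t)^{\mu+1}\biggl(\int v^{\sigma/|\mu|}g\,d\eta\biggr)^{|\mu|}\leq A(t)^{\mu+1}C_{1}(v_{0})^{|\mu|},
\]
the last step using the uniform bound (\ref{cond2_non-existence}) with $x_{1}=\sigma/|\mu|$. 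In the sub-case $\mu\leq -1$, the H\"older exponent degenerates, but now $a^{\mu+1}$ is decreasing in $a$, so the isoperimetric inequality gives $a^{\mu+1}\leq c_{0}^{\mu+1}v^{2(\mu+1)/3}$; combining with the algebraic identity $\sigma+2(\mu+1)/3=\gamma-1/3$, a direct consequence of (\ref{same_rescaling}), and with (\ref{cond2_non-existence}) applied this time with $x_{1}=\gamma-1/3$, I obtain $\int a^{\mu+1}v^{\sigma}g\,d\eta\leq c_{0}^{\mu+1}C_{1}(v_{0})$.

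Combining the two previous steps with $R_{1}\leq v_{0}$, in the sub-case $\mu\in(-1,0)$ I arrive at $\dot A(t)\geq \tfrac{1}{3}A(t)-KA(t)^{\mu+1}$ with $K=(1-\gamma)v_{0}C_{1}(v_{0})^{|\mu|}$. The substitution $z:=A^{-|\mu|}$ converts this into the logistic-type estimate $\dot z\leq -\tfrac{|\mu|}{3}z+K|\mu|z^{2}$, whose equilibrium $z=0$ attracts the basin $\{z<1/(3K)\}$; choosing $C_{2}(v_{0}):=(6K)^{1/|\mu|}$, hypothesis (\ref{initial area large}) places $z(0)$ strictly inside that basin, and integrating $\dot z/z\leq -|\mu|/3+K|\mu|\,z$ with $\int_{0}^{\infty}z(s)\,ds$ estimated from the crude bound $z(s)\leq z(0)e^{-|\mu|s/6}$ yields $z(t)\leq C\,z(0)e^{-|\mu|t/3}$ for some constant $C$ depending only on $\mu$, which rewrites as $A(t)\geq C_{\mu}A(0)e^{t/3}$. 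In the sub-case $\mu\leq -1$ the inequality is linear, $\dot A\geq \tfrac{1}{3}A-K'$, and a direct Gr\"onwall delivers $A(t)\geq \tfrac{1}{2}A(0)e^{t/3}$ as soon as $A(0)\geq 6K'$, so $C_{2}(v_{0})=6K'$ works. The technical crux of the argument is the cutoff approximation that legitimises testing with the unbounded function $\varphi\equiv a$; the H\"older bookkeeping in the sub-case $\mu\in(-1,0)$ is precisely what singles out $x_{1}=\sigma/|\mu|$ in (\ref{cond2_non-existence}) as the right bounded $v$-moment to invoke.
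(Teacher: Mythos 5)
Your proof is correct, and for the key sub-case $\mu\in(-1,0)$ it takes a genuinely different route from the paper. The paper applies Young's inequality pointwise, $a^{1+\mu}v^{\sigma}\leq\frac{\epsilon}{p}a+\frac{\epsilon^{-q/p}}{q}v^{\sigma/|\mu|}$, which yields a linear differential inequality $\dot A\geq(\tfrac{1}{3}-\tilde\delta)A-\tilde C_\epsilon$; since this only gives rate $\tfrac{1}{3}-\tilde\delta$, the paper then runs a second bootstrap, letting $\epsilon=\epsilon(t)$ decay exponentially so that $\int_0^\infty\epsilon(s)\,ds<\infty$ and the deficit $\tilde\delta$ disappears in the exponent. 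You instead apply H\"older globally with exponents $\tfrac{1}{\mu+1}$ and $\tfrac{1}{|\mu|}$ to obtain the nonlinear inequality $\dot A\geq\tfrac{1}{3}A-KA^{\mu+1}$ directly, and then the substitution $z=A^{-|\mu|}$ converts it to $\dot z\leq-\tfrac{|\mu|}{3}z+K|\mu|z^{2}$; the basin argument for the stable fixed point $z=0$ and the bookkeeping of $\int_0^\infty z(s)\,ds$ (via the crude decay $z(s)\leq z(0)e^{-|\mu|s/6}$) give the exact rate $\tfrac{1}{3}$ in one pass. Your approach is more streamlined because it retains the nonlinear structure instead of linearising and then repairing the exponent; the moment assumptions used ((\ref{cond2_non-existence}) with $x_1=\sigma/|\mu|$, the isoperimetric bound with $x_1=\gamma-\tfrac{1}{3}$ for $\mu\leq-1$, and (\ref{conditions_non-existence}) to justify $\varphi\equiv a$) are the same as in the paper. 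One minor cosmetic omission: the factor $(1-\gamma)$ in front of the fusion term is dropped in your display, but since it only enters the constant $K$ multiplicatively this has no effect on the argument.
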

\begin{rmk}
We have that $C_{2}(v_{0})=\overline{C}_{2}v_{0}^{\xi(\frac{2}{3}-\gamma)}$, for $\xi$ as in (\ref{scale}), where $\overline{C}_{2}>0$ depends only on $K_{0}$, $K_{1}$, $\gamma,$ $\mu$ and $\sigma$ , cf. (\ref{homogeneity}), (\ref{lower_bound_kernel}) and (\ref{fusion_form}).  For more details about the rescaling properties of $g$, see Appendix \ref{formal rescaling properties}.
\end{rmk}
Notice that Theorem \ref{remarkramification} holds for any weak solution in the sense of Definition \ref{definitiontimedependent} that satisfies in addition  (\ref{cond2_non-existence}), (\ref{conditions_non-existence}) and (\ref{initial area large}). On the other hand, combining Proposition \ref{prop ramification} and Theorem \ref{remarkramification}, we obtain the following
\begin{cor}
Suppose that $g_{\textup{in}}\in\mathscr{M}_{+}^{I}(\mathbb{R}_{>0}^{2})$  and satisfies (\ref{g in 2}), (\ref{g in 1}), (\ref{r1}) and (\ref{initial area large}), with $C_{1}(v_{0})$ and $C_{2}(v_{0})$ as in Proposition \ref{prop ramification} and Theorem \ref{remarkramification}, respectively. Then there exists a weak solution for the time-dependent fusion problem in the sense of Definition \ref{definitiontimedependent} which satisfies  (\ref{total area large}).
\end{cor}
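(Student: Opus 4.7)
The plan is to read the corollary as a straightforward concatenation of Proposition \ref{prop ramification} and Theorem \ref{remarkramification}: the hypotheses on $g_{\textup{in}}$ are exactly the union of the two separate sets of initial-data assumptions, so the proof reduces to checking that the moment bounds produced by the first result are precisely those required as hypotheses by the second, with no intermediate estimate to derive.

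Concretely, I would proceed in two steps. First, I would invoke Proposition \ref{prop ramification} with the given $g_{\textup{in}}$, which belongs to $\mathscr{M}_{+}^{I}(\mathbb{R}_{>0}^{2})$, has total volume $v_0$, and satisfies (\ref{g in 2}) and (\ref{g in 1}) by assumption. The kernel assumptions (\ref{homogeneity}), (\ref{kersym1}), (\ref{lower_bound_kernel}), (\ref{alpha non neg}) for $K$ and (\ref{fusion_form}), (\ref{fusion_homogeneity}), (\ref{same_rescaling}), (\ref{ode_fusion}) for $r$, together with the sign hypotheses $\alpha>0$ and $\mu<0$, are in force throughout this subsection, so the proposition applies and produces a weak solution $g\in\textup{C}([0,\infty);\mathscr{M}^{I}_{+}(\mathbb{R}_{>0}^{2}))$ of the time-dependent fusion problem (in the sense of Definition \ref{definitiontimedependent}) that satisfies the uniform bound (\ref{cond2_non-existence}) and the locally uniform moment estimates (\ref{conditions_non-existence}).

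Second, I would plug this $g$ into Theorem \ref{remarkramification}. The kernel hypotheses are identical to those just used. The hypotheses (\ref{cond2_non-existence}) and (\ref{conditions_non-existence}) required of $g$ by the theorem are exactly what Proposition \ref{prop ramification} has just delivered. The remaining hypotheses (\ref{r1}) and (\ref{initial area large}) are assumed directly in the corollary. Hence Theorem \ref{remarkramification} yields the exponential lower bound (\ref{total area large}) for the same solution $g$, which is the claim.

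There is essentially no obstacle: the only thing to verify is that the constants $C_{1}(v_{0})$ and $C_{2}(v_{0})$ in the two prior results and the moment indices appearing in (\ref{g in 2}), (\ref{g in 1}), (\ref{cond2_non-existence}), (\ref{conditions_non-existence}) are the same objects in both statements, which is so by design. In particular, no new a priori estimate, compactness argument, or passage to the limit is needed beyond what is already contained in the two invoked results, and the compatibility $R_{1}\leq v_{0}$ ensures that the fusion term remains in the regime covered by Theorem \ref{remarkramification}.
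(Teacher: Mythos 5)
Your proof is correct and matches the paper's approach exactly: the corollary is indeed a direct concatenation of Proposition \ref{prop ramification} (which supplies a weak solution satisfying (\ref{cond2_non-existence}) and (\ref{conditions_non-existence})) and Theorem \ref{remarkramification} (which takes such a solution, together with (\ref{r1}) and (\ref{initial area large}), and yields (\ref{total area large})), and the paper itself introduces the corollary with precisely this reasoning.
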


Theorem \ref{remarkramification} can be understood in the following manner. Identically as before, we can assume without loss of generality that $v_{0}=1$ by means of a rescaling argument (see Appendix \ref{formal rescaling properties}). With this rescaling, is we have $R_{1}\leq 1$ and $\int_{(0,\infty)^{2}}ag_{\textup{in}}(a,v)\der v \der a\geq C$, for some sufficiently large constant $C>0$, then ramification occurs. 

We now provide some heuristic explanation in order to justify the validity of Theorem \ref{remarkramification}. We explain below the condition needed for Theorem \ref{remarkramification}, namely condition (\ref{initial area large}).

Assume for simplicity that $r(a,v)=a^{\mu}$ and that $\mu\leq -1$. In other words, in order to be consistent with the condition (\ref{fusion_form}), $r(a,v)=a^{\frac{3(\gamma-1)}{2}}$. The rigorous proof of Theorem \ref{remarkramification} will be a generalization of the following idea.

Test formally in (\ref{weak_form_time_dependent}) with $\varphi\equiv a$. We have $  \langle \mathbb{K}[g],\varphi\rangle=0.$ Denote by $A(t):=\int_{(0,\infty)^{2}}ag(\eta,t)\der\eta$. Equation (\ref{weak_form_time_dependent}) 
 becomes
\begin{align}\label{area ramification simplified}
    \partial_{t}A(t)& = \frac{1}{3}A(t)+\int_{(0,\infty)^{2}}a^{\mu}(c_{0}v^{\frac{2}{3}}-a)g(\eta,t)\der\eta\nonumber\\
    &\geq  \frac{1}{3}A(t)-\int_{(0,\infty)^{2}}a^{\mu+1}g(\eta,t)\der\eta\nonumber\\
     &\geq  \frac{1}{3}A(t)-c_{0}^{\mu+1}\int_{(0,\infty)^{2}}v^{\gamma-\frac{1}{3}}g(\eta,t)\der\eta.
\end{align}
It turns out that we can prove 
\begin{align}\label{volume ramification simplified}
    \int_{(0,\infty)^{2}}v^{\gamma-\frac{1}{3}}g(\eta,t)\der\eta\leq \max\{\int_{(0,\infty)^{2}}v^{\gamma-\frac{1}{3}}g_{\textup{in}}(\eta)\der\eta,C\},
\end{align}
for some fixed constant $C>0$. The proof of this result is made in a similar manner as the proof of the analogous estimate for the one-dimensional coagulation equations. Thus, (\ref{area ramification simplified}) becomes
\begin{align}\label{last}
    \partial_{t}A(t)& \geq \frac{1}{3}A(t)-C,
\end{align}
for some constant $C>0$ depending only on $g_{\textup{in}}$.

From (\ref{last}), we deduce that if $A(0)$ is sufficiently large, then $A(t)$ behaves like $\textup{e}^{\frac{1}{3}t}$. This is the content of the ramification result in Theorem \ref{remarkramification}.

\section{Existence of self-similar solutions when $\mu>0$}\label{sectionexistence}
The strategy for the proof of Theorem \ref{themostimportanttheorem} (and Theorem \ref{main teo case alpha zero}) follows the approach of obtaining self-similar profiles as a fixed point of a truncated version of the time-dependent problem by showing invariance in time of a compact set.  It is convenient to work with truncated versions of the coagulation kernel, as well as a modified fusion rate. This is done in order to avoid singular behavior and unbounded terms. Notice that, since $g$ is supported in the region $\{a\geq c_{0}v^{\frac{2}{3}}\}$, information about one of the variables implies some information over the other. Estimates for moments depending only on $v$ follow then in the same manner as for the one-dimensional coagulation equation, due to the particular form of the coagulation kernel.
 
 \
 In order to define the truncated problem, we introduce the following functions. For $\epsilon\in(0,1)$ and $R>1$, we define a truncation $K_{\epsilon,R}:(0,\infty)^{4}\rightarrow[0,\infty)$ for the coagulation kernel  to be a continuous function such that:
\begin{align}\label{truncation_kernel_selfsimilar}
 K_{\epsilon,R}(a,v,a',v')=\min\{K(a,v,a',v'), 2^{1+\beta}K_{0}\epsilon^{-\alpha}R^{\beta}\},
\end{align}
 where $K$ satisfies (\ref{lower_bound_kernel}) and take $\xi_{R}:\mathbb{R}_{>0}\rightarrow [0,1]$ to be continuous and defined in the following manner:
 \begin{align}
     \xi_{R}(v) & =0, & &\text{ when } v\geq 2R, \\
     \xi_{R}(v)& =1, & &\text { on } (0,R].
 \end{align}
We first discuss on how the existence of strong solutions for the truncated version of (\ref{strong_self_sim}) will be proven. We take a truncation for the linear transport terms, namely we take $\Theta_{\epsilon}:(0,\infty)\rightarrow \mathbb{R}$ to be a smooth monotonically increasing function such that:
\begin{align}\label{thetaepsilon}
\Theta_{\epsilon}(v)=
\begin{cases}
1, & v>2\epsilon, \\
0, &v\leq \epsilon.
\end{cases}
\end{align}
The main issue is to find a suitable subset of $\mathscr{M}_{+}^{I}(\mathbb{R}_{>0}^{2})$ in which we can obtain uniform estimates in time. Thus, we first take a cut-off near the origin and show that the support of $g$ remains in this region. As mentioned before, information about the behavior of $v$ near the origin is enough to control $a$ near the origin. However, this is not the case for large values of $a$ and thus we have to deal with the fusion rate.

\
We replace the fusion term by terms linearly increasing in the area variable in order to avoid the characteristics to arrive from infinity in finite time. The linear growth in $a$ will enable us to test with functions that are not compactly supported in the area variable. So, for $\varphi\in\vanishfundif$, we analyse the following regularized version of (\ref{strong_self_sim}):
\begin{align}\label{regularizedformequation}
\int_{(0,\infty)^{2}}g(\eta,t)\Theta_{\epsilon}(v)\varphi(\eta)\der\eta-\frac{2}{3}\int_{(0,\infty)^{2}}g(\eta,t)\Theta_{\epsilon}(v) a\partial_{a}\varphi(\eta)\der\eta-\int_{(0,\infty)^{2}}&g(\eta,t)\Theta_{\epsilon}(v)v\partial_{v}\varphi(\eta)\der \eta \nonumber \\
+(1-\gamma)\langle \mathbb{K}_{\epsilon,R}[g],\varphi\rangle+(1-\gamma)\int_{(0,\infty)^{2}}r_{\delta}(\eta)(c_{0}v^{\frac{2}{3}}-a)g(\eta,t)\partial_{a}\varphi(\eta)\der \eta =\partial_{t}&\int_{(0,\infty)^{2}}g(\eta,t)\varphi(\eta)\der \eta,
\end{align}
where
\begin{align}\label{kernel_term}
    \langle \mathbb{K}_{\epsilon,R}[g],\varphi\rangle:=\frac{1}{2}\int_{(0,\infty)^{2}}\int_{(0,\infty)^{2}}K_{\epsilon,R}(a,v,a',v')\xi_{R}(v+v')\chi_{\varphi}(\eta,\eta')\der \eta'\der \eta.
\end{align}
We have replaced the fusion term $r(a,v)$ by
\begin{align}\label{cutfusion}
r_{\delta}(a,v):= \frac{r( \eta)\max\{v^{\sigma}, L\delta\}}{v^{\sigma}(1+\delta a^{\mu})},
\end{align}
for $\delta\in(0,1)$ and where we denoted
\begin{align}\label{definition_L}
    L:=\frac{12}{R_{0}(1-\gamma)},
\end{align}
where $R_{0}$ is as in (\ref{fusion_form}). Notice that $r_{\delta}(\eta)\rightarrow r(\eta)$ for fixed $\eta$ as $\delta\rightarrow 0$. $L$ was chosen in such a way to derive uniform estimates for the total area of solutions. This means it has to be sufficiently large in order to compensate for the linear transport term appearing due to the coagulation kernel.

For some $\tilde{g}\in\mathscr{M}^{I}_{+}(\mathbb{R}_{>0}^{2})$ such that
\begin{align}\label{finalization_space}
\int_{(0,\infty)^{2}}(1+a)\tilde{g}(a,v)\der v \der a<\infty,
\end{align}
we define the space
\begin{align}\label{existencespace for f}
U_{\epsilon,R}&:= \{\tilde{g}\in\mathscr{M}^{I}_{+}(\mathbb{R}_{>0}^{2}), \tilde{g}\Big(\mathbb{R}^{2}_{>0}\setminus[c_{0}\epsilon^{\frac{2}{3}},\infty)\times[\epsilon,2R)\Big)=0, \tilde{g} \textup{ satisfies } (\ref{finalization_space})\}.
\end{align}

In this section we will prove the following technical results.
\begin{prop}\label{propweaksol}
Take $K_{\epsilon,R}$ as above, i.e. (\ref{truncation_kernel_selfsimilar}) holds, and assume it satisfies (\ref{kersym1}). Let $g_{\textup{in},R}\in\mathscr{M}^{I}_{+,\textup{b}}(\mathbb{R}_{>0}^{2})\cap U_{\epsilon,R}$. There exists a unique solution $g_{\epsilon,R,\delta}\in\textup{C}^{1}([0,\infty);\mathscr{M}^{I}_{+}(\mathbb{R}_{>0}^{2}))$, $g_{\epsilon,R,\delta}(t)\in U_{\epsilon,R}$, that satisfies
\begin{align*}
\sup_{t\in[0,T]}\int_{(0,\infty)^{2}}(1+a)g_{\epsilon,R,\delta}(a,v,t)\der v \der a<\infty,
\end{align*}
for all times $T\in[0,\infty),$ for the weak formulation of the coagulation equation (\ref{regularizedformequation}) with initial datum $g_{\textup{in},R}$.
\end{prop}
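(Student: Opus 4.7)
The plan is to recast (\ref{regularizedformequation}) as an abstract Cauchy problem in the Banach space
\begin{align*}
X_{\epsilon,R} := \bigl\{\nu \in \mathscr{M}_{b}(\mathbb{R}^{2}_{>0}) : \supp(\nu) \subset \{(a,v) : \epsilon \leq v \leq 2R,\, a\geq c_{0}v^{\frac{2}{3}}\}\bigr\},
\end{align*}
equipped with the weighted total variation norm $\|\nu\| := \int (1+a)\,\der|\nu|(a,v)$, and to solve it by a Duhamel formula combined with a Picard iteration. The weight $(1+a)$ is the key choice: the regularized fusion velocity $r_{\delta}(\eta)(c_{0}v^{\frac{2}{3}}-a)$ grows linearly in $a$ at infinity, since the denominator $1+\delta a^{\mu}$ in (\ref{cutfusion}) keeps $r_{\delta}$ uniformly bounded on the support strip regardless of the sign of $\mu$.

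First I would study the linear transport-fusion part, whose characteristics are given by $\dot{v}=-\Theta_{\epsilon}(v)v$ and $\dot{a} = -\frac{2}{3}\Theta_{\epsilon}(v) a + (1-\gamma) r_{\delta}(a,v)(c_{0}v^{\frac{2}{3}}-a)$. Since $\Theta_{\epsilon}(\epsilon)=0$ and $\dot v \leq 0$, the $v$-component is trapped in $[\epsilon,2R]$; since the right-hand side of the $a$-equation is at most linear in $a$, the flow is globally defined; and the identity
\begin{align*}
\frac{\der}{\der t}(a-c_{0}v^{\frac{2}{3}}) = -\Bigl[\tfrac{2}{3}\Theta_{\epsilon}(v) + (1-\gamma) r_{\delta}(a,v)\Bigr](a-c_{0}v^{\frac{2}{3}})
\end{align*}
shows that $\{a\geq c_{0}v^{\frac{2}{3}}\}$ is invariant. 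Pushing measures forward along this flow and adding the multiplicative factor coming from the source term $\Theta_{\epsilon}(v) g$ then defines a positivity-preserving strongly continuous semigroup $\{S(t)\}_{t\geq 0}$ on $X_{\epsilon,R}$ whose operator norm grows at most exponentially in $t$.

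Next I would show that $g \mapsto \mathbb{K}_{\epsilon,R}[g,g]$ is a bounded, locally Lipschitz bilinear operator into $X_{\epsilon,R}$. The uniform bound $K_{\epsilon,R}\leq 2^{1+\beta}K_{0}\epsilon^{-\alpha}R^{\beta}$ from (\ref{truncation_kernel_selfsimilar}) gives boundedness; the cutoff $\xi_{R}(v+v')$ forces the produced particles to satisfy $v+v'\leq 2R$; the concavity of $v\mapsto c_{0}v^{\frac{2}{3}}$ implies $a+a'\geq c_{0}(v+v')^{\frac{2}{3}}$, so the isoperimetric constraint is preserved under the gain operator; and the weight is respected via $1+(a+a')\leq (1+a)+(1+a')$. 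Writing (\ref{regularizedformequation}) in mild form
\begin{align*}
g(t) = S(t) g_{\textup{in},R} + (1-\gamma)\int_{0}^{t}S(t-s)\,\mathbb{K}_{\epsilon,R}[g(s),g(s)]\,\der s,
\end{align*}
a standard contraction argument on a closed ball of $X_{\epsilon,R}$ over a short time interval yields a unique local solution, and the $\textup{C}^{1}$ regularity in time is inherited from the mild formulation together with the continuity of $s\mapsto \mathbb{K}_{\epsilon,R}[g(s),g(s)]$.

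For the global extension, I would derive an a priori bound by testing the weak formulation with $\varphi\equiv 1$ and $\varphi\equiv a$: the first gives linear control on the total mass since the coagulation contribution to $\int g$ is bounded by $\|g\|^{2}$, while the second receives a non-positive contribution from the fusion term (using $c_{0}v^{\frac{2}{3}} - a \leq 0$ on $\supp g$) together with linear contributions from the scaling transport and the coagulation loss, yielding $\|g(t)\|\lesssim \|g_{\textup{in},R}\|\,\textup{e}^{Ct}$ on compact intervals and precluding finite-time blow-up. Non-negativity is inherited from the positivity of $S(t)$ and from the fact that the loss part of $\mathbb{K}_{\epsilon,R}$ can be absorbed into the semigroup as a bounded multiplicative factor, leaving only the non-negative gain term outside. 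I expect the main technical obstacle to be the careful bookkeeping needed to verify that the fixed-point iteration preserves simultaneously all the structural constraints defining $U_{\epsilon,R}$ (support in $[\epsilon,2R]$ in $v$, isoperimetric inequality, positivity, and finite first $a$-moment); this relies on the flow invariance computation and the coagulation subadditivity argument described above.
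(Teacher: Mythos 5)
Your proposal follows essentially the same approach as the paper: integrate the linear transport-fusion part along characteristics (the paper's change of variables $\phi_t$ and exponential factor $e^{h_\epsilon}$ are exactly your semigroup $S(t)$ on measures), work with the $(1+a)$-weighted norm to control escape to infinity in $a$ given the at-most-linear growth of the regularized drift, absorb the coagulation loss term into a multiplicative exponential so the Duhamel iterates stay non-negative (the paper's factor $e^{-\int_0^t a[G]}$ in the map $J$), and close the short-time contraction before extending globally via the a priori bound obtained by testing with $\varphi\equiv a$ and using that the fusion term has a favorable sign. Your direct ODE computation $\frac{\der}{\der t}(a-c_0v^{2/3})=-\bigl[\frac{2}{3}\Theta_\epsilon(v)+(1-\gamma)r_\delta(a,v)\bigr](a-c_0v^{2/3})$ and the concavity observation $a+a'\geq c_0(v+v')^{2/3}$ are the same invariance facts the paper establishes in Propositions \ref{observationofthedomain2} and \ref{supp is kept}, just presented slightly more compactly.
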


The proof of this proposition will be the content of Subsection \ref{chapterone}.

Let $K_{\epsilon,R}$ defined as in (\ref{truncation_kernel_selfsimilar}) and assume $K$ and $r$ satisfy the conditions stated in Theorem \ref{themostimportanttheorem} or in Theorem \ref{main teo case alpha zero}. Let $T>0$. We define the map $S(t):U_{\epsilon,R}\rightarrow U_{\epsilon,R}$ in the following way: 
\begin{align}\label{definition semigroup}
S(t)g_{\textup{in},R}=g_{\epsilon,R,\delta}(\cdot,\cdot,t),
\end{align} for all $t\in[0,T]$, where $g_{\epsilon,R,\delta}$ is the unique solution of the weak formulation of the coagulation equation with coagulation kernel $K_{\epsilon,R}$ found Proposition \ref{propweaksol}.

In order to prove Theorem \ref{themostimportanttheorem} (and Theorem \ref{main teo case alpha zero}), the next lemma will be useful:
\begin{prop}\label{lemmaweaksol}
Let $\epsilon\in(0,1),R>1,\delta\in(0,1)$. Let $K_{\epsilon,R}$ defined as in (\ref{truncation_kernel_selfsimilar}) and $r_{\delta}$ as in (\ref{cutfusion}). Assume $K$ and $r$ satisfy the conditions stated in Theorem \ref{themostimportanttheorem} or in Theorem \ref{main teo case alpha zero}. Let $T>0$ and $S(t):U_{\epsilon,R}\rightarrow U_{\epsilon,R}$ as in (\ref{definition semigroup}), for $t\in[0,T]$.

Let $\mu>0$ in (\ref{fusion_form}). Then there exist constants $c_{0,-\alpha-\tilde{\epsilon}},$ $c_{0,\tilde{m}},$ $c_{1,0}>0$, where $\tilde{\epsilon}\in(0,1)$ and $\tilde{m}>\max\{1,\frac{|\sigma|}{\mu}\}$, with the property that the set $\omega(\epsilon,R,\delta)$, defined as
\begin{align}
    \omega(\epsilon,R,\delta):=&\{ M_{0,1}(g_{\epsilon,R,\delta})=1; M_{0,-\alpha-\tilde{\epsilon}}(g_{\epsilon,R,\delta})\leq c_{0,-\alpha-\tilde{\epsilon}}; M_{0,\tilde{m}}(g_{\epsilon,R,\delta})\leq c_{0,\tilde{m}};M_{1,0}(g_{\epsilon,R,\delta})\leq c_{1,0}\}\label{THESET}\\
    &\textup { if $\alpha>0$ and }  \nonumber \\
    \omega(\epsilon,R,\delta):=&\{ M_{0,1}(g_{\epsilon,R,\delta})=1; M_{0,\gamma}(g_{\epsilon,R,\delta})\leq c_{0,\gamma}; M_{0,\tilde{m}}(g_{\epsilon,R,\delta})\leq c_{0,\tilde{m}};  M_{1,0}(g_{\epsilon,R,\delta})\leq c_{1,0}\}\label{THESET2}\\
    &\textup { if $\alpha=0,$}\nonumber
\end{align}
is preserved in time uniformly in $\epsilon,R,\delta$ under equation (\ref{regularizedformequation}), i.e. $S(t)\omega(\epsilon,R,\delta)\subseteq\omega(\epsilon,R,\delta),$ for all $t\in[0,T]$. 
\end{prop}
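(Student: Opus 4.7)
My plan is to test the regularized equation (\ref{regularizedformequation}) against four functions $\varphi$, one per constraint defining $\omega(\epsilon,R,\delta)$, and to derive in each case a differential inequality for the corresponding moment that survives the truncations. The estimates will use only the sign of $\chi_\varphi$, the bounds on $K$ in (\ref{lower_bound_kernel}), the bounds on $r$ in (\ref{fusion_form}), and the a priori control already available from the other constraints. Invariance of $\omega(\epsilon,R,\delta)$ then follows by choosing the constants large enough that at the boundary of each constraint the moment is forced to decrease, all uniformly in $(\epsilon, R, \delta)$.

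\textbf{Volume and $v$-moments.} For $\varphi(\eta) = v$, we have $\chi_\varphi \equiv 0$ and $\partial_a\varphi \equiv 0$; after integrating the transport term by parts, $\partial_t M_{0,1} = 0$ up to corrections supported outside the invariant set $U_{\epsilon,R}$. In the case $\alpha > 0$, testing with $\varphi = v^{-\alpha-\tilde\epsilon}$ yields $\chi_\varphi \leq 0$ (since the exponent is negative), and the lower bound $K \geq K_1(v^{-\alpha}v'^\beta + v'^{-\alpha}v^\beta)$ produces a strongly dissipative bilinear contribution that dominates the linear growth from the self-similar transport; this closes an ODI for $M_{0,-\alpha-\tilde\epsilon}$ in the style of \cite{dust,ESCOBEDO}. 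The case $\alpha = 0$ with $\varphi = v^\gamma$ and $\gamma \in (0,1)$ is entirely analogous and controls $M_{0,\gamma}$. The high moment $M_{0,\tilde m}$, with $\tilde m > \max\{1, |\sigma|/\mu\}$, is handled by testing with $\varphi = v^{\tilde m}$, the standard inequality $\chi_{v^{\tilde m}}(v,v') \leq C_{\tilde m}(v^{\tilde m-1}v' + v\,v'^{\tilde m-1})$, and the upper bound on $K$, producing after moment interpolation an ODI of the form $\partial_t M_{0,\tilde m} \leq -c\, M_{0,\tilde m} + C$.

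\textbf{Area moment (main obstacle).} I would test with $\varphi(\eta) = a$, which is admissible in the truncated equation since $r_\delta(\eta)(c_0 v^{2/3} - a)$ is at most linear in $a$. Because $\chi_a \equiv 0$, the coagulation term drops out; the scaling terms contribute $+\frac{1}{3} M_{1,0}$; and the fusion contribution, estimated on the bulk region $\{a \geq 2 c_0 v^{2/3}\}$ using $c_0 v^{2/3} - a \leq -a/2$ and $r \geq R_0 a^\mu v^\sigma$, gives
\begin{align*}
\partial_t M_{1,0} \leq \frac{1}{3} M_{1,0} - \frac{(1-\gamma) R_0}{4} \int_{\{a \geq 2 c_0 v^{2/3}\}} a^{\mu+1} v^{\sigma}\, g(\eta,t)\, \der\eta + C,
\end{align*}
where the additive $C$ absorbs the contribution of $\{a < 2 c_0 v^{2/3}\}$, bounded by $2 c_0 M_{0, 2/3}$ and hence by the previously controlled $v$-moments. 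Young's inequality with conjugate exponents $\mu + 1$ and $(\mu+1)/\mu$ yields
\begin{align*}
a \leq \epsilon_0\, a^{\mu+1} v^{\sigma} + C(\epsilon_0)\, v^{-\sigma/\mu},
\end{align*}
so choosing $\epsilon_0$ small in terms of $R_0, \gamma, \mu$ absorbs the $\frac{1}{3} M_{1,0}$ term into the fusion dissipation at the cost of $C(\epsilon_0)\, M_{0,-\sigma/\mu}$. Since (\ref{same_rescaling}) together with $\mu > 0$ and $\gamma < 1$ forces $\sigma < 0$, the exponent $-\sigma/\mu$ is positive and lies strictly below $\tilde m$; hence $M_{0,-\sigma/\mu}$ is bounded on $\omega(\epsilon,R,\delta)$ by log-convex interpolation between $M_{0,1}$ and $M_{0,\tilde m}$. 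The result is $\partial_t M_{1,0} \leq -\kappa M_{1,0} + C$ for some $\kappa, C > 0$, which yields the claimed invariance.

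\textbf{Main difficulty.} The $v$-moment estimates are straightforward two-dimensional adaptations of one-dimensional arguments, since fusion does not act on $v$. The area estimate is what truly requires $\mu > 0$: the self-similar transport injects a positive $\frac{1}{3} M_{1,0}$ that cannot be dominated by $v$-moments alone, and the only source of dissipation is the fusion term, which supplies the higher power $a^{\mu+1}$ strictly dominating $a$ precisely because $\mu > 0$. Making Young's inequality, the high $v$-moment $M_{0,\tilde m}$, and the smallness of $\epsilon_0$ simultaneously compatible is the heart of the proof and explains the exact form of the requirement $\tilde m > \max\{1, |\sigma|/\mu\}$ in the statement.
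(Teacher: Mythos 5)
Your overall strategy is the right one and matches the paper: test against $v$, $v^{-\alpha-\tilde\epsilon}$ (or $v^\gamma$ when $\alpha=0$), $v^{\tilde m}$, and $a$, and close an ODI for each moment using the sign of $\chi_\varphi$, the two-sided kernel bounds, and moment interpolation. The paper carries this out across Propositions \ref{firstmoments}--\ref{avnegativeprop}, and your sketch of the $v$-moment estimates (including the subtle lower bound on $M_{0,\beta}$ implicit in ``in the style of \cite{dust,ESCOBEDO}'') is consistent with what they do. Your Young's inequality split with exponents $\mu+1$ and $(\mu+1)/\mu$, the reduction of the $\{a<2c_0 v^{2/3}\}$ region to $M_{0,2/3}$, and the role of $\tilde m>|\sigma|/\mu$ are all exactly the paper's mechanism.

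However, there is one genuine gap in the area-moment estimate: the regularized equation (\ref{regularizedformequation}) carries the truncated fusion rate $r_\delta$ from (\ref{cutfusion}), not $r$, and the lower bound $r\geq R_0 a^\mu v^\sigma$ that your sketch uses simply fails for $r_\delta$ once $a^\mu\geq 1/\delta$. Indeed, $r_\delta$ is constructed to saturate at large $a$ precisely so that characteristics cannot reach infinity in finite time; on $\{a^\mu\geq 1/\delta\}$ one only has $r_\delta\geq \tfrac{R_0}{2\delta}\max\{v^\sigma,L\delta\}\geq \tfrac{R_0L}{2}$, a constant lower bound that no longer produces the super-linear dissipation $a^{\mu+1}v^\sigma$. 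So your Young's-inequality step applies only on the complementary region $\{a^\mu<1/\delta\}$, and as written your ODI would not be uniform in $\delta$. The paper handles this by the two-region split (see the display (\ref{surface area bound 1}) in the proof of Proposition \ref{avnegativeprop}): on $\{a^\mu<1/\delta\}$ your argument goes through with a factor $1/2$, while on $\{a^\mu\geq 1/\delta\}$ the constant lower bound on $r_\delta$ already gives linear dissipation $-\tfrac{R_0L}{4}a$, and the constant $L=\tfrac{12}{R_0(1-\gamma)}$ from (\ref{definition_L}) was tuned precisely so that $(1-\gamma)R_0L/4=3>\tfrac{1}{3}$, absorbing the linear growth from the self-similar transport. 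Without recognizing this design choice for $L$ and the two-regime behavior of $r_\delta$, the uniformity in $\delta$ claimed in the proposition cannot be obtained.
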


The proof of this proposition will be given in Subsection \ref{chapter_two}.

 To get volume-conserving solutions, we need to control the total area. In order to obtain this, we need to assume that an additional moment is bounded and this is why the moment $M_{0,\frac{|\sigma|}{\mu}}$ appears in Proposition \ref{lemmaweaksol}.
\subsection{Existence of solutions for the truncated time-dependent problem}\label{chapterone}
We define the functions $A,V:\mathbb{R}_{>0}^{2}\times\mathbb{R}_{\geq 0}\rightarrow \mathbb{R}$ in the following manner, by looking at the characteristic equations:
\begin{equation}\label{changeofvartable}
\left\{\begin{aligned}
\partial_{t}A(a_{0},v_{0},t)&=(1-\gamma)r_{\delta}(A,V)(c_{0}V^{\frac{2}{3}}-A)-\frac{2}{3}\Theta_{\epsilon}(V)A; \\
 \partial_{t}V(a_{0},v_{0},t)&=-\Theta_{\epsilon}(V)V,
   \end{aligned}\right.
   \end{equation}
   with initial conditions
   \begin{equation}\label{initial conditions}
\left\{\begin{aligned}
 A(a_{0},v_{0},0)&=&a_{0}; \\
V(a_{0},v_{0},0)&=&v_{0}.
   \end{aligned}\right.
   \end{equation}
By (\ref{ode_fusion}), we have the following inequality, that we write for future reference
\begin{align*}
\partial_{A}[r_{\delta}(A,V)(A-c_{0}V^{\frac{2}{3}})]
&=\max\{V^{\sigma},L\delta\}\frac{[\partial_{A}r(A,V)-\frac{\mu\delta A^{\mu-1}}{1+\delta A^{\mu}}r(A,V)](A-c_{0}V^{\frac{2}{3}})+r(A,V)}{V^{\sigma}(1+\delta A^{\mu})}\\
&\geq \max\{V^{\sigma},L\delta\}\frac{[\partial_{A}r(A,V)-\mu A^{-1}r(A,V)](A-c_{0}V^{\frac{2}{3}})+r(A,V)}{V^{\sigma}(1+\delta A^{\mu})}\geq 0.
\end{align*}

 Fix $t\geq 0$. We denote the pair $(A(a_{0},v_{0},t),V(a_{0},v_{0},t))=: \phi_{t}(a_{0},v_{0})$. Observe that, due to the form of the equations in (\ref{changeofvartable}), the function $V$ is independent of $a_{0}$. In particular, there exists a family of functions $\{y_{t}\}_{t\geq 0}:\mathbb{R}_{>0}\rightarrow\mathbb{R}_{>0}$ such that $y_{t}(v_{0}):=V(a_{0},v_{0},t)$.
    In the same manner, we fix $v_{0}$ and we define $x_{t,v_{0}}:\mathbb{R}_{>0}\rightarrow\mathbb{R}$ by $x_{t,v_{0}}(a_{0}):=A(a_{0},v_{0},t)$.
    
We gather in the following proposition a list of properties for the solutions of the system (\ref{changeofvartable}) that will be used throughout the paper.
\begin{prop}[Properties of the characteristics]\label{observationofthedomain2}
Let $A,V$ be as in (\ref{changeofvartable}) with initial conditions (\ref{initial conditions}). Then
    \begin{enumerate}
        \item[(1)] $V(a_{0},v_{0},t)\equiv v_{0},$ when $V\leq \epsilon$;
        \item[(2)] $V(a_{0},v_{0},t)=v_{0}\textup{e}^{-t}$, when $V>2\epsilon$.
    \end{enumerate}
    We have in addition that
    \begin{enumerate}
        \item[(3)] Let $P(v_{0},t)$, with $(v_{0},t)\in(0,\infty)\times[0,\infty)$ be a solution of $\partial_{t}P=-\Theta_{\epsilon}(P)P$. Define then $H=c_{0}P^{\frac{2}{3}}$, for all $t\geq 0$ and all $v_{0}\in(0,\infty)$. Then $H$ solves the first equation in (\ref{changeofvartable}). In particular, we have that, if $(a_{0},v_{0})\in\partial \tilde{\textup{S}}$, then $(A,V)\in\partial \tilde{\textup{S}}$, where $\tilde{\textup{S}}$ was defined in (\ref{set iso});
        \item[(4)] The mapping $\phi_{t}$ sends $\tilde{\textup{S}}$ to $\tilde{\textup{S}}$, i.e. $\phi_{t}(\tilde{\textup{S}})\subseteq\tilde{\textup{S}}$, where $\tilde{\textup{S}}$ was defined in (\ref{set iso});
        \item[(5)] If $(a_{0},v_{0})\in\tilde{\textup{S}}$, we have that $\partial_{a_{0}}x_{t,v_{0}}\geq  0$ and $\partial_{v_{0}}y_{t}\geq 0$.
    \end{enumerate}
\end{prop}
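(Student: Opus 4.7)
The plan is to prove the five assertions sequentially, all of which reduce to direct ODE computations exploiting the decoupled structure of (\ref{changeofvartable}): the $V$-equation does not involve $A$. The structural hypothesis (\ref{ode_fusion}) enters only in assertion (5).

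For (1) and (2), I would read off the second equation in (\ref{changeofvartable}) together with the definition of $\Theta_{\epsilon}$ in (\ref{thetaepsilon}). Where $V\leq \epsilon$, the cutoff vanishes and $\partial_{t}V=0$, so $V\equiv v_{0}$; where $V>2\epsilon$, the cutoff equals one and $\partial_{t}V=-V$, so $V=v_{0}\textup{e}^{-t}$. For (3), I would substitute the ansatz $(A,V)=(c_{0}P^{2/3},P)$ into the first line of (\ref{changeofvartable}): the fusion term $(1-\gamma)r_{\delta}(A,V)(c_{0}V^{2/3}-A)$ vanishes identically, and a short differentiation gives $\partial_{t}(c_{0}P^{2/3})=-\tfrac{2}{3}\Theta_{\epsilon}(P)\cdot c_{0}P^{2/3}$, which matches the remaining transport term. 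Uniqueness for the ODE system (\ref{changeofvartable}) then forces any characteristic starting on $\partial \tilde{\textup{S}}$ to coincide with $(c_{0}P^{2/3},P)$ and hence remain on $\partial\tilde{\textup{S}}$.

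For (4), I would set $h(t):=A(t)-c_{0}V(t)^{2/3}$ and compute
\[
h'(t)=-(1-\gamma)r_{\delta}(A,V)\,h(t)-\tfrac{2}{3}\Theta_{\epsilon}(V)(A-c_{0}V^{2/3})=-\bigl[(1-\gamma)r_{\delta}(A,V)+\tfrac{2}{3}\Theta_{\epsilon}(V)\bigr]\,h(t),
\]
after collecting the transport pieces through $\tfrac{2c_{0}}{3}V^{-1/3}\cdot \Theta_{\epsilon}(V)V=\tfrac{2}{3}\Theta_{\epsilon}(V)c_{0}V^{2/3}$. This is a linear homogeneous ODE with a non-negative coefficient, so $h(0)\geq 0$ propagates to $h(t)\geq 0$ for all $t\geq 0$, which is precisely $\phi_{t}(\tilde{\textup{S}})\subseteq \tilde{\textup{S}}$.

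For (5), $\partial_{v_{0}}y_{t}$ satisfies the linear ODE $\partial_{t}w=-\bigl[\Theta_{\epsilon}'(V)V+\Theta_{\epsilon}(V)\bigr]w$ with $w(0)=1$, hence $\partial_{v_{0}}y_{t}>0$. Differentiating the first equation in (\ref{changeofvartable}) with respect to $a_{0}$ (recalling that $V$ does not depend on $a_{0}$) and using $\partial_{A}[r_{\delta}(A,V)(c_{0}V^{2/3}-A)]=-\partial_{A}[r_{\delta}(A,V)(A-c_{0}V^{2/3})]$ gives
\[
\partial_{t}(\partial_{a_{0}}A)=-\Bigl[(1-\gamma)\partial_{A}\bigl[r_{\delta}(A,V)(A-c_{0}V^{2/3})\bigr]+\tfrac{2}{3}\Theta_{\epsilon}(V)\Bigr]\partial_{a_{0}}A.
\]
By the computation displayed in the text just after (\ref{initial conditions}), the first bracket is non-negative on $\tilde{\textup{S}}$ under hypothesis (\ref{ode_fusion}); combining with (4), which ensures the trajectory stays in $\tilde{\textup{S}}$, the full coefficient is non-negative along the characteristic, so $\partial_{a_{0}}x_{t,v_{0}}>0$. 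The only step with any subtlety is this sign check in (5), and it is immediate once (\ref{ode_fusion}) and assertion (4) are in hand; the rest is routine linear ODE comparison.
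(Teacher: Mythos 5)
Your proof is correct, and for assertions (1), (2), (3), and the $\partial_{v_{0}}y_{t}$ part of (5) it matches the paper essentially verbatim. The one place where you diverge is (4): the paper deduces invariance of $\tilde{\textup{S}}$ from (3) together with the uniqueness theory of ODEs (a characteristic starting strictly above the boundary curve cannot cross the boundary characteristic since, for fixed $v_{0}$, both share the same $V$-component). You instead set $h(t)=A(t)-c_{0}V(t)^{2/3}$ and verify directly that $h$ satisfies the homogeneous linear equation $h'=-\bigl[(1-\gamma)r_{\delta}(A,V)+\tfrac{2}{3}\Theta_{\epsilon}(V)\bigr]h$ with non-negative coefficient along the trajectory, so $h(0)\geq 0$ propagates. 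This is a more self-contained and quantitative argument: it gives the invariance of $\tilde{\textup{S}}$ without any appeal to a separate boundary-solution, and indeed it contains (3) as the special case $h(0)=0$. Your completion of the $\partial_{a_{0}}x_{t,v_{0}}$ part of (5), which the paper only sketches, is exactly what was intended: you linearize the $A$-equation in $a_{0}$, cite the displayed ``for future reference'' inequality following (\ref{initial conditions}) for the sign of $\partial_{A}[r_{\delta}(A,V)(A-c_{0}V^{2/3})]$, and correctly note that (4) is needed to guarantee that inequality applies along the whole trajectory. No gaps.
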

\begin{proof}
The fact that $A(a_{0},v_{0},t)$ and $V(a_{0},v_{0},t)$ are well-defined for $(a_{0},v_{0})\in\tilde{\textup{S}}$, with $\tilde{\textup{S}}$ as in  (\ref{set iso}), and for $t\geq 0$ follows from standard ODE theory, as well as the choice of truncation $r_{\delta}$ in (\ref{cutfusion}), which avoids blow-up in finite time.
Statement $(1)$ follows from the fact that $\Theta_{\epsilon}(v)=0$, when $v\leq \epsilon$. Statement $(2)$ follows from the fact that $\Theta_{\epsilon}(v)=1$, when $v\geq 2\epsilon$.

In order to prove Statement $(3)$, take $H=c_{0}P^{\frac{2}{3}}$. Notice on one hand that
\begin{align*}
    \partial_{t}H=\partial_{t}\bigg(c_{0}P^{\frac{2}{3}}\bigg)=\frac{2c_{0}}{3}P^{-\frac{1}{3}}\partial_{t}P=-\frac{2c_{0}}{3}\Theta_{\epsilon}(P)P^{\frac{2}{3}}.
\end{align*}
On the other hand, it follows that
\begin{align*}
(1-\gamma)r_{\delta}(H,P)(c_{0}P^{\frac{2}{3}}-H)-\frac{2}{3}\Theta_{\epsilon}(P)H= -\frac{2}{3}\Theta_{\epsilon}(P)H=-\frac{2c_{0}}{3}\Theta_{\epsilon}(P)P^{\frac{2}{3}}.
\end{align*}
Thus, $H=c_{0}P^{\frac{2}{3}}$ solves the first equation in (\ref{changeofvartable}). The statement then follows from the uniqueness theory of ODE's.

Statement $(4)$ is a consequence of Statement $(3)$ and also of uniqueness theory of ODE's.


Finally, for Statement $(5)$, we will only prove that $\partial_{v_{0}}y_{t}\geq 0$ since the proof of the fact that $\partial_{a_{0}}x_{t,v_{0}}\geq  0$ follows using a similar argument. We have that
\begin{equation*}
\left\{\begin{aligned}
\partial_{t}\partial_{v_{0}}V(a_{0},v_{0},t)&=-\partial_{V}\big(\Theta_{\epsilon}(V)V\big)\partial_{v_{0}}V; \\
 \partial_{v_{0}}V(a_{0},v_{0},t)&=1.
   \end{aligned}\right.
   \end{equation*}
   Thus 
   \begin{align*}
       \partial_{v_{0}}V(a_{0},v_{0},t)=\textup{e}^{-\int_{0}^{t}\partial_{V}\big(\Theta_{\epsilon}(V(s))V(s)\big)\der s}\geq 0.
   \end{align*}
\end{proof}
\begin{defi}\label{observationofthedomain3}
We define the pair $(l_{1},l_{2})\in\tilde{\textup{S}}$, with $\tilde{\textup{S}}$ defined in (\ref{set iso}), to be the solution of
\begin{equation}
\left\{\begin{aligned}
&y_{t}(l_{2}(v_{0},v'_{0},t)) &=&y_{t}(v_{0})+y_{t}(v'_{0}); \\
&x_{t,l_{2}(v_{0},v'_{0},t)}(l_{1}(a_{0},a'_{0},l_{2}(v_{0},v'_{0},t),t)) & =&x_{t,v_{0}}(a_{0})+x_{t,v'_{0}}(a'_{0}),
   \end{aligned}\right.
   \end{equation}
where $(a_{0},v_{0})\in \tilde{\textup{S}}$, $(a'_{0},v'_{0})\in \tilde{\textup{S}}$ and $t\in[0,\infty)$.
\end{defi}

We make the following notation: $\phi_{t}^{-1}(A,V):=(x^{-1}_{t,y_{t}^{-1}(V)}(A), y^{-1}_{t}(V))$.

For further use, we define 
\begin{align}\label{controloverfirstlinearterm}
    h_{\epsilon}(v_{0},t)=\int_{0}^{t}\Theta_{\epsilon}(y_{s}(v_{0})) \der s.
\end{align}
 
 Most of the results in this subsection hold true if, instead of working with $K_{\epsilon,R}$ defined in (\ref{truncation_kernel_selfsimilar}), we work with a function $K_{R}:(0,\infty)^{4}\rightarrow[0,\infty)$, which is defined in the following manner:
 \begin{align}
     K_{R}(a,v,a',v')=\min\{K(a,v,a',v'), R\}, \label{truncationkernel}
 \end{align}
 where $K$ is as in (\ref{lower_bound_kernel}). Thus, in order to simplify computations (and without loss of generality), we will interchange between $K_{R}$ and $K_{\epsilon,R}$ throughout this subsection. We will also write $\mathbb{K}_{R}$ instead of $\mathbb{K}_{\epsilon,R}$ (which was defined in (\ref{kernel_term})) when we work with $K_{R}$. The same notation will be used in Subsection \ref{subsection better moment estimates} and Appendix \ref{appendix b}. Notice that we will need to work with $K_{\epsilon,R}$ in Subsection \ref{chapter_two} in order to obtain suitable moment estimates.

We first start by proving there exists $G\in\textup{C}([0,\infty);\mathscr{M}_{+}^{I}(\mathbb{R}_{>0}^{2}))$ which satisfies the following equation:
\begin{align}\label{equation_change_of_var}
   &\partial_{t}\int_{\mathbb{R}^{2}_{>0}}G(A,V,t)\varphi(A,V)\der V\der A=\frac{1-\gamma}{2}\int_{\mathbb{R}^{2}_{>0}}\int_{\mathbb{R}^{2}_{>0}}K_{R}(\phi_{t}(A,V),\phi_{t}(A',V'))\xi_{R}(y_{t}(V)+y_{t}(V'))G(A',V',t)\nonumber\\
   &G(A,V,t)[\varphi(\phi_{t}^{-1}(\phi_{t}(A,V)+\phi_{t}(A',V')))\Phi(V,V',t)-\varphi(A,V)\textup{e}^{h_{\epsilon}(V',t)}-\varphi(A',V')\textup{e}^{h_{\epsilon}(V,t)}]\der V' \der A' \der V\der A, 
\end{align}
for every $\varphi\in\compactfun$ and 
\begin{align*}
   \Phi(V,V',t) :=\textup{e}^{-h_{\epsilon}(l_{2}(V,V',t),t)+h_{\epsilon}(V,t)+h_{\epsilon}(V',t)}.
\end{align*}
Notice that the operator $K_{R}$ on the right-hand side of equation (\ref{equation_change_of_var}) encodes information about the fusion process and not only about coagulation through the function $\phi_{t}(A,V)$ and that (\ref{equation_change_of_var})  is a reformulation of (\ref{regularizedformequation}) using characteristics.

Let $T>0$. Take $M=2\int_{(0,\infty)^{2}}(1+a)g_{\textup{in},R}(a,v) \der v \der a+1$, for some $g_{\textup{in},R}$ as in Proposition \ref{propweaksol}. Choose $\tau<T$ such that 
\begin{align}
(M+2(1-\gamma)M+1)||K_{R}||_{\infty}(\textup{e}^{\tau}-1) & < \frac{1}{2}; \label{eq1} \\
2(1-\gamma)M^{2}||K_{R}||_{\infty}\tau & <1 \textup{   and   } \tau\leq \ln{2}. \label{eq2}
\end{align}

We will use the following auxiliary metric space \begin{align}\label{spaceexistence}
\textup{Y}_{\epsilon,\tau}=&\{G\in\textup{C}([0,\tau];\mathscr{M}_{+,\textup{b}}^{I}(\mathbb{R}^{2}_{>0})): ||G||= \sup_{0\leq t\leq \tau}\bigg| \int_{(0,\infty)^{2}}\textup{e}^{h_{\epsilon}(V,t)}(1+x_{t,V}(A))G(A,V,t) \der V \der  A\bigg| \leq 2M; \nonumber\\ 
& G\Big(\mathbb{R}^{2}_{>0}\setminus[c_{0}\epsilon^{\frac{2}{3}},\infty)\times[\epsilon,2R\textup{e}^{t}),t\Big)=0, \textup{ for every } t\in[0,\tau]\}.
\end{align}

Given $G\in \textup{Y}_{\epsilon,\tau}$, we define $$a[G](A,V,t):=\int_{\{A'\geq V'^{\frac{2}{3}}\}}K_{R}(\phi_{t}(A,V),\phi_{t}(A',V'))\xi_{R}(y_{t}(V)+y_{t}(V'))G(A',V',t)\textup{e}^{h_{\epsilon}(V,t)}\der V' \der A',$$ which is well-defined since $K_{R}$ is bounded. For $g_{\textup{in},R}$ as in Proposition \ref{propweaksol}, we analyse the  properties of the map $J:\textup{Y}_{\epsilon,\tau}\rightarrow \textup{C}([0,\tau];\mathscr{M}(\mathbb{R}_{>0}^{2}))$, defined by 
\begin{align}\label{definitionjf}
\int_{\mathbb{R}^{2}_{>0}}J[G](A,V,t)\varphi(A,V)\der V\der A =J_{1}(G,t)+J_{2}(G,t),
\end{align}
where:
\begin{align*}
    J_{1}(G,t):=\int_{\mathbb{R}^{2}_{>0}}g_{\textup{in},R}(A,V) e^{-\int_{0}^{t}a[G](A,V,\xi)\der\xi}\varphi(A,V)\der V\der A
\end{align*}
and
\begin{align*}
J_{2}(G,t):=&\frac{1-\gamma}{2}\int_{0}^{t}\int_{\mathbb{R}^{2}_{>0}}\int_{\mathbb{R}^{2}_{>0}}e^{-\int_{s}^{t}a[G](A,V,\xi)\der\xi}
K_{R}(\phi_{s}(A,V), \phi_{s}(A',V'))\xi_{R}(y_{s}(V)+y_{s}(V'))\times \\
&G(A,V,s)G(A',V',s)\varphi(\phi_{s}^{-1}(\phi_{s}(A,V)+\phi_{s}(A',V')))    \Phi(V,V',s)\der V'\der A' \der V \der A\der s, 
\end{align*}
for every $\varphi\in\compactfun$.
\

Since $G$ is non-negative, then $J[G]\in\textup{C}([0,\tau];\mathscr{M}_{+}(\mathbb{R}_{>0}^{2}))$. Our plan is to use Banach fixed-point theorem for the map $J$. The reason is that, as explained below, a fixed point of the operator $J$ will give the desired solution. In other words, we use a similar approach as the one used to prove well-posedness for pure coagulation equations with bounded kernels, which has been repeatedly used in literature (see, for example, \cite{book}).
\begin{prop}\label{supp is kept}
Assume $g_{\textup{in},R}$ is as in Proposition \ref{propweaksol} and $K_{R}$ as in (\ref{truncationkernel}). Assume $G\in\textup{Y}_{\epsilon,\tau}$. Then $J[G]$ is supported in the same domain as the measures in $\textup{Y}_{\epsilon,\tau}$, namely $J[G]\in\textup{C}([0,\tau];\mathscr{M}_{+}^{I}(\mathbb{R}_{>0}^{2}))$ and  $J[G]\Big(\mathbb{R}^{2}_{>0}\setminus[c_{0}\epsilon^{\frac{2}{3}},\infty)\times[\epsilon,2R\textup{e}^{t}),t\Big)=0$, for every $t\in[0,\tau]$.
\end{prop}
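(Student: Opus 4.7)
The statement concerns support properties only: non-negativity and continuity in $t$ of $J[G]$ with values in $\mathscr{M}_{+}(\mathbb{R}_{>0}^{2})$ are already noted after (\ref{definitionjf}). I would check separately the isoperimetric condition $J[G](t)\in\mathscr{M}_{+}^{I}(\mathbb{R}_{>0}^{2})$ and the box support $J[G](\mathbb{R}_{>0}^{2}\setminus[c_{0}\epsilon^{\frac{2}{3}},\infty)\times[\epsilon,2R\textup{e}^{t}),t)=0$ by testing against an arbitrary $\varphi\in\compactfun$ supported off the target set and splitting into the two pieces $J_{1}(G,t)$ and $J_{2}(G,t)$. The $J_{1}$ contribution is immediate: by hypothesis $g_{\textup{in},R}\in U_{\epsilon,R}\cap\mathscr{M}_{+}^{I}(\mathbb{R}_{>0}^{2})$ vanishes outside $[c_{0}\epsilon^{\frac{2}{3}},\infty)\times[\epsilon,2R)\cap\tilde{\textup{S}}\subseteq[c_{0}\epsilon^{\frac{2}{3}},\infty)\times[\epsilon,2R\textup{e}^{t})\cap\tilde{\textup{S}}$, and the absorption factor $\textup{e}^{-\int_{0}^{t}a[G]\,\der\xi}$ does not enlarge the support. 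The work is for $J_{2}$.

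\textbf{Isoperimetric preservation in $J_{2}$.} The test function is evaluated at $\phi_{s}^{-1}(\phi_{s}(A,V)+\phi_{s}(A',V'))$, with $(A,V),(A',V')\in\tilde{\textup{S}}$ coming from $\supp G(\cdot,s)$ and $\xi_{R}(y_{s}(V)+y_{s}(V'))\neq 0$. By Proposition \ref{observationofthedomain2}(4) both $\phi_{s}(A,V)$ and $\phi_{s}(A',V')$ lie in $\tilde{\textup{S}}$, and subadditivity of $x\mapsto x^{\frac{2}{3}}$ gives
\begin{align*}
x_{s,V}(A)+x_{s,V'}(A')\geq c_{0}y_{s}(V)^{\frac{2}{3}}+c_{0}y_{s}(V')^{\frac{2}{3}}\geq c_{0}(y_{s}(V)+y_{s}(V'))^{\frac{2}{3}},
\end{align*}
so the sum $\phi_{s}(A,V)+\phi_{s}(A',V')$ belongs to $\tilde{\textup{S}}$. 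To transfer this back through $\phi_{s}^{-1}$, I combine Proposition \ref{observationofthedomain2}(3) (characteristics issued from $\partial\tilde{\textup{S}}$ remain on $\partial\tilde{\textup{S}}$) with the monotonicity in Proposition \ref{observationofthedomain2}(5): no trajectory can cross $\partial\tilde{\textup{S}}$, hence both $\tilde{\textup{S}}$ and its complement are flow-invariant and $\phi_{s}^{-1}(\tilde{\textup{S}})\subseteq\tilde{\textup{S}}$.

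\textbf{Box support in $J_{2}$.} The $V$-coordinate of $\phi_{s}^{-1}(\phi_{s}(A,V)+\phi_{s}(A',V'))$ is $y_{s}^{-1}(y_{s}(V)+y_{s}(V'))$. The lower bound follows from monotonicity: $y_{s}^{-1}(y_{s}(V)+y_{s}(V'))\geq y_{s}^{-1}(y_{s}(V))=V\geq\epsilon$. For the upper bound, the truncation $\xi_{R}$ forces $y_{s}(V)+y_{s}(V')<2R$, and the pointwise bound $y_{s}(v_{0})\geq v_{0}\textup{e}^{-s}$, which follows from $\partial_{t}V=-\Theta_{\epsilon}(V)V\geq -V$ and Gronwall, yields $y_{s}^{-1}(W)\leq W\textup{e}^{s}$ for all admissible $W$. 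Hence $y_{s}^{-1}(y_{s}(V)+y_{s}(V'))<2R\textup{e}^{s}\leq 2R\textup{e}^{t}$. The corresponding $A$-coordinate then exceeds $c_{0}\epsilon^{\frac{2}{3}}$ via the isoperimetric condition just proved, closing the support verification.

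\textbf{Main obstacle.} The only non-routine step is the backward invariance $\phi_{s}^{-1}(\tilde{\textup{S}})\subseteq\tilde{\textup{S}}$: Proposition \ref{observationofthedomain2} states only the forward inclusion, and one must invoke the boundary-preservation item (3) together with the $\partial_{a_{0}}x_{t,v_{0}}\geq 0$ and $\partial_{v_{0}}y_{t}\geq 0$ of item (5) to preclude any trajectory from leaving $\mathbb{R}_{>0}^{2}\setminus\tilde{\textup{S}}$ into $\tilde{\textup{S}}$. Everything else reduces to standard monotonicity, the explicit form of the inverse flow bound, and concavity of $x^{\frac{2}{3}}$.
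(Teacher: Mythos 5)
Your proposal is correct and follows essentially the same strategy as the paper's proof: split $J[G]$ into $J_{1}$ and $J_{2}$, handle $J_{1}$ by the support of $g_{\textup{in},R}$, use Proposition~\ref{observationofthedomain2}(4) plus subadditivity of $x\mapsto x^{2/3}$ to place $\phi_{s}(A,V)+\phi_{s}(A',V')$ in $\tilde{\textup{S}}$, transfer backward through $\phi_{s}^{-1}$ via boundary preservation, and verify the box support using the vanishing of $\xi_{R}$, the support of $G$, and the isoperimetric inequality. You are somewhat more explicit than the paper about the backward step $\phi_{s}^{-1}(\tilde{\textup{S}})\subseteq\tilde{\textup{S}}$ (the paper simply writes ``by Proposition~\ref{observationofthedomain2}''), and you derive the lower bound $l_{2}\geq\epsilon$ directly from monotonicity of $y_{s}^{-1}$ rather than via the paper's contrapositive route; these are minor stylistic variations, not a different argument.
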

\begin{proof}
Let $\varphi\in\vanishfun$ such that $\supp\varphi\subseteq \{A<c_{0}V^{\frac{2}{3}}\}$. Since $g_{\textup{in},R}\in\textup{Y}_{\epsilon,\tau}$, then the term $J_{1}(G,t)$ defined in (\ref{definitionjf}) vanishes. Due to the support of $G$, we have $A\geq c_{0}V^{\frac{2}{3}}$ and $A'\geq c_{0}V'^{\frac{2}{3}}$. By Proposition \ref{observationofthedomain2},  Statement (4), $x_{t,V}(A)\geq c_{0}y_{t}(V)^{\frac{2}{3}}$ and $x_{t,V'}(A')\geq c_{0}y_{t}(V')^{\frac{2}{3}}$. This implies:
\begin{align*}
    x_{t,V}(A)+x_{t,V'}(A')\geq c_{0}y_{t}(V)^{\frac{2}{3}}+c_{0}y_{t}(V')^{\frac{2}{3}}\geq c_{0}(y_{t}(V)+y_{t}(V'))^{\frac{2}{3}},
\end{align*}
which  implies that $l_{1}(A,A',l_{2}(V,V',t))\geq c_{0}l_{2}(V,V',t)^{\frac{2}{3}}$ by Proposition \ref{observationofthedomain2} and using the notation from Definition \ref{observationofthedomain3}. Thus the term $J_{2}(G,t)$ in (\ref{definitionjf}) vanishes as the support of $\varphi$ gives that we have to work in the set where $l_{1}(A,A',l_{2}(V,V',t))<c_{0}l_{2}(V,V',t)^{\frac{2}{3}}$.

\ 
For the sets $(0,\infty)^{2}\setminus (0,\infty)\times (0,2R\textup{e}^{t}]$ and $(0,\infty)^{2}\setminus [c_{0}\epsilon^{\frac{2}{3}},\infty)\times [\epsilon,\infty)$, the proof is done in the same manner taking note of the fact that the kernel is chosen to vanish on these sets. For example, if $l_{2}(V,V',t)>2R\textup{e}^{t}$, then $y_{t}(V)+y_{t}(V')\geq 2R$ and $\xi_{R}(y_{t}(V)+y_{t}(V'))$ vanishes on this set. If $l_{2}(V,V',t)<\epsilon$, then $y_{t}(V)+y_{t}(V')<\epsilon$, meaning $V,V'<\epsilon$ and $G$ vanishes on these sets. Lastly, we deal with the set $(0,\infty)^{2}\setminus [c_{0}\epsilon^{\frac{2}{3}},\infty)\times (0,\infty)$ by making use of the isoperimetric inequality $c_{0}l_{2}(V,V',t)^{\frac{2}{3}}\leq x^{-1}_{t,l_{2}(V,V',t)}(x_{t,V}(A)+x_{t,V'}(A'))<c_{0}\epsilon^{\frac{2}{3}}$. Thus, $l_{2}(V,V',t)<\epsilon$ and we proceed as before.
\end{proof}
\begin{prop}\label{contractivemap}
Let $g_{\textup{in},R}$ as stated in Proposition \ref{propweaksol} and $K_{R}$ as in (\ref{truncationkernel}). Assume $F,G\in\textup{Y}_{\epsilon,\tau}$ with $\tau$ as in (\ref{eq1}) and (\ref{eq2}). Then
\begin{enumerate}
\item $||J[G]||\leq 2M$; 
\item The map $J$ is contractive, more explicitly, $||J[F]-J[G]||\leq \frac{1}{2}||F-G||$.
\end{enumerate}
In particular, Proposition \ref{supp is kept} and Proposition \ref{contractivemap} imply that $J[G]\in\textup{Y}_{\epsilon,\tau}$.
\end{prop}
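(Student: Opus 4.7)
The plan is to test both inequalities against the weight $w(A,V,t):=e^{h_\epsilon(V,t)}(1+x_{t,V}(A))$ that defines $||\cdot||$, and to exploit two structural features of the characteristic flow $\phi_t$. First, on the isoperimetric region $\tilde{\textup{S}}$ both the fusion term $(1-\gamma)r_\delta(A,V)(c_0V^{\frac{2}{3}}-A)$ and the transport term $-\tfrac{2}{3}\Theta_\epsilon(V)A$ in (\ref{changeofvartable}) are non-positive, hence $t\mapsto x_{t,V}(A)$ is non-increasing for each fixed $(A,V)\in\tilde{\textup{S}}$. Second, the semigroup property of $\phi_t$ combined with the defining relation $\phi_s(\tilde{A},\tilde{V})=\phi_s(A,V)+\phi_s(A',V')$ (so that $l_2(V,V',s)=\tilde{V}$) gives $x_{t,\tilde{V}}(\tilde{A})\leq x_{s,\tilde{V}}(\tilde{A})=x_{s,V}(A)+x_{s,V'}(A')$ and $\Phi(V,V',s)\,e^{h_\epsilon(\tilde{V},t)}\leq e^{t-s}e^{h_\epsilon(V,s)+h_\epsilon(V',s)}$. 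Multiplying these yields the sub-additive weight estimate
\[
w(\tilde{A},\tilde{V},t)\,\Phi(V,V',s)\leq e^{t-s}\Big[e^{h_\epsilon(V,s)}(1+x_{s,V}(A))\cdot e^{h_\epsilon(V',s)}+e^{h_\epsilon(V,s)}\cdot e^{h_\epsilon(V',s)}(1+x_{s,V'}(A'))\Big],
\]
whose two summands factor, so that the double integral in $J_2$ splits as a product of a $||\cdot||$-controlled integral and an integral of $G$ against $e^{h_\epsilon}$, which is itself bounded by $||G||$.

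For part (1), the $J_1$-contribution is bounded using $e^{-\int_0^t a[G]}\leq 1$, $e^{h_\epsilon(V,t)}\leq e^\tau\leq 2$ and $x_{t,V}(A)\leq A$, giving $\int w(\cdot,t)\,J_1\,\der V\der A\leq e^\tau\int(1+A)g_{\textup{in},R}\,\der V\der A\leq M-1$. Combining the factorisation above with $K_R\leq||K_R||_\infty$ yields
\[
\int w(\cdot,t)\,J_2\,\der V\der A\leq 2(1-\gamma)||K_R||_\infty\,||G||^2\int_0^t e^{t-s}\,\der s\leq 4(1-\gamma)||K_R||_\infty M^2(e^\tau-1),
\]
which by (\ref{eq1}) is strictly less than $M$, so $||J[G]||\leq(M-1)+M<2M$.

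For part (2), write $J[F]-J[G]=(J_1[F]-J_1[G])+(J_2[F]-J_2[G])$ and decompose the second difference via the algebraic identity
\[
e^{-\int a[F]}FF'-e^{-\int a[G]}GG'=\bigl(e^{-\int a[F]}-e^{-\int a[G]}\bigr)FF'+e^{-\int a[G]}\bigl[(F-G)F'+G(F'-G')\bigr].
\]
Bound each summand with the same sub-additive weight estimate. Using $|e^{-u}-e^{-v}|\leq|u-v|$ together with $|a[F]-a[G]|(A,V,s)\leq e^\tau||K_R||_\infty\,||F-G||$, the first summand contributes an estimate of order $\tau(e^\tau-1)||K_R||_\infty^{2}M^{2}||F-G||$, while the other two contribute $\sim(e^\tau-1)||K_R||_\infty M||F-G||$; the $J_1$-difference is of order $\tau\,||K_R||_\infty M||F-G||$. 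Collecting, $||J[F]-J[G]||\leq C(M,\gamma,||K_R||_\infty)\bigl[\tau+(e^\tau-1)+\tau(e^\tau-1)\bigr]\,||F-G||$, and the constraints (\ref{eq1})--(\ref{eq2}) with $\tau\leq\ln 2$ are tailored precisely so that this constant is at most $\tfrac12$. The main obstacle is establishing the sub-additive weight estimate: it is the only genuinely new structural input and rests entirely on the non-expansiveness of $\phi_t$ on the $A$-coordinate inside $\tilde{\textup{S}}$. Once this is in hand, the rest is routine Banach fixed-point bookkeeping in the spirit of the bounded-kernel one-dimensional Smoluchowski theory.
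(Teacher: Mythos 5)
Your proof is correct and follows essentially the same route as the paper's (Appendix~B): test $J[G]$ against the weight $w(A,V,t)=\textup{e}^{h_\epsilon(V,t)}(1+x_{t,V}(A))$, exploit that $t\mapsto x_{t,V}(A)$ is non-increasing on $\tilde{\textup{S}}$ together with the structural identity $\Phi(V,V',s)\textup{e}^{h_\epsilon(l_2(V,V',s),s)}=\textup{e}^{h_\epsilon(V,s)+h_\epsilon(V',s)}$ (the paper's (\ref{ceafacuteugenia})), and then do the standard bounded-kernel Banach fixed-point bookkeeping. The one place where your write-up is actually more careful than the paper's is the time mismatch between the weight (evaluated at~$t$) and the gain term in $J_2$ (evaluated at~$s\le t$): you explicitly track the factor $\textup{e}^{\int_s^t\Theta_\epsilon(y_\sigma(\hat V))\,\der\sigma}\le \textup{e}^{t-s}$, which turns $\int_0^t(\cdots)\der s$ into $\textup{e}^\tau-1$ rather than $\tau$; the paper's displayed estimate writes $\tau$ here, silently absorbing this factor. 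You compensate by invoking (\ref{eq1}) rather than (\ref{eq2}) for part~(1), and since $M+2(1-\gamma)M+1\ge 2(1-\gamma)M$, the inequality $4(1-\gamma)M^2\|K_R\|_\infty(\textup{e}^\tau-1)<M$ does follow; the remaining constant-chasing is as you sketch. The only other divergence is cosmetic: you bound $1+a+b\le(1+a)+(1+b)$ (splitting the double integral into a sum of two products, each $\le\|G\|^2$) whereas the paper uses $1+a+b\le(1+a)(1+b)$ (a single product), costing you a factor of two that the conditions easily absorb.
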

The proof consists in a combination of standard methods used in the study of coagulation equations and some of the properties proven in Proposition \ref{observationofthedomain2}. A detailed proof is given in Appendix \ref{appendix b}.

Later on, we will prove moment estimates for higher order powers of $a$. For this, it is useful to keep in mind that the above computations can be done in a more general case.

 We now use Banach fixed point theorem to conclude that there exists a fixed point in the space $\textup{Y}_{\epsilon,\tau}$ for the map $J$, which we will denote by $G_{\epsilon,R,\delta}$.  We will extend the solution to arbitrary times. To do so, we show that the previous computations can be done if we replace $g_{\textup{in},R}$ with $G_{\epsilon,R,\delta}(\cdot,\cdot,\tau)$ and then use induction.

\begin{prop}\label{extensionsolutionprop}
Let $G_{\epsilon, R,\delta}$ be the found fixed point for (\ref{definitionjf}) up to time $\tau$ defined as in (\ref{eq1}), (\ref{eq2}) and with initial datum $g_{\textup{in},R}$ taken as in Proposition \ref{propweaksol}. For any $T>0$, there exists a unique solution, for which we keep the notation $G_{\epsilon,R,\delta}\in\textup{C}^{1}([0,T];\mathscr{M}_{+,\textup{b}}^{I}(\mathbb{R}^{2}_{>0}))$ that satisfies (\ref{equation_change_of_var}).
\end{prop}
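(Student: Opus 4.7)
The plan is to iterate the local fixed-point construction on successive time intervals and glue the pieces together, with a global a priori bound on the first moment in $a$ guaranteeing that the iteration covers any prescribed $T>0$. The value $G_{\epsilon,R,\delta}(\cdot,\cdot,\tau_0)$ at the endpoint of the initial interval lies in $\mathscr{M}_{+,\textup{b}}^{I}(\mathbb{R}_{>0}^{2})$ and satisfies $\int_{(0,\infty)^{2}}(1+a)G_{\epsilon,R,\delta}(a,v,\tau_{0})\der v\der a\leq 2M$ by membership in $\textup{Y}_{\epsilon,\tau_0}$. Reapplying the fixed-point construction of Propositions~\ref{supp is kept}--\ref{contractivemap} with this as the new initial datum produces an extension to $[\tau_0,\tau_0+\tau_1]$, and iterating yields an extension to $[0,T_N]$ with $T_N=\sum_{n=0}^{N-1}\tau_n$; concatenation preserves equation (\ref{equation_change_of_var}) because the evolution is autonomous in $t$.

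The critical point is to verify that $T_N\to\infty$. For this, the plan is to derive the a priori bound
\[\int_{(0,\infty)^{2}}(1+a)G_{\epsilon,R,\delta}(a,v,t)\der v\der a \leq \textup{e}^{Ct}\int_{(0,\infty)^{2}}(1+a)g_{\textup{in},R}(a,v)\der v\der a,\]
with $C$ depending only on the truncation parameters, by testing (\ref{regularizedformequation}) against cutoff approximations of $\varphi\equiv 1$ and $\varphi\equiv a$. For $\varphi\equiv a$ one has $\chi_{\varphi}\equiv 0$, so the coagulation contribution vanishes; the fusion contribution $(1-\gamma)\int r_{\delta}(c_{0}v^{\frac{2}{3}}-a)G_{\epsilon,R,\delta}\der v\der a$ is non-positive because $G_{\epsilon,R,\delta}$ is supported in $\{a\geq c_{0}v^{\frac{2}{3}}\}$; and the self-similar drift terms reduce to $\int\Theta_{\epsilon}aG_{\epsilon,R,\delta}\der v\der a - \frac{2}{3}\int\Theta_{\epsilon}aG_{\epsilon,R,\delta}\der v\der a\leq \frac{1}{3}\int aG_{\epsilon,R,\delta}\der v\der a$. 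The choice $\varphi\equiv 1$ gives $\chi_{\varphi}\equiv -1$, so the coagulation term is a loss, while the drift contributes at most $\int G_{\epsilon,R,\delta}\der v\der a$. A Gr\"onwall argument then delivers the displayed bound. Combined with (\ref{eq1})--(\ref{eq2}), which give a lower bound on $\tau_n$ in terms of the first moment at time $T_n$, this forces $\sum_{n}\tau_{n}=\infty$.

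Once the solution is constructed on $[0,T]$ with $G_{\epsilon,R,\delta}\in\textup{C}([0,T];\mathscr{M}_{+,\textup{b}}^{I}(\mathbb{R}_{>0}^{2}))$, the $\textup{C}^{1}$-regularity in time is automatic: the right-hand side of (\ref{regularizedformequation}) is continuous in $t$ as a functional of $G_{\epsilon,R,\delta}(t)$, because both $K_{\epsilon,R}$ and $r_{\delta}$ are bounded thanks to the truncations. Uniqueness on $[0,T]$ follows from the local uniqueness in Proposition~\ref{contractivemap} propagated step by step across the pieces. The main obstacle is to control the step sizes $\tau_{n}$ so that $\sum_{n}\tau_{n}$ diverges; without the exponential moment bound above, the iteration could in principle collapse at some finite $T^{\ast}<\infty$ and fail to extend beyond it.
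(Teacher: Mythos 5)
Your proposal matches the paper's strategy: iterate the local fixed-point construction and control the $(1+a)$-moment globally so the iteration can continue. The paper performs the moment estimate in the Lagrangian $G$-variables, testing (\ref{equation_change_of_var}) with $\textup{e}^{h_{\epsilon}(V,t)}x_{t,V}(A)$ and $\textup{e}^{h_{\epsilon}(V,t)}$ and noting that the coagulation bracket vanishes for the linear weight; your Eulerian version — testing (\ref{regularizedformequation}) with $\varphi\equiv a$ and $\varphi\equiv 1$ — is the same computation transported through (\ref{changeofvarequation}). Two points are worth being careful about. First, in the paper's logical order the $g$--$G$ correspondence of Definition~\ref{definitionchangeofvar} and the verification that $g_{\epsilon,R,\delta}$ satisfies (\ref{regularizedformequation}) come \emph{after} this proposition and use it, so invoking (\ref{regularizedformequation}) here is slightly circular; you should either push the weight through the characteristics and test (\ref{equation_change_of_var}) directly (as the paper does), or explicitly define $g$ on $[0,\tau_{0}]$ from the local $G$ and verify the Eulerian equation inside this proof. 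Second, the step ``the exponential bound forces $\sum_{n}\tau_{n}=\infty$'' is true but not immediate: (\ref{eq1})--(\ref{eq2}) give roughly $\tau_{n}\gtrsim M_{n}^{-2}$ with $M_{n}\lesssim\textup{e}^{T_{n}}$, so $\tau_{n}\gtrsim\textup{e}^{-2T_{n}}$, and one needs the elementary recursion $\textup{e}^{2T_{n+1}}\geq\textup{e}^{2T_{n}}+c$ to conclude $T_{n}\to\infty$. A cleaner route, closer to what the paper suggests, is to fix $T$ first, obtain the a priori bound $\sup_{t\leq T}\int(1+a)g(t)\leq\textup{e}^{T}\int(1+a)g_{\textup{in},R}=:C(T)$ on the maximal existence interval, and then run (\ref{eq1})--(\ref{eq2}) with $2C(T)+1$ in place of $M$; this produces a single step size $\tau_{T}>0$ valid uniformly on $[0,T]$, so $\lceil T/\tau_{T}\rceil$ iterations suffice and the step-size analysis becomes trivial. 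Also note that membership in $\textup{Y}_{\epsilon,\tau_{0}}$ controls $\int\textup{e}^{h_{\epsilon}(V,\tau_{0})}(1+x_{\tau_{0},V}(A))G(\tau_{0})$, which by (\ref{changeofvarequation}) is $\int(1+a)g(\tau_{0})$ rather than $\int(1+A)G(\tau_{0})$; it is the former that serves as the re-iterated initial datum, so the quantity you write should be the $g$-moment.
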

As before, the proof consists in a combination of standard methods used in the study of coagulation equations and some of the properties proven in Proposition \ref{observationofthedomain2}. A detailed proof of this proposition is given in Appendix \ref{appendix b}.

\addtocontents{toc}{\protect\setcounter{tocdepth}{2}}
\subsubsection{Passage to the initial equation and properties of the semigroup}\label{subsubsection existence}
\begin{defi} \label{definitionchangeofvar}
Let $T>0$. Let $G_{\epsilon,R,\delta}$ be as in Proposition \ref{extensionsolutionprop}, that is $G_{\epsilon,R,\delta}\in \textup{C}^{1}([0,T];\mathscr{M}_{+}^{I}(\mathbb{R}^{2}_{>0}))$ with $G_{\epsilon,R,\delta}\Big(\mathbb{R}^{2}_{>0}\setminus[c_{0}\epsilon^{\frac{2}{3}},\infty)\times[\epsilon,2R\textup{e}^{t}),t\Big)=0$, for every $t\in[0,T]$. We define $g_{\epsilon,R,\delta}\in\fintimesolradondif$ in the following manner:
\begin{align}\label{changeofvarequation}
    \int_{(0,\infty)^{2}}g_{\epsilon,R,\delta}(a,v,t)\varphi(a,v)\der v \der a=\int_{(0,\infty)^{2}}\textup{e}^{h_{\epsilon}(V,t)}G_{\epsilon,R,\delta}(A,V,t)\varphi(\phi_{t}(A,V))\der V\der A,
\end{align}
for every $\varphi \in \vanishfun$ and every $t\in[0,T]$.
\end{defi}
\begin{proof}[Proof of Proposition \ref{propweaksol}]
Let $g_{\epsilon,R,\delta}$ as in Definition \ref{definitionchangeofvar}. We will prove that $g_{\epsilon,R,\delta}\in \textup{C}^{1}([0,T];\mathscr{M}_{+}^{I}(\mathbb{R}^{2}_{>0}))$ and $g_{\epsilon,R,\delta}$ satisfies equation (\ref{regularizedformequation}) with coagulation kernel $K_{R}$ and initial value $g_{\textup{in},R}$. Moreover, $g_{\epsilon,R,\delta}\Big(\mathbb{R}^{2}_{>0}\setminus[c_{0}\epsilon^{\frac{2}{3}},\infty)\times[\epsilon,2R),t\Big)=0$, for every $t\in[0,T]$.

Firstly, we prove that $g_{\epsilon,R,\delta}$ has the stated support. Assume $\varphi\in\vanishfun$ is supported in $\{a<c_{0}v^{\frac{2}{3}}\}$. The right-hand side of (\ref{changeofvarequation}) implies $x_{t,V}(A)<c_{0}y_{t}(V)^{\frac{2}{3}}$. By Proposition \ref{observationofthedomain2}, we have $A<c_{0}V^{\frac{2}{3}}$. Since $G_{\epsilon,R,\delta}(t)\in\mathscr{M}^{I}_{+}(\mathbb{R}^{2}_{>0})$, then: 
\begin{align*}
    \int_{(0,\infty)^{2}}g_{\epsilon,R,\delta}(a,v,t)\varphi(a,v)\der v \der a=0.
\end{align*}
A similar argument can be used to prove $g_{\epsilon,R,\delta}\Big(\mathbb{R}^{2}_{>0}\setminus[c_{0}\epsilon^{\frac{2}{3}},\infty)\times[\epsilon,2R),t\Big)=0$, for every $t\in[0,T]$.

We now prove that $g_{\epsilon,R,\delta}$ satisfies (\ref{regularizedformequation}). We make use of (\ref{changeofvartable}) and (\ref{changeofvarequation}).
\begin{align*}
\frac{\der}{\der t}\int_{(0,\infty)^{2}}g_{\epsilon,R,\delta}(a,v,t)\varphi(a,v,t)\der v \der a=\frac{\der}{\der t}\int_{(0,\infty)^{2}}\int_{(0,\infty)^{2}}\textup{e}^{h_{\epsilon}(V,t)}G_{\epsilon,R,\delta}(A,V,t)\varphi(\phi_{t}(A,V))\der V\der A\\
=\int_{(0,\infty)^{2}}\frac{\der}{\der t}\bigg(\textup{e}^{h_{\epsilon}(V,t)}\varphi(\phi_{t}(A,V))\bigg)G_{\epsilon,R,\delta}(A,V,t)\der V\der A+\int_{(0,\infty)^{2}}\textup{e}^{h_{\epsilon}(V,t)}\varphi(\phi_{t}(A,V))\frac{\der}{\der t}G_{\epsilon,R,\delta}(A,V,t)\der V\der A.
\end{align*}
The first term becomes:
\begin{align*}
\int_{(0,\infty)^{2}}\frac{\der}{\der t}&\bigg(\textup{e}^{h_{\epsilon}(V,t)}\varphi(\phi_{t}(A,V))\bigg)G_{\epsilon,R,\delta}(A,V,t)\der V\der A \\
&=\int_{(0,\infty)^{2}}\bigg((1-\gamma)r_{\delta}(\phi_{t}(A,V))(c_{0}y_{t}(V)^{\frac{2}{3}}-x_{t,V}(A))\partial_{1}\varphi(\phi_{t}(A,V))\\
&-\frac{2}{3}\Theta_{\epsilon}(y_{t}(V))x_{t,V}(A)\partial_{1}\varphi(\phi_{t}(A,V))-\Theta_{\epsilon}(y_{t}(V))y_{t}(V)\partial_{2}\varphi(\phi_{t}(A,V))\\&+\Theta_{\epsilon}(y_{t}(V))\varphi(\phi_{t}(A,V))\bigg)\textup{e}^{h_{\epsilon}(V,t)}G_{\epsilon,R,\delta}(A,V,t)\der V\der A,
\end{align*}
where we used (\ref{controloverfirstlinearterm}). For the second term, we have:
\begin{align*}
    \int_{(0,\infty)^{2}}&\textup{e}^{h_{\epsilon}(V,t)}\varphi(\phi_{t}(A,V))\frac{\der}{\der t}G_{\epsilon,R,\delta}(A,V,t)\der V\der A=  \\ &\frac{1-\gamma}{2}\int_{(0,\infty)^{2}}\int_{(0,\infty)^{2}}K_{R}(\phi_{t}(A,V),\phi_{t}(A',V'))\xi_{R}(y_{t}(V)+y_{t}(V'))G_{\epsilon,R,\delta}(A',V',t)G_{\epsilon,R,\delta}(A,V,t) \\ 
  & \textup{e}^{h_{\epsilon}(V,t)+h_{\epsilon}(V',t)}[\varphi(\phi_{t}(A,V)+\phi_{t}(A',V'))-\varphi(\phi_{t}(A,V))-\varphi(\phi_{t}(A',V'))]\der V'\der A'\der V\der A. 
\end{align*}
Using now the definition in (\ref{changeofvarequation}), we see that $g_{\epsilon,R,\delta}$ satisfies (\ref{regularizedformequation}).
\end{proof}

We now focus on proving the continuity in the weak topology of the semigroup defined in (\ref{definition semigroup}). This will be useful in order to show that there exists a fixed point in time for equation (\ref{regularizedformequation}). In order to prove Propositions \ref{continuoussemigroup} - \ref{continuous semigroup 2}, we need better regularity for the coagulation kernel. We solve the adjoint problem for a mollified version of the coagulation kernel. A similar approach can be found in \cite{Niethammer_2012}. We then show that the difference between the terms containing the two coagulation kernels can be made small due to the uniform estimates for the total surface area.

We first define the rectangles
\begin{align*}
    V_{\epsilon,R}:=(c_{0}\bigg(\frac{\epsilon}{2}\bigg)^{\frac{2}{3}},\infty)\times(\frac{\epsilon}{2},4R)\subset\mathbb{R}_{>0}^{2} \textup{ and }
    \tilde{V}_{\epsilon,R}:=[c_{0}\bigg(\frac{\epsilon}{4}\bigg)^{\frac{2}{3}},\infty)\times[\frac{\epsilon}{4},8R]\subset\mathbb{R}_{>0}^{2}.
\end{align*}
\begin{ass}\label{assumption kernel}
Let then $K^{n}_{\epsilon,R}\in \textup{C}^{1}(V_{\epsilon,R}\times V_{\epsilon,R})$ be a mollified version of $K_{\epsilon,R}\mathbbm{1}_{\tilde{V}_{\epsilon,R}\times \tilde{V}_{\epsilon,R}}(\eta,\eta')$ chosen in such a way that 
\begin{align*}
\sup_{(\eta,\eta')\in K }|K_{\epsilon,R}(\eta,\eta')-K^{n}_{\epsilon,R}(\eta,\eta')|\leq \frac{1}{n},
\end{align*}
for some $n\in\mathbb{N}$ sufficiently large, to be fixed later, and some fixed compact set $K\subset V_{\epsilon,R}\times V_{\epsilon,R}$.
\end{ass}

\begin{prop}\label{continuoussemigroup}
Let $K^{n}_{\epsilon,R}$ be as in Assumption \ref{assumption kernel}. Let $g_{1},g_{2}$ be two solutions of (\ref{regularizedformequation}) with initial values $g_{\textup{in},1},g_{\textup{in},2}$, respectively. Assume both initial conditions satisfy the assumptions in Proposition \ref{propweaksol}. Let $T>0$. We work with functions on the space
\begin{align*}
    \textup{W}_{T}:=\{\varphi\in\textup{C}^{1}([0,T],\textup{C}_{\textup{b}}^{1}(\tilde{\textup{S}}))\textup{ }|\textup{ }\varphi(\eta,t)=0 \textup{ if } v\not\in [\frac{\epsilon}{2},4R],\textup{ for every } t\leq T\},
\end{align*}
where $\tilde{\textup{S}}$ was defined in (\ref{set iso}). Let $\chi(\eta)$ be an arbitrary function in $\textup{C}_{\textup{b}}^{1}(\tilde{S})$ that is zero when $v\not\in [\frac{\epsilon}{2},4R]$. Then, for every $T>0,$ there exists a unique solution $\varphi\in \textup{W}_{T}$, with $\varphi(\cdot,T)=\chi(\cdot)$, which solves the following equation: 
\begin{align*}
    \partial_{t}\varphi(\eta,t)+\Theta_{\epsilon}(v)\bigg(\varphi(\eta,t)-v\partial_{v}\varphi(\eta,t)-\frac{2}{3}a\partial_{a}\varphi(\eta,t)\bigg)+(1-\gamma)r_{\delta}(a,v)(c_{0}v^{\frac{2}{3}}-a)\partial_{a}\varphi(\eta,t)+\mathbb{L}(\varphi)(\eta,t)=0,
\end{align*}
where
\begin{align*}
    \mathbb{L}(\varphi(\eta,t))(\eta,t):=\frac{1-\gamma}{2}\int_{(0,\infty)^{2}}K^{n}_{\epsilon,R}(\eta,\eta')\xi_{R}(v+v')\chi_{\varphi}(\eta,\eta',t)(g_{1}(\eta',t)+g_{2}(\eta',t))\der\eta'.
\end{align*}
\end{prop}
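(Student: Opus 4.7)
The plan is to solve this linear backward integro-differential equation by the method of characteristics combined with a Banach fixed-point argument. The crucial observation is that the first-order transport part of the equation, namely $-\tfrac{2}{3}\Theta_\epsilon(v) a \partial_a \varphi - \Theta_\epsilon(v) v \partial_v \varphi + (1-\gamma) r_\delta(a,v)(c_0 v^{2/3}-a) \partial_a \varphi$, is generated by exactly the same vector field $(b_1,b_2)$ defining the forward characteristic flow $\phi_t$ in \eqref{changeofvartable}. The only term coupling $\varphi$ to itself non-locally is $\mathbb{L}(\varphi)$, which is a purely integral operator (no differentiation of $\varphi$) with coefficient $g_1+g_2$ and kernel $K^n_{\epsilon,R}\xi_R$, and is therefore bounded on $W_T$.

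First I introduce, for $\eta=(a,v)\in\tilde{\textup{S}}$ and $s\in[t,T]$, the lifted function $\Psi_\eta(s) := \varphi(\phi_{s-t}(\eta), s)$. Differentiating along the characteristic reduces the PDE to the scalar ODE
\begin{equation*}
\Psi'_\eta(s) \;=\; -\Theta_\epsilon(V(s-t,v))\,\Psi_\eta(s) \;-\; \mathbb{L}(\varphi)(\phi_{s-t}(\eta),s),\qquad \Psi_\eta(T)=\chi(\phi_{T-t}(\eta)).
\end{equation*}
Applying an integrating factor and the terminal condition, Duhamel's principle produces the representation
\begin{equation*}
\varphi(\eta,t) \;=\; e^{\int_t^T \Theta_\epsilon(V(s-t,v))\,\der s}\chi(\phi_{T-t}(\eta)) \;+\; \int_t^T e^{\int_t^s \Theta_\epsilon(V(\tau-t,v))\,\der\tau}\,\mathbb{L}(\varphi)(\phi_{s-t}(\eta),s)\,\der s.
\end{equation*}
I then define $\mathcal F:W_T\to W_T$ by the right-hand side and run Banach fixed-point. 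Since $K^n_{\epsilon,R}$ is bounded and $C^1$ with support in $\tilde V_{\epsilon,R}\times\tilde V_{\epsilon,R}$, and since $\sup_{t\leq T}\int (g_1+g_2)(\eta,t)\der\eta<\infty$ by Proposition \ref{propweaksol}, the operator norm of $\mathbb{L}$ is bounded uniformly on $W_T$. A direct estimate gives that $\mathcal F$ is a contraction on $W_{[T-\tau,T]}$ for some $\tau>0$ depending only on $\|K^n_{\epsilon,R}\|_\infty$, $\sup_{t\leq T}\|g_1+g_2\|(t)$ and $R$. Since $\tau$ is uniform, iterating backwards from $T$ yields a unique solution on all of $[0,T]$, and uniqueness extends globally by Gronwall.

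It remains to verify that $\mathcal F$ preserves the space $W_T$. The $v$-support condition is preserved because the second equation in \eqref{changeofvartable} gives $V(s-t,v)=v$ if $v\leq\epsilon$ and $V(s-t,v)=v\,e^{-(s-t)}$ if $v>2\epsilon$, so a $v$-strip of the form $[\epsilon/2,4R]$ is mapped into itself for $s\in[t,T]$ when $T-t$ is controlled; the supports of $\chi$, of $K^n_{\epsilon,R}$ in the $v,v'$ variables, and of $g_1+g_2$, together with $\xi_R(v+v')$, then force $\varphi(\eta,t)$ to vanish when $v\notin[\epsilon/2,4R]$. The $C^1$ regularity in $(\eta,t)$ follows from the $C^1$ dependence of $\phi_t$ on its initial data (the vector field $(b_1,b_2)$ is $C^1$ because $r_\delta$ is smooth by construction and $\Theta_\epsilon$ is smooth), combined with the assumed $C^1$ regularity of $\chi$ and $K^n_{\epsilon,R}$; differentiation of the Duhamel formula in $a$ or $v$ is legitimate because all integrands have uniformly compact support in $(v,v')$ and the mollified kernel supplies bounded derivatives. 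The main technical obstacle in carrying this out is precisely the interaction between the unbounded $a$-variable and the differentiation under $\mathbb{L}$: the term $\chi_\varphi(\eta,\eta')=\varphi(\eta+\eta')-\varphi(\eta)-\varphi(\eta')$ seemingly requires control of $\varphi$ at large $a+a'$. This is resolved by noting that $\partial_a\varphi$ and $\partial_v\varphi$ can be estimated via the Duhamel formula together with the linear growth of the characteristic flow in $a$, which keeps derivatives locally bounded on every bounded $v$-strip used in the support of $K^n_{\epsilon,R}$.
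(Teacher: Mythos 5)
Your overall strategy — reduce the adjoint PDE along the characteristic flow generated by the same vector field as in (\ref{changeofvartable}), obtain an ODE with a nonlocal source, write Duhamel, run Banach fixed point on a short interval, iterate, and use Gr\"onwall for uniqueness — is exactly the paper's approach. There are, however, two places where your write-up omits or misstates something the paper handles carefully, and the first is a genuine gap.

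First, the support argument has the direction reversed. You note that the forward flow maps the strip $\{v\in[\epsilon/2,4R]\}$ into itself, and conclude that $\mathcal F[\varphi]$ vanishes outside the strip. But in the Duhamel representation the terminal data is read off at $\phi_{T-t}(\eta)$, so what is required is the converse statement: if $v\notin[\epsilon/2,4R]$ then $y_{T-t}(v)\notin[\epsilon/2,4R]$, so that $\chi(\phi_{T-t}(\eta))=0$, and similarly for the source evaluated at $\phi_{s-t}(\eta)$. This holds for $v<\epsilon/2$ (the flow leaves small $v$ fixed), but fails for $v>4R$: the flowed value $ve^{-(T-t)}$ can land inside $[\epsilon/2,4R]$. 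Likewise, $\mathbb L(\varphi)$ is not supported only in $\{v\in[\epsilon/2,4R]\}$ — the mollified kernel has $v$-support in $\tilde V_{\epsilon,R}$, i.e.\ $[\epsilon/4,8R]$. So without a further ingredient, $\mathcal F$ does not map $\textup{W}_T$ into itself even on short intervals, and iterating backward only enlarges the support further. The paper avoids this by replacing $\mathbb L$ with $\overline{\mathbb L}:=\chi_{\epsilon,R}(v)\,\mathbb L$, where $\chi_{\epsilon,R}$ cuts off outside $[\epsilon/2,4R]$ and equals $1$ on $[\epsilon,2R)$, and by performing the fixed point in a space $\tilde{W}_T$ lifted along characteristics where the $v$-constraint is allowed to depend on time. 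The modification is harmless for the intended use in Proposition \ref{continuous semigroup 2} because $g_1,g_2$ are supported in $\{v\in[\epsilon,2R)\}$, where $\overline{\mathbb L}$ and $\mathbb L$ coincide.

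Second, proving $\varphi(\cdot,t)\in\textup{C}^1_{\textup{b}}(\tilde{\textup{S}})$ is not a matter of invoking ``$C^1$ dependence of $\phi_t$ on initial data'' — in fact $r_\delta$ in (\ref{cutfusion}) involves a $\max$ and is only Lipschitz, so abstract $C^1$ regularity of the flow is not available without extra smoothing. The paper instead proves the two-sided bound $C(t)\leq\partial_a x_{t,v}(a)\leq 1$ in (\ref{bounds for derivative ode}), using the structural condition (\ref{ode_fusion}) on $r$ together with the explicit form of the truncation (see (\ref{derivative1})--(\ref{derivative3})); the upper bound keeps the $a$-derivative of the Duhamel representation for $\varphi$ bounded, and the lower bound is what lets one return from Lipschitz estimates on the lifted $\tilde\varphi$ to Lipschitz estimates on $\varphi$, needed again in Proposition \ref{dual function lipschitz}. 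Your sketch leaves both of these unaddressed.
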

\begin{proof}
Let us start by integrating along the characteristics. This means that it is enough to prove that there exists a function $\tilde{\varphi}$ that satisfies the equation:
\begin{align*}
    \partial_{t}\tilde{\varphi}(\eta,t)+\Theta_{\epsilon}(y_{t}(v))\tilde{\varphi}(\eta,t)+\mathbb{L}(\tilde{\varphi}(\eta,t))(\phi_{t}(\eta),t)=0,
\end{align*}
where $\phi_{t}(\eta)$ is defined as in (\ref{changeofvartable}).

 We consider a modified version of the operator $\mathbb{L}(\varphi)$, which is possible due to the way the kernel was truncated, as well as the support of $g_{1},g_{2}$. Let $\chi_{\epsilon,R}(v)$ be a continuous function which is equal to zero when $v\leq \frac{\epsilon}{2}$ and when $v\geq 4R$ and is equal to $1$, when $v\in[\epsilon,2R)$. Instead of working with $\mathbb{L}(\varphi),$ we work with:
\begin{align*}
   \overline{\mathbb{L}}(\varphi(\eta,t))(\eta,t):= \chi_{\epsilon,R}(v) \mathbb{L}(\varphi(\eta,t))(\eta,t).
\end{align*}
We observe that, if $\varphi$ is continuous and compactly supported in the $v$ variable, then $\overline{\mathbb{L}}(\varphi)(\eta,t)$ is continuous and compactly supported in the $v$ variable.
We emphasize the fact that, since $g_{1}$ and $g_{2}$ are supported in the region where $\{a\geq c_{0}v^{\frac{2}{3}}\}$, the space of functions in $\textup{C}([0,T],\textup{C}_{\textup{b}}^{1}(\tilde{\textup{S}}))$ will be enough to prove continuity of the semigroup in the weak-$^\ast$ topology later on. We thus prove that there exists a $\tilde{\varphi}$ that satisfies the equation
\begin{align}\label{weakstarcontinuity}
    \partial_{t}\tilde{\varphi}(\eta,t)+\Theta_{\epsilon}(y_{t}(v))\tilde{\varphi}(\eta,t)+\overline{\mathbb{L}}(\tilde{\varphi}(\eta,t))(\phi_{t}(\eta),t)=0.
\end{align}
This will impose no problems when we prove the continuity in the weak-$^\ast$ topology of the semigroup, since $g_{1}$ and $g_{2}$ are supported outside the region where $\overline{\mathbb{L}}(\varphi)\neq \mathbb{L}(\varphi)$.  We prove the existence of a $\tilde{\varphi}$ that satisfies (\ref{weakstarcontinuity}) via a fixed-point argument in the space
\begin{align*}
    \tilde{W}_{T}:=\{\tilde{\varphi}\in\textup{C}^{1}([0,T],\textup{C}^{1}_{\textup{b}}(\tilde{\textup{S}}))|\tilde{\varphi}(\eta,t)=0 \textup{ if } v\leq \frac{\epsilon}{2} \textup{ or } v\geq 4R\textup{e}^{t}, \textup{ for every } t\leq T\}.
\end{align*} 
We use the fact that the kernel is bounded and that 
\begin{align*}
    \int_{(0,\infty)^{2}}[g_{1}(\eta',t)+g_{2}(\eta',t)]\der \eta'\leq  \int_{(0,\infty)^{2}}[g_{\textup{in},1}(\eta')+g_{\textup{in},2}(\eta')]\der \eta'
\end{align*}
in order to prove contractivity. To prove that $\varphi(\cdot,t)\in\textup{C}_{\textup{b}}^{1}(\tilde{\textup{S}}),$ for $t\in[0,T]$, we first prove that there exists a constant $C(t)>0$ such that
\begin{align}\label{bounds for derivative ode}
C(t)\leq \partial_{a}x_{t,v}(a)\leq 1.
\end{align}
This is since
\begin{align}\label{derivative1}
0\leq \partial_{a}[r_{\delta}(a,v)(a-c_{0}v^{\frac{2}{3}})]&=\partial_{a}r_{\delta}(a,v)(a-c_{0}v^{\frac{2}{3}})+r_{\delta}(a,v).
\end{align}
By (\ref{fusion_form}) and (\ref{ode_fusion}), there exists a constant $C_{\epsilon,R,\delta}>0$ such that
\begin{align}
\partial_{a}r_{\delta}(a,v)(a-c_{0}v^{\frac{2}{3}})&=\Big[\frac{\partial_{a}r(a,v)}{1+\delta a^{\mu}}-\frac{r(a,v)\frac{\mu \delta a^{\mu-1}}{1+\delta a^{\mu}}}{1+\delta a^{\mu}}\Big]\frac{\max\{v^{\sigma},L\delta\}}{v^{\sigma}}(a-c_{0}v^{\frac{2}{3}})\nonumber\\
&\leq\frac{\partial_{a}r(a,v)}{1+\delta a^{\mu}}\frac{\max\{v^{\sigma},L\delta\}}{v^{\sigma}}(a-c_{0}v^{\frac{2}{3}})\nonumber\\
&\leq \Big[\frac{Ba^{-1}r(a,v)}{1+\delta a^{\mu}}(a-c_{0}v^{\frac{2}{3}})\Big]\frac{\max\{v^{\sigma},L\delta\}}{v^{\sigma}}
\nonumber\\
&\leq \Big[\frac{BR_{1}a^{\mu-1}v^{\sigma}}{1+\delta a^{\mu}}(a-c_{0}v^{\frac{2}{3}})\Big]\frac{\max\{v^{\sigma},L\delta\}}{v^{\sigma}}\nonumber\\
&\leq \frac{B R_{1}a^{\mu}v^{\sigma}}{1+\delta a^{\mu}}\frac{\max\{v^{\sigma},L\delta\}}{v^{\sigma}}\leq \frac{BR_{1}}{\delta }\max\{v^{\sigma},L\delta\}\leq C_{\epsilon,R,\delta}\label{derivative2}
\end{align}
and
\begin{align}\label{derivative3}
r_{\delta}(a,v)&=\frac{r(a,v)}{1+\delta a^{\mu}}\frac{\max\{v^{\sigma},L\delta\}}{v^{\sigma}}\leq \frac{R_{1}a^{\mu}v^{\sigma}}{1+\delta a^{\mu}}\frac{\max\{v^{\sigma},L\delta\}}{v^{\sigma}}\leq C_{\epsilon,R,\delta}.
\end{align}
Combining (\ref{derivative1})-(\ref{derivative3}) and making use of (\ref{changeofvartable}), we obtain (\ref{bounds for derivative ode}). Using (\ref{weakstarcontinuity}) and (\ref{bounds for derivative ode}), we prove the desired bound for the derivative of $\varphi$.
\end{proof}
\begin{rmk}
Let $g_{1},g_{2}$ be two solutions of (\ref{regularizedformequation}) with initial values $g_{\textup{in},1},g_{\textup{in},2}\in U_{\epsilon,R}$, respectively. Let $T>0$. Let $\varphi\in \textup{C}^{1}([0,T],\textup{C}^{1}(\tilde{\textup{S}}))$ such that $\varphi(\eta,T)=\chi(\eta)$ be the function found in Proposition \ref{continuoussemigroup}. Assume, in addition, that $\sup_{\eta\in \tilde{\textup{S}}}|\chi(\eta)|\leq 1$. Then there exists a constant $C_{\epsilon,R}(T), $ which is independent of the choice of $n\in\mathbb{N}$ in Assumption \ref{assumption kernel} and depends only on the norm of $g_{1}$ and $g_{2},$ $T, \epsilon$ and $R$, such that
\begin{align}\label{upper bound independent n}
    \sup_{s\in[0,T]}\sup_{\eta\in\tilde{\textup{S}}}|\varphi(\eta, s)|\leq C_{\epsilon,R}(T).
\end{align}
\end{rmk}
We now prove that the solution $\varphi$ that we have found in Proposition \ref{continuoussemigroup} is Lipschitz continuous. This will provide a suitable compactness property that will allow us to prove the desired weak continuity of the mapping $S(t):U_{\epsilon,R}\rightarrow U_{\epsilon,R}$ defined in (\ref{definition semigroup}).
\begin{prop}\label{dual function lipschitz}
Let $g_{1},g_{2}$ be two solutions of (\ref{regularizedformequation}) with initial values $g_{\textup{in},1},g_{\textup{in},2}\in U_{\epsilon,R}$, respectively. Let $T>0$. Let $\varphi\in \textup{C}^{1}([0,T],\textup{C}^{1}(\tilde{\textup{S}}))$ such that $\varphi(\eta,T)=\chi(\eta)$ be the function found in Proposition \ref{continuoussemigroup}. Assume, in addition, that $\sup_{\eta\in \tilde{\textup{S}}}|\chi(\eta)|\leq 1$ and that $\chi(\eta)$ is Lipschitz. Then $\varphi$ is Lipschitz continuous, in the sense that, for every $t\in[0,T],$ there exists $C(t)>0$ such that 
\begin{align*}
    \sup_{s\in[0,t]}|\varphi(\eta,s)-\varphi(\tilde{\eta},s)|\leq C(t)|\eta-\tilde{\eta}|,
\end{align*}
for every $\eta,\tilde{\eta}\in\tilde{\textup{S}}$. Moreover, $C(t)$ may depend on the norm of $g_{1}$ and $g_{2}$, but is otherwise independent of the choice of $g_{1}$ and $g_{2}$.
\end{prop}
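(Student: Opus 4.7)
The plan is to establish a mild (integral) formulation of the adjoint equation along the characteristics $\phi_{s}$ defined in (\ref{changeofvartable}), and then to bootstrap a Lipschitz estimate from the Lipschitz regularity of $\chi$, of the flow $\phi_{s}$ and of the smoothed kernel $K^{n}_{\epsilon,R}$ via Gronwall's inequality.

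First, I would differentiate the quantity $\psi(s):=\varphi(\phi_{s}(\eta),t+s)$ for $s\in[0,T-t]$, plug in the PDE from Proposition \ref{continuoussemigroup} and the ODE (\ref{changeofvartable}). The transport terms cancel exactly, leaving
\begin{align*}
\frac{\der}{\der s}\psi(s)=-\Theta_{\epsilon}(y_{s}(v))\psi(s)-\mathbb{L}(\varphi)(\phi_{s}(\eta),t+s),
\end{align*}
which after integration and imposing $\psi(T-t)=\chi(\phi_{T-t}(\eta))$ yields the representation
\begin{align*}
\varphi(\eta,t)=\chi(\phi_{T-t}(\eta))\textup{e}^{h_{\epsilon}(v,T-t)}+\int_{0}^{T-t}\textup{e}^{h_{\epsilon}(v,s)}\mathbb{L}(\varphi)(\phi_{s}(\eta),t+s)\der s,
\end{align*}
with $h_{\epsilon}$ as in (\ref{controloverfirstlinearterm}). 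This makes the $\chi$-dependence and the $\varphi$-self-dependence explicit.

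Next, I would establish Lipschitz bounds for each ingredient of this representation uniformly on the compact support $\tilde{V}_{\epsilon,R}$ (which is preserved by the flow up to time $T$). For the flow $\phi_{s}$, differentiating (\ref{changeofvartable}) in the initial data gives a linear ODE for $\partial_{a_{0}}A$, $\partial_{v_{0}}A$, $\partial_{v_{0}}V$ with coefficients that are bounded on $\tilde{V}_{\epsilon,R}$, using (\ref{fusion_form}), (\ref{ode_fusion}) and the computations (\ref{derivative2})--(\ref{derivative3}); Gronwall then yields $|\phi_{s}(\eta)-\phi_{s}(\tilde{\eta})|\lesssim_{\epsilon,R}|\eta-\tilde{\eta}|$ uniformly in $s\in[0,T]$. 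The same argument gives that $h_{\epsilon}(\cdot,s)$ is Lipschitz in the starting $v$ variable. For the operator $\mathbb{L}(\varphi)$, since $K^{n}_{\epsilon,R}$ is of class $\textup{C}^{1}$ on the compact $V_{\epsilon,R}\times V_{\epsilon,R}$ and is extended by zero outside, its gradient is bounded, and together with the uniform bound (\ref{upper bound independent n}) on $\|\varphi(\cdot,s)\|_{\infty}$ and on the total mass of $g_{1}+g_{2}$, a direct splitting of the difference $\mathbb{L}(\varphi)(\eta,s)-\mathbb{L}(\varphi)(\tilde{\eta},s)$ into the $K^{n}_{\epsilon,R}$-difference and the $\varphi$-difference yields
\begin{align*}
|\mathbb{L}(\varphi)(\eta,s)-\mathbb{L}(\varphi)(\tilde{\eta},s)|\leq C_{\epsilon,R,n}\bigl(\|\varphi(\cdot,s)\|_{\infty}+\textup{Lip}(\varphi(\cdot,s))\bigr)|\eta-\tilde{\eta}|,
\end{align*}
where the constant depends on the norms of $g_{1},g_{2}$ but not on their specific choice.

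Finally, subtracting the representation at $\eta$ and at $\tilde{\eta}$, applying the three Lipschitz bounds from the previous step to each term, and setting $L(t):=\sup_{s\in[t,T]}\textup{Lip}(\varphi(\cdot,s))$ gives an integral inequality of the form $L(t)\leq C_{0}+C_{1}\int_{t}^{T}L(s)\,\der s$, from which backward Gronwall yields $L(t)\leq C(t)$ with a constant depending only on $T$, $\textup{Lip}(\chi)$, $\epsilon$, $R$, $\delta$, $n$ and the masses of $g_{1},g_{2}$, which is the desired statement. The main obstacle I anticipate is the self-referential appearance of $\textup{Lip}(\varphi(\cdot,s))$ inside $\mathbb{L}(\varphi)$: one must be careful that the Gronwall argument closes only thanks to the fact that $\mathbb{L}$ does not involve derivatives of $\varphi$, so that $\textup{Lip}(\varphi(\cdot,s))$ enters linearly rather than through a derivative, and that the a priori uniform bound (\ref{upper bound independent n}) prevents a blow-up of the zero-order term in the inequality.
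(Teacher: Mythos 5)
Your proposal is correct and follows essentially the same route as the paper: both integrate the adjoint equation along the characteristics $\phi_s$ of (\ref{changeofvartable}) to obtain a Duhamel-type representation of $\varphi$, then close a backward Gr\"{o}nwall estimate for the Lipschitz seminorm using (i) the Lipschitz regularity of the flow (the paper records this as the derivative bound (\ref{bounds for derivative ode})), (ii) the boundedness of the first-order derivatives of the mollified kernel $K^{n}_{\epsilon,R}$ from Assumption \ref{assumption kernel}, and (iii) the uniform supremum bound (\ref{upper bound independent n}). Your observation that the argument closes because $\mathbb{L}$ is of order zero in $\varphi$, so that $\textup{Lip}(\varphi(\cdot,s))$ enters only linearly, is exactly the point that makes the Gr\"{o}nwall step work in the paper's proof as well.
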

\begin{proof}
Notice first that, since $\sup_{\eta\in \tilde{\textup{S}}}|\chi(\eta)|\leq 1$, then $\sup_{s\in[0,t],\eta\in \tilde{\textup{S}}}|\varphi(\eta,s)|\leq C_{\epsilon,R}(T)$ by (\ref{upper bound independent n}). We use Gr\"{o}nwall in (\ref{weakstarcontinuity}):
\begin{align*}
 |\tilde{\varphi}(\eta,t)-\tilde{\varphi}(\tilde{\eta},t)|&\leq |\chi(\eta)-\chi(\tilde{\eta})|+\sup_{z\in[0,t],\eta\in\tilde{\textup{S}}}|\tilde{\varphi}(\eta,z)|\int_{t}^{T}|\Theta_{\epsilon}(y_{s}(v))-\Theta_{\epsilon}(y_{s}(\tilde{v}))|\der s\\
 &+\int_{t}^{T}|\tilde{\varphi}(\eta,s)-\tilde{\varphi}(\tilde{\eta},s)|\der s+\int_{t}^{T}|\overline{\mathbb{L}}(\tilde{\varphi}(\eta,s)-\tilde{\varphi}(\tilde{\eta},s))(\phi_{s}(\eta),s)|\der s\\
 &+\int_{t}^{T}|\overline{\mathbb{L}}(\tilde{\varphi}(\eta,s))(\phi_{s}(\eta)-\phi_{s}(\tilde{\eta}),s)|\der s.
\end{align*}
By the definition of $K^{n}_{\epsilon,R}$ in Assumption \ref{assumption kernel}, we have its first order derivatives are bounded from above. Moreover, $\phi_{s}(\eta)$ is Lipschitz continuous. Thus, there exists a constant $C>0,$ which can depend on $\epsilon,$ $R,$ $\delta$ and the norms of $g_{1}$ and $g_{2}$ such that 
\begin{align*}
    |\tilde{\varphi}(\eta,t)-\tilde{\varphi}(\tilde{\eta},t)|\leq C\int_{t}^{T}    |\tilde{\varphi}(\eta,s)-\tilde{\varphi}(\tilde{\eta},s)|+C |\eta-\tilde{\eta}|.
\end{align*}
We use Gr\"{o}nwall and obtain $|\tilde{\varphi}(\eta,t)-\tilde{\varphi}(\tilde{\eta},t)|\leq C(t) |\eta-\tilde{\eta}|.$ Combining this with (\ref{bounds for derivative ode}), we conclude that $|\varphi(\eta,t)-\varphi(\tilde{\eta},t)|\leq C(t) |\eta-\tilde{\eta}|$.
\end{proof}
This is enough to enable us to prove that the semigroup defined in (\ref{definition semigroup}) is continuous in the weak-$^\ast$ topology.
\begin{prop}\label{continuous semigroup 2}
Let $K^{n}_{\epsilon,R}$ be as in Assumption \ref{assumption kernel}. Let $t\geq 0$ and $S(t):U_{\epsilon,R}\rightarrow U_{\epsilon,R}$ be the mapping defined in (\ref{definition semigroup}). Then, we have that $S(t)$ is continuous in the weak-$^\ast$ topology.
\end{prop}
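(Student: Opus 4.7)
The strategy is to derive a duality identity relating $g^{(k)}(t)-g(t)$ to the difference of initial data, tested against the adjoint function built in Proposition \ref{continuoussemigroup}, and then pass to the limit in $k$ after controlling the mollification error from Assumption \ref{assumption kernel}. Take a sequence $g_{\textup{in}}^{(k)}\to g_{\textup{in}}$ in the weak-$^\ast$ topology in $U_{\epsilon,R}$, with uniform bounds on $\sup_{k}\int(1+a)g_{\textup{in}}^{(k)}\der v\der a$ (automatic on the bounded subsets of $U_{\epsilon,R}$ relevant to the fixed-point argument). Set $g^{(k)}(s):=S(s)g_{\textup{in}}^{(k)}$ and $g(s):=S(s)g_{\textup{in}}$. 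Proposition \ref{propweaksol} propagates the bound to $\sup_{k,\,s\in[0,T]}\int(1+a)g^{(k)}(s)\der v\der a<\infty$. It suffices to show $\int \chi\,\der(g^{(k)}(t)-g(t))\to 0$ for every Lipschitz $\chi\in\textup{C}_{\textup{b}}^{1}(\tilde{\textup{S}})$ vanishing for $v\notin[\epsilon/2,4R]$, since the tail in $a$ of $g^{(k)}(t)-g(t)$ is controlled by the uniform $a$-moment.

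Fix $n$ as in Assumption \ref{assumption kernel}, and let $\varphi_{k,n}\in \textup{W}_{T}$ be the adjoint furnished by Proposition \ref{continuoussemigroup} with terminal datum $\chi$, inputs $g_{1}=g^{(k)},g_{2}=g$, and mollified kernel $K_{\epsilon,R}^{n}$. Differentiating $s\mapsto \int \varphi_{k,n}(\eta,s)(g^{(k)}-g)(\eta,s)\der\eta$ using (\ref{regularizedformequation}) for $g^{(k)}$ and $g$ together with the equation satisfied by $\varphi_{k,n}$, one checks that the transport and fusion contributions cancel identically. The symmetry of $K_{\epsilon,R}\xi_{R}\chi_{\varphi_{k,n}}$ in $(\eta,\eta')$ gives
\begin{align*}
\langle \mathbb{K}_{\epsilon,R}[g^{(k)}]-\mathbb{K}_{\epsilon,R}[g],\varphi_{k,n}\rangle=\frac{1}{2}\iint K_{\epsilon,R}\,\xi_{R}\,\chi_{\varphi_{k,n}}(g^{(k)}+g)(\eta')(g^{(k)}-g)(\eta)\der\eta'\der\eta,
\end{align*}
which differs from $(1-\gamma)^{-1}\int(g^{(k)}-g)\mathbb{L}(\varphi_{k,n})\der\eta$ only through the replacement of $K_{\epsilon,R}$ by $K_{\epsilon,R}^{n}$. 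Integrating in $s\in[0,t]$ with $\varphi_{k,n}(\cdot,t)=\chi$ yields
\begin{align*}
\int \chi\,\der(g^{(k)}(t)-g(t))=\int \varphi_{k,n}(\cdot,0)\,\der(g_{\textup{in}}^{(k)}-g_{\textup{in}})+E_{k,n}(t),
\end{align*}
with the remainder
\begin{align*}
E_{k,n}(t)=\frac{1-\gamma}{2}\int_{0}^{t}\iint(K_{\epsilon,R}-K_{\epsilon,R}^{n})\,\xi_{R}\,\chi_{\varphi_{k,n}}(g^{(k)}+g)(\eta',s)(g^{(k)}-g)(\eta,s)\der\eta'\der\eta\der s.
\end{align*}
By Assumption \ref{assumption kernel}, the bound $|\chi_{\varphi_{k,n}}|\leq 3C_{\epsilon,R}(T)$ from (\ref{upper bound independent n}), and the uniform total-variation bounds on $g^{(k)},g$, one obtains $|E_{k,n}(t)|\leq C(T,\epsilon,R,\delta)/n$, uniformly in $k$.

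To pass to the limit, fix $n$. Proposition \ref{dual function lipschitz} shows that $\{\varphi_{k,n}(\cdot,0)\}_{k}$ is uniformly bounded and uniformly Lipschitz, with constants independent of $k$. Arzelà--Ascoli on each compact subset of the strip $v\in[\epsilon/2,4R]$, combined with the uniform bound on $\int a\,g_{\textup{in}}^{(k)}\der v\der a$ to discard the tail in $a$, produces a subsequence $\varphi_{k_{j},n}(\cdot,0)\to\varphi_{*,n}$ uniformly on compacts. The weak-$^\ast$ convergence $g_{\textup{in}}^{(k_{j})}\to g_{\textup{in}}$ then yields $\int\varphi_{k_{j},n}(\cdot,0)\,\der(g_{\textup{in}}^{(k_{j})}-g_{\textup{in}})\to 0$, and hence $\limsup_{k}|\int\chi\,\der(g^{(k)}(t)-g(t))|\leq C(T,\epsilon,R,\delta)/n$. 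Sending $n\to\infty$ gives convergence along the subsequence; applying the same argument to every subsequence of $(g_{\textup{in}}^{(k)})$ forces the full sequence to converge, establishing the weak-$^\ast$ continuity of $S(t)$. The main technical obstacle is the joint dependence of $\varphi_{k,n}$ on $k$ (through the linearization around $g^{(k)}+g$ inside $\mathbb{L}$) and on $n$ (through the mollification), which precludes a direct argument with a fixed test function and forces both the compactness step via Arzelà--Ascoli and the diagonal $n\to\infty$ limit to handle the mollification error.
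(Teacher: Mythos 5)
Your proposal is correct and uses essentially the same ingredients as the paper's proof: the adjoint function from Proposition \ref{continuoussemigroup}, the mollified kernel with $O(1/n)$ error from Assumption \ref{assumption kernel}, the uniform-in-$(g_1,g_2)$ Lipschitz bound of Proposition \ref{dual function lipschitz}, and separate control of the tail in $a$ via the uniform first-moment bound. The only packaging difference is that you extract a convergent subsequence of $\{\varphi_{k,n}(\cdot,0)\}_k$ via Arzelà--Ascoli and close with a subsequence-of-subsequences argument (which, for full rigor, needs the tail of $E_{k,n}$ outside the compact set $K$ of Assumption \ref{assumption kernel} to be treated explicitly, as in the paper's $I_3$ term), whereas the paper fixes a universal finite $\tilde\delta$-net $\{\psi_i\}_{i=1}^N$ of Lipschitz functions and obtains a quantitative $\tilde\delta$-$N$ estimate directly.
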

\begin{proof}
As before, assume that $g_{1},g_{2}$ are two solutions of (\ref{regularizedformequation}) with initial conditions $g_{\textup{in},1},g_{\textup{in},2}\in U_{\epsilon,R}$, respectively. Assume furthermore that there exists a constant $C>0$ such that
\begin{align*}
    \int_{(0,\infty)^{2}}(1+a)(g_{\textup{in},1}(\eta)+g_{\textup{in},2}(\eta))\der \eta\leq C.
\end{align*}
Let $\phi\in\textup{C}^{1}_{0}(\mathbb{R}_{>0}^{2})$ with $\phi(\eta)=0 \textup{ if } (a,v)\not\in [c_{0}\big(\frac{\epsilon}{2}\big)^{\frac{2}{3}},L]\times[\frac{\epsilon}{2},4R]$, where $\epsilon,R$ are as in Proposition \ref{propweaksol} and for some $L>0$. Assume in addition that $||\phi||_{\infty}\leq 1$. Let $\tilde{\delta}<1$. Our goal is to prove that, if for every $\chi\in\textup{C}_{0}(\mathbb{R}_{>0}^{2}),$ with $||\chi||_{\infty}\leq 1,$
\begin{align*}
    \int_{(0,\infty)^{2}}[g_{\textup{in},2}(\eta)-g_{\textup{in},1}(\eta)]\chi(\eta)\der\eta\textup{ is sufficiently small, }
\end{align*}
 then for every $\chi\in\textup{C}_{0}(\mathbb{R}_{>0}^{2}),$ with $||\chi||_{\infty}\leq 1,$
 \begin{align*}
    \int_{(0,\infty)^{2}}[g_{2}(\eta,t)-g_{1}(\eta,t)]\chi(\eta)\der\eta<\tilde{\delta}.
\end{align*}
We make the following notation:
\begin{align*}
   C_{\textup{norm}}:=\sup_{s\in[0,t]} \int_{(0,\infty)^{2}}(1+a)(g_{1}(\eta,s)+g_{2}(\eta,s))\der \eta<\infty.
\end{align*}

 Notice that $C_{\textup{norm}}$ may depend on the time $t$, but is independent of the choice of $g_{1}(\eta,s), g_{2}(\eta,s)$, for $s\in[0,t]$. This is since we can bound 
 \begin{align*}
     \sup_{s\in[0,t]} \int_{(0,\infty)^{2}}(1+a)(g_{1}(\eta,s)+g_{2}(\eta,s))\der \eta\leq C(t)\int_{(0,\infty)^{2}}(1+a)(g_{\textup{in},1}(\eta)+g_{\textup{in},2}(\eta))\der \eta
 \end{align*}
by using similar arguments to the ones used in Proposition \ref{extensionsolutionprop}.

We fix $n\in\mathbb{N}$ in Assumption \ref{assumption kernel} such that $\frac{3tC_{\epsilon,R}(t)C_{\textup{norm}}^{2}}{n}\leq\frac{\tilde{\delta}}{4},$ where $C_{\epsilon,R}(t)$ is defined as in (\ref{upper bound independent n}).

Let $\varphi$ be the function found in Proposition \ref{continuoussemigroup} associated to the coagulation kernel $K_{\epsilon,R}^{n}$, for $n\in\mathbb{N}$ as above, and with $\varphi(\eta,t)=\phi(\eta).$ Since $\phi\in\textup{C}^{1}_{\textup{c}}(\mathbb{R}_{>0}^{2})$, then by Proposition \ref{dual function lipschitz} the function $\varphi$ is Lipschitz continuous. Then there exists $\tilde{C}(\epsilon,R,\delta,t,C_{\textup{norm}})>0$ such that $|\varphi(\eta,s)-\varphi(\tilde{\eta},s)|\leq \tilde{C}(\epsilon,R,\delta,t,C_{\textup{norm}})|\eta-\tilde{\eta}|,$ for every $\eta,\tilde{\eta}\in\tilde{\textup{S}}$ and $s\in[0,t]$.

Assume $M>1$ is a fixed constant whose value will be determined later. We take $\chi_{M}:\mathbb{R}_{>0}\rightarrow [0,1]$, continuous, to be
\begin{align}\label{cut near infinity}
\chi_{M}(a)=
\begin{cases}
1, &\text{ when } a\in[c_{0}\epsilon^{\frac{2}{3}},M],\\ 
0, &\text{ when } a\not\in[c_{0}(\frac{\epsilon}{2})^{\frac{2}{3}},2M].
\end{cases}
\end{align}

Then there exists $C(\epsilon,R,\delta,t,C_{\textup{norm}})>0$ such that $|\varphi(\eta,s)\chi_{M}(a)-\varphi(\tilde{\eta},s)\chi_{M}(\tilde{a})|\leq$ $ C(\epsilon,R,\delta,t,C_{\textup{norm}})$ $|\eta-\tilde{\eta}|,$ for every $\eta,\tilde{\eta}\in\tilde{\textup{S}}$ and $s\in[0,t]$.
 
 We look at the set
 \begin{align*}
K=\{\tilde{\varphi}\in\textup{C}(\tilde{S})|&\text{ }\tilde{\varphi}(\eta)=0 \textup{ if }  \eta\not\in [c_{0}(\frac{\epsilon}{2})^{\frac{2}{3}},2M]\times[\frac{\epsilon}{2},4R];\text{ } |\tilde{\varphi}(\eta)-\tilde{\varphi}(\eta')|\leq C(\epsilon,R,\delta,t,C_{\textup{norm}})|\eta-\eta'|,\\
&\textup{ for all } \eta,\eta'\in\tilde{S}\}.
 \end{align*}
As the set is totally bounded, there exist $N\in\mathbb{N}$ and  $\psi_{1},\ldots,\psi_{N}\in K$ with the property that $K\subseteq\cup_{i=1}^{N} B(\psi_{i},\frac{\tilde{\delta}}{4C_{\textup{norm}}})$. As $\varphi(\cdot,0)\chi_{M}\in K,$ then $\min_{i=\overline{1,N}}\sup_{\eta\in \tilde{S}}|\varphi(\eta,0)\chi_{M}(a)-\psi_{i}(\eta)|\leq \frac{\tilde{\delta}}{4C_{\textup{norm}}}.$
Assume that, for every $\chi\in\textup{C}_{0}(\mathbb{R}_{>0}^{2}),$ with $||\chi||_{\infty}\leq 1,$
\begin{align*}
    \int_{(0,\infty)^{2}}[g_{\textup{in},2}(\eta)-g_{\textup{in},1}(\eta)]\chi(\eta)\der\eta<\frac{\tilde{\delta}}{8N}.
\end{align*}
We then have
\begin{align}\label{duality argument}
\int_{(0,\infty)^{2}}&[g_{2}-g_{1}](\eta,t)\phi(\eta)\der \eta=\int_{(0,\infty)^{2}}[g_{2}-g_{1}](\eta,t)\varphi(\eta,t)\der \eta
\nonumber\\
&=\int_{(0,\infty)^{2}}[g_{2,\textup{in}}-g_{1,\textup{in}}](\eta)\varphi(\eta,0)\der \eta+\frac{1-\gamma}{2}\int_{0}^{t}\int_{(0,\infty)^{2}}\int_{(0,\infty)^{2}}(K_{\epsilon,R}(\eta,\eta')-K_{\epsilon,R}^{n}(\eta,\eta'))\xi_{R}(v+v')\nonumber\\
&\chi_{\varphi}(\eta,\eta',s)(g_{1}(\eta',s)+g_{2}(\eta',s))(g_{1}(\eta',s)-g_{2}(\eta',s))\der \eta'\der \eta\der s\nonumber\\
&\leq\int_{(0,\infty)^{2}}[g_{2,\textup{in}}-g_{1,\textup{in}}](\eta)\varphi(\eta,0)\der \eta+\int_{0}^{t}\int_{V_{\epsilon,R,M}}\int_{V_{\epsilon,R,M}}|(K_{\epsilon,R}(\eta,\eta')-K_{\epsilon,R}^{n}(\eta,\eta'))\xi_{R}(v+v')\nonumber\\
&\chi_{\varphi}(\eta,\eta',s)(g_{1}(\eta',s)+g_{2}(\eta',s))(g_{1}(\eta',s)-g_{2}(\eta',s))|\der \eta'\der \eta\der s\nonumber\\
&+\int_{0}^{t}\int_{(M,\infty)\times[\epsilon,2R]}\int_{(0,\infty)^{2}}|(K_{\epsilon,R}(\eta,\eta')-K_{\epsilon,R}^{n}(\eta,\eta'))\xi_{R}(v+v')\chi_{\varphi}(\eta,\eta',s)(g_{1}(\eta',s)+g_{2}(\eta',s))\nonumber\\
&(g_{1}(\eta',s)-g_{2}(\eta',s))|\der \eta'\der \eta\der s\nonumber\\
&= I_{1}+I_{2}+I_{3},
\end{align}
where $V_{\epsilon,R,M}:=[c_{0}\epsilon^{\frac{2}{3}},M]\times[\epsilon,2R]$, for some sufficiently large $M$ as in (\ref{cut near infinity}).

We bound the term $I_{1}$ in (\ref{duality argument}) by:
\begin{align}\label{first term}
    \int_{(0,\infty)^{2}}&[g_{2,\textup{in}}-g_{1,\textup{in}}](\eta)\varphi(\eta,0)\der \eta\nonumber\\
    &=\int_{(0,\infty)^{2}}[g_{2,\textup{in}}-g_{1,\textup{in}}](\eta)\varphi(\eta,0)\chi_{M}(a)\der \eta+\int_{(0,\infty)^{2}}[g_{2,\textup{in}}-g_{1,\textup{in}}](\eta)\varphi(\eta,0)[1-\chi_{M}(a)]\der \eta\nonumber\\
     &\leq\int_{(0,\infty)^{2}}[g_{2,\textup{in}}-g_{1,\textup{in}}](\eta)\varphi(\eta,0)\chi_{M}(a)\der \eta+2||\varphi(\eta,0)||_{\infty}\int_{a>M}[g_{2,\textup{in}}+g_{1,\textup{in}}](\eta)\der \eta\nonumber\\
    &\leq \min_{i=\overline{1,N}}\int_{(0,\infty)^{2}}|[g_{2,\textup{in}}-g_{1,\textup{in}}](\eta)||\varphi(\eta,0)\chi_{M}(a)-\psi_{i}(\eta)|\der \eta\nonumber\\
    &+\max_{i=\overline{1,N}}\int_{(0,\infty)^{2}}[g_{2,\textup{in}}-g_{1,\textup{in}}](\eta)\psi_{i}(\eta)\der \eta+2M^{-1}C_{\textup{norm}}||\varphi(\eta,0)||_{\infty}.
\end{align}
Notice that $K$ was chosen independently of $g_{1}, g_{2}$ since it depends only on $C(\epsilon,R,\delta,t, C_{\textup{norm}})$. Thus we can bound $\max_{i=\overline{1,N}}\int_{(0,\infty)^{2}}[g_{2,\textup{in}}-g_{1,\textup{in}}](\eta)\psi_{i}(\eta)\der \eta$ in (\ref{first term}) by $\frac{\tilde{\delta}}{8}$ independently of the choice of $g_{1},g_{2}$, as $\psi_{i}\in K,$ $i=\overline{1,N}$. As $||\phi(\eta)||_{\infty}\leq 1,$ then $\sup_{s\in[0,t]}||\varphi(\cdot,s)||_{\infty}\leq C_{\epsilon,R}(t)$. We can then choose $M>\frac{16C_{\textup{norm}}C_{\epsilon,R}(t)}{\tilde{\delta}}$ in order to bound $2M^{-1}C_{\textup{norm}}||\varphi(\eta,0)||_{\infty}$. This means that
\begin{align}\label{step 2}
     \int_{(0,\infty)^{2}}&[g_{2,\textup{in}}-g_{1,\textup{in}}](\eta)\varphi(\eta,0)\der \eta\leq \frac{\tilde{\delta}}{4C_{\textup{norm}}}\int_{(0,\infty)^{2}}[g_{2,\textup{in}}+g_{1,\textup{in}}](\eta)\der \eta+\frac{\tilde{\delta}}{4}\leq \frac{\tilde{\delta}}{2}.
\end{align}

In order to bound  $I_{2}$ in (\ref{duality argument}),
we have that on $([c_{0}\epsilon^{\frac{2}{3}},M]\times[\epsilon,2R])^{2}$
\begin{align*}
   \sup_{(\eta,\eta')\in ([c_{0}\epsilon^{\frac{2}{3}},M]\times[\epsilon,2R])^{2}}|K^{n}_{\epsilon,R}(\eta,\eta')- K_{\epsilon,R}(\eta,\eta')|\leq \frac{1}{n}
\end{align*} 
by Assumption \ref{assumption kernel}. Thus $I_{2}\leq \frac{3t||\varphi||_{\infty}C_{\textup{norm}}^{2}}{n}\leq\frac{\tilde{\delta}}{4}.$

For $I_{3}$ in (\ref{duality argument}), we take $M\geq 6C_{\epsilon,R}(t)C_{\textup{norm}}^{2}t||K_{\epsilon,R}||_{\infty}\frac{4}{\tilde{\delta}}$. This is because $I_{3}$ can be estimated from above by
\begin{align*}
M^{-1}3tC_{\epsilon,R}(t)(||K^{n}_{\epsilon,R}||_{\infty}+||K_{\epsilon,R}||_{\infty})\sup_{s\in[0,t]}\int_{\{a\geq M\}}a(g_{1}(\eta,s)+g_{2}(\eta,s))\der \eta\int_{(0,\infty)^{2}}(g_{1}(\eta,s)+g_{2}(\eta,s))\der \eta'.
\end{align*}

We then combine this with (\ref{step 2}) and our choice of $n\in\mathbb{N}$ in (\ref{duality argument}) in order to conclude the argument.

To extend this argument to all functions $\phi\in\textup{C}_{0}(\mathbb{R}_{>0}^{2})$ we use again that $(M_{0,1}+M_{0,0})(g_{1}(s)+g_{2}(s))<\infty,$ for all $s\in[0,t]$.
\end{proof}

For $g\in U_{\epsilon,R}$, it is worthwhile to observe that the map $S(\cdot)g:[0,T]\rightarrow U_{\epsilon,R}$ defined in (\ref{definition semigroup}) is also continuous in time. 
\begin{prop}\label{continuous time semigroup}
Let $g\in U_{\epsilon,R}$, with $U_{\epsilon,R}$ defined in (\ref{existencespace for f}) and $T>0$. Let $S(\cdot)g:[0,T]\rightarrow U_{\epsilon,R}$ be as in (\ref{definition semigroup}). Then $S(\cdot)g$ is continuous in time. 
\end{prop}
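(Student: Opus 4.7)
The plan is to establish weak-$\ast$ continuity of the map $t \mapsto S(t)g = g_{\epsilon,R,\delta}(\cdot,\cdot,t)$ by testing first against smooth compactly supported functions and then extending to $\textup{C}_0(\mathbb{R}_{>0}^{2})$ by a cutoff argument that exploits the uniform moment bound in $a$ already available from the construction.

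First, I would invoke Propositions \ref{propweaksol} and \ref{extensionsolutionprop}: $g_{\epsilon,R,\delta}$ lies in $\textup{C}^{1}([0,T];\mathscr{M}_{+,\textup{b}}^{I}(\mathbb{R}_{>0}^{2}))$ and satisfies the weak formulation (\ref{regularizedformequation}). For any $\psi\in\compactfundif$, differentiating $\int\psi\,g_{\epsilon,R,\delta}(\cdot,t)\,\der\eta$ in $t$ via (\ref{regularizedformequation}) produces an integral whose absolute value is controlled uniformly in $t\in[0,T]$ by the boundedness of $K_{\epsilon,R}$ in (\ref{truncation_kernel_selfsimilar}), the bound $|r_{\delta}|\leq C_{\epsilon,R,\delta}$ obtained in (\ref{derivative3}), and the uniform moment bound
\[
C_{T}:=\sup_{t\in[0,T]}\int_{(0,\infty)^{2}}(1+a)g_{\epsilon,R,\delta}(\eta,t)\,\der\eta<\infty
\]
furnished by the norm in $\textup{Y}_{\epsilon,\tau}$ and its iterated extensions from Proposition \ref{extensionsolutionprop}. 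This yields Lipschitz continuity of $t\mapsto\int\psi\,g_{\epsilon,R,\delta}(\cdot,t)\,\der\eta$ for every $\psi\in\compactfundif$, with Lipschitz constant depending on $\psi,\epsilon,R,\delta,T$.

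Second, I would extend to arbitrary $\varphi\in\vanishfun$. Fix $t_{0}\in[0,T]$ and $\tilde{\epsilon}>0$. Since $g_{\epsilon,R,\delta}(\cdot,t)\in U_{\epsilon,R}$ is supported in the strip $[c_{0}\epsilon^{\frac{2}{3}},\infty)\times[\epsilon,2R)$ and $\varphi$ vanishes at infinity, we can pick $M>0$ so large that $|\varphi(a,v)|\leq\tilde{\epsilon}/(3C_{T})$ whenever $(a,v)$ belongs to this strip with $a\geq M$. Choose a smooth cutoff $\chi_{M}\in\compactfundif$ with $0\leq\chi_{M}\leq 1$, $\chi_{M}\equiv 1$ on $[c_{0}\epsilon^{\frac{2}{3}},M]\times[\epsilon,2R]$ and with compact support in $\mathbb{R}_{>0}^{2}$, and approximate $\varphi\chi_{M}\in\compactfun$ uniformly by some $\psi\in\compactfundif$ with $\|\varphi\chi_{M}-\psi\|_{\infty}\leq\tilde{\epsilon}/(3C_{T})$. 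Splitting
\[
\bigl|\langle g_{\epsilon,R,\delta}(t)-g_{\epsilon,R,\delta}(t_{0}),\varphi\rangle\bigr|\leq\bigl|\langle g_{\epsilon,R,\delta}(t)-g_{\epsilon,R,\delta}(t_{0}),\psi\rangle\bigr|+\bigl|\langle g_{\epsilon,R,\delta}(t)-g_{\epsilon,R,\delta}(t_{0}),\varphi-\psi\rangle\bigr|,
\]
the second term is bounded by $2\tilde{\epsilon}/3$ uniformly in $t$ thanks to $C_{T}$ and the two estimates on $|\varphi-\varphi\chi_{M}|$ and $|\varphi\chi_{M}-\psi|$, while the first term is bounded by $\tilde{\epsilon}/3$ for $|t-t_{0}|$ sufficiently small, by the Lipschitz continuity obtained in the first step.

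The only subtle point in the argument is controlling the tail of $g_{\epsilon,R,\delta}(\cdot,t)$ in the $a$-direction, the $v$-direction being trivial since the support of $g_{\epsilon,R,\delta}$ is bounded in $v$ by $2R$. This is handled cleanly by the first-moment estimate in $a$ that is built into the definition of $\textup{Y}_{\epsilon,\tau}$ in (\ref{spaceexistence}) and transferred to all later times via the iterative extension in Proposition \ref{extensionsolutionprop}, so no additional estimates are required beyond those already at hand.
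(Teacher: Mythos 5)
Your proposal is correct and follows essentially the same route as the paper: you first obtain Lipschitz continuity of $t\mapsto\langle g_{\epsilon,R,\delta}(t),\psi\rangle$ for $\psi\in\compactfundif$ directly from the weak formulation (\ref{regularizedformequation}) using the boundedness of $K_{\epsilon,R}$, of $r_\delta$, and of $\sup_{t\in[0,T]}\int(1+a)g_{\epsilon,R,\delta}(\eta,t)\,\der\eta$, and then pass to $\varphi\in\vanishfun$ via a cutoff at $a=M$ plus a $\textup{C}^1$ approximation, which is the same density argument the paper runs (the paper merely splits it into two passes, $\textup{C}^1_{\textup{c}}\to\textup{C}_{\textup{c}}\to\textup{C}_0$, and bounds the $a\geq M$ tail via $\|\varphi\|_\infty M^{-1}\sup_t\int a\,g$, whereas you bound it via the decay of $\varphi\in\textup{C}_0$ on the strip, which is equally valid). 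The only blemish is a factor-of-two slip in your threshold $\tilde\epsilon/(3C_T)$, since the pairing $|\langle g(t)-g(t_0),\cdot\rangle|$ contributes $\int g(t)+\int g(t_0)\leq 2C_T$ rather than $C_T$, but this is cosmetic and does not affect the argument.
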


The proof consists of standard methods used in the study of coagulation equations. A proof of it can be found in Appendix \ref{appendix c}.

\subsection{Existence of self-similar profiles}\label{chapter_two}
Assume $\mu>0$. We focus on proving Proposition \ref{lemmaweaksol}. The following moment estimates involving the  $v$ variable (Propositions \ref{firstmoments}-\ref{m-lprop}) are an adaptation to the two-dimensional case of \cite[Lemma 3.3, Lemma 3.4]{dust}. There was an additional need to control the escape to infinity of the area, which has been dealt with by choosing an appropriate norm in $\textup{Y}_{\epsilon,\tau}$ defined in (\ref{spaceexistence}), as well as a specific truncation for the coagulation kernel and fusion term. We then continue with proving an additional uniform moment estimate involving the area which will be needed for removing the truncation in (\ref{regularizedformequation}).
\begin{rmk}\label{kernel_lower_bound_welldef}
Assume $K_{\epsilon,R}$ satisfies  (\ref{truncation_kernel_selfsimilar}). Then $K_{\epsilon,R}(a,v,a',v')\geq K_{1}(v^{\beta}v'^{-\alpha}+v^{-\alpha}v'^{\beta})$ if $v,v'>\epsilon$ and $v+v'<2R$, for all $(a,a')\in(0,\infty)^{2}$.
\end{rmk}
We first mention that due to the choice of $\textup{Y}_{\epsilon,\tau}$ and $r_{\delta}$, we can test (\ref{regularizedformequation}) with functions that do not necessarily have compact support. More precisely, we can test with functions of the form $a^{c}v^{d}$, $d\in\mathbb{R}$, as long as $c\leq 1$.

As previously mentioned, the proof of Propositions \ref{firstmoments}-\ref{m-lprop} is an adaptation of methods used in \cite{dust} to our setting. We thus only state the estimates here and move their proof to Appendix \ref{appendix c}.
\begin{prop} \label{firstmoments}
Let $g_{\textup{in},R}\in \mathscr{M}^{I}_{+}(\mathbb{R}_{>0}^{2})\cap \omega(\epsilon,R,\delta)$. Let $g_{\epsilon,R,\delta}$ be the solution found in Proposition \ref{propweaksol} with kernel $K_{\epsilon,R}$ as in (\ref{truncation_kernel_selfsimilar}). Then 
\begin{align}
 \sup_{t\geq 0}M_{0,1}(g_{\epsilon,R,\delta}(\cdot,\cdot,t))= M_{0,1}(g_{\textup{in},R}). \label{m1}
\end{align}
 Moreover, there exists a constant $C_{0,\gamma}>0$ such that:
\begin{align}
\sup_{t\geq 0}M_{0,\gamma}(g_{\epsilon,R,\delta}(\cdot,\cdot,t))\leq \max\{{M_{0,\gamma}(g_{\textup{in},R});C_{0,\gamma}}\}, \label{mlambda}
\end{align}
uniformly in $\epsilon, R$ and $\delta$.
\end{prop}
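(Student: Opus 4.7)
The strategy is to test the weak formulation (\ref{regularizedformequation}) with test functions depending only on $v$, which is admissible by the remark preceding the statement (since we work in $\textup{Y}_{\epsilon,\tau}$ with a truncated fusion $r_\delta$, test functions of the form $a^c v^d$ with $c\leq 1$ are allowed). Such test functions kill the fusion term outright because $\partial_{a}\varphi = 0$, so the problem effectively reduces to moment estimates for a one-dimensional coagulation equation in self-similar variables, of the type studied in \cite{dust}.

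For the mass identity (\ref{m1}), I would test with $\varphi(a,v) = v$. Then $\chi_{\varphi}\equiv 0$, so the coagulation term vanishes by the usual mass-conservation computation; the self-similar transport contributions combine as $\int g_{\epsilon,R,\delta}\Theta_{\epsilon}(v)v\,\der\eta - \int g_{\epsilon,R,\delta}\Theta_{\epsilon}(v)v\cdot 1\,\der\eta = 0$; and the fusion term vanishes since $\partial_{a}\varphi = 0$. Hence $\partial_{t}M_{0,1}(g_{\epsilon,R,\delta}) = 0$, which gives (\ref{m1}) immediately.

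For (\ref{mlambda}), test with $\varphi(a,v)=v^{\gamma}$, where $\gamma\in[0,1)$. A direct computation yields
\begin{align*}
\partial_{t}M_{0,\gamma}(g_{\epsilon,R,\delta}) = (1-\gamma)\int_{(0,\infty)^{2}} g_{\epsilon,R,\delta}\,\Theta_{\epsilon}(v)\,v^{\gamma}\,\der\eta + (1-\gamma)\langle \mathbb{K}_{\epsilon,R}[g_{\epsilon,R,\delta}],v^{\gamma}\rangle.
\end{align*}
Since $v\mapsto v^{\gamma}$ is subadditive, one has the sharp bound $(v+v')^{\gamma}-v^{\gamma}-v'^{\gamma} \leq -(1-\gamma)\min(v,v')^{\gamma}$, and combining this with the lower bound on $K_{\epsilon,R}$ from Remark \ref{kernel_lower_bound_welldef} produces a quadratic-in-$g$ dissipative term, which I would write schematically as
\begin{align*}
\langle \mathbb{K}_{\epsilon,R}[g_{\epsilon,R,\delta}],v^{\gamma}\rangle \leq -c\int\!\!\int \xi_{R}(v+v')(v^{\beta}v'^{-\alpha}+v^{-\alpha}v'^{\beta})\min(v,v')^{\gamma}\,g_{\epsilon,R,\delta}(\eta)g_{\epsilon,R,\delta}(\eta')\,\der\eta\,\der\eta'.
\end{align*}
The conserved mass $M_{0,1}(g_{\epsilon,R,\delta})=M_{0,1}(g_{\textup{in},R})$ then provides, via a H\"{o}lder/interpolation argument as in \cite{dust}, a lower bound for this coagulation dissipation that is super-linear in $M_{0,\gamma}(g_{\epsilon,R,\delta})$. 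The resulting differential inequality has the form $\partial_{t}M_{0,\gamma} \leq (1-\gamma)M_{0,\gamma} - \Psi(M_{0,\gamma})$ with $\Psi$ growing faster than linearly, so the half-line $[C_{0,\gamma},\infty)$ is repelling for $M_{0,\gamma}$, where $C_{0,\gamma}$ depends only on $K_{1},\alpha,\beta,\gamma$ and $M_{0,1}(g_{\textup{in},R})$, uniformly in $\epsilon,R,\delta$. This yields (\ref{mlambda}).

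The main obstacle is extracting the super-linear dissipation $\Psi$ from the coagulation contribution in a way that is uniform in the truncation parameters $\epsilon, R, \delta$. The relevant point is that on the support of $g_{\epsilon,R,\delta}$ (i.e.\ $v,v'\geq \epsilon$ and effectively $v+v'\leq R$), the cutoff $\xi_{R}(v+v')$ equals $1$ on the bulk of the support and the truncated kernel still satisfies the lower bound $K_{1}(v^{\beta}v'^{-\alpha}+v^{-\alpha}v'^{\beta})$ of Remark \ref{kernel_lower_bound_welldef}, so neither truncation degrades the interpolation. Granted this, the adaptation from \cite{dust} is routine since the fusion term has been eliminated from the very beginning by the choice of $\varphi = v^{\gamma}$.
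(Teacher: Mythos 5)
Your proposal follows essentially the same route as the paper: test (\ref{regularizedformequation}) with $\varphi=v$ for (\ref{m1}) and with $\varphi=v^{\gamma}$ for (\ref{mlambda}), so the fusion term drops out, and close a differential inequality for $M_{0,\gamma}$ by a coagulation dissipation that is quadratic in $M_{0,\gamma}$, uniformly in $\epsilon,R,\delta$. The one clarification worth making is that the quadratic dissipation is \emph{not} obtained by H\"{o}lder/interpolation: writing $z=v'/v\le 1$, the paper uses the pointwise bound $(v^{\beta}v'^{-\alpha}+v^{-\alpha}v'^{\beta})\big((v+v')^{\gamma}-v^{\gamma}-v'^{\gamma}\big)\le -(1-\gamma)v^{\gamma}v'^{\gamma}$ (using $z^{-\alpha}\ge 1$ together with $(1+z)^{\gamma}-1\le\gamma z^{\gamma}$), which gives $-cM_{0,\gamma}^{2}$ directly after factorizing; mass conservation $M_{0,1}(g_{\epsilon,R,\delta})=M_{0,1}(g_{\textup{in},R})$ enters only to bound the cutoff tail $\int_{\{v>R/2\}}v^{\gamma}g\le(R/2)^{\gamma-1}M_{0,1}(g_{\textup{in},R})$, which you need in order to pass from the $\xi_{R}$-truncated quadratic form to $M_{0,\gamma}^{2}$ with a constant independent of $R$.
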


\begin{prop}\label{prop m moment}
Let $g_{\textup{in},R}$ and $g_{\epsilon,R,\delta}$ (which we will denote by $g_{\textup{in}}$ and $g$, respectively) be as in Proposition \ref{firstmoments}. Then, for any $m>1$, there exists a constant $C_{0,m}>0$, independent of $\epsilon, R$ and $\delta$, such that:
\begin{align}
\sup_{t\geq 0}M_{0,m}(g_{\epsilon,R,\delta}(\cdot,\cdot,t))\leq \max\{M_{0,m}(g_{\textup{in}}), C_{0,m}\}. \label{mm}
\end{align}
\end{prop}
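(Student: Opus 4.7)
The plan is to test the weak equation (\ref{regularizedformequation}) with $\varphi(a,v) = v^{m}$ and derive a differential inequality for $M_{0,m}(g_{\epsilon,R,\delta}(\cdot,\cdot,t))$. Such a test function, although not compactly supported, is admissible because $g_{\epsilon,R,\delta}(t)$ is supported in $v\in[\epsilon,2R]$; a standard cutoff-and-limit argument in the $v$ variable justifies the choice. Since $\partial_{a}\varphi\equiv 0$, both the fusion term and the $a$-transport term vanish, and the surviving linear terms combine to give $(1-m)\int g\,\Theta_{\epsilon}(v)\,v^{m}\,d\eta$. Using $\Theta_{\epsilon}(v)=1$ for $v\geq 2\epsilon$ together with $M_{0,1}(g)=1$ from Proposition \ref{firstmoments}, this is bounded above by $-(m-1)M_{0,m}(g)+(m-1)2^{m-1}$, where the correction is uniform in $\epsilon\in(0,1)$.

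For the coagulation contribution, the elementary inequality
\[
(v+v')^{m}-v^{m}-v'^{m}\;\leq\;C_{m}\bigl(v^{m-1}v'+v\,v'^{m-1}\bigr),\qquad m>1,
\]
combined with the upper bound (\ref{lower_bound_kernel}) and $\xi_{R}\leq 1$, yields
\[
\langle \mathbb{K}_{\epsilon,R}[g],\,v^{m}\rangle\;\leq\;\tilde{C}_{m}\bigl[M_{0,1-\alpha}(g)\,M_{0,m+\beta-1}(g)+M_{0,1+\beta}(g)\,M_{0,m-1-\alpha}(g)\bigr].
\]
The crucial observation is that each moment order on the right is at most $m-1+\beta$, which is strictly less than $m$ since $\beta<1$.

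The proof is then completed by an iterative moment-bound argument. In the base range $m\leq\tilde{m}$ (with $\tilde{m}$ the moment bounded in the invariance set $\omega(\epsilon,R,\delta)$, chosen so that $\tilde{m}\geq 1+\beta$), the conclusion follows at once from Hölder interpolation: $M_{0,m}(g)\leq M_{0,\gamma}(g)^{\theta}M_{0,\tilde{m}}(g)^{1-\theta}$. For $m>\tilde{m}$ one inducts in increments of $1-\beta$: if $M_{0,m'}(g(t))$ is uniformly bounded for every $m'\in[\gamma,m_{0}]$, then for $m\in(m_{0},m_{0}+(1-\beta))$ every moment appearing on the right-hand side of the coagulation bound has order at most $m_{0}$ and is therefore already controlled. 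Moments of order below $\gamma$, such as $M_{0,m-1-\alpha}$ when $m-1-\alpha<\gamma$, are handled by interpolating with the bound on $M_{0,-\alpha-\tilde{\epsilon}}(g)$ from $\omega$; this is where the assumption $\alpha>0$ enters. Substituting these bounds, the coagulation term becomes dominated by a constant $\tilde{C}_{m}'$ independent of $\epsilon,R,\delta$.

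Combining all the estimates yields the differential inequality
\[
\partial_{t}M_{0,m}(g(t))\;\leq\;-(m-1)M_{0,m}(g(t))+C_{m}',
\]
which integrates to $M_{0,m}(g(t))\leq\max\{M_{0,m}(g_{\textup{in}}),\,C_{m}'/(m-1)\}$ and gives the claim with $C_{0,m}=C_{m}'/(m-1)$. The main technical obstacle is ensuring that the induction closes cleanly: the moment $M_{0,1+\beta}$ appears in the coagulation bound for every target $m$, so it must already be controlled at the base case. This forces the choice $\tilde{m}\geq 1+\beta$ in the defining condition of $\omega$, which is compatible with the requirement $\tilde{m}>\max\{1,|\sigma|/\mu\}$ upon possibly enlarging $\tilde{m}$. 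Once this base case is secured, each inductive step merely produces a new constant $C_{m}'$ depending on the (now known to be bounded) moments from the previous round, and after finitely many steps the estimate covers any prescribed $m>1$.
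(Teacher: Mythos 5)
Your test function and the structure of the argument match the paper, but your estimate on the coagulation term is too coarse, and this creates a genuine gap. Starting from the symmetric inequality $(v+v')^m-v^m-v'^m\leq C_m(v^{m-1}v'+vv'^{m-1})$ and multiplying by the kernel bound $K\leq K_0(v^{-\alpha}v'^\beta+v'^{-\alpha}v^\beta)$ produces two moment products, $M_{0,1-\alpha}M_{0,m+\beta-1}$ and $M_{0,1+\beta}M_{0,m-1-\alpha}$, and the second one cannot be closed. Your claim that ``each moment order on the right is at most $m-1+\beta$'' is false for $1+\beta$, which exceeds $m-1+\beta$ unless $m\geq 2$. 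Concretely: for $m\in(1,1+\beta)$, the exponent $1+\beta$ is strictly larger than $m$, so $M_{0,1+\beta}$ is a \emph{higher}-order moment than the one you are trying to bound and cannot be controlled by interpolation against $M_{0,m}$. For $m\geq 1+\beta$, interpolating both factors between $M_{0,\gamma}$ and $M_{0,m}$ gives exponents $\theta_1=(1+\alpha)/(m-\gamma)$ and $\theta_2=(m-1-\beta)/(m-\gamma)$ whose sum is exactly $1$, so the resulting differential inequality is $\partial_t M_{0,m}\leq -(m-1)M_{0,m}+CM_{0,\gamma}M_{0,m}+\dots$, which yields an invariant region only if $m-1>CM_{0,\gamma}$ — not guaranteed. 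Furthermore, the induction's base case is circular: you justify $M_{0,\tilde m}$ being bounded by appealing to membership in $\omega(\epsilon,R,\delta)$, but the invariance of $\omega$ is precisely what this proposition (together with its neighbors) is meant to establish, so the proof cannot assume $M_{0,\tilde m}(g(t))$ stays bounded.

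The paper sidesteps all of this by not symmetrizing the elementary inequality. Assume $v\geq v'$, set $z=v'/v\in(0,1]$; then $(1+z)^m-1-z^m\leq 2C_m z$, and the kernel satisfies $K\leq K_0 v^\gamma(z^\beta+z^{-\alpha})\leq 2K_0 v^\gamma z^{-\alpha}$ because $z^\beta\leq 1\leq z^{-\alpha}$ when $z\leq 1$. Multiplying gives $Kv^m z\leq 4C_mK_0 v^{\beta+m-1}v'^{1-\alpha}$, so \emph{only} the product $M_{0,m+\beta-1}M_{0,1-\alpha}$ appears. Since $1-\alpha\in[\gamma,1]$, $M_{0,1-\alpha}$ is already bounded, and $M_{0,m+\beta-1}\leq M_{0,\gamma}^{1-\theta}M_{0,m}^\theta$ with $\theta=(m+\beta-1-\gamma)/(m-\gamma)<1$ because $\beta<1$. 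The differential inequality $\partial_t M_{0,m}\leq -(m-1)M_{0,m}+CM_{0,m}^\theta+C'$ with $\theta<1$ has an invariant region, with no iteration needed. Without the step $z^\beta+z^{-\alpha}\leq 2z^{-\alpha}$ (only available after fixing $v\geq v'$), the argument does not close.
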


We can also obtain bounds independent of time for moments of the form $M_{0,-l}$, with $l>0$, if we require in addition that $\alpha>0$. 
\begin{prop}\label{m-lprop}
Let $g_{\textup{in},R}$ and $g_{\epsilon,R,\delta}$ (which we will denote by $g_{\textup{in}}$ and $g$, respectively) be as in Proposition \ref{firstmoments}. Assume $\alpha>0$. Let $l>0$. Then there exists a constant $C_{0,-l}>0$ for which the following estimate holds:
\begin{align}
\sup_{t\geq 0}M_{0,-l}(g_{\epsilon,R,\delta}(t))\leq \max\{M_{0,-l}(g_{\textup{in}}),C_{0,-l}\}, \label{m-l}
\end{align}
uniformly in $\epsilon, R$ and $\delta$.
\end{prop}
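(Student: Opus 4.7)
The strategy is to test the weak formulation (\ref{regularizedformequation}) against $\varphi(a,v)=\psi_N(v)$, where $\psi_N\in\vanishfundif$ is a smooth bounded approximation of $v^{-l}$ that agrees with $v^{-l}$ on a slight enlargement of $\supp g_{\epsilon,R,\delta}$, and to pass to the limit $N\to\infty$ using the fact that $g$ is supported in $v\geq\epsilon$. Since $\varphi$ has no $a$-dependence, the $a$-transport and the fusion terms (both involving $\partial_a\varphi$) vanish identically, while $-v\partial_v\psi_N\to lv^{-l}$ in the limit. Using $0\leq\Theta_\epsilon\leq 1$, the equation reduces to the differential inequality
\begin{align*}
\partial_t M_{0,-l}(g_{\epsilon,R,\delta})\leq (1+l)\,M_{0,-l}(g_{\epsilon,R,\delta})+(1-\gamma)\,\langle\mathbb{K}_{\epsilon,R}[g_{\epsilon,R,\delta}],v^{-l}\rangle.
\end{align*}

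The next step is the elementary inequality
\begin{align*}
-\chi_{v^{-l}}(\eta,\eta')=v^{-l}+v'^{-l}-(v+v')^{-l}\geq c_l\,(v^{-l}+v'^{-l}),\qquad c_l:=\tfrac{1}{2}(1-2^{-l})>0,
\end{align*}
which follows by checking (assuming $v\leq v'$) that $(v+v')^{-l}\leq 2^{-l}v^{-l}\leq\tfrac{1+2^{-l}}{2}(v^{-l}+v'^{-l})$. Combined with the lower bound (\ref{lower_bound_kernel})---which $K_{\epsilon,R}$ inherits on the relevant part of the support thanks to Remark \ref{kernel_lower_bound_welldef}---one obtains, after expanding and after a routine accounting for the truncation $\xi_R(v+v')$ (the lost contribution from $\{v+v'>R\}$ is controlled by the moment bounds of Propositions \ref{firstmoments} and \ref{prop m moment}),
\begin{align*}
\langle\mathbb{K}_{\epsilon,R}[g],v^{-l}\rangle\leq -C\,\bigl[M_{0,-\alpha-l}(g)\,M_{0,\beta}(g)+M_{0,-\alpha}(g)\,M_{0,\beta-l}(g)\bigr].
\end{align*}

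The decisive point is to convert the first bracketed term into a super-linear function of $M_{0,-l}(g)$. H\"older interpolation at the exponent $-l$ between $M_{0,-\alpha-l}$ and $M_{0,\gamma}$ gives
\begin{align*}
M_{0,-\alpha-l}(g)\geq M_{0,\gamma}(g)^{-\alpha/(\gamma+l)}\,M_{0,-l}(g)^{1+\alpha/(\gamma+l)},
\end{align*}
and Proposition \ref{firstmoments} supplies the uniform-in-$(\epsilon,R,\delta)$ upper bound on $M_{0,\gamma}(g)$. For $M_{0,\beta}(g)$, interpolating $M_{0,1}=v_0$ between $M_{0,\beta}$ and $M_{0,m}$ for any fixed $m>1$ yields $M_{0,\beta}(g)\geq v_0^{(m-\beta)/(m-1)}M_{0,m}(g)^{-(1-\beta)/(m-1)}$, which is bounded below by a strictly positive constant thanks to Proposition \ref{prop m moment}. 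The hypothesis $\alpha>0$ is precisely what makes the exponent $\alpha/(\gamma+l)$ strictly positive and hence the resulting bound supercritical in $M_{0,-l}$.

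Inserting these two estimates into the previous one produces a Bernoulli-type differential inequality
\begin{align*}
\partial_t M_{0,-l}(g)\leq (1+l)\,M_{0,-l}(g)-C_1\,M_{0,-l}(g)^{1+\eta},\qquad\eta:=\alpha/(\gamma+l)>0,
\end{align*}
with $C_1>0$ independent of $\epsilon,R,\delta$. A standard comparison argument then gives $M_{0,-l}(g(t))\leq\max\{M_{0,-l}(g_\textup{in}),((1+l)/C_1)^{1/\eta}\}$, which is (\ref{m-l}). I expect the main technical difficulty to be the $\xi_R$-bookkeeping, ensuring that the super-linear term genuinely absorbs the linear source uniformly in $R$; the justification of the test with the unbounded function $v^{-l}$ via the $\psi_N$ approximation is routine given that $g_{\epsilon,R,\delta}$ lives on $v\geq\epsilon$.
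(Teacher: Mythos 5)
Your proposal follows the same overall strategy as the paper's own proof: test with $v^{-l}$, lower-bound the dissipation produced by the coagulation operator, interpolate the resulting negative moment to obtain a super-linear term in $M_{0,-l}$, secure a positive lower bound on $M_{0,\beta}$, and close with a Bernoulli comparison. The minor stylistic differences are: (i) you derive the quantitative dissipation bound $-\chi_{v^{-l}}\geq c_l(v^{-l}+v'^{-l})$, whereas the paper just uses $\chi_{v^{-l}}\leq 0$ together with $(v+v')^{-l}\leq\min\{v^{-l},v'^{-l}\}$ and keeps only the term $-v^{-\alpha-l}v'^{\beta}-v'^{-\alpha-l}v^{\beta}$, which is already enough; (ii) you interpolate $M_{0,-\alpha-l}$ against $M_{0,\gamma}$ while the paper uses $M_{0,1}$, and you lower-bound $M_{0,\beta}$ by reverse H\"older between $M_{0,1}$ and $M_{0,m}$ while the paper splits the integral at a fixed threshold $N$. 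All these variants give an exponent strictly larger than one precisely when $\alpha>0$, which you correctly identify as the mechanism.

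The step you label a ``routine accounting for $\xi_R$'' is, however, the one genuinely delicate point, and your description of it is inverted. The factor $\xi_R(v+v')$ multiplies a \emph{nonpositive} integrand, so deleting it makes the right-hand side smaller, not larger; you cannot argue that ``the lost contribution from $\{v+v'>R\}$ is controlled by the moment bounds,'' because what is lost there is dissipation, and you need a \emph{lower} bound on what remains. Consequently the inequality $\langle\mathbb{K}_{\epsilon,R}[g],v^{-l}\rangle\leq -C\,M_{0,-\alpha-l}M_{0,\beta}$ with full moments is not directly available. The paper instead uses $\xi_R(v+v')\geq\mathbbm{1}_{\{v<R/2\}}\mathbbm{1}_{\{v'<R/2\}}$ and arrives at the \emph{truncated} product $\int_{\{v<R/2\}}v^{-\alpha-l}g\cdot\int_{\{v<R/2\}}v^{\beta}g$. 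To convert this back into a power of the full $M_{0,-l}$, it needs both (a) the lower bound $\int_{\{v<R/2\}}v^{\beta}g\geq\tfrac{1}{4N^{1-\beta}}$, uniformly in $R$ large, obtained from $M_{0,1}=1$ and the higher moment $M_{0,\tilde m}$; and (b) the elementary inequality $x^{1/\theta}\geq 2^{1-1/\theta}(x+y)^{1/\theta}-y^{1/\theta}$ applied with $x=\int_{\{v<R/2\}}v^{-l}g$ and $y=\int_{\{v>R/2\}}v^{-l}g$, the latter being uniformly bounded by $M_{0,1}(g_\textup{in})$. Once this is in place, your Bernoulli comparison argument finishes the proof exactly as you describe.
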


We now find uniform estimates for the total surface area of $g_{\epsilon,R,\delta}$. The uniform estimates for the surface area are a consequence of the fusion term overtaking the coagulation operator in the case $\mu>0$. We prove a more general statement which will be needed later on for the improvement of moment estimates.
\begin{prop}\label{avnegativeprop}
 Let $g_{\textup{in},R}$ and $g_{\epsilon,R,\delta}$ (which we will denote by $g_{\textup{in}}$ and $g$, respectively) be as in Proposition \ref{firstmoments}. Then
\begin{align}
\sup_{t\geq 0}M_{1,0}(g(\cdot,\cdot,t))\leq \max\{M_{1,0}(g_{\textup{in}}), C_{1,0}\}.\label{mainarea}
\end{align}
Moreover, if (\ref{alpha non neg}) holds and if $M_{1,-\alpha}(g_{\textup{in}})<\infty$, there exists $C_{1,-\alpha}>0$ such that:
\begin{align}
\sup_{t\geq 0}M_{1,-\alpha}(g(\cdot,\cdot,t))\leq \max\{M_{1,-\alpha}(g_{\textup{in}}), C_{1,-\alpha}\}. \label{avnegative}
\end{align}
If (\ref{alpha zero}) holds, there exists $\delta_{1}>0,$ for which $\gamma\leq \frac{2}{3}-\delta_{1}$, and there exists $C_{1,-\delta_{1}}>0$ such that:
\begin{align}\label{avnegative alpha zero}
\sup_{t\geq 0}M_{1,-\delta_{1}}(g(\cdot,\cdot,t))\leq \max\{M_{1,-\delta_{1}}(g_{\textup{in}}), C_{1,-\delta_{1}}\}. 
\end{align}
\end{prop}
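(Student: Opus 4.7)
The plan is to derive each of the three estimates by testing the weak formulation (\ref{regularizedformequation}) against an unbounded test function of the form $\varphi(a,v) = a v^{-p}$ with $p = 0$, $\alpha$, or $\delta_1$ respectively. Such test functions are admissible because the norm in $\textup{Y}_{\epsilon,\tau}$ from (\ref{spaceexistence}) controls $\int a\, g\, \der \eta$ and the regularization $r_\delta$ in (\ref{cutfusion}) keeps the fusion coefficient bounded; the technical challenge is to show that the resulting constants are independent of $\epsilon, R, \delta$.

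For (\ref{mainarea}) I take $\varphi(a,v)=a$. Since $\chi_\varphi \equiv 0$ the coagulation contribution vanishes, and with $a\partial_a\varphi = a$, $v\partial_v\varphi = 0$ the weak equation collapses to
\begin{align*}
\partial_t M_{1,0}(g) = \tfrac{1}{3}\int \Theta_\epsilon(v)\, a\, g\, \der\eta + (1-\gamma)\int r_\delta(a,v)(c_0 v^{2/3}-a)\, g\, \der\eta,
\end{align*}
whose last integrand is non-positive on $\tilde{\textup{S}}$. I would split at $a = 2c_0 v^{2/3}$: on the near-spherical slab $\{c_0 v^{2/3}\leq a\leq 2c_0 v^{2/3}\}$ one has $a\lesssim v^{2/3}$, so its contribution to the transport is controlled by moments in $v$ already bounded by Propositions \ref{firstmoments} and \ref{prop m moment}; on the complementary region $a-c_0 v^{2/3}\geq a/2$ and (\ref{fusion_form})--(\ref{cutfusion}) give $r_\delta \geq R_0 a^\mu v^\sigma/(1+\delta a^\mu)$. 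A Young-type inequality of the form $a\leq \varepsilon\, a^{\mu+1}v^\sigma/(1+\delta a^\mu) + C_\varepsilon\, v^{-\sigma/\mu}$ (derived by splitting $\{\delta a^\mu \leq 1\}$ versus its complement and applying a slightly different Young bound on each piece) then lets the fusion dissipation absorb the $\tfrac{1}{3}\int a g$ drift. The remainder integral $\int v^{-\sigma/\mu} g\,\der\eta$ is bounded uniformly because the invariant set $\omega(\epsilon,R,\delta)$ in (\ref{THESET}) includes $M_{0,\tilde m}(g)$ with $\tilde m > |\sigma|/\mu$ (recall $\sigma = \gamma-1-\tfrac{2}{3}\mu < 0$). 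A Gr\"onwall argument then closes the estimate.

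The bound (\ref{avnegative}) follows by the same scheme with $\varphi = a v^{-\alpha}$: the extra factor $-v\partial_v(v^{-\alpha}) = \alpha v^{-\alpha}$ enlarges the transport coefficient to $\tfrac{1}{3}+\alpha$; the coagulation contribution is non-positive, because $v\mapsto v^{-\alpha}$ is strictly decreasing and therefore $\chi_\varphi \leq 0$, and may be discarded; the Young remainder is now $M_{0,-\alpha-\sigma/\mu}(g)$, finite thanks to Proposition \ref{m-lprop} --- this is precisely where the assumption $\alpha > 0$ enters decisively. The bound (\ref{avnegative alpha zero}) under (\ref{alpha zero}) is obtained with $\varphi = a v^{-\delta_1}$ for $\delta_1 \in (0, |\sigma|/\mu - \gamma)$, an interval that is non-empty because $|\sigma|/\mu = (1-\gamma)/\mu + \tfrac{2}{3} > \gamma$ under (\ref{alpha zero}); for such $\delta_1$ the Young remainder $M_{0,-\delta_1-\sigma/\mu}(g)$ has exponent $\geq \gamma$ and is controlled by interpolation between $M_{0,\gamma}$ (Proposition \ref{firstmoments}) and $M_{0,\tilde m}$ (Proposition \ref{prop m moment}).

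The main technical obstacle throughout is the uniformity in $\delta$ of the Young absorption step: the tempering factor $1/(1+\delta a^\mu)$ in $r_\delta$ weakens the fusion dissipation for $a\gtrsim \delta^{-1/\mu}$, which forces one to treat the two regimes $\{\delta a^\mu\leq 1\}$ and $\{\delta a^\mu > 1\}$ separately, exploiting in the latter that $r_\delta(a-c_0 v^{2/3}) \gtrsim a v^\sigma/\delta$ to dispose of the drift on that set. The hypothesis $\mu > 0$ is indispensable here: it is the super-linear growth of $r(a,v)$ in $a$ that makes the absorption possible in both regimes and ultimately beats the $\tfrac{1}{3}M_{1,0}$ drift generated by the self-similar transport terms.
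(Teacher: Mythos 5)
Your plan tracks the paper's proof closely: test with $\varphi = av^{-p}$ so that $\chi_\varphi \leq 0$ kills (or sign-controls) the coagulation contribution, split the fusion integral at $a = 2c_0 v^{2/3}$ and then at $\delta a^\mu = 1$, apply Young's inequality on the $\{\delta a^\mu \leq 1\}$ part, control the remainder with the uniform $v$-moments from Propositions \ref{firstmoments}--\ref{m-lprop}, and close with a comparison argument; your explanation of why $\mu>0$ is indispensable for the Young absorption is also the one the paper uses. Two details, however, are off in ways that would derail the argument as written.

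On $\{\delta a^\mu > 1\}$ the bound you invoke, $r_\delta(a - c_0 v^{2/3}) \gtrsim av^\sigma/\delta$, is not uniform: since $\sigma < 0$ and $v$ ranges up to $2R$, the factor $v^\sigma/\delta$ can be arbitrarily small relative to the drift coefficient $\tfrac{1}{3}+|l|$, so this does not ``dispose of the drift'' uniformly in $\delta$ and $R$. The cutoff in (\ref{cutfusion}) carries the extra factor $\max\{v^{\sigma},L\delta\}/v^{\sigma}$ precisely to repair this: on $\{\delta a^\mu>1\}$ one uses $\max\{v^{\sigma},L\delta\}\geq L\delta$ so that $r_\delta \geq R_0 L/2$ with no residual $\delta$- or $v$-dependence, and it is the choice of $L$ in (\ref{definition_L}), giving $(1-\gamma)R_0 L/4 = 3$, that makes the fusion dissipation dominate $\tfrac{1}{3}+|l|$. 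Your account never invokes the $\max$ factor and therefore never recovers a $\delta$-uniform absorption constant on that set.

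Your admissible range $\delta_1 \in (0, |\sigma|/\mu - \gamma)$ in the $\alpha=0$ case is too wide. You track only the Young-remainder constraint, but passing from $\{a \geq 2c_0 v^{2/3}\}$ back to the full isoperimetric region produces a second remainder, $M_{0,\,2/3-\delta_1}$, from the near-spherical slab --- a term you noted when discussing (\ref{mainarea}) but dropped here. For $\alpha=0$ this needs $2/3-\delta_1 \geq \gamma$, and since $|\sigma|/\mu = (1-\gamma)/\mu + 2/3 > 2/3$ when $\mu>0$, it is $\delta_1 \leq 2/3-\gamma$ that binds, which is exactly the constraint the proposition states.
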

\begin{proof}
We test equation (\ref{regularizedformequation}) with $\varphi(a,v)=av^{l}$, with $l\leq 0$. This is possible due to our choice of the space $\textup{Y}_{\epsilon,\tau}$, which allows us to test with functions that do not have compact support if they are bounded from above by a function of the form $a^{c}v^{d}$, with $c\leq 1$ and $d\in\mathbb{R}$. Since $l\leq0$, we have:
\begin{align*}
  K_{\epsilon,R}(a,v,a',v')[(a+a')(v+v')^{l}-av^{l}-a'v'^{l}]\leq 0.   
\end{align*}
Equation (\ref{regularizedformequation}) thus becomes:
\begin{align}\label{equation thus becomes}
   \frac{\der}{\der t} M_{1,l}(g(t))\leq (\frac{1}{3}+|l|)M_{1,l}(g(t))+(1-\gamma)R_{0}\int_{(0,\infty)^{2}}\frac{a^{\mu}\max\{v^{\sigma}, L\delta\}}{1+\delta a^{\mu}}(c_{0}v^{\frac{2}{3}}-a)g(a,v,t)v^{l}\der v \der a.
\end{align}
We need some control over the drift term:
\begin{align}
    \int_{\{a\geq c_{0}v^{\frac{2}{3}}\}}\frac{a^{\mu}\max\{v^{\sigma}, L\delta\}}{1+\delta a^{\mu}}(c_{0}v^{\frac{2}{3}}-a)g(\eta,t)v^{l}\der \eta &\leq  \int_{\{a\geq 2c_{0}v^{\frac{2}{3}}\}}\frac{a^{\mu}\max\{v^{\sigma}, L\delta\}}{1+\delta a^{\mu}}(c_{0}v^{\frac{2}{3}}-a)g(\eta,t)v^{l}\der \eta \nonumber \\
    &\leq -\frac{1}{2}\int_{\{a\geq 2c_{0}v^{\frac{2}{3}}\}}\frac{a^{\mu}\max\{v^{\sigma}, L\delta\}}{1+\delta a^{\mu}}av^{-|l|}g(\eta,t)\der \eta. \label{we need some control over the drift term}
\end{align}
This integral can now be estimated as follows:
\begin{align}\label{surface area bound 1}
& -\frac{1}{2}\int_{\{a\geq 2c_{0}v^{\frac{2}{3}}, a^{\mu}\geq \frac{1}{\delta}\}}\frac{a^{\mu}\max\{v^{\sigma}, L\delta\}}{1+\delta a^{\mu}}av^{-|l|}g(\eta,t)\der \eta-\frac{1}{2}\int_{\{a\geq 2c_{0}v^{\frac{2}{3}}, a^{\mu}< \frac{1}{\delta}\}}\frac{a^{\mu}\max\{v^{\sigma}, L\delta\}}{1+\delta a^{\mu}}av^{-|l|}g(\eta,t)\der \eta\nonumber\\
 &\leq -\frac{1}{4\delta}\int_{\{a\geq 2c_{0}v^{\frac{2}{3}}, a^{\mu}\geq \frac{1}{\delta}\}}\max\{v^{\sigma}, L\delta\}av^{-|l|}g(a,v,t)\der v \der a-\frac{1}{4}\int_{\{a\geq 2c_{0}v^{\frac{2}{3}}, a^{\mu}< \frac{1}{\delta}\}}a^{\mu+1}v^{\sigma-|l|}g(a,v,t)\der v \der a\nonumber\\
  &\leq -\frac{L}{4}\int_{\{a\geq 2c_{0}v^{\frac{2}{3}}, a^{\mu}\geq \frac{1}{\delta}\}}av^{-|l|}g(a,v,t)\der v \der a-\frac{1}{4}\int_{\{a\geq 2c_{0}v^{\frac{2}{3}}, a^{\mu}< \frac{1}{\delta}\}}a^{\mu+1}v^{\sigma-|l|}g(a,v,t)\der v \der a.
\end{align}
In order to estimate the second term on the right-hand side of (\ref{surface area bound 1}), we apply Young's inequality. This is possible since $\mu+1>1$. Thus, we obtain:
\begin{align*}
    av^{l}=av^{\frac{\sigma-|l|}{\mu+1}}v^{l-\frac{\sigma-|l|}{\mu+1}}\leq\frac{\epsilon}{p}(av^{\frac{\sigma-|l|}{\mu+1}})^{\mu+1}+\frac{\epsilon^{-\frac{q}{p}}}{q}v^{(l-\frac{{\sigma-|l|}}{\mu+1})q},
\end{align*}
where $p=\mu+1$ and $\frac{1}{p}+\frac{1}{q}=1$. Thus, there exists a constant $C_{\epsilon}>0$ depending on $\epsilon$
for which $av^{l}\lesssim \epsilon a^{\mu+1}v^{\sigma-|l|}+\epsilon C_{\epsilon}v^{l-\frac{\sigma}{\mu}}$, or equivalently:
\begin{align*}
-a^{\mu+1}v^{\sigma-|l|} \lesssim -\frac{1}{\epsilon}  av^{l}+C_{\epsilon}v^{l-\frac{\sigma}{\mu}}
\end{align*}
and this implies:
\begin{align}\label{surface area bound 2}
    -\int_{\{a\geq 2c_{0}v^{\frac{2}{3}}, a^{\mu}<\frac{1}{\delta}\}}a^{\mu+1}v^{\sigma-|l|}g(a,v,t)\der v \der a\lesssim -\frac{1}{\epsilon}\int_{\{a\geq 2c_{0}v^{\frac{2}{3}},a^{\mu}<\frac{1}{\delta}\}}av^{l}g(a,v,t)\der v \der a + C_{\epsilon}M_{0,l-\frac{\sigma}{\mu}}(g(t)).
\end{align}
From Propositions \ref{firstmoments}-\ref{m-lprop}, we have that $M_{0,l-\frac{\sigma}{\mu}}(g(t))$ is uniformly bounded from above. We combine (\ref{we need some control over the drift term}), (\ref{surface area bound 1}) and (\ref{surface area bound 2}) and then we choose $\epsilon$ sufficiently small in order to obtain that
\begin{align}\label{surface area bound added}
     \int_{\{a\geq c_{0}v^{\frac{2}{3}}\}}\frac{a^{\mu}\max\{v^{\sigma}, L\delta\}}{1+\delta a^{\mu}}(c_{0}v^{\frac{2}{3}}-a)g(\eta,t)v^{l}\der \eta \lesssim -\frac{L}{4}\int_{\{a\geq 2c_{0}v^{\frac{2}{3}}\}}av^{-|l|}g(a,v,t)\der v \der a+C_{\epsilon}.
\end{align}
Moreover, in order to pass back to the region $\{a\geq c_{0}v^{\frac{2}{3}}\}$ in the integral term on the right-hand side of (\ref{surface area bound added}), we notice that:
\begin{align}\label{surface area bound 3}
   -\int_{\{a\geq 2c_{0}v^{\frac{2}{3}}\}}av^{l}g(a,v,t)\der v \der a&= -\int_{\{a\geq c_{0}v^{\frac{2}{3}}\}}av^{l}g(a,v,t)\der v \der a+\int_{\{c_{0}v^{\frac{2}{3}}\leq a<2c_{0}v^{\frac{2}{3}}\}}av^{l}g(a,v,t)\der v \der a \nonumber\\
   &\leq-\int_{\{a\geq c_{0}v^{\frac{2}{3}}\}}av^{l}g(a,v,t)\der v \der a+2c_{0}\int_{(0,\infty)^{2}}v^{\frac{2}{3}+l}g(a,v,t)\der v \der a.
\end{align}
Combining (\ref{equation thus becomes}), (\ref{surface area bound added}) and (\ref{surface area bound 3}), we deduce that 
\begin{align}\label{neededlaterforav^m}
    \frac{\der}{\der t} M_{1,l}(g(t))\lesssim (\frac{1}{3}+|l|)M_{1,l}(g(t))-(1-\gamma)R_{0}\frac{L}{4}M_{1,l}(g(t))+C_{\epsilon}.
\end{align}
Assume $\alpha>0$. If $l=0$ we conclude using a comparison argument since $(1-\gamma)R_{0}\frac{L}{4}=3$ cf. (\ref{definition_L}). The proof remains valid if we choose $l=-\alpha.$

If (\ref{alpha zero}) holds and we choose $l=-\delta_{1}$ to be such that $\frac{2}{3}-\delta_{1}\geq \gamma$, we have that $M_{0,\frac{2}{3}+l}$ and $M_{0,\frac{|\sigma|}{\mu}+l}$ are uniformly bounded from above and we can conclude in a similar manner.
\end{proof}
\begin{proof}[Proof of Proposition \ref{lemmaweaksol}]
    Let $T>0$. Let $S(t): U_{\epsilon,R}\rightarrow U_{\epsilon,R}$, for $t\in[0,T]$, be as in (\ref{definition semigroup}). Proposition \ref{propweaksol} guarantees that the semigroup is well-defined. Proposition \ref{continuous time semigroup} gives us continuity in time of the mapping $S(\cdot)g:[0,T]\rightarrow U_{\epsilon,R}$, for $g\in U_{\epsilon,R}$. Propositions \ref{firstmoments} - \ref{avnegativeprop} prove that $S(t)\omega(\epsilon,R,\delta)\subseteq\omega(\epsilon,R,\delta)$ if we choose the constants $c_{0,-\alpha-\tilde{\epsilon}},$ $c_{0,\tilde{m}},$ $c_{1,0}>0$ in Proposition \ref{lemmaweaksol} to correspond to the constants found in Propositions \ref{firstmoments} - \ref{avnegativeprop}.
\end{proof}
We are now in a position to prove Theorem \ref{themostimportanttheorem}.
\begin{proof}[Proof of Theorem \ref{themostimportanttheorem}, case $\alpha>0$. Existence of self-similar profiles]
Assume first $\alpha>0$. Let $T>0$. Let $S(t): U_{\epsilon,R}\rightarrow U_{\epsilon,R}$, for $t\in[0,T]$, be as in (\ref{definition semigroup}). For fixed $t\in[0,T]$, Proposition \ref{continuous semigroup 2} assures that the mapping  $S(t): U_{\epsilon,R}\rightarrow U_{\epsilon,R}$ is continuous in the weak-$^{\ast}$ topology. Proposition  \ref{lemmaweaksol} gives us that $S(t)\omega(\epsilon,R,\delta)\subseteq\omega(\epsilon,R,\delta)$.

Using a variant of Tykonov’s fixed point theorem (see, for instance, \cite[Theorem 1.2]{ESCOBEDO}), we can find a stationary solution in time, which we denote by $\tilde{g}_{\epsilon,R,\delta}(\eta)$. As $\tilde{g}_{\epsilon,R,\delta}\in\omega(\epsilon,R,\delta),$ we can find a subsequence $(\tilde{g}_{\epsilon_{k},R_{k},\delta_{k}})_{k\geq 1}$ that converges to some $g$ in the weak-$^{\ast}$ topology as $k\rightarrow\infty$. This argument has been used extensively to prove the existence of self-similar profiles for coagulation equations and therefore we do not give more details.
\

We need to prove that this $g$ satisfies (\ref{self_similar_equation}). Fix $\varphi\in\compactfundif$.
\

As $\tilde{g}_{\epsilon_{k},R_{k},\delta_{k}}\rightharpoonup g$, as $k\rightarrow \infty$, the linear terms containing $\tilde{g}_{\epsilon_{k},R_{k},\delta_{k}}$ will converge to the linear terms in (\ref{self_similar_equation}). We deal with the fusion term in the following manner:
\begin{align}\label{convergence fusion term}
&\bigg| \int_{(0,\infty)^{2}}  r_{\delta}( \eta)(c_{0}v^{\frac{2}{3}}-a)\partial_{a}\varphi(\eta)\tilde{g}_{\epsilon_{k},R_{k},\delta_{k}}(\der\eta)-  \int_{(0,\infty)^{2}} r( \eta)(c_{0}v^{\frac{2}{3}}-a)\partial_{a}\varphi(\eta)g(\der \eta)\bigg|\nonumber\\
& \leq C\int_{(0,\infty)^{2}} | r_{\delta}( \eta)-  r( \eta)||\partial_{a}\varphi(\eta)|\tilde{g}_{\epsilon_{k},R_{k},\delta_{k}}(\der \eta)+  \bigg|\int_{(0,\infty)^{2}} r( \eta)(c_{0}v^{\frac{2}{3}}-a)\partial_{a}\varphi(\eta) [g-\tilde{g}_{\epsilon_{k},R_{k},\delta_{k}}](\der \eta)\bigg|
\end{align}
As stated before, $r_{\delta}(\eta)\rightarrow r(\eta)$ for fixed $\eta$ as $\delta\rightarrow 0$. Due to the compact support of $\varphi$ and since $\tilde{g}_{\epsilon_{k},R_{k},\delta_{k}}\in\omega(\epsilon,R,\delta)$, the first term in (\ref{convergence fusion term}) goes to zero as $k\rightarrow\infty$. The second term in (\ref{convergence fusion term}) goes to zero as $k\rightarrow\infty$ since $\tilde{g}_{\epsilon_{k},R_{k},\delta_{k}}\rightharpoonup g$ as $k\rightarrow \infty$.

We now examine the coagulation term
\begin{align}
\int_{(0,\infty)^{2}}\int_{(0,\infty)^{2}}K_{\epsilon,R}(\eta,\eta')\xi_{R}(v+v')[\varphi(\eta+\eta')-\varphi(\eta)-\varphi(\eta')]\tilde{g}_{\epsilon_{k},R_{k},\delta_{k}}(\eta)\tilde{g}_{\epsilon_{k},R_{k},\delta_{k}}(\eta')\der \eta'\der \eta.   
\end{align}
Notice first that the support of $\tilde{g}_{\epsilon_{k},R_{k},\delta_{k}}$ is contained in the strip $v\in[\epsilon,2R)$. We can then replace $K_{\epsilon,R}(\eta,\eta')$ by $K(\eta,\eta')$. On the other hand, 
\begin{align*}
\int_{(0,\infty)^{2}}\int_{(0,\infty)^{2}}K(\eta,\eta')|\xi_{R}(v+v')-1|[\varphi(\eta+\eta')-\varphi(\eta)-\varphi(\eta')]\tilde{g}_{\epsilon_{k},R_{k},\delta_{k}}(\eta)\tilde{g}_{\epsilon_{k},R_{k},\delta_{k}}(\eta')\der \eta'\der \eta
\end{align*}
will go uniformly to zero as $R\rightarrow \infty$ using only the uniform moment estimates in $v$, namely $M_{0,1}$ and $M_{0,-\alpha}$.
Take a continuous function $p:\mathbb{R}_{+}\rightarrow[0,1]$ such that $p(x)=1,$ when $x\leq 1$ and $p(x)=0,$ when $x\geq 2$. Define then $p_{M}(a,v,a',v')=p(\frac{a}{M})p(\frac{v}{M})p(\frac{a'}{M})p(\frac{v'}{M})p(\frac{1}{aM})p(\frac{1}{vM})p(\frac{1}{a'M})p(\frac{1}{v'M})$, for $M>1$ large enough. We have that
\begin{align*}
\int_{(0,\infty)^{2}}\int_{(0,\infty)^{2}}K(\eta,\eta')[\varphi(\eta+\eta')-\varphi(\eta)-\varphi(\eta')]p_{M}(\eta,\eta')\tilde{g}_{\epsilon_{k},R_{k},\delta_{k}}(\eta)\tilde{g}_{\epsilon_{k},R_{k},\delta_{k}}(\eta')\der \eta'\der \eta   
\end{align*}
converges to
\begin{align*}
\int_{(0,\infty)^{2}}\int_{(0,\infty)^{2}}K(\eta,\eta')[\varphi(\eta+\eta')-\varphi(\eta)-\varphi(\eta')]p_{M}(\eta,\eta')g(\eta)g(\eta')\der \eta'\der \eta,  
\end{align*}
as $k\rightarrow\infty$ since the support of the integral is now on a compact set. An argument closely related can be found in \cite[Proof of Theorem 2.3]{ferreira2020stationary}. It remains to estimate the contribution outside the set $[\frac{1}{M},M]^{4}$. When $v<\frac{1}{M}$ or $v>M$, the estimates are similar to the one-dimensional case as the kernel is bounded by functions depending only on $v$. We can prove that the integral converges to zero as $M\rightarrow\infty$ in the region $\{v<\frac{1}{M}\}$, using the uniform estimates for $M_{0,-\alpha-\tilde{\epsilon}}$ and $M_{0,\beta}$. When $v>M$, we use that $M_{0,-\alpha}$ and $M_{0,1}$ are uniformly bounded to obtain the desired convergence. The regions where $\{v'<M\}$ and $\{v'>\frac{1}{M}\}$ are treated in the same manner by symmetry. We can treat the region where $\{a<\frac{1}{M}\}$ in the same manner as the region where $\{v<\big(\frac{1}{c_{0}M}\big)^{\frac{3}{2}}\}$ due to the isoperimetric inequality $a\geq c_{0}v^{\frac{2}{3}}$. For $a>M$, the main point is to control 
\begin{align*}
\int_{(0,\infty)^{2}\cap\{a>M\}}(v^{-\alpha}+v^{\beta})\tilde{g}_{\epsilon,R,\delta}(\eta).
\end{align*}
We split the region into three parts and use the fact that for large and small values of $v$, we can control this region using only moments in $v$ as before. More precisely, let $\tilde{C}>1$ large, then
\begin{align}\label{control large area}
 \int_{\{a>M\}}(v^{-\alpha}+v^{\beta})\tilde{g}_{\epsilon,R,\delta}(\eta)\der \eta&\leq 2\int_{(0,\infty)^{2}\cap\{v<\frac{1}{\tilde{C}}\}}v^{-\alpha}\tilde{g}_{\epsilon,R,\delta}(\eta)\der \eta+\int_{\{a>M,v\in[\frac{1}{\tilde{C}},\tilde{C}]\}}(v^{-\alpha}+v^{\beta})\tilde{g}_{\epsilon,R,\delta}(\eta)\der \eta\nonumber\\
 &+2\int_{(0,\infty)^{2}\cap\{v>\tilde{C}\}}v^{\beta}\tilde{g}_{\epsilon,R,\delta}(\eta)\der \eta\nonumber\\
 &\leq2\tilde{C}^{-\tilde{\epsilon}}\int_{(0,\infty)^{2}\cap\{v<\frac{1}{\tilde{C}}\}}v^{-\alpha-\tilde{\epsilon}}\tilde{g}_{\epsilon,R,\delta}(\eta)\der \eta+2\tilde{C}M^{-1}\int_{\{a>M,v\in[\frac{1}{\tilde{C}},\tilde{C}]\}}a\tilde{g}_{\epsilon,R,\delta}(\eta)\der \eta\nonumber\\
 &+2\tilde{C}^{\beta-1}\int_{(0,\infty)^{2}\cap\{v>\tilde{C}\}}v\tilde{g}_{\epsilon,R,\delta}(\eta)\der \eta\nonumber\\
  &\leq2\tilde{C}^{-\tilde{\epsilon}}c_{0,-\alpha-\tilde{\epsilon}}+2\tilde{C}M^{-1}c_{1,0}+2\tilde{C}^{\beta-1},
\end{align}
where $\tilde{\epsilon}$ is chosen as in Proposition \ref{lemmaweaksol} and $c_{1,0}, c_{0,-\alpha-\tilde{\epsilon}}$ are the constants found in Proposition \ref{lemmaweaksol}.  For given $\tilde{\delta}\in(0,1)$, we first take $\tilde{C}$ to be such that $2\tilde{C}^{-\tilde{\epsilon}}c_{0,-\alpha-\tilde{\epsilon}}+2\tilde{C}^{\beta-1}\leq \frac{\tilde{\delta}}{2}$ and then $M$ such that $2\tilde{C}M^{-1}c_{1,0}\leq\frac{\tilde{\delta}}{2}$. This proves our desired convergence of $\int_{(0,\infty)^{2}\cap\{a>M\}}(v^{-\alpha}+v^{\beta})\tilde{g}_{\epsilon,R,\delta}(\eta)$ to zero as $M\rightarrow\infty$, thus concluding our proof.
\end{proof}
\begin{proof}[Proof of Theorem \ref{main teo case alpha zero}, case $\alpha=0$. Existence of self-similar profiles]
We keep the notation used in the Proof of Theorem \ref{themostimportanttheorem}, case $\alpha>0$. The proof is done in the same manner as in the case $\alpha>0$. The difference is that we derived uniform bounds for moments of the form $M_{0,c},$ with $c\geq \gamma$ and for the moment $M_{1,0}$, but we have no information about moments of the form $M_{0,c},$ with $c<\gamma$. This implies that we now have to control the contribution of the regions $\{v\leq \frac{1}{M}\}$ and $\{v'\leq \frac{1}{M}\},$ for a sufficiently large $M$, in the term containing the coagulation kernel
\begin{align}\label{full coag kernel}
\int_{(0,\infty)^{2}}\int_{(0,\infty)^{2}}K_{\epsilon,R}(\eta,\eta')\xi_{R}(v+v')v[\varphi(\eta+\eta')-\varphi(\eta)]\tilde{g}_{\epsilon_{k},R_{k},\delta_{k}}(\eta)\tilde{g}_{\epsilon_{k},R_{k},\delta_{k}}(\eta')\der \eta'\der \eta.
\end{align}
For the region $\{v<\frac{1}{M}\}$, we have that:
\begin{align}
&\bigg|\int_{\{v<\frac{1}{M}\}}\int_{(0,\infty)^{2}}K_{\epsilon,R}(\eta,\eta')\xi_{R}(v+v')v[\varphi(\eta+\eta')-\varphi(\eta)]\tilde{g}_{\epsilon_{k},R_{k},\delta_{k}}(\eta)\tilde{g}_{\epsilon_{k},R_{k},\delta_{k}}(\eta')\der \eta'\der \eta\bigg| \label{full term v small}\\
&\lesssim\int_{\{v<\frac{1}{M}\}}\int_{(0,\infty)^{2}}(v^{\beta+1}+vv'^{\beta})|\varphi(\eta+\eta')-\varphi(\eta)|\tilde{g}_{\epsilon_{k},R_{k},\delta_{k}}(\eta)\tilde{g}_{\epsilon_{k},R_{k},\delta_{k}}(\eta')\der \eta'\der \eta\nonumber\\
&\lesssim2||\varphi||_{\infty}M^{\beta-1}M_{0,\beta}(\tilde{g}_{\epsilon_{k},R_{k},\delta_{k}})^{2}+M^{-\beta}\int_{\{v<\frac{1}{M}\}}\int_{(0,\infty)^{2}}v|\varphi(\eta+\eta')-\varphi(\eta)|\tilde{g}_{\epsilon_{k},R_{k},\delta_{k}}(\eta)\tilde{g}_{\epsilon_{k},R_{k},\delta_{k}}(\eta')\der \eta'\der \eta\nonumber. 
\end{align}
Thus, in order to show that the term in (\ref{full term v small}) goes to zero as $M\rightarrow\infty,$ it suffices to show that the integral above is bounded.
\begin{align*}
   &\int_{\{v<\frac{1}{M}\}}\int_{(0,\infty)^{2}}v|\varphi(\eta+\eta')-\varphi(\eta)|\tilde{g}_{\epsilon_{k},R_{k},\delta_{k}}(\eta)\tilde{g}_{\epsilon_{k},R_{k},\delta_{k}}(\eta')\der \eta'\der \eta\\
   &\leq ||\nabla{\varphi}||_{\infty}M_{0,1}(\tilde{g}_{\epsilon_{k},R_{k},\delta_{k}})\int_{(0,\infty)^{2}}|\eta'|\tilde{g}_{\epsilon_{k},R_{k},\delta_{k}}(\eta')\der \eta'\leq C.
\end{align*}
We now deal with the region where $\{v'<\frac{1}{M}\}$:
\begin{align}
&\bigg|\int_{(0,\infty)^{2}}\int_{\{v'<\frac{1}{M}\}}K_{\epsilon,R}(\eta,\eta')\xi_{R}(v+v')v[\varphi(\eta+\eta')-\varphi(\eta)]\tilde{g}_{\epsilon_{k},R_{k},\delta_{k}}(\eta)\tilde{g}_{\epsilon_{k},R_{k},\delta_{k}}(\eta')\der \eta'\der \eta\bigg| \nonumber\\
&\lesssim\int_{(0,\infty)^{2}}\int_{\{v'<\frac{1}{M}\}}(v^{\beta+1}+vv'^{\beta})|\varphi(\eta+\eta')-\varphi(\eta)|\tilde{g}_{\epsilon_{k},R_{k},\delta_{k}}(\eta)\tilde{g}_{\epsilon_{k},R_{k},\delta_{k}}(\eta')\der \eta'\der \eta.\label{firstsecond term}
\end{align}
For the first term in (\ref{firstsecond term}), we have:
\begin{align*}
  &\int_{(0,\infty)^{2}}\int_{\{v'<\frac{1}{M}\}}v^{\beta+1}|\varphi(\eta+\eta')-\varphi(\eta)|\tilde{g}_{\epsilon_{k},R_{k},\delta_{k}}(\eta)\tilde{g}_{\epsilon_{k},R_{k},\delta_{k}}(\eta')\der \eta'\der \eta\\
  &\leq ||\nabla \varphi||_{\infty} \int_{(0,\infty)^{2}}\int_{\{v'<\frac{1}{M}\}}v^{\beta+1}(a'+v')\tilde{g}_{\epsilon_{k},R_{k},\delta_{k}}(\eta)\tilde{g}_{\epsilon_{k},R_{k},\delta_{k}}(\eta')\der \eta'\der \eta\\
 & \leq M^{\beta-1}||\nabla\varphi||_{\infty} M_{0,\beta+1}(\tilde{g}_{\epsilon_{k},R_{k},\delta_{k}})M_{0,\beta}(\tilde{g}_{\epsilon_{k},R_{k},\delta_{k}})+ M^{-\delta_{1}}||\nabla\varphi||_{\infty}M_{0,\beta+1}(\tilde{g}_{\epsilon_{k},R_{k},\delta_{k}})M_{1,-\delta_{1}}(\tilde{g}_{\epsilon_{k},R_{k},\delta_{k}}).
\end{align*}
For the second term in (\ref{firstsecond term}), we have:
\begin{align*}
&\int_{(0,\infty)^{2}}\int_{\{v'<\frac{1}{M}\}}vv'^{\beta}|\varphi(\eta+\eta')-\varphi(\eta)|\tilde{g}_{\epsilon_{k},R_{k},\delta_{k}}(\eta)\tilde{g}_{\epsilon_{k},R_{k},\delta_{k}}(\eta')\der \eta'\der \eta\\
&\leq M^{-\beta}\int_{(0,\infty)^{2}}\int_{\{v'<\frac{1}{M}\}}v|\varphi(\eta+\eta')-\varphi(\eta)|\tilde{g}_{\epsilon_{k},R_{k},\delta_{k}}(\eta)\tilde{g}_{\epsilon_{k},R_{k},\delta_{k}}(\eta')\der \eta'\der \eta\\
    &\leq  M^{-\beta}||\nabla{\varphi}||_{\infty}[M_{0,1}^{2}(\tilde{g}_{\epsilon_{k},R_{k},\delta_{k}})+M_{0,1}(\tilde{g}_{\epsilon_{k},R_{k},\delta_{k}})M_{1,0}(\tilde{g}_{\epsilon_{k},R_{k},\delta_{k}})].
\end{align*}
We prove that the term in (\ref{full coag kernel}) converges to zero as $M\rightarrow\infty$ in the regions $\{v>M\}, \{v'>M\}$, $\{a<\frac{1}{M}\}$ and $\{a'<\frac{1}{M}\}$ in a similar manner. We deal with the regions where $\{a>M\}$ and $\{a'>M\}$ as in (\ref{control large area}). This concludes the proof.
\end{proof}
\subsection{Moment estimates for arbitrary powers of area and volume}\label{subsection better moment estimates}
We construct a self-similar profile $\tilde{g}$ which satisfies $M_{n,k}(\tilde{g})<\infty,$ for $n,k\in\mathbb{R}$, if (\ref{alpha non neg}) holds, and $M_{n,k}(\tilde{g})<\infty,$ for $n\in\mathbb{N}$, $k\geq \gamma$, if (\ref{alpha zero}) holds. In order to do so, we apply the strategy used to find a self-similar profile in Theorem \ref{themostimportanttheorem}. In order to estimate the moments $M_{n,0}(\tilde{g})$, $n\in\mathbb{N}$, we notice that we can improve the estimates found in Proposition \ref{avnegativeprop}, while to derive bounds for $M_{-n,0}(\tilde{g})$, $n\in\mathbb{N}$, we use the fact that they can be estimated in terms of $M_{0,-\frac{2}{3}n}(\tilde{g})$.
\begin{rmk}\label{rmkanegavnega}
Let $n>0$ and $k\in\mathbb{R}$. In order to bound $M_{-n,k}(\tilde{g})$, it is enough to use $a\geq c_{0}v^{\frac{2}{3}}$ to obtain $\int_{\{a\geq c_{0}v^{\frac{2}{3}}\}}a^{-n}v^{k}\tilde{g}(a,v)\der v \der a\leq c_{0}^{-n}\int_{(0,\infty)^{2}}v^{k-\frac{2}{3}n}\tilde{g}(a,v)\der v \der a$ if $M_{0,k-\frac{2}{3}n}(\tilde{g})<\infty$.
\end{rmk}
In order to find uniform bounds for the moments $M_{n,0}(\tilde{g})$, we need to be able to test (\ref{regularizedformequation}) with higher powers of $a$.

Let $N\in\mathbb{N}, N\geq 2$. 
For some $\tilde{g}\in\mathscr{M}^{I}_{+}(\mathbb{R}_{>0}^{2})$ such that
\begin{align}\label{finalization_space_generalcase}
\int_{(0,\infty)^{2}}(1+a^{N})\tilde{g}(a,v)\der v \der a<\infty,
\end{align}
we define the space
\begin{align}\label{existence_space_strong_general}
U_{\epsilon,R,N}&:= \{\tilde{g}\in\mathscr{M}_{+}^{I}(\mathbb{R}^{2}_{>0}), \tilde{g}\Big(\mathbb{R}^{2}_{>0}\setminus[c_{0}\epsilon^{\frac{2}{3}},\infty)\times[\epsilon,2R)\Big)=0, \tilde{g} \textup{ satisfies } (\ref{finalization_space_generalcase})\}.
\end{align}
\begin{prop}\label{propweaksol_generalcase}
Let $K_{\epsilon,R}$ be the kernel defined in (\ref{truncation_kernel_selfsimilar}). Let $N\in\mathbb{N}$, $N\geq 2$. Let $g_{\textup{in},R}\in\mathscr{M}_{+,\textup{b}}^{I}(\mathbb{R}^{2}_{>0})\cap U_{\epsilon,R,N}$. 
There exists a unique solution $g_{\epsilon,R,\delta}\in\textup{C}^{1}([0,\infty);\mathscr{M}_{+,\textup{b}}^{I}(\mathbb{R}^{2}_{>0}))$, $g_{\epsilon,R,\delta}(t)\in U_{\epsilon,R,N}$, that satisfies
\begin{align*}
\sup_{t\in[0,T]}\int_{(0,\infty)^{2}}(1+a^{N})g_{\epsilon,R,\delta}(a,v,t)\der v \der a<\infty,
\end{align*} 
for every $T>0$, for the weak formulation of the coagulation equation (\ref{regularizedformequation}) with initial datum $g_{\textup{in},R}$.
\end{prop}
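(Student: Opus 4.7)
The plan is to adapt the Banach fixed-point argument used in Proposition \ref{propweaksol}, lifting it from the $(1+a)$-weighted setting to a $(1+a^N)$-weighted setting. I would define the analogue of $\textup{Y}_{\epsilon,\tau}$ from (\ref{spaceexistence}) by
\begin{align*}
\textup{Y}_{\epsilon,\tau,N} := \Big\{ G\in\textup{C}([0,\tau];\mathscr{M}_{+,\textup{b}}^{I}(\mathbb{R}^{2}_{>0})) & : \|G\|_N \leq 2M_N,\\
& G\Big(\mathbb{R}^{2}_{>0}\setminus[c_{0}\epsilon^{\frac{2}{3}},\infty)\times[\epsilon,2R\textup{e}^{t}),t\Big)=0,\; t\in[0,\tau]\Big\},
\end{align*}
where $\|G\|_N := \sup_{0\leq t\leq\tau} \int_{(0,\infty)^{2}} \textup{e}^{h_\epsilon(V,t)} (1+x_{t,V}(A))^N G(A,V,t)\,\der V\,\der A$ and $M_N := 2\int (1+a^N)g_{\textup{in},R}(a,v)\,\der v\,\der a + 1$. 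I then use exactly the same integral operator $J$ as in (\ref{definitionjf}) and look for its fixed point on $\textup{Y}_{\epsilon,\tau,N}$ for $\tau$ sufficiently small, analogously to (\ref{eq1})--(\ref{eq2}).

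The crucial ingredient is a linear growth bound for the area characteristic along the flow (\ref{changeofvartable}). Since $r_\delta$ is uniformly bounded on the support of interest (cf.\ (\ref{derivative3})) and $V$ stays inside $[\epsilon/2, 2R\textup{e}^t]$, one has $\partial_t A \leq C_{\epsilon,R,\delta}$, hence $x_{t,V}(A) \leq A + C_{\epsilon,R,\delta}\, t$. Together with the derivative bounds (\ref{bounds for derivative ode}) on the inverse flow, this yields, for the composition appearing in the gain term of $J_2$,
\begin{align*}
l_{1}(A,A',l_{2}(V,V',t)) \;\leq\; C(t)\bigl(x_{t,V}(A) + x_{t,V'}(A')\bigr) + C'(t) \;\leq\; \tilde{C}(t)(1+A+A').
\end{align*}
Using the elementary inequality $(1+A+A')^N \leq 2^{N-1}(1 + A^N + A'^N) + \text{lower order}$, the bilinear gain contribution can be bounded by $\tilde{C}(t)^N \|K_R\|_\infty \tau \,\|G\|_N\, \|G\|_0$, which is $\leq \tfrac{1}{2}M_N$ once $\tau$ is small, exactly as in Proposition \ref{contractivemap}.

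Contractivity $\|J[F]-J[G]\|_N \leq \tfrac{1}{2}\|F-G\|_N$ follows by the same bookkeeping as in Proposition \ref{contractivemap} (see Appendix \ref{appendix b}), since the kernel $K_R$ is bounded and the new weight $(1+x_{t,V}(A))^N$ interacts with the gain term only through the polynomial inequality above. Invariance of the isoperimetric support and of the $v$-support is identical to Proposition \ref{supp is kept}: the characteristics preserve $\tilde{\textup{S}}$ by Proposition \ref{observationofthedomain2}, and the truncation factor $\xi_R(v+v')$ together with the cutoffs at $v=\epsilon$ take care of the $v$-support.

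Once a fixed point $G_{\epsilon,R,\delta}$ is obtained on $[0,\tau]$, extension to arbitrary $T>0$ proceeds by induction exactly as in Proposition \ref{extensionsolutionprop}, restarting from $G_{\epsilon,R,\delta}(\cdot,\cdot,\tau)$, whose $N$-th moment is controlled by $2M_N$. Passing back to the original variables via Definition \ref{definitionchangeofvar} gives the desired $g_{\epsilon,R,\delta}$ in $\textup{C}^{1}([0,T];\mathscr{M}_{+,\textup{b}}^{I}(\mathbb{R}^{2}_{>0}))$ with $g_{\epsilon,R,\delta}(t)\in U_{\epsilon,R,N}$ and the stated $(1+a^N)$-moment bound. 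The main (and only nontrivial) obstacle is the characteristic bound for $l_1$ above, i.e.\ verifying that the inverse characteristic flow does not amplify the $N$-th moment at the level of the gain term; all other steps are direct transcriptions of the $N=1$ proof.
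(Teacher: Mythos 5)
Your overall scaffold — lift the weighted norm of $\textup{Y}_{\epsilon,\tau}$ to a $(1+a^N)$-weight, run the same Banach fixed-point argument on a short interval, extend by restarting, and return via Definition \ref{definitionchangeofvar} — is indeed what the paper does, and the factorization $1+(x+x')^N\leq 2^{N-1}(1+x^N)(1+x'^N)$ that you gesture at is exactly the inequality used in Proposition \ref{contractivemap_generalcase}. However, there are two places where your write-up departs from what is actually needed.

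First, the paragraph you call the ``crucial ingredient'' is both unnecessary and not quite correctly derived. There is no need to bound $l_1$ in terms of $A,A'$. By Definition \ref{observationofthedomain3}, the identity $x_{t,l_2(V,V',t)}\bigl(l_1(A,A',l_2(V,V',t),t)\bigr)=x_{t,V}(A)+x_{t,V'}(A')$ holds \emph{exactly}, so when the norm weight $\textup{e}^{h_\epsilon(V,t)}\bigl(1+x_{t,V}^N(A)\bigr)$ is tested against the gain term $J_2$, what appears at the coagulated point is precisely $1+\bigl(x_{s,V}(A)+x_{s,V'}(A')\bigr)^N$ (the $h_\epsilon$ factors being handled by $\Phi$ via (\ref{ceafacuteugenia})). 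One then applies the factorization inequality above and the bilinear bound follows with constants independent of any inverse-flow bound. Incidentally, your claimed $x_{t,V}(A)\leq A+C_{\epsilon,R,\delta}t$ is weaker than the monotone decrease $x_{t,V}(A)\leq A$ already used for $J_1$, and an upper bound on $l_1$ would in fact require a \emph{lower} bound on the flow map $x_{t,l_2}$, not the upper bound you write; the route through (\ref{bounds for derivative ode}) can be made to work, but it introduces $t$-dependent constants for no gain.

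Second, and more seriously, the extension to arbitrary $T>0$ is \emph{not} ``exactly as in Proposition \ref{extensionsolutionprop}.'' For $N=1$, the coagulation contribution to the tested moment vanishes identically, since $(x_{s,V}(A)+x_{s,V'}(A'))-x_{s,V}(A)-x_{s,V'}(A')=0$, so the first $a$-moment of $G_{\epsilon,R,\delta}$ is preserved up to a fixed factor and restarting at time $\tau$ is immediate. For $N\geq 2$ this cancellation fails — $(x+x')^N-x^N-x'^N>0$ — so the $N$-th moment is not conserved and one must control its growth before one can restart with a new datum in the same ball. This is exactly the content of Proposition \ref{extensionalltime_generalcase}, which uses the polynomial inequality of Lemma \ref{inequalityforiterationargument} to bound $\bigl(x+x'\bigr)^n-x^n-x'^n$ by products of lower-order moments and closes with an iteration in $n$ starting from the conserved $n=1$ moment. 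Without this step, the ``restart and induct'' argument does not go through, because the short time $\tau_N$ required by (\ref{time general 1})--(\ref{time general 2}) depends on $M_N$, and without a priori control of the $N$-th moment the restarting intervals could shrink to zero before reaching $T$.
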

\begin{proof}
The proof is done via a standard fixed-point argument as in the proof of Proposition \ref{propweaksol}. The space $U_{\epsilon,R,N}$ was chosen since we would like to later test with higher powers of the area $a$ and we need to control the escape to infinity of $a$. We prove existence in the same manner as in Proposition \ref{propweaksol} with the aid of Propositions \ref{contractivemap_generalcase} and \ref{extensionalltime_generalcase}, which will be stated below. 
\end{proof}

Proposition \ref{propweaksol_generalcase} states that we can obtain better moment estimates if we assume moment estimates for higher order powers for the initial datum, namely with (\ref{finalization_space_generalcase}) instead of (\ref{finalization_space}).

As before, we use $K_{R}$, see (\ref{truncationkernel}), and $K_{\epsilon,R}$, see (\ref{truncation_kernel_selfsimilar}), interchangeably.

We define the space \begin{align*}
\textup{Y}_{N}=&\{G\in\textup{C}([0,\tau_{N}];\mathscr{M}_{+,\textup{b}}^{I}(\mathbb{R}^{2}_{>0})): ||G||_{N}= \sup_{0\leq t\leq \tau_{N}}\bigg| \int_{(0,\infty)^{2}}\textup{e}^{h_{\epsilon}(V,t)}(1+x_{t,V}^{N}(A))G(A,V,t) \der V \der  A \bigg|\leq 2M_{N};\\ 
& G(\mathbb{R}^{2}_{>0}\setminus [c_{0}\epsilon^{\frac{2}{3}},\infty)\times[\epsilon,2R\textup{e}^{t}), t)=0, \textup{ } t\in[0,\tau_{N}]\}
\end{align*}
instead of $Y_{\epsilon,\tau}$ in (\ref{spaceexistence}), where $M_{N}=2\int_{(0,\infty)^{2}}(1+a^{N})g_{\textup{in},R}(a,v) \der v \der a+1$, $g_{\textup{in},R}$ is as in Proposition  \ref{propweaksol_generalcase} and we fix a time $\tau_{N}$ instead of $\tau$ such that
\begin{align}
(2^{N}(1-\gamma)M_{N}+M_{N}+1)||K_{R}||_{\infty}(\textup{e}^{\tau_{N}}-1) & < \frac{1}{2}; \label{time general 1}\\
2^{N}(1-\gamma)M_{N}^{2}||K_{R}||_{\infty}\tau_{N} & <1 \textup{   and   } \tau_{N}\leq \ln{2}.\label{time general 2}
\end{align}
\begin{prop}\label{contractivemap_generalcase}
Let $N\in\mathbb{N}, N\geq 2$. Let $K_{\epsilon,R}$ defined as in (\ref{truncation_kernel_selfsimilar}) and assume K and r satisfy the conditions stated in Theorem \ref{themostimportanttheorem} or in
Theorem \ref{main teo case alpha zero}. Let $g_{\textup{in},R}$ as stated in Proposition \ref{propweaksol_generalcase}. Let $J$ be as in (\ref{definitionjf}). Assume $F,G\in\textup{Y}_{N}$ with $\tau_{N}$ as in (\ref{time general 1}) and (\ref{time general 2}). Then
\begin{enumerate}
\item $||J[G]||\leq 2M_{N}$; 
\item The map $J$ is contractive, more explicitly, $||J[F]-J[G]||_{N}\leq \frac{1}{2}||F-G||_{N}$.
\end{enumerate}
\end{prop}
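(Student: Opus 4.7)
The plan is to adapt the proof of Proposition \ref{contractivemap} (which handles $N=1$ and is detailed in Appendix \ref{appendix b}) to the weight $(1 + x_{t,V}^N(A))$. The only substantive new ingredient is the elementary inequality $(a+b)^N \leq 2^{N-1}(a^N + b^N)$ for $a,b\geq 0$, and this is precisely what justifies the factor $2^N$ appearing in the time constraints (\ref{time general 1})--(\ref{time general 2}).

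For the bound $\|J[G]\|_N \leq 2M_N$, I would test the defining identity (\ref{definitionjf}) with $\varphi(A,V) = e^{h_\epsilon(V,t)}(1 + x_{t,V}^N(A))$ and estimate $J_1$ and $J_2$ separately. For $J_1$, I use that $e^{-\int_0^t a[G]\, \der \xi}\leq 1$ since $a[G]\geq 0$, that along the characteristics $\partial_t A \leq 0$ in the isoperimetric region (so $x_{t,V}(A)\leq A$), and that $h_\epsilon(V,t)\leq t\leq \tau_N\leq \ln 2$, giving
\[ |J_1(G,t)| \leq 2\int_{\mathbb{R}_{>0}^2}(1+A^N)g_{\textup{in},R}(A,V)\der V\der A \leq M_N. \]
For $J_2$, the key point is that $\varphi$ evaluated at $\phi_s^{-1}(\phi_s(A,V)+\phi_s(A',V'))=(l_1(s),l_2(s))$ produces $e^{h_\epsilon(l_2(s),t)}\bigl(1 + x_{t,l_2(s)}^N(l_1(s))\bigr)$, and the monotonicity of characteristics together with $x_{s,l_2(s)}(l_1(s)) = x_{s,V}(A) + x_{s,V'}(A')$ allows me to estimate $x_{t,l_2(s)}^N(l_1(s)) \leq 2^{N-1}\bigl(x_{t,V}^N(A) + x_{t,V'}^N(A')\bigr)$. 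The factor $\Phi(V,V',s)$ combines with $e^{h_\epsilon(l_2(s),t)}$ to yield $e^{h_\epsilon(V,s)+h_\epsilon(V',s)}$ (up to terms bounded by $e^{t}\leq 2$), so the symmetric structure of the integrand gives
\[ |J_2(G,t)| \leq 2^N(1-\gamma)\|K_R\|_\infty \tau_N \|G\|_N^2 \leq 2^N(1-\gamma)\|K_R\|_\infty \tau_N (2M_N)^2, \]
and (\ref{time general 2}) forces this to be at most $M_N$. Adding both bounds gives $\|J[G]\|_N \leq 2M_N$.

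For the contraction property, I would decompose $J[F]-J[G]$ as $(J_1(F,t)-J_1(G,t)) + (J_2(F,t)-J_2(G,t))$. In the $J_1$ difference, the inequality $|e^{-x}-e^{-y}|\leq |x-y|$ together with $|a[F]-a[G]|(A,V,\xi) \leq \|K_R\|_\infty e^{h_\epsilon(V,\xi)} \int |F-G|(A',V',\xi)\,\der V'\der A'$ gives an estimate of the form $\|K_R\|_\infty(e^{\tau_N}-1)\|F-G\|_N$ times the weighted initial-data norm $\leq M_N$. For the $J_2$ difference, I split $FF - GG = (F-G)F + G(F-G)$ and bound each piece using the $2M_N$ bound on $\|F\|_N, \|G\|_N$ and the binomial factor $2^{N-1}$. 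The resulting combined bound is at most $[(2^N(1-\gamma)M_N + M_N + 1)\|K_R\|_\infty(e^{\tau_N}-1)]\|F-G\|_N$, which by (\ref{time general 1}) is at most $\tfrac{1}{2}\|F-G\|_N$.

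The main obstacle is not conceptual but a careful bookkeeping of all weight factors: the exponentials $e^{h_\epsilon(\cdot,t)}$ arising at the outer level, the factors $e^{h_\epsilon(\cdot,s)}$ appearing inside the $s$-integrand of $J_2$, the combination identity that makes $\Phi(V,V',s) \cdot e^{h_\epsilon(l_2(s),t)}$ symmetric, and ensuring that the constant $2^{N-1}$ from the binomial bound lines up exactly with the $2^N$ prefactor in (\ref{time general 1})--(\ref{time general 2}). Since these verifications mirror exactly the $N=1$ case already treated in Appendix \ref{appendix b}, a detailed proof would proceed along the same lines and can be deferred to the appendix.
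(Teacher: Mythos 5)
Your proposal is correct and follows essentially the same route as the paper: adapt the fixed-point argument from Proposition \ref{contractivemap} to the weight $e^{h_\epsilon(V,t)}(1+x_{t,V}^N(A))$, with the binomial estimate providing the factor $2^{N-1}$ that accounts for the $2^N$ in the time constraints (\ref{time general 1})--(\ref{time general 2}). The paper packages the key estimate in the slightly more convenient multiplicative form $1+(x_{t,V}(A)+x_{t,V'}(A'))^{N}\leq 2^{N-1}(1+x_{t,V}^{N}(A))(1+x_{t,V'}^{N}(A'))$, but this is an immediate consequence of the elementary inequality $(a+b)^N\leq 2^{N-1}(a^N+b^N)$ you invoke, and both lead to the same bound up to an $O(1)$ factor.
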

\begin{proof}
We follow the proof of Proposition \ref{contractivemap}, working with $\textup{Y}_{N}$, $M_{N}$ and $\tau_{N}$ instead of $\textup{Y}_{\epsilon,\tau}$, $M$ and $\tau$, respectively. We make use in addition of the inequality 
\begin{align*}
    1+(x_{t,V}(A)+x_{t,V'}(A'))^{N}\leq 2^{N-1}(1+x_{t,V}^{N}(A))(1+x_{t,V'}^{N}(A')).
\end{align*}
For more details, see Proof of Proposition \ref{contractivemap} in Appendix \ref{appendix b}, where similar arguments were used.
\end{proof}
 We make use in addition of the following inequality provided in  \cite[Lemma 2]{inequality}.
\begin{lem}\label{inequalityforiterationargument}
Assume that $p>1$ and let $k_{p}:=\big[\frac{p+1}{2}\big]$. Then, for
all $x, y > 0$, the following inequalities hold:
\begin{align*}
    \sum_{k=1}^{k_{p}-1}\binom{p}{k}(x^{k}y^{p-k}+x^{p-k}y^{k})\leq (x+y)^{p}-x^{p}-y^{p}\leq \sum_{k=1}^{k_{p}}\binom{p}{k}(x^{k}y^{p-k}+x^{p-k}y^{k}).
\end{align*}
\end{lem}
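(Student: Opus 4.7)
The inequality is symmetric under $(x,y)\leftrightarrow(y,x)$ and homogeneous of degree $p$, so by assuming without loss of generality that $0<y\leq x$ and dividing through by $x^{p}$ we can set $t:=y/x\in(0,1]$ and reduce the statement to the one-variable form
\begin{align*}
L(t):=\sum_{k=1}^{k_{p}-1}\binom{p}{k}(t^{p-k}+t^{k})\leq F(t):=(1+t)^{p}-1-t^{p}\leq \sum_{k=1}^{k_{p}}\binom{p}{k}(t^{p-k}+t^{k})=:U(t),
\end{align*}
for $t\in(0,1]$. Note $p-k\geq p-k_{p}\geq (p-1)/2>0$ for every $k\leq k_{p}$, so all the exponents appearing on the left and right are positive and the bounds are well-defined and vanish at $t=0$.

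For integer $p$, the bounds follow directly from the binomial theorem. Writing $(1+t)^{p}=\sum_{k=0}^{p}\binom{p}{k}t^{k}$ and using $\binom{p}{k}=\binom{p}{p-k}$, we pair the term with index $k\in\{1,\dots,p-1\}$ with the term with index $p-k$. If $p$ is odd, then $k_{p}=(p+1)/2$ and the pairing is exact: $F(t)=\sum_{k=1}^{k_{p}-1}\binom{p}{k}(t^{k}+t^{p-k})=L(t)$, and $U(t)-F(t)=\binom{p}{k_{p}}(t^{k_{p}}+t^{p-k_{p}})\geq 0$. If $p$ is even, then $k_{p}=p/2$ and the middle term $\binom{p}{p/2}t^{p/2}$ is unpaired: $F(t)-L(t)=\binom{p}{p/2}t^{p/2}\geq 0$ and $U(t)-F(t)=\binom{p}{p/2}t^{p/2}\geq 0$. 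So the integer case is immediate.

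For general real $p>1$ the plan is to use Taylor's theorem with integral remainder applied to the smooth function $G(t):=(1+t)^{p}$. Expanding $G$ to order $k_{p}$ gives
\begin{align*}
(1+t)^{p}=\sum_{k=0}^{k_{p}}\binom{p}{k}t^{k}+\mathcal{R}(t),\qquad \mathcal{R}(t)=\binom{p}{k_{p}+1}(k_{p}+1)\int_{0}^{t}(t-s)^{k_{p}}(1+s)^{p-k_{p}-1}\der s,
\end{align*}
and the symmetric identity $(1+t)^{p}=t^{p}(1+t^{-1})^{p}$ allows one to convert part of the remainder into the $t^{p-k}$ terms on the right of the claim. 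Because $k\leq k_{p}\leq p$, the coefficients $\binom{p}{k}$ are nonnegative, and the signs of the remainder terms after grouping with $t^{p}$ are under control. The main obstacle is to verify that the remaining error, after matching $L(t)$ and $U(t)$ with the symmetrized partial sum, has a definite sign on $(0,1]$; this is done by differentiating $F(t)-L(t)$ and $U(t)-F(t)$ until one reaches a derivative whose sign is manifest (namely $(p-k_{p}+1)$-th order, where the series $(1+t)^{p}$ has monotonic derivatives because $p-k_{p}>0$), then integrating back and checking that the lower-order Taylor coefficients at $t=0$ all vanish. This reduces the claim to a one-line sign check on a single residual integral. As the paper cites \cite{inequality} for this estimate, the full execution of the last step can be taken from there; the present sketch indicates that the essential mechanism is the symmetrized binomial expansion together with the sign of the Taylor remainder.
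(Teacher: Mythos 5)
Your homogeneity/symmetry reduction to $t=y/x\in(0,1]$ and your treatment of integer $p$ by pairing binomial coefficients are both correct. For context, the paper itself does not prove Lemma~\ref{inequalityforiterationargument}: the statement is quoted from Lemma~2 of \cite{inequality} and used as a black box, so there is no in-paper proof to compare against, and your integer-case argument already supplies more than the text does.

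The genuine gap is the non-integer case, which you present only as a plan and ultimately defer back to \cite{inequality}. As written, that plan does not close. The quantities $F$, $L$, $U$ all contain the monomials $t^{p},\,t^{p-1},\dots,t^{p-k_{p}+1}$, which for $p\notin\mathbb{N}$ cannot be produced by any finite Taylor expansion of $(1+t)^{p}$ about $t=0$. Invoking $(1+t)^{p}=t^{p}(1+t^{-1})^{p}$ gives a second expansion with its own remainder, but simply combining the two expansions double-counts the low-order integer powers, and no compensating factor $1/2$ appears in the lemma; so the ``symmetrized partial sum'' you describe is not an identity one can read off, and the residual whose sign you must control is not the one the sketch produces. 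Moreover ``$(p-k_{p}+1)$-th order'' is not a well-defined derivative order when $p\notin\mathbb{N}$ (you presumably intend the $(k_{p}+1)$-th derivative), which is a symptom that the decisive sign check has not actually been located. A self-contained route would set up the residuals $D_{-}:=F-L$ and $D_{+}:=U-F$, check $D_{\pm}(0)=0$, and establish a definite sign on $(0,1]$ by a genuine induction, e.g.\ on $k_{p}$ by differentiating once in $t$ and using that $p-k>0$ and $\binom{p}{k}>0$ for all $k\le k_{p}$. The proposal gestures at this but does not carry it out, so the general case remains unproved.
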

\begin{prop}\label{extensionalltime_generalcase}
Let $N\in\mathbb{N}$, $N\geq 2$, $N$ fixed. Let $G_{\epsilon, R,\delta}$ be the found fixed point for (\ref{definitionjf}) up to time $\tau_{N}$ defined as in  (\ref{time general 1}), (\ref{time general 2}) and with initial datum $g_{\textup{in},R}$ taken as in Proposition \ref{propweaksol_generalcase}. For any $T>0$, there exists a unique solution, for which we keep the notation $G_{\epsilon,R,\delta}\in\textup{C}^{1}([0,T];\mathscr{M}_{+,\textup{b}}^{I}(\mathbb{R}^{2}_{>0}))$ that satisfies (\ref{equation_change_of_var}). 
\end{prop}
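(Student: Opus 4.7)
The plan is to follow exactly the scheme of Proposition \ref{extensionsolutionprop}, replacing the space $\textup{Y}_{\epsilon,\tau}$ and norm $||\cdot||$ by $\textup{Y}_N$ and $||\cdot||_N$. Proposition \ref{contractivemap_generalcase} together with Banach's fixed point theorem immediately yields a unique $G_{\epsilon,R,\delta}\in\textup{Y}_N$ on $[0,\tau_N]$. To extend this to arbitrary $T>0$, I would iterate the local existence with $G_{\epsilon,R,\delta}(\cdot,\cdot,k\tau_N)$ as the new initial datum at each step and check, as in Proposition \ref{extensionsolutionprop}, that the constructed branches glue continuously and inherit the $\textup{C}^1$-in-time regularity from the integral formulation (\ref{definitionjf}).

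The delicate point is that $\tau_N$ depends, through (\ref{time general 1})--(\ref{time general 2}), on $M_N=2\int_{(0,\infty)^2}(1+a^N)g_{\textup{in},R}\der v\der a+1$. If the analogous quantity along the evolution grew in an uncontrolled way, the time steps would shrink to zero and the iteration would stall below $T$. The main step is therefore to establish an a priori bound on finite intervals for
\begin{align*}
\mathcal{N}(t):=\int_{(0,\infty)^2}\textup{e}^{h_\epsilon(V,t)}\big(1+x_{t,V}^N(A)\big)G_{\epsilon,R,\delta}(A,V,t)\der V\der A,
\end{align*}
which by Definition \ref{definitionchangeofvar} amounts to bounding $\int_{(0,\infty)^2}(1+a^N)g_{\epsilon,R,\delta}(a,v,t)\der v\der a$ along (\ref{regularizedformequation}).

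To obtain such a bound, I would test (\ref{regularizedformequation}) with (a cut-off version of) $\varphi(a,v)=1+a^N$. The fusion contribution is non-positive on the isoperimetric region $\{a\geq c_0v^{2/3}\}$ because $(c_0v^{\frac{2}{3}}-a)\leq 0$ and $\partial_a\varphi\geq 0$. For the coagulation term, Lemma \ref{inequalityforiterationargument} controls $(a+a')^N-a^N-a'^N$ by a sum of mixed powers $a^k a'^{N-k}+a^{N-k}a'^k$, each of which, using the uniform bound on $K_{\epsilon,R}$ by $2^{1+\beta}K_0\epsilon^{-\alpha}R^\beta$ and mass conservation, yields an integral of order $||K_{\epsilon,R}||_\infty\mathcal{N}(t)M_{0,0}(g_{\textup{in},R})$. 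Together these give a Gr\"onwall inequality $\frac{\der}{\der t}\mathcal{N}(t)\leq C_N\mathcal{N}(t)$ with $C_N$ depending only on $N$, $||K_{\epsilon,R}||_\infty$ and $M_{0,0}(g_{\textup{in},R})$, hence $\mathcal{N}(t)\leq \mathcal{N}(0)\textup{e}^{C_NT}$ on $[0,T]$. Plugging this upper bound back into (\ref{time general 1})--(\ref{time general 2}) in place of $M_N$ yields a uniform lower bound $\tau_N(T)>0$ for every local step on $[0,T]$, so finitely many iterations cover $[0,T]$; uniqueness follows from the contraction property on each interval.

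The main obstacle, beyond bookkeeping of the constants so that $\tau_N(T)$ stays bounded below, is the rigorous justification of the test with the unbounded function $1+a^N$. This requires approximating $\varphi$ by $\varphi_M(a,v):=(1+a^N)\zeta_M(a)$ with a smooth cutoff $\zeta_M$ supported in $[0,2M]$, applying (\ref{regularizedformequation}) to $\varphi_M$, and passing to the limit $M\to\infty$; the a priori $(1+a^N)$-moment built into the definition of $\textup{Y}_N$ is exactly what dominates the tail contributions and validates this passage, closing the argument.
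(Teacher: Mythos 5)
Your overall scheme matches the paper's: local existence from the contraction estimate in Proposition \ref{contractivemap_generalcase}, then extension to $[0,T]$ by iteration, made possible by an a priori bound on $\int(1+a^N)g_{\epsilon,R,\delta}(t)$ that keeps the local time step $\tau_N$ bounded away from zero. The two proofs differ in how that moment bound is obtained, and your version as written has a gap in the key step. The paper runs a nested induction on the exponent $n\in\{1,\dots,N\}$: for $n=1$ the coagulation contribution cancels identically (testing with a linear function of $a$ is conservative under coagulation), giving a $t$-independent bound; for $n\geq 2$, Lemma \ref{inequalityforiterationargument} reduces the coagulation contribution to products $M_{k,0}M_{n-k,0}$ with $1\leq k\leq k_n$, so that both $k$ and $n-k$ lie in $[1,n-1]$ and are already controlled by the induction hypothesis. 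You instead go directly to the top moment and assert $\frac{\der}{\der t}\mathcal{N}\leq C_N\mathcal{N}$ ``using mass conservation'', but $M_{0,0}(g(t))$ is not conserved (testing (\ref{regularizedformequation}) with $\varphi\equiv 1$ gives $\partial_tM_{0,0}\leq M_{0,0}$, so it grows, though it remains finite on $[0,T]$), and the naive bounds $M_{k,0}\leq\mathcal{N}$, $M_{N-k,0}\leq\mathcal{N}$ only give a quadratic right-hand side $\frac{\der}{\der t}\mathcal{N}\lesssim\mathcal{N}^2$, which does not close a Gr\"onwall argument. What actually validates your claimed linear bound is the H\"older interpolation $M_{k,0}M_{N-k,0}\leq M_{N,0}M_{0,0}$ (from $M_{k,0}\leq M_{N,0}^{k/N}M_{0,0}^{(N-k)/N}$), combined with $M_{0,0}(t)\leq M_{0,0}(0)\textup{e}^{T}$ on $[0,T]$; with those two lines added, your Gr\"onwall gives $\mathcal{N}(t)\leq\mathcal{N}(0)\textup{e}^{C_N(T)t}$, which is enough. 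The paper's induction sidesteps both of these observations and also bypasses your closing cutoff argument by working directly with $G$ in the space $\textup{Y}_N$, where testing with $\textup{e}^{h_\epsilon(V,t)}x_{t,V}^n(A)$ is legitimate by construction.
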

The proof of this proposition is based on an iterative argument and is given in Appendix \ref{appendix b}.

Let $K_{\epsilon,R}$ defined as in (\ref{truncation_kernel_selfsimilar}) and assume $K$ and $r$ satisfy the conditions stated in Theorem \ref{themostimportanttheorem} or in Theorem \ref{main teo case alpha zero}. Let $T>0$. Define $S(t): U_{\epsilon,R,N}\rightarrow U_{\epsilon,R,N}$ in the following way:  
\begin{align}\label{definition semigroup general}
S(t)g_{\textup{in},R}=g_{\epsilon,R,\delta}(\cdot,\cdot,t)
\end{align}
for all $t\in[0,T]$, where $g_{\epsilon,R,\delta}$ is the unique solution of the weak formulation of the coagulation equation with coagulation kernel $K_{\epsilon,R}$ found Proposition \ref{propweaksol_generalcase}.
\begin{prop}\label{lemma_weak_sol_general_case}
Assume $g_{\epsilon,R,\delta}$ is the solution found in Proposition \ref{propweaksol_generalcase} for some $N\in\mathbb{N},$ $N\geq 2$, fixed, with kernel $K_{\epsilon,R}$ taken as in (\ref{truncation_kernel_selfsimilar}). Let $\mu>0$.

If we are in the case (\ref{alpha non neg}), for $k\in\mathbb{R}, n\in[1,N]$, there exist constants $c_{0,k}, c_{1,-\alpha}, c_{1,1}, c_{n,0}>0$ for which the set
\begin{align*}
\omega=\{& M_{0,1}(g_{\epsilon,R,\delta}(t))=1, M_{0,k}(g_{\epsilon,R,\delta}(t))\leq c_{0,k},M_{1,-\alpha}(g_{\epsilon,R,\delta}(t))\leq c_{1,-\alpha}, M_{1,1}(g_{\epsilon,R,\delta}(t))\leq c_{1,1},\\
&M_{n,0}(g_{\epsilon,R,\delta}(t))\leq c_{n,0}\}
\end{align*}
is preserved for all $t\in[0,T]$, that means $S(t)\omega\subseteq\omega$.

If we are in the case (\ref{alpha zero}), for $k\geq \gamma, n\in[1,N]$, there exist constants $c_{0,k}, c_{1,1}, c_{n,0}>0$ for which the set
\begin{align*}
\omega=\{M_{0,1}(g_{\epsilon,R,\delta}(t))=1, M_{0,k}(g_{\epsilon,R,\delta}(t))\leq c_{0,k},
M_{1,1}(g_{\epsilon,R,\delta}(t))\leq c_{1,1}, M_{n,0}(g_{\epsilon,R,\delta}(t))\leq c_{n,0}\}
\end{align*}
is preserved for all $t\in[0,T]$, that means $S(t)\omega\subseteq\omega$.
\end{prop}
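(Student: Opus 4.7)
The estimates for $M_{0,1}$, $M_{0,k}$ (for $k$ in the range permitted by (\ref{alpha non neg}) or (\ref{alpha zero})), $M_{1,0}$, and $M_{1,-\alpha}$ (in case (\ref{alpha non neg})) follow verbatim from Propositions \ref{firstmoments}--\ref{avnegativeprop}, whose proofs transfer to the enlarged space $U_{\epsilon,R,N}$ of (\ref{existence_space_strong_general}). Thus it remains to control, uniformly in time, the two new families $M_{1,1}$ and $M_{n,0}$ for $n\in[2,N]$. The common mechanism is that, since $\mu>0$, testing (\ref{regularizedformequation}) with a weight $a^n v^l$ produces a fusion contribution of size $-\int a^{n+\mu}v^{l+\sigma}g\,\der\eta$ on the region $\{a\geq 2c_0 v^{2/3}\}$, which, combined with Young's inequality, dominates the moment being estimated and yields a differential inequality of the form $\partial_t M\leq -cM+C$.

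For $M_{1,1}$, test (\ref{regularizedformequation}) with $\varphi(a,v)=av$; this is admissible since $\partial_a\varphi=v$, $\partial_v\varphi=a$, and the relevant integrals are finite for $g\in U_{\epsilon,R,N}$ (using $v\leq 2R$ on the support). The linear transport terms give a contribution proportional to $M_{1,1}$. After symmetrization, the coagulation contribution becomes $(1-\gamma)\int\!\!\int K(\eta,\eta')\,a v'\,g(\eta)g(\eta')\,\der\eta'\,\der\eta$, which by (\ref{lower_bound_kernel}) is bounded by a constant times $M_{1,-\alpha}(g)\,M_{0,\beta+1}(g)+M_{0,1-\alpha}(g)\,M_{1,\beta}(g)$ (and by a similar expression in case (\ref{alpha zero}) with $\alpha=0$). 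All these factors except $M_{1,\beta}$ are already controlled; and $M_{1,\beta}$ is handled by H\"older interpolation $M_{1,\beta}\leq M_{1,0}^{1-\beta}M_{1,1}^\beta$ together with Young, producing a bound $\epsilon M_{1,1}+C_\epsilon$. The fusion contribution, treated as in the proof of Proposition \ref{avnegativeprop}, is dominated by $-c\int_{a\geq 2c_0 v^{2/3}}a^{1+\mu}v^{1+\sigma}g\,\der\eta + C$; Young's inequality
\[
av\leq \epsilon\, a^{1+\mu}v^{1+\sigma}+C_\epsilon\,v^{1-\sigma/\mu}
\]
reduces this to $-c'M_{1,1}+C''$ modulo a moment $M_{0,1-\sigma/\mu}$ whose exponent equals $5/3+(1-\gamma)/\mu$ by (\ref{same_rescaling}), and is therefore in the admissible range of Proposition \ref{prop m moment}. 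Choosing $\epsilon$ small yields $\partial_t M_{1,1}\leq -cM_{1,1}+C$, hence the uniform bound.

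For $M_{n,0}$, $n\in[2,N]$, proceed by induction on $n$, carrying along in parallel the auxiliary bound for $M_{n,1}$ (proved by the same template as above with test function $a^n v$). Test (\ref{regularizedformequation}) with $\varphi=a^n$. Lemma \ref{inequalityforiterationargument} bounds the coagulation kernel factor $(a+a')^n-a^n-a'^n$ by $\sum_{k=1}^{k_n}\binom{n}{k}(a^k a'^{n-k}+a^{n-k}a'^k)$, so the coagulation contribution is dominated by a finite sum of products of lower-order moments $M_{k,-\alpha}M_{n-k,\beta}$ (and symmetric counterparts) with $k\in[1,n-1]$. Each factor $M_{j,\beta}$ with $j\leq n-1$ is handled by the simple Young bound $v^\beta\leq \epsilon v+C_\epsilon$, yielding $M_{j,\beta}\leq \epsilon M_{j,1}+C_\epsilon M_{j,0}$, both quantities being controlled by the induction hypothesis. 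The fusion contribution satisfies
\[
\frac{(1-\gamma)n}{2}\int r_\delta(\eta)(c_0 v^{2/3}-a)\,a^{n-1}g\,\der\eta \;\leq\; -c\int a^{n+\mu}v^\sigma g\,\der\eta + C,
\]
and Young's inequality $a^n\leq \epsilon\,a^{n+\mu}v^\sigma+C_\epsilon\,v^{-\sigma n/\mu}$ closes the argument: the remainder is $M_{0,-\sigma n/\mu}$, whose $v$-exponent equals $n(1-\gamma)/\mu+2n/3\geq 2/3>\gamma$ (so this moment is bounded also in case (\ref{alpha zero})). Choosing $\epsilon$ small yields $\partial_t M_{n,0}\leq -cM_{n,0}+C$ and completes the induction.

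The main obstacle is that the coagulation term at order $n$ introduces the coupled moments $M_{j,\beta}$ and, through the parallel induction, $M_{j,1}$, none of which is explicitly listed in $\omega$. The resolution is to maintain both $M_{n,0}$ and $M_{n,1}$ through the induction via the test functions $a^n$ and $a^n v$ respectively, and to transfer $M_{j,\beta}$ to $M_{j,1}$ and $M_{j,0}$ via Young's inequality. At each stage the $\mu>0$ fusion dissipation supplies the decisive negative contribution that closes the estimate.
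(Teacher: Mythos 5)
Your proof takes essentially the same iterative route as the paper's Propositions~\ref{avpositiveprop} and~\ref{iteration_exponential_decay}, but it diverges in two places, and one of those divergences leaves a genuine gap.

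For $M_{1,1}$ you lean on the fusion dissipation and Young's inequality. This is not wrong, but it is unnecessary: in the paper's Proposition~\ref{avpositiveprop}, testing with $av^{m}$ for $m\geq 1$ already produces the negative coefficient $1-\tfrac{2}{3}-m\leq-\tfrac{2}{3}$ from the linear transport, and the (non-positive) fusion term is simply discarded. Both arguments close, and yours is just more work.

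For $M_{n,0}$ there is a gap when $\alpha>0$. After Lemma~\ref{inequalityforiterationargument} the coagulation contribution involves, as you say, products $M_{k,-\alpha}M_{n-k,\beta}$ for $k\in[1,k_n]$. You convert the factors $M_{j,\beta}$ into $M_{j,1}$ and $M_{j,0}$ via $v^{\beta}\leq\epsilon v+C_\epsilon$ and a parallel induction on $M_{j,1}$, but you offer no analogous handling of the factors $M_{k,-\alpha}$ for $k\in[2,n-1]$: these are neither in $\omega$ nor among the quantities your induction propagates ($M_{j,0}$, $M_{j,1}$), and $v^{-\alpha}$ cannot be absorbed by a positive power of $v$. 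As written the estimate does not close. The paper avoids the intermediate moments entirely by appending the elementary reduction $\sum_{k=1}^{k_n}\binom{n}{k}(a^ka'^{n-k}+a^{n-k}a'^{k})\leq C_n(a^{n-1}a'+aa'^{n-1})$ to Lemma~\ref{inequalityforiterationargument}, which collapses everything to $M_{n-1,\pm}M_{1,\mp}$; then $M_{1,-\alpha}$, $M_{1,\beta}$ are already in hand, and $M_{n-1,-\alpha}$, $M_{n-1,\beta}$ are converted to $\epsilon_1 M_{n,0}+C_{n,\epsilon_1}M_{0,\cdot}$ by Young in the $a$ variable alone. This also removes the need to carry $M_{n,1}$ at all, so the invariant set can stay exactly as stated. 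You could alternatively patch your version by a further Young/H\"older step on $M_{k,-\alpha}$ (interpolating between $M_{1,-\alpha}$ and $M_{n-1,-\alpha}$), but without one of these additions the induction as you wrote it does not go through.
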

We prove this proposition after proving first some uniform moment estimates. To bound moments of the form $M_{0,c},$ $c\in\mathbb{R}$ and $M_{1,-\alpha}$, we use the same method that we used in Section \ref{chapter_two}, namely Propositions \ref{firstmoments} - \ref{avnegativeprop}. In the next propositions we prove uniform bounds for $M_{1,1}$ and $M_{n,0},$ for $n\in[2,N]$.

We prove uniform bounds for $M_{1,m}(g_{\epsilon,R,\delta})$, $m\geq 1,$ instead of only proving uniform bounds for the moment $M_{1,1}$ as the proof is done in a similar manner.
\begin{prop}\label{avpositiveprop}
Let $g_{\textup{in},R}\in\omega$ and $g_{\epsilon,R,\delta}$ (which we will denote by $g_{\textup{in}}$ and $g$, respectively) be as in Proposition \ref{propweaksol_generalcase}. Let $m \geq 1$. Assume in addition that $\int_{(0,\infty)^{2}}av^{m}g_{\textup{in},R}(a,v)\der v \der a<\infty$. Then there exists $C_{1,m}>0$, which does not depend on $\epsilon, R,\delta,$ such that:
\begin{align}
\sup_{t\geq 0}M_{1,m}(g(\cdot,\cdot,t))\leq \max\{M_{1,m}(g_{\textup{in}}), C_{1,m}\}. \label{avpositive}
\end{align}
\end{prop}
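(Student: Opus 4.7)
The plan is to test the regularized equation (\ref{regularizedformequation}) with the non-compactly supported function $\varphi(a,v) = a v^m$, which is admissible for solutions in $U_{\epsilon,R,N}$ (the support in $v$ is bounded above by $2R$ and below by $\epsilon$, and the growth in $a$ is only linear). The aim is to derive a differential inequality
\begin{align*}
\tfrac{\der}{\der t} M_{1,m}(g(t)) \leq -c\, M_{1,m}(g(t)) + C,
\end{align*}
with constants $c, C > 0$ independent of $\epsilon, R, \delta$, from which (\ref{avpositive}) follows by comparison.

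Since $\partial_a \varphi = v^m$ and $\partial_v \varphi = m a v^{m-1}$, the transport part contributes (ignoring $\Theta_\epsilon$, which only helps) $(1 - \tfrac{2}{3} - m) M_{1,m}(g) \leq -\tfrac{2}{3} M_{1,m}(g)$ for $m \geq 1$. The fusion contribution is handled exactly as in the proof of Proposition \ref{avnegativeprop}, replacing $l \leq 0$ by $m \geq 1$: restricting to $\{a \geq 2 c_0 v^{2/3}\}$ gains a factor $-\tfrac{1}{2}$, and Young's inequality with conjugate exponents $p = \mu+1 > 1$, $q = (\mu+1)/\mu$ gives
\begin{align*}
a v^m \leq \epsilon\, a^{\mu+1} v^{\sigma + m} + C_\epsilon\, v^{m - \sigma/\mu}.
\end{align*}
The remainder $M_{0, m - \sigma/\mu}(g)$ is uniformly bounded by Propositions \ref{firstmoments}--\ref{m-lprop} (for $\alpha > 0$) or by Proposition \ref{prop m moment} (for $\alpha = 0$, using $m - \sigma/\mu > 1 > \gamma$). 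With $\epsilon$ small and recalling $(1-\gamma) R_0 L / 4 = 3$ from (\ref{definition_L}), the fusion term is bounded above by $-3\, M_{1,m}(g) + C$.

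The main obstacle is the coagulation term, since $\chi_{a v^m} \geq 0$ cannot be discarded. Expanding by the binomial formula (or by Lemma \ref{inequalityforiterationargument} for non-integer $m$) yields
\begin{align*}
\chi_{a v^m}(\eta, \eta') \leq a v'^m + a' v^m + C_m \sum_{1 \leq k \leq k_m} (a + a') (v^{m-k} v'^k + v^k v'^{m-k}).
\end{align*}
Multiplying by the upper bound $K \leq K_0(v^{-\alpha} v'^\beta + v^\beta v'^{-\alpha})$ from (\ref{lower_bound_kernel}) and integrating against $g \otimes g$ produces a finite sum of products of the form $M_{1, j_1}(g) \cdot M_{0, j_2}(g)$ with $j_1 \in [-\alpha, m-1+\beta]$ and $j_2$ in a bounded range; all $M_{0, j_2}$ factors are uniformly bounded by the prior estimates. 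For $j_1 \in [-\alpha, 0)$ one uses $v^{j_1} \leq v^{-\alpha}$ on $\{v \leq 1\}$ and $v^{j_1} \leq 1$ on $\{v > 1\}$, so $M_{1, j_1}(g) \leq M_{1, -\alpha}(g) + M_{1, 0}(g)$, both uniformly bounded. For $j_1 \in [0, m)$, splitting the integration domain as $\{v \leq K\} \cup \{v > K\}$ and using $v^{j_1} \leq K^{j_1}$ on the first region and $v^{j_1} \leq K^{j_1 - m} v^m$ on the second (since $j_1 - m < 0$) gives, for arbitrary $\epsilon > 0$,
\begin{align*}
M_{1, j_1}(g) \leq \epsilon M_{1, m}(g) + C_\epsilon M_{1, 0}(g).
\end{align*}
Hence the coagulation contribution is $\leq \epsilon M_{1, m}(g) + C_\epsilon$.

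Summing the three contributions and choosing $\epsilon$ small enough to absorb the $\epsilon M_{1, m}(g)$ term into the strictly negative contributions from the transport and fusion parts yields $\partial_t M_{1, m}(g) \leq -c M_{1, m}(g) + C$, and a comparison argument delivers (\ref{avpositive}). The $\alpha = 0$ case proceeds identically, replacing $M_{1, -\alpha}$ by $M_{1, -\delta_1}$ from (\ref{avnegative alpha zero}) and noting that the moments $M_{0, \cdot}$ of order $\geq \gamma$ needed in the coagulation estimate are uniformly bounded by Proposition \ref{prop m moment}.
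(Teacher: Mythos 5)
Your proposal follows the same skeleton as the paper's proof — test (\ref{regularizedformequation}) with $\varphi=av^{m}$, expand $\chi_{av^m}$, bound by the kernel estimate, and close with the previously obtained moments — so the core argument is sound. Two remarks.

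First, there is an error in the transport step. You claim that ignoring $\Theta_\epsilon$ ``only helps,'' but since $\Theta_\epsilon\leq 1$ and the coefficient $(1-\tfrac{2}{3}-m)$ is negative, $(1-\tfrac{2}{3}-m)\int g\Theta_\epsilon av^{m}\,\der\eta\geq (1-\tfrac{2}{3}-m)M_{1,m}(g)$, i.e.\ replacing $\Theta_\epsilon$ by $1$ makes the bound \emph{worse}, not better. The only inequality you can read off directly is $(1-\tfrac{2}{3}-m)\int g\Theta_\epsilon av^{m}\,\der\eta\leq 0$. The paper recovers the $-\tfrac{2}{3}M_{1,m}(g)$ coercive term by observing that $\int g(1-\Theta_\epsilon)av^{m}\,\der\eta\leq 2^{m}M_{1,0}(g)$, a bounded error coming from the region $\{v<2\epsilon\}$. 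In your argument this slip is harmless because you independently extract a coercive $-3M_{1,m}(g)$ from the fusion term via the Young's inequality machinery of Proposition \ref{avnegativeprop}, but the statement as written is incorrect and should be fixed.

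Second, the paper takes the opposite division of labor: it discards the non-positive fusion term outright and obtains the coercive term from the (correctly handled) transport part, then closes with H\"older interpolation $M_{1,c}\leq M_{1,-\alpha}^{1-\theta}M_{1,m}^{\theta}$ for $c\in(-\alpha,m)$ rather than the $\epsilon$-absorption splitting $\{v\leq K\}\cup\{v>K\}$ you use. Both routes work; yours yields a larger damping constant at the price of redoing the truncation-specific Young argument for the regularized kernel $r_\delta$, while the paper's route is shorter because the fusion term never needs to be touched.
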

\begin{proof}
The proof relies on the fact that the coagulation operator does not contribute with too fast interactions, using in addition the uniform estimate for the total surface area, which we were able to derive making use of the fusion term.

We use
\begin{align*}
-\int_{(0,\infty)^{2}}av^{m}g(a,v,t)\Theta_{\epsilon}(v)\der v \der a\leq -M_{1,m}(g(t))+2^{m}\int_{(0,\infty)^{2}}ag(a,v,t)\der v \der a
\end{align*}
and, as $M_{1,0}(g)$ is uniformly bounded from above, equation (\ref{regularizedformequation}) becomes:
\begin{align*}
   \frac{\der}{\der t} M_{1,m}(g(t))\leq (1-\frac{2}{3}-m)M_{1,m}(g(t))+C+(1-\gamma)R_{0}\int_{(0,\infty)^{2}}\frac{a^{\mu}\max\{v^{\sigma},L\delta\}}{1+\delta a^{\mu}}(c_{0}v^{\frac{2}{3}}-a)g(a,v,t)v^{m}\der v \der a\\
   +\frac{1-\gamma}{2}\int_{(0,\infty)^{2}}\int_{(0,\infty)^{2}}K(a,v,a',v')[(a+a')(v+v')^{m}-av^{m}-a'v'^{m}]g(a,v,t)g(a',v',t)\der v' \der a' \der v \der a.
\end{align*}
The term with the fusion kernel is non-positive and can be omitted. For the term with the coagulation kernel, we combine
\begin{align*}
    (a+a')(v+v')^{m}-av^{m}-a'v'^{m}= a[(v+v')^{m}-v^{m}]+a'[(v+v')^{m}-v'^{m}]
\end{align*}
with the inequality:
\begin{align*}
    (v+v')^{m}-v^{m}=m\int_{v}^{v+v'}s^{m-1}\der s\leq m(v+v')^{m-1}v'\leq C_{m}(v'^{m}+v^{m-1}v'),
\end{align*}
and obtain
\begin{align*}
   K(\eta,\eta')[(a+a')(v+v')^{m}-av^{m}-a'v'^{m}]&\lesssim C_{m}K(a,v,a',v')(av^{m-1}v'+av'^{m}+a'v'^{m-1}v+a'v^{m}) \\
   &\lesssim K_{0}C_{m}(v^{-\alpha}v'^{\beta}+v'^{-\alpha}v^{\beta})(av^{m-1}v'+av'^{m}+a'v'^{m-1}v+a'v^{m}).
\end{align*}
The moments $M_{0,c},$ $c\geq \gamma,$ are bounded from above. The term $M_{1,-\alpha}$ is uniformly bounded from above. This was proven in Proposition \ref{avnegativeprop}. If $m>1$, then $m+\beta-1,\beta,m-\alpha-1\in(-\alpha,m)$ and we use H\"{o}lder's inequality and conclude using a comparison argument. If $m=1$, we only have to bound $av^{\beta}$. As $\beta\in(-\alpha,m),$ we conclude in the same manner as before.
\end{proof}

\begin{prop}\label{iteration_exponential_decay}
Let $g_{\textup{in},R}\in\omega$ and $g_{\epsilon,R,\delta}$ (which we will denote by $g_{\textup{in}}$ and $g$, respectively) be as in Proposition \ref{propweaksol_generalcase}, for $N\in\mathbb{N},$ $N\geq 2,$ fixed. Then, for any $n\in[2,N]$, there exists $C_{n,0}>0$, independent of $\epsilon,R,\delta,N,$ but dependent on $n$, such that:
\begin{align}
\sup_{t\geq 0}M_{n,0}(g(\cdot,\cdot,t))\leq \max\{M_{n,0}(g_{\textup{in}}), C_{n,0}\}. \label{alargevall}
\end{align}
\end{prop}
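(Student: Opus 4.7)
The plan is an induction on integer values of $n\in\{2,3,\ldots,N\}$; non-integer $n\in[2,N]$ then follow by H\"older interpolation between neighbouring integer moments. For the inductive step, assume $M_{k,0}(g(t))$ is uniformly bounded in $t,\epsilon,R,\delta$ for every integer $k\in\{1,\ldots,n-1\}$, and test equation (\ref{regularizedformequation}) with $\varphi(a,v)=a^{n}$; this is admissible because $g(t)\in U_{\epsilon,R,N}$ controls $M_{N,0}$. The resulting identity reads
\begin{align*}
\frac{\der}{\der t}M_{n,0}(g(t)) = \Bigl(1-\frac{2n}{3}\Bigr)\!\int g(\eta,t)\Theta_{\epsilon}(v)a^{n}\der\eta + n(1-\gamma)\!\int r_{\delta}(\eta)(c_{0}v^{\frac{2}{3}}-a)a^{n-1}g(\eta,t)\der\eta + (1-\gamma)\langle\mathbb{K}_{\epsilon,R}[g],a^{n}\rangle.
\end{align*}
For $n\geq 2$ the transport term has coefficient $1-\tfrac{2n}{3}\leq -\tfrac{1}{3}$ and is non-positive, so it can be discarded.

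The fusion term is handled exactly as in the proof of Proposition \ref{avnegativeprop}: restricting to $\{a\geq 2c_{0}v^{2/3}\}$ yields $(c_{0}v^{2/3}-a)\leq -a/2$; splitting the integral over $\{\delta a^{\mu}\geq 1\}$ and $\{\delta a^{\mu}<1\}$ as in (\ref{surface area bound 1}), using the choice (\ref{definition_L}) of $L$, and applying the Young inequality $a^{n}\leq \frac{\epsilon n}{n+\mu}a^{n+\mu}v^{\sigma}+C_{\epsilon}v^{-\sigma n/\mu}$ with conjugate exponents $p=(n+\mu)/n$, $q=(n+\mu)/\mu$ (available because $\mu>0$), one obtains
\begin{align*}
n(1-\gamma)\!\int r_{\delta}(\eta)(c_{0}v^{\frac{2}{3}}-a)a^{n-1}g(\eta,t)\der\eta \leq -3nM_{n,0}(g(t))+C^{(1)}_{n},
\end{align*}
where $C^{(1)}_{n}$ depends only on $M_{0,-\sigma n/\mu}(g(t))$ and $M_{0,2n/3}(g(t))$, both uniformly bounded by Propositions \ref{firstmoments}--\ref{m-lprop} (note $-\sigma n/\mu>4/3$ for $n\geq 2$). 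For the coagulation term, Lemma \ref{inequalityforiterationargument} combined with the upper bound in (\ref{lower_bound_kernel}) gives
\begin{align*}
(1-\gamma)\langle\mathbb{K}_{\epsilon,R}[g],a^{n}\rangle \leq C\sum_{k=1}^{k_{n}}\binom{n}{k}\bigl[M_{k,-\alpha}(g(t))M_{n-k,\beta}(g(t))+M_{k,\beta}(g(t))M_{n-k,-\alpha}(g(t))\bigr],
\end{align*}
with $k_{n}=[(n+1)/2]$.

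In this sum, any factor $M_{k',l}$ with $k'\leq n-2$ can be bounded uniformly by the H\"older inequality $M_{k',l}\leq M_{k'+1,0}^{k'/(k'+1)}M_{0,(k'+1)l}^{1/(k'+1)}$ together with the induction hypothesis and the uniform estimates on volume moments from Propositions \ref{firstmoments}--\ref{m-lprop}. The only factor requiring the top-order moment is $M_{n-1,l}$, which appears in the product coming from $k=1$ (and additionally from $k=2$ when $n=3$, since then $k_{n}=n-1$); for it we use the sub-linear bound $M_{n-1,l}\leq M_{n,0}(g(t))^{(n-1)/n}M_{0,nl}^{1/n}$. Hence the coagulation contribution satisfies $\leq C^{(2)}_{n}\bigl(1+M_{n,0}(g(t))^{(n-1)/n}\bigr)$, and a further Young inequality $C^{(2)}_{n}x^{(n-1)/n}\leq nx+C(n)$ yields
\begin{align*}
\frac{\der}{\der t}M_{n,0}(g(t))\leq -2nM_{n,0}(g(t))+C_{n},
\end{align*}
from which (\ref{alargevall}) follows by ODE comparison. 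The main technical obstacle is securing the \emph{strictly sub-linear} dependence of the coagulation contribution on $M_{n,0}$: this rests on the inequality $k_{n}\leq n-1$ (so that in any product $M_{k,\cdot}M_{n-k,\cdot}$ at most one factor equals $M_{n-1,\cdot}$), on the availability of arbitrary volume moments provided by Propositions \ref{firstmoments}--\ref{m-lprop}, and on the $-3nM_{n,0}$ coercivity extracted from the fusion term.
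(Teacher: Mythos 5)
Your overall scaffolding (test with $a^{n}$, drop the non-positive transport term, use Lemma~\ref{inequalityforiterationargument} on the coagulation gain, extract $-3nM_{n,0}$ coercivity from the fusion term via Young with exponents $p=(n+\mu)/n$, $q=(n+\mu)/\mu$) is the same as the paper's, and the fusion computation is essentially correct. However, your treatment of the coagulation term diverges from the paper's and contains a genuine gap at $n=2$, the base case of your induction.

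Concretely, for $n=2$ we have $k_{n}=1$, so the only term in the sum is $M_{1,-\alpha}M_{1,\beta}$, and \emph{both} factors have $k'=n-1=1$. Neither satisfies $k'\leq n-2=0$, so neither is covered by your ``uniformly bounded by induction'' clause, and applying your sub-linear bound to both gives $M_{1,-\alpha}M_{1,\beta}\leq M_{2,0}\,M_{0,-2\alpha}^{1/2}M_{0,2\beta}^{1/2}$, which is \emph{linear} in $M_{2,0}$ with a fixed constant $C$ that you cannot shrink. Since the fusion coercivity is fixed at $-6M_{2,0}$, the sign of $(-6+C)M_{2,0}$ is not under control, and the ODE comparison does not close. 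Your sentence ``the only factor requiring the top-order moment is $M_{n-1,l}$'' is false precisely at $n=2$.

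The paper avoids this by exploiting the invariant-set hypothesis: $M_{1,-\alpha}$ and $M_{1,\beta}$ are \emph{already} uniformly bounded in time by Propositions~\ref{avnegativeprop} and~\ref{avpositiveprop} (this is part of the set $\omega$ in Proposition~\ref{lemma_weak_sol_general_case}), and therefore one factor of each product, $\int a'v'^{-\alpha}g'$ or $\int a'v'^{\beta}g'$, is bounded by a constant. Only the remaining factor $\int a^{n-1}v^{-\alpha}g$ or $\int a^{n-1}v^{\beta}g$ is attacked with Young, yielding $\epsilon_{1}M_{n,0}+C_{n,\epsilon_{1}}$ with a \emph{free} small parameter $\epsilon_{1}$, which can be chosen smaller than the coercivity coefficient regardless of $K_{0}$, $\gamma$, etc. Your Hölder-interpolation-in-$a$ route is an interesting alternative and does work for $n\geq 3$ (where $k_{n}\leq n-1$ with at most one factor at level $n-1$), but to repair $n=2$ you must, as the paper does, invoke the separate uniform bounds on $M_{1,-\alpha}$ and $M_{1,\beta}$ rather than deriving them by interpolation against $M_{2,0}$.
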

\begin{proof}
This estimate follows from the fact that fusion overtakes coagulation in the case $\mu>0$. 

Notice first that due to the choice of the space $\textup{Y}_{N}$, we can test equation (\ref{regularizedformequation}) with $a^{n}$, for $n\in[2,N]$. We have that there exists $C_{n}>0$ such that:
\begin{align*}
  K_{\epsilon,R}(a,v,a',v')[(a+a')^{n}-a^{n}-a'^{n}]&\leq K(a,v,a',v')\sum_{k=1}^{k_{n}}\binom{n}{k}(a^{k}a'^{n-k}+a^{n-k}a'^{k})\\
  &\leq C_{n}(v^{-\alpha}v'^{\beta}+v^{\beta}v'^{-\alpha})(a^{n-1}a'+aa'^{n-1}),
\end{align*}
where $k_{n}$ is taken as in Lemma \ref{inequalityforiterationargument}.
We have a uniform upper bound for $\int_{(0,\infty)^{2}}(av^{-\alpha}+av^{\beta})g(a,v,t)\der v \der a$ due to Proposition \ref{avpositiveprop}.

If $\alpha>0$, we use the fact that there exists $C_{n,\epsilon_{1}}>0$ for which:
\begin{align*}
    \int_{(0,\infty)^{2}}a^{n-1}v^{-\alpha}g(a,v,t)\der v \der a\leq \epsilon_{1}\int_{(0,\infty)^{2}}a^{n}g(a,v,t)\der v \der a+C_{n,\epsilon_{1}}\int_{(0,\infty)^{2}}v^{-n\alpha}g(a,v,t)\der v \der a
    \end{align*}
    and
\begin{align*}
\int_{(0,\infty)^{2}}a^{n-1}v^{\beta}g(a,v,t)\der v \der a\leq \epsilon_{1}\int_{(0,\infty)^{2}}a^{n}g(a,v,t)\der v \der a+C_{n,\epsilon_{1}}\int_{(0,\infty)^{2}}v^{n\beta}g(a,v,t)\der v \der a.
\end{align*}

If $\alpha=0$ and $n=2,$ we use that $M_{1,0}$ and $M_{1,\beta}$ are uniformly bounded from above. If $\alpha=0$ and $n\geq 3$, then
\begin{align*}
    \int_{(0,\infty)^{2}}a^{n-1}g(a,v,t)\der v \der a\leq \epsilon_{1}\int_{(0,\infty)^{2}}a^{n}g(a,v,t)\der v \der a+C_{n,\epsilon_{1}}\int_{(0,\infty)^{2}}ag(a,v,t)\der v \der a.
    \end{align*}

As the linear term is non-positive, equation (\ref{regularizedformequation}) becomes:
\begin{align*}
   \frac{\der}{\der t} M_{n,0}(g(t))&\leq (1-\gamma)nR_{0}\int_{(0,\infty)^{2}}\frac{a^{\mu}\max\{v^{\sigma},L\delta\}}{1+\delta a^{\mu}}(c_{0}v^{\frac{2}{3}}-a)g(\eta,t)a^{n-1}\der \eta\\
   &+C_{(-\alpha,n,\beta)}\epsilon_{1} M_{n,0}(g(t))+C_{(-\alpha,n,\beta)}.
\end{align*}

We now use the same argument as in Proposition \ref{avnegativeprop}, modifying the Young's inequality part. As $\mu+n>n,$ we can apply Young's inequality to obtain:
\begin{align*}
    a^{n}=a^{n}v^{\frac{n\sigma}{\mu+n}}v^{-\frac{n\sigma}{\mu+n}}\leq\frac{\epsilon_{2}}{p}(av^{\frac{\sigma}{\mu+n}})^{\mu+n}+\frac{\epsilon_{2}^{-\frac{q}{p}}}{q}v^{-\frac{{n\sigma}}{\mu+n}q},
\end{align*}
where $p=\frac{\mu+n}{n}$ and $\frac{1}{p}+\frac{1}{q}=1$. So there exists a constant $C_{\epsilon_{2}}>0$, depending on $\epsilon_{2}$,
for which:
\begin{align*}
    a^{n}\lesssim \epsilon_{2} a^{\mu+n}v^{\sigma}+C_{\epsilon_{2}}v^{-n\frac{\sigma}{\mu}}
\end{align*}
and this implies:
\begin{align*}
    -\int_{\{a\geq 2c_{0}v^{\frac{2}{3}}\}}a^{\mu+n}v^{\sigma}g(a,v,t)\der v \der a\lesssim -\frac{1}{\epsilon_{2}}\int_{\{a\geq 2c_{0}v^{\frac{2}{3}}\}}a^{n}g(a,v,t)\der v \der a + C_{\epsilon_{2}}\int_{(0,\infty)^{2}}v^{-n\frac{\sigma}{\mu}}g(a,v,t)\der v\der a.
\end{align*}
As moments of the form $M_{0,c}$ are bounded from above, we choose $\epsilon_{2}$ sufficiently small in order to be able to repeat the argument in (\ref{surface area bound added}) and then we recover the region where $\{a\geq c_{0}v^{\frac{2}{3}}\}:$
\begin{align*}
   -\int_{\{a\geq 2c_{0}v^{\frac{2}{3}}\}}a^{n}g(a,v,t)\der v \der a&= -\int_{\{a\geq c_{0}v^{\frac{2}{3}}\}}a^{n}g(a,v,t)\der v \der a+\int_{\{c_{0}v^{\frac{2}{3}}\leq a<2c_{0}v^{\frac{2}{3}}\}}a^{n}g(a,v,t)\der v \der a \\
   &\leq-\int_{\{a\geq c_{0}v^{\frac{2}{3}}\}}a^{n}g(a,v,t)\der v \der a+(2c_{0})^{n}\int_{(0,\infty)^{2}}v^{\frac{2}{3}n}g(a,v,t)\der v \der a.
\end{align*}
We can choose $\epsilon_{1}>0$ sufficiently small so that equation (\ref{regularizedformequation}) becomes:
\begin{align*}
    \frac{\der}{\der t} M_{n,0}(g(t))\lesssim -n\frac{L(1-\gamma)}{4}R_{0}M_{n,0}(g(t))+3M_{n,0}(g(t))+C_{\epsilon_{1},\epsilon_{2},n}
    \leq -M_{n,0}(g(t))+C_{\epsilon_{1},\epsilon_{2},n}
\end{align*}
and we conclude using a comparison argument.
\end{proof}
\begin{proof}[Proof of Proposition \ref{lemma_weak_sol_general_case}]
    Let $T>0$. Let $N\in\mathbb{N}$, $N\geq 2$, fixed. Let $S(t): U_{\epsilon,R,N}\rightarrow U_{\epsilon,R,N}$, for $t\in[0,T]$, be as in (\ref{definition semigroup general}). Proposition \ref{propweaksol_generalcase} guarantees that the semigroup is well-defined. Propositions \ref{avpositiveprop} - \ref{iteration_exponential_decay} together with Propositions \ref{firstmoments} - \ref{avnegativeprop} prove that $S(t)\omega\subseteq\omega$ if we choose the constants $c_{0,k}, c_{1,-\alpha}, c_{1,1}, c_{n,0}>0$ in Proposition \ref{lemma_weak_sol_general_case} to correspond to the constants found in Propositions \ref{avpositiveprop} - \ref{iteration_exponential_decay}.
\end{proof}
We now prove the stated moment bounds for the self-similar profiles found in Theorem \ref{themostimportanttheorem} and Theorem \ref{main teo case alpha zero}.
\begin{proof}[Proof of Theorem \ref{themostimportanttheorem} and Theorem \ref{main teo case alpha zero}. Moment bounds]
The main point is to control positive powers of the area. Following the proof of Theorem \ref{themostimportanttheorem} and with the help of Proposition \ref{propweaksol_generalcase} and Proposition \ref{lemma_weak_sol_general_case}, we can prove that there exists a self-similar profile $g_{N}$ for the two-dimensional coagulation equation satisfying 
\begin{align}\label{allmoments}
    \int_{(0,\infty)^{2}}(1+a^{n})g_{N}(a,v)\der v \der a\leq c_{n,0},
\end{align}
for every $n\in[1,N]$. Moreover, if $\alpha>0$, we have that      $\int_{(0,\infty)^{2}}(v^{-\alpha-\tilde{\epsilon}}+v+a)g_{N}(a,v)\der v\der a$ is bounded uniformly independently of $N$, for $\tilde{\epsilon}$ as in Proposition \ref{lemmaweaksol}. If $\alpha=0$, we have that      $\int_{(0,\infty)^{2}}(v^{\gamma}+v^{\gamma+1}+a)g_{N}(a,v)\der v\der a$ is bounded uniformly independently of $N$. We can thus find a subsequence of $\{g_{N}\}_{N\geq 2}$ that converges to some $\tilde{g}$ in the weak-$^{\ast}$ topology and $\tilde{g}$ is a self-similar profile for the two-dimensional coagulation equation. This is since these are the moment estimates needed to prove the existence of a self-similar profile.

\
As the constants $c_{n,0}$ in (\ref{allmoments}) are independent of $N$, for $n\leq N$, we can conclude that there exist some constants $c_{d}>0$ such that $\int_{(0,\infty)^{2}}a^{d}\tilde{g}(a,v)\der v \der a <c_{d}$, for all $d\in[1,\infty).$

\
To control negative powers of $a$, we use Remark \ref{rmkanegavnega}. For combinations of positive powers of $a$ and powers of $v,$ we use Young's inequality. Take $d>0$ and $k\in\mathbb{R}$. Then:
\begin{align*}
    \int_{(0,\infty)^{2}}a^{d}v^{k}\tilde{g}(a,v)\der v \der a\leq  \frac{d}{d+1}\int_{(0,\infty)^{2}}a^{d+1}\tilde{g}(a,v)\der v \der a+ \frac{1}{d+1}\int_{(0,\infty)^{2}}v^{k(d+1)}\tilde{g}(a,v)\der v \der a.
\end{align*}
\end{proof}
\section{Different asymptotic behaviors for $\mu<0$}\label{case mu negative}
\subsection{Ramification theory}
First, we prove existence of a weak solution for the time-dependent fusion problem as in Definition \ref{definitiontimedependent} that satisfies the moment bounds (\ref{cond2_non-existence}) and (\ref{conditions_non-existence}) and with initial value $g_{\textup{in}}\in\mathscr{M}_{+}^{I}(\mathbb{R}_{>0}^{2})$. We initially prove well-posedness for a truncated version of the time-dependent problem. Since $\mu$ is negative, we can prove well-posedness using the truncation for the coagulation kernel, while the cut-off in the fusion rate is not needed. So we look at the equation:
\begin{align}\label{regularized_form_equation_negative}
\int_{(0,\infty)^{2}}g(\eta,t)\Theta_{\epsilon}(v)\varphi(\eta)\der\eta-\frac{2}{3}\int_{(0,\infty)^{2}}g(\eta,t)\Theta_{\epsilon}(v) a\partial_{a}\varphi(\eta)\der\eta-\int_{(0,\infty)^{2}}&g(\eta,t)\Theta_{\epsilon}(v)v\partial_{v}\varphi(\eta)\der \eta \nonumber \\
+(1-\gamma)\langle \mathbb{K}_{R}[g],\varphi\rangle+(1-\gamma)\int_{(0,\infty)^{2}}r(\eta)(c_{0}v^{\frac{2}{3}}-a)g(\eta,t)\partial_{a}\varphi(\eta)\der \eta =\partial_{t}&\int_{(0,\infty)^{2}}g(\eta,t)\varphi(\eta)\der \eta
\end{align}
and assume the initial value is $g_{\textup{in},R}=g_{\textup{in}}\mathbbm{1}_{\{[c_{0}\epsilon^{\frac{2}{3}},\infty)\times[\epsilon,2R)\}}$. For this form of the equation, we make use of Proposition \ref{propweaksol_generalcase} for the case $N=2$ and obtain a sequence of solutions $\{g_{\epsilon, R}\}_{\{\epsilon\in(0,1),R>1\}}$. Denote by $\alpha_{1}=\min\{-\alpha-\tilde{\epsilon},\gamma-\frac{1}{3}\}$ and let $\tilde{m}$ as in Proposition \ref{lemmaweaksol}. Using the same proof as for Proposition \ref{lemmaweaksol}, we can prove that the set
\begin{align}\label{gamma-1pe3}
    \omega=\omega(\epsilon,R):=\{\frac{1}{2}\leq M_{0,1}(g_{\epsilon,R})\leq 1; M_{0,\alpha_{1}}(g_{\epsilon,R})\leq c_{0,\alpha_{1}}; M_{0,\tilde{m}}(g_{\epsilon,R})\leq c_{0,\tilde{m}}, M_{0,\frac{\sigma}{|\mu|}}(g_{\epsilon,R})\leq c_{0,\frac{\sigma}{|\mu|}}\} 
\end{align}
stays preserved in time uniformly in $\epsilon, R$. 

In order to prove uniform moment bounds for negative powers of $v$, i.e. in order to prove $M_{0,c}$, with $c\leq 0$, we need a lower bound for the $M_{0,1}$ moment. Thus, we need to prove that the lower bound on $M_{0,1}$ is preserved in time, independently of $\epsilon,R$ and of the initial condition $g_{\textup{in},R}$.

To do this, we test equation (\ref{regularized_form_equation_negative}) with $\varphi(a,v)=v$ and make use of the following technical proposition.
\begin{prop} \label{lowerbound}
Let $g_{\epsilon,R}$ be the solution of equation (\ref{regularized_form_equation_negative}) with initial value $g_{\textup{in},R}=g_{\textup{in}}\mathbbm{1}_{\{[c_{0}\epsilon^{\frac{2}{3}},\infty)\times[\epsilon,2R)\}}$ and assume in addition that $g_{\textup{in},R}\in\omega$, where $\omega$ is as in (\ref{gamma-1pe3}). Then there exists $M>0$ such that:
\begin{align}
 \frac{M_{0,1}(g_{\textup{in}})}{2} \leq M_{0,1}(g_{\epsilon,R}(\cdot,\cdot,t))\leq M_{0,1}(g_{\textup{in}}),
\end{align}
for every $t\geq 0$ and uniformly in $\epsilon<\frac{1}{M}, R>M$.
\end{prop}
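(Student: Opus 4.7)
The plan is to test equation (\ref{regularized_form_equation_negative}) with the weight $\varphi(a,v)=v$. Formally this annihilates every contribution: the coagulation bracket vanishes because $\chi_\varphi(\eta,\eta')=(v+v')-v-v'=0$, the fusion and $a$-transport terms vanish because $\partial_a\varphi\equiv 0$, and the $v$-transport term $\int g_{\epsilon,R}\Theta_\epsilon(v)\,v\,d\eta$ exactly cancels the zeroth-order term $\int g_{\epsilon,R}\Theta_\epsilon(v)\,\varphi\,d\eta$. Hence $\partial_t M_{0,1}(g_{\epsilon,R}(t))=0$, i.e.\ the first volume moment is exactly preserved by the truncated dynamics.

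Since $\varphi=v$ is not admissible in (\ref{regularized_form_equation_negative}), I would justify this by approximating with $\varphi_n(a,v)=v\,\chi_n(a)\,\psi_n(v)$, where $\chi_n,\psi_n\in C^1_c((0,\infty))$ are smooth cutoffs equal to $1$ on $[1/n,n]$ and supported in $[1/(2n),2n]$. For $n>\max\{1/\epsilon,4R\}$ the factor $\psi_n$ equals $1$ on both the $v$-support of $g_{\epsilon,R}$ and of its self-convolution, and the lower portion of $\chi_n'$ falls below the $a$-support $[c_0\epsilon^{2/3},\infty)$; only the portion of $\chi_n'$ supported on $[n,2n]$ contributes. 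These residual contributions vanish as $n\to\infty$ using the uniform bound $\sup_{t\in[0,T]}M_{2,0}(g_{\epsilon,R}(t))<\infty$ from Proposition \ref{propweaksol_generalcase} with $N=2$: the $a$-transport part is $O(1/n^2)$ by Chebyshev, the fusion part (using $|r(a,v)(c_0v^{2/3}-a)|\leq R_1 a^{\mu+1}v^\sigma$ on the support $a\geq c_0v^{2/3}$ together with $a^{\mu+1}\leq n^{\mu-1}a^2$ on $[n,2n]$, valid since $\mu<1$) is $O(n^{\mu-2})$, and the coagulation bracket---where $\chi_{\varphi_n}$ vanishes whenever $a,a',a+a'\in[1/n,n]$ and is bounded by $2(v+v')$ otherwise---is handled by a Chebyshev argument on the ``transition'' region together with the boundedness of $K_{\epsilon,R}$.

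Having established $M_{0,1}(g_{\epsilon,R}(t))=M_{0,1}(g_{\textup{in},R})$ for all $t\geq 0$, the comparison with $M_{0,1}(g_{\textup{in}})$ is straightforward: the upper bound is trivial since $g_{\textup{in},R}$ is a restriction of $g_{\textup{in}}$, and monotone convergence gives $M_{0,1}(g_{\textup{in},R})\to M_{0,1}(g_{\textup{in}})$ as $\epsilon\to 0$ and $R\to\infty$, so some $M>0$ (depending only on $g_{\textup{in}}$) ensures $M_{0,1}(g_{\textup{in},R})\geq M_{0,1}(g_{\textup{in}})/2$ whenever $\epsilon<1/M$ and $R>M$. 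The main obstacle is the passage to the limit in the coagulation bracket: $\chi_{\varphi_n}$ does not decay pointwise to zero, but only through integral cancellation in the transition region, which is precisely why the existence theory is set up in the second-moment space $U_{\epsilon,R,2}$.
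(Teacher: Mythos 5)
Correct, and essentially the paper's argument: test (\ref{regularized_form_equation_negative}) with $\varphi(a,v)=v$ to obtain exact conservation $M_{0,1}(g_{\epsilon,R}(t))=M_{0,1}(g_{\textup{in},R})$, then verify that the truncated datum retains at least half the mass once $\epsilon<1/M$ and $R>M$. The only cosmetic differences are that the paper fixes $M$ explicitly by requiring $M^{\gamma-1}M_{0,\gamma}(g_{\textup{in}})+M^{1-\tilde m}M_{0,\tilde m}(g_{\textup{in}})+M^{-2/3}M_{0,1/3}(g_{\textup{in}})\leq\tfrac12 M_{0,1}(g_{\textup{in}})$ and uses the isoperimetric support to estimate the excluded regions, while you invoke monotone convergence together with the monotonicity in $\epsilon,R$ of the truncation sets; and the paper dispatches the admissibility of $\varphi=v$ by appeal to the framework of Proposition \ref{propweaksol_generalcase} (with $N=2$), whereas you carry out the cutoff approximation explicitly.
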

\begin{proof}
Fix $M>0$, sufficiently large, such that
\begin{align*}
    M^{\gamma-1}\int_{(0,\infty)^{2}}v^{\gamma}g_{\textup{in}}(a,v)\der v \der a+M^{1-\tilde{m}}\int_{(0,\infty)^{2}}v^{\tilde{m}}g_{\textup{in}}(a,v)\der v \der a+M^{-\frac{2}{3}}\int_{(0,\infty)^{2}}v^{\frac{1}{3}}g_{\textup{in}}(a,v)\der v \der a\leq \frac{M_{0,1}(g_{\textup{in}})}{2},
\end{align*}
where $\tilde{m}$ is as in Proposition \ref{lemmaweaksol} and $\beta$ as in (\ref{lower_bound_kernel}).
\

Testing in (\ref{regularized_form_equation_negative}) with $\varphi(a,v)=v$, we obtain that
\begin{align}\label{technical 1}
\int_{(0,\infty)^{2}}vg_{\epsilon,R}(a,v,t)\der v \der a=\int_{(0,\infty)^{2}}vg_{\textup{in},R}(a,v)\der v \der a.
\end{align}
Take $\epsilon<\frac{1}{M}$ and $R>M$. Since $a\geq c_{0}v^{\frac{2}{3}}$, we have that
\begin{align}\label{technical 2}
 \int_{(0,\infty)^{2}}vg_{\textup{in},R}(a,v)\der v \der a&   \geq \int_{(c_{0}M^{-\frac{2}{3}},\infty)\times(\frac{1}{M},M)}vg_{\textup{in}}(a,v)\der v \der a\nonumber\\
&\geq M_{0,1}(g_{\textup{in}})-M^{\gamma-1}\int_{(0,\infty)^{2}}v^{\gamma}g_{\textup{in}}(a,v)\der v \der a-M^{1-\tilde{m}}\int_{(0,\infty)^{2}}v^{\tilde{m}}g_{\textup{in}}(a,v)\der v \der a\nonumber\\
&-M^{-\frac{2}{3}}\int_{(0,\infty)^{2}}v^{\frac{1}{3}}g_{\textup{in}}(a,v)\der v \der a\geq  \frac{M_{0,1}(g_{\textup{in}})}{2}.
\end{align}
We combine (\ref{technical 1}) and (\ref{technical 2}) in order to conclude our proof.
\end{proof}

The estimates for the remaining moments follow as in the proof of Proposition \ref{lemmaweaksol}. In particular, we obtain that there exists a function $g\in\alltimesolradon\cap\omega$ and a subsequence of $\{g_{\epsilon,R}\}$ such that $g_{\epsilon,R}(t)$ converges to $g(t)$ in the weak-$^{\ast}$ topology, for every $t\geq 0$. We now want to prove that $g$ satisfies equation (\ref{regularized_form_equation_negative}). For this, we follow the steps used in the proof of Theorem \ref{themostimportanttheorem}. The difference is that now we work with an equation that depends on time instead of trying to prove the existence of a self-similar profile. Therefore, unlike Theorem \ref{themostimportanttheorem}, we do not need to obtain an estimate for $M_{1,0}$ independent of time, but it is enough to derive an estimate of the form $M_{1,0}(t)\leq m(T)$, for $0\leq t\leq T$, for some function $m:\mathbb{R}_{+}\rightarrow\mathbb{R}_{+}$, increasing, but finite for any fixed time. Notice that we can expect to have $M_{1,0}$ growing as $t\rightarrow\infty$ since this is the predicted behavior for $M_{1,0}$ in Theorem \ref{remarkramification}.
\begin{prop}\label{existencemunegative time dependent}
Let $\mu<0$. There exists a solution for the weak version of the time-dependent fusion problem as in Definition \ref{definitiontimedependent}.
\end{prop}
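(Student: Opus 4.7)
The plan is to pass to the limit in the truncated equation (\ref{regularized_form_equation_negative}) as $\epsilon\downarrow 0$ and $R\uparrow\infty$. The pointwise-in-time weak-$^{\ast}$ limit $g\in\alltimesolradon\cap\omega$ has already been extracted by the diagonal argument preceding the statement, using the uniform bounds in $\omega$ from (\ref{gamma-1pe3}) together with the lower bound on $M_{0,1}$ from Proposition \ref{lowerbound}. It remains to verify the weak formulation (\ref{weak_form_time_dependent}) for this $g$, which I would split into (a) producing a time-local estimate for $M_{1,0}$ so that every term in the limit is well defined, and (b) passing to the limit in each integral.

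For step (a), I would test (\ref{regularized_form_equation_negative}) against $\varphi(a,v)=a$. The fusion contribution $(1-\gamma)\int r(\eta)(c_{0}v^{2/3}-a)g_{\epsilon,R}(\eta,t)\,\der\eta$ is non-positive on the isoperimetric support and can be discarded, while the coagulation contribution vanishes since $a$ is collisionally conserved. The surviving drift term yields $\partial_{t}M_{1,0}(g_{\epsilon,R}(t))\leq \tfrac{1}{3}M_{1,0}(g_{\epsilon,R}(t))$, so Gr\"onwall gives $M_{1,0}(g_{\epsilon,R}(t))\leq M_{1,0}(g_{\textup{in},R})\textup{e}^{t/3}$, a bound uniform in $\epsilon,R$ on each fixed interval $[0,T]$. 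All the $v$-moments required by the integrability condition of Definition \ref{definitiontimedependent} are already uniform in time through $\omega$.

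For step (b), the linear drift and fusion terms pass to the limit straightforwardly: the integrands are continuous in $\eta$ with support inherited from the compactly supported test function $\varphi\in\textup{C}^{1}_{\textup{c}}([0,\infty);\compactfundif)$, so $g_{\epsilon,R}(s)\rightharpoonup g(s)$ weakly-$^{\ast}$ delivers the convergence pointwise in $s$, and dominated convergence in time closes the argument (for $\mu<0$ the factor $r(a,v)(c_{0}v^{2/3}-a)$ is bounded on the compact support of $\partial_{a}\varphi$). The real work is in the coagulation term, and here I would follow verbatim the truncation strategy used after Proposition \ref{avnegativeprop} in the proof of Theorem \ref{themostimportanttheorem}: first replace $K_{R}$ by $K$, then insert a compactly supported bump $p_{M}(\eta,\eta')$ concentrating on $[1/M,M]^{4}$ to exploit weak-$^{\ast}$ convergence on the compact piece, and finally control the residual contributions on the strips $\{v<1/M\}$, $\{v>M\}$, $\{a<1/M\}$, $\{a>M\}$ (together with their primed analogues) using the moment bounds in $\omega$.

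The main obstacle is the large-$a$ strip $\{a>M\}$. In Theorem \ref{themostimportanttheorem} this was handled via the uniform-in-time bound $M_{1,0}\leq c_{1,0}$ produced by the fast-growing fusion rate; for $\mu<0$ this uniform bound is not available---indeed Theorem \ref{remarkramification} predicts exponential growth of $M_{1,0}$. Crucially, however, the estimate (\ref{control large area}) only requires $M_{1,0}$ to be finite on the fixed window $[0,T]$, which is exactly the content of the Gr\"onwall bound above. Integrated in time against $\varphi$, the residual is bounded by a constant multiple of $M^{-1}\,T\,M_{1,0}(g_{\textup{in}})\textup{e}^{T/3}$, and letting first the auxiliary parameter controlling the small-$v$ and large-$v$ strips tend to infinity and then $M\to\infty$ closes the passage to the limit, exactly as in the $\mu>0$ case.
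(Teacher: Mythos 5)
Your proposal matches the paper's proof: you test the truncated equation with $\varphi=a$, discard the non-positive fusion term, note that the coagulation bracket vanishes, apply Gr\"onwall to obtain a time-local (not time-uniform) bound on $M_{1,0}(g_{\epsilon,R}(t))$, and then repeat the limit-passing argument of Theorem \ref{themostimportanttheorem}, observing that the large-$a$ tail estimate only needs $M_{1,0}$ finite on $[0,T]$ rather than bounded uniformly in time. The paper's write-up is terser but uses exactly these steps, including the explicit remark that the time-dependent setting only requires an estimate $M_{1,0}(t)\leq m(T)$ with $m$ increasing.
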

\begin{proof}
We test equation (\ref{regularized_form_equation_negative}) with $\varphi(\eta)=a$. The fusion term is non-positive due to the isoperimetric inequality and we have that $\langle \mathbb{K}_{R}[g_{\epsilon,R}],a\rangle=0$. Thus, we obtain:
\begin{align*}
\int_{(0,\infty)^{2}}g_
{\epsilon,R}(\eta,t)a\der \eta\leq \int_{(0,\infty)^{2}}g_{\textup{in}}(\eta)a\der \eta+ \frac{1}{3}\int_{0}^{t}\int_{(0,\infty)^{2}}g_
{\epsilon,R}(\eta,s)a\der\eta\der s.
\end{align*}
By Gr\"{o}nwall's inequality, this implies that $M_{1,0}(g_{\epsilon,R}(t))$ is bounded for every $t\geq 0$. Thus, we can redo the argument used in Theorem \ref{themostimportanttheorem} to prove that $g$ satisfies equation (\ref{weak_form_time_dependent}).
\end{proof}

We now wish to prove that $g$ satisfies the estimates in (\ref{conditions_non-existence}) in order to prove Proposition \ref{prop ramification}.
\begin{prop}\label{cond exst neg}
The solutions $g_{\epsilon,R}$ of the truncated version of the time-dependent fusion problem (\ref{regularized_form_equation_negative}) satisfy (\ref{conditions_non-existence}) uniformly in $\epsilon,R$.
\end{prop}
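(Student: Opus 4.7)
The strategy is to test the truncated equation (\ref{regularized_form_equation_negative}) with appropriate monomial weights $\varphi(a,v)=a^{k}v^{l}$ for $k\in\{\mu,2\}$ and $l\in\{-\alpha-1,\max\{1+\tilde\epsilon,\sigma+\tfrac{2}{3}\}\}$, and to obtain a Gronwall-type differential inequality whose constants depend on $T$ but are independent of $\epsilon,R$. This yields the bound on $[0,T]$ in (\ref{conditions_non-existence}). Note that we only need finiteness on bounded time intervals, not uniform bounds in time, which gives us substantial freedom.

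First I would handle the $k=\mu$ case, which is essentially free: since $\mu<0$ and $g_{\epsilon,R}$ is supported in $\{a\geq c_{0}v^{2/3}\}$, one has the pointwise bound $a^{\mu}\leq c_{0}^{\mu}v^{2\mu/3}$, so $M_{\mu,l}(g_{\epsilon,R}(t))\leq c_{0}^{\mu}M_{0,l+2\mu/3}(g_{\epsilon,R}(t))$. The resulting $v$-moments can be controlled uniformly in $\epsilon,R$ by the same arguments used in Propositions \ref{firstmoments}--\ref{m-lprop} (adding a further $v$-power to the set $\omega$ in (\ref{gamma-1pe3}) if the exponent $l+2\mu/3$ lies outside the range already covered there; propagation of such $M_{0,\cdot}$-bounds requires no new input, as the fusion term does not affect the volume variable).

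For $k=2$, testing with $\varphi=a^{2}v^{l}$ produces a linear transport contribution equal to $-(l+\tfrac{1}{3})M_{2,l}^{\Theta_\epsilon}(g_{\epsilon,R}(t))$, which is harmless since it is dominated by $(|l|+\tfrac13)M_{2,l}$. Crucially, the fusion contribution is $(1-\gamma)\int r(\eta)(c_{0}v^{2/3}-a)g_{\epsilon,R}\,2av^{l}\,d\eta\leq 0$ because $a\geq c_{0}v^{2/3}$ on $\supp g_{\epsilon,R}$, so it may be discarded. Only the coagulation term requires work: expanding $(a+a')^{2}=a^{2}+2aa'+a'^{2}$ and using the $K$-independent upper bound $K\leq K_{0}(v^{-\alpha}v'^{\beta}+v'^{-\alpha}v^{\beta})$ (valid since $K_{\epsilon,R}\leq K$), the diagonal terms produce quantities controlled by $M_{2,l+\beta}M_{0,-\alpha}$ type products and similar, while the cross term $2aa'(v+v')^{l}$ must be split via the symmetric decomposition $\{v\leq v'\}\cup\{v>v'\}$, applying $(v+v')^{l}\leq\min(v,v')^{l}$ when $l<0$ and $(v+v')^{l}\leq 2^{l}(v^{l}+v'^{l})$ when $l>0$. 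Young's inequality then distributes the resulting cross-products into a controllable part (bounded via auxiliary area-weighted moments $M_{1,l'}$) and a small multiple of $M_{2,l}$ itself.

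The main obstacle is the bootstrap step for the intermediate moments $M_{1,l'}$ that arise in the coagulation estimate. These are obtained analogously by testing with $\varphi=av^{l'}$; the same non-positivity of the fusion term applies, and the resulting differential inequality for $M_{1,l'}$ involves only the $M_{0,\cdot}$-moments controlled by (\ref{gamma-1pe3}) together with $M_{1,l'}$ itself. A finite number of such bootstrap steps, chained so that the exponents match those appearing in the $M_{2,l}$ analysis, closes the estimate by Gronwall on $[0,T]$. The delicate point is to verify that every exponent $l+2\mu/3$, $l+\beta$, $l-\alpha$, etc.\ produced along the way either lies in the range controlled by $\omega$ in (\ref{gamma-1pe3}) (possibly extended by a few additional uniform bounds propagated in the same way as Propositions \ref{firstmoments}--\ref{m-lprop}) or can be absorbed by the isoperimetric inequality into a $v$-moment already bounded; the choice $x_{1}\in\{\sigma/|\mu|,\gamma-\tfrac{1}{3}\}$ and the assumption (\ref{g in 1}) have been tailored precisely so that this bookkeeping closes.
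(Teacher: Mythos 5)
Your proposal is correct and takes essentially the same route as the paper: test with monomial weights $a^kv^l$, discard the fusion term by non-positivity on $\{a\geq c_0 v^{2/3}\}$, control the coagulation contribution by products of lower-order area-weighted moments $M_{1,\cdot}$ (themselves bootstrapped from the $M_{0,\cdot}$-bounds propagated in $\omega$), and close with Gronwall on $[0,T]$, exploiting that only finiteness on bounded time intervals is needed; the $a^{\mu}$-weighted moments reduce to $v$-moments via $a^{\mu}\leq c_0^{\mu}v^{2\mu/3}$, an observation the paper leaves implicit. The only cosmetic deviation is your use of Young's inequality where the paper interpolates via Hölder between $M_{2,m}$ and $M_{2,-\alpha}$, both producing the same sub-linear Gronwall structure.
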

\begin{proof}
In order to prove the bounds in (\ref{conditions_non-existence}), we prove that the estimates hold uniformly in $\epsilon, R$ for $g_{\epsilon,R}$ and then pass to the limit. As the ideas for different moment estimates are similar, we focus on proving $M_{2,0}$ is uniformly bounded. Since it may not be possible to prove bounds for $M_{2,0}$ which are independent of time, we restrict ourselves to proving that, for some fixed $T>0$, we have that $M_{2,0}(t)$ is bounded for all $t\leq T.$ \

Let $T>0$, $t\leq T$. Due to Proposition \ref{propweaksol_generalcase} in the case $N=2$, we are able to test (\ref{regularized_form_equation_negative}) with $\varphi(a,v)=a^{2}$. Since the fusion term is non-positive due to the isoperimetric inequality, the equation becomes:
\begin{align*}
\int_{(0,\infty)^{2}}g_{\epsilon,R}(\eta,t)a^{2}\der \eta&\leq \int_{(0,\infty)^{2}}g_{\textup{in}}(\eta)a^{2}\der \eta+(1-\frac{4}{3})\int_{0}^{t}\int_{(0,\infty)^{2}}g_{\epsilon,R}(\eta,s)\Theta_{\epsilon}(v)a^{2}\der\eta\der s\\
&+C\int_{0}^{t}\int_{(0,\infty)^{2}}av^{-\alpha}g_{\epsilon,R}(\eta,s)\der\eta\int_{(0,\infty)^{2}} a'v'^{\beta}g_{\epsilon,R}(\eta',s)\der \eta'\der s\\
&\leq  \int_{(0,\infty)^{2}}g_{\textup{in}}(\eta)a^{2}\der \eta+C\int_{0}^{t}\int_{(0,\infty)^{2}}av^{-\alpha}g_{\epsilon,R}(\eta,s)\der\eta\int_{(0,\infty)^{2}} a'v'^{\beta}g_{\epsilon,R}(\eta',s)\der \eta'\der s
\end{align*}
Thus, if $M_{1,-\alpha}$ and $M_{1,\beta}$ are bounded, we obtain that:
\begin{align*}
    \int_{(0,\infty)^{2}}g_{\epsilon,R}(\eta,t)a^{2}\der \eta\leq \int_{(0,\infty)^{2}}g_{\textup{in}}(\eta)a^{2}\der \eta+C(t)\leq \int_{(0,\infty)^{2}}g_{\textup{in}}(\eta)a^{2}\der \eta+C(T)
\end{align*}
and we can conclude using Gr\"{o}nwall's inequality.

$M_{1,-\alpha}$ is bounded in the same manner as $M_{1,0}$ in Proposition \ref{existencemunegative time dependent}.

We now estimate $M_{1,\beta}$. The contribution of the fusion term is non-positive. We estimate the term containing the coagulation kernel using:
\begin{align*}
    \int_{(0,\infty)^{2}}&K_{\epsilon,R}(a,v,a',v')[(a+a')(v+v')^{\beta}-av^{\beta}-a'v'^{\beta}]g_{\epsilon,R}(\eta,t)g_{\epsilon,R}(\eta',t')\der \eta'\der \eta\\
    &\leq\int_{(0,\infty)^{2}}K_{\epsilon,R}(a,v,a',v')[av'^{\beta}+a'v^{\beta}]g_{\epsilon,R}(\eta,t)g_{\epsilon,R}(\eta',t')\der \eta'\der \eta\\
    &\leq C\int_{(0,\infty)^{2}}av^{\beta}g(\eta,t)\der \eta\int_{(0,\infty)^{2}}v'^{\beta-\alpha}g(\eta',t')\der \eta'+C\int_{(0,\infty)^{2}}av^{-\alpha}g(\eta,t)\der \eta\int_{(0,\infty)^{2}}v'^{2\beta}g(\eta',t')\der \eta'.
\end{align*}
Since we can bound from above the moment estimates of the form $M_{0,d},$ with $d=2\beta$ or $d=\beta-\alpha$, when testing (\ref{regularized_form_equation_negative}) with $av^{\beta}$, the equation becomes
\begin{align*}
    \partial_{t}M_{1,\beta}(g_{\epsilon,R}(t))&\leq (1-\frac{2}{3}-\beta)\int_{(0,\infty)^{2}}g_{\epsilon,R}(\eta,t)\Theta_{\epsilon}(v)av^{\beta}\der\eta+C_{1}M_{1,\beta}(g_{\epsilon,R}(t))+C_{2}M_{1,-\alpha}(g_{\epsilon,R}(t))\\
    &\leq \overline{C}_{1}M_{1,\beta}(g_{\epsilon,R}(t))+\overline{C}_{2}(t),
\end{align*}
for some constants $C_{1},C_{2},\overline{C}_{1},\overline{C}_{2}(t)>0$, where $\overline{C}_{2}$ is a function $\overline{C}_{2}:\mathbb{R}_{+}\rightarrow\mathbb{R}_{+}$, increasing, but finite for any fixed time. Thus, for $t\in[0,T]$, we have
\begin{align*}
    \partial_{t}M_{1,\beta}(g_{\epsilon,R}(t))    \leq \overline{C}_{1}M_{1,\beta}(g_{\epsilon,R}(t))+\overline{C}_{2}(T).
\end{align*}
We can conclude again using Gr\"{o}nwall's inequality.

The rest of the moments in (\ref{conditions_non-existence}) are estimated using the same methods. The main idea is to use an iterative argument that allows to reduce the exponents of the powers of $a$ and $v$ to lower order terms, for which we already obtained uniform bounds.

For example, we analyse the moment $M_{2,m},$ when $m>1$. The fusion term is non-positive and the linear terms in (\ref{regularized_form_equation_negative}) satisfy  $(1-\frac{4}{3}-m)\int_{(0,\infty)^{2}}g_{\epsilon,R}(\eta,t)\Theta_{\epsilon}(v)a^{2}v^{m}\der\eta\leq 0$. We thus obtain:
\begin{align}\label{moment 2 m}
 \partial_{t} M_{2,m}(g_{\epsilon,R}(t))&\lesssim
\int_{(0,\infty)^{2}}\int_{(0,\infty)^{2}}K_{\epsilon,R}(\eta,\eta')[(a+a')^{2}(v+v')^{m}-a^{2}v^{m}-a'^{2}v'^{m}]g_{\epsilon,R}(\eta,t)g_{\epsilon,R}(\eta',t)\der \eta' \der \eta\nonumber\\
&\leq \int_{(0,\infty)^{2}}\int_{(0,\infty)^{2}}K(\eta,\eta')[(a+a')^{2}(v+v')^{m}-a^{2}v^{m}-a'^{2}v'^{m}]g_{\epsilon,R}(\eta,t)g_{\epsilon,R}(\eta',t)\der \eta' \der \eta
\end{align}
and use the fact that
\begin{align*}
    (a+a')^{2}(v+v')^{m}-a^{2}v^{m}-a'^{2}v'^{m}&\leq [(a+a')^{2}-a^{2}-a'^{2}](v+v')^{m}+(a^{2}+a'^{2})(v+v')^{m}-a^{2}v^{m}-a'^{2}v'^{m}\\
    &\leq 2aa'(v+v')^{m}+(a^{2}+a'^{2})[(v+v')^{m}-v^{m}-v'^{m}]\\
    &+(a^{2}+a'^{2})(v^{m}+v'^{m})-a^{2}v^{m}-a'^{2}v'^{m}\\
    &\leq 2aa'(v+v')^{m}+\sum_{k=1}^{k_{m}}\binom{m}{k}(a^{2}+a'^{2})(v^{k}v'^{m-k}+v^{m-k}v'^{k})\\
    &+a^{2}v'^{m}+a'^{2}v^{m}.
\end{align*}
The terms containing powers of $a$ of lower order, such as $av^{k},$ $k\in[0,m]$, are bounded iteratively. The rest of the terms are of the form $a^{2}v^{b}$, with $b-\alpha, b+\beta\in(-\alpha,m)$ or with $b-\alpha=-\alpha$. For $b-\alpha, b+\beta\in(-\alpha,m)$, we can use H\"{o}lder's inequality for the integral containing the coagulation kernel:
\begin{align}\label{holder 2 m}
\int_{(0,\infty)^{2}}g_{\epsilon,R}(\eta,t)a^{2}v^{b+\beta}\der \eta \leq \bigg(\int_{(0,\infty)^{2}}g_{\epsilon,R}(\eta,t)a^{2}v^{m}\der \eta\bigg)^{\theta} \bigg(\int_{(0,\infty)^{2}}g_{\epsilon,R}(\eta,t)a^{2}v^{-\alpha}\der \eta\bigg)^{1-\theta},
\end{align}
for some $\theta\in(0,1)$. We can estimate $M_{2,-\alpha}$ in a completely analogous manner as  $M_{2,0}$. The moments containing only powers of $v$ are bounded from above.

Thus, we combine (\ref{moment 2 m}) and (\ref{holder 2 m}) to deduce that
\begin{align*}
 \partial_{t} M_{2,m}(g_{\epsilon,R}(t))&\lesssim
C_{1}(t)\bigg(\int_{(0,\infty)^{2}}g_{\epsilon,R}(\eta,t)a^{2}v^{m}\der \eta\bigg)^{\theta} +C_{2}(t)\\
&\leq 
C_{1}(T)\bigg(\int_{(0,\infty)^{2}}g_{\epsilon,R}(\eta,t)a^{2}v^{m}\der \eta\bigg)^{\theta} +C_{2}(T),
\end{align*}
for some $C_{1},C_{2}:\mathbb{R}_{+}\rightarrow\mathbb{R}_{+}$, increasing, but finite for any fixed time.  We conclude using Gr\"{o}nwall's inequality.
\end{proof}
In Proposition \ref{cond exst neg}, we derived uniform estimates in  $\epsilon,R$ for the moments in (\ref{conditions_non-existence}) of $g_{\epsilon,R}$. This means that the limit $g$ will satisfy the same moment estimates.
\begin{rmk}\label{remark needed for proposition}
$g$ satisfies the estimates in (\ref{conditions_non-existence}).
\end{rmk}

\begin{proof}[Proof of Proposition \ref{prop ramification}]
 Proposition \ref{prop ramification} follows by combining Propositions \ref{lowerbound} - \ref{cond exst neg} and Remark \ref{remark needed for proposition}.
\end{proof}
We now prove a technical proposition which shows that we can test (\ref{weak_form_time_dependent}) with $\varphi\equiv a$.
\begin{prop}\label{extension compact support}
Let $T>0$. Let $\mu<0$. Then equation (\ref{weak_form_time_dependent}) holds for every $\varphi\in\textup{C}^{1}(\mathbb{R}_{>0}^{2})$ with $\sup_{\eta\in\mathbb{R}_{>0}^{2}}|\varphi(\eta)+a\partial_{a}\varphi(\eta)|\leq Ca$ and $\sup_{\eta\in\mathbb{R}_{>0}^{2}}|\partial_{v}\varphi(\eta,t)|\leq C$ if (\ref{conditions_non-existence}) holds.
\end{prop}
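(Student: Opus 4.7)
The plan is to approximate $\varphi$ by compactly supported test functions, apply (\ref{weak_form_time_dependent}) to the approximations, and pass to the limit $N\to\infty$ using the moment estimates (\ref{cond2_non-existence})--(\ref{conditions_non-existence}) to justify dominated convergence. Choose smooth cutoffs $\chi_{N}\in\textup{C}^{1}_{\textup{c}}(\mathbb{R}^{2}_{>0};[0,1])$ with $\chi_{N}\equiv 1$ on $[1/N,N]^{2}$, $\chi_{N}\equiv 0$ outside $[1/(2N),2N]^{2}$, and $|a\partial_{a}\chi_{N}|+|v\partial_{v}\chi_{N}|\leq C$ uniformly in $N$. Set $\varphi_{N}:=\varphi\,\chi_{N}\in\textup{C}^{1}_{\textup{c}}(\mathbb{R}^{2}_{>0})$. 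Since $\varphi_{N}$ is compactly supported (and, after multiplying by a time cutoff that is let to increase to $\mathbbm{1}_{[0,t]}$, it is a valid test function in (\ref{weak_form_time_dependent})), each term in (\ref{weak_form_time_dependent}) is well defined for $\varphi_{N}$; the task is to show that every term converges to its counterpart with $\varphi$.

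The preliminary step is to extract pointwise bounds on $\varphi$ from the hypotheses. Observing that $\partial_{a}(a\varphi)=\varphi+a\partial_{a}\varphi$, integration from $a=1$ yields $|\varphi(a,v)|\leq C(|\varphi(1,v)|a^{-1}+a)$, and combining this with $|\partial_{v}\varphi|\leq C$ gives pointwise estimates of the form $|\varphi(a,v)|\leq C(a+v/a+1/a)$ and $|\partial_{a}\varphi(a,v)|\leq C(1+v/a^{2}+1/a^{2})$. On the support $\{a\geq c_{0}v^{\frac{2}{3}}\}$, the isoperimetric inequality converts these into bounds involving only powers of $a$ and $v$ (e.g.\ $1/a\leq c_{0}^{-1}v^{-2/3}$, $v/a\leq c_{0}^{-1}v^{1/3}$) which are dominated by the moments appearing in (\ref{cond2_non-existence}) and (\ref{conditions_non-existence}).

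With these estimates, the three linear terms $\int g\varphi_{N}$, $\int g\,a\partial_{a}\varphi_{N}$, $\int g\,v\partial_{v}\varphi_{N}$ converge to their $\varphi$-counterparts by dominated convergence: the boundary contributions produced by $\chi_{N}'$ are supported in shrinking annuli and carry the bounded factors $a\partial_{a}\chi_{N}$, $v\partial_{v}\chi_{N}$, while $a\varphi$ and $v\varphi$ are integrable against $g$. The coagulation bracket $\langle\mathbb{K}[g],\varphi_{N}\rangle$ converges to $\langle\mathbb{K}[g],\varphi\rangle$ because $|\chi_{\varphi}(\eta,\eta')|$ inherits an affine growth in $(a,a',v,v')$ from the bounds above, and combining this with the kernel estimate (\ref{lower_bound_kernel}) gives an integrable majorant against $g\otimes g$ in terms of the moments $M_{0,-\alpha-1}$, $M_{0,\max\{1+\tilde\epsilon,\sigma+2/3\}}$, $M_{1,-\alpha}$, $M_{1,\beta}$, all of which are finite by (\ref{conditions_non-existence}).

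The delicate point, and the main obstacle, is the fusion term. Since $\mu<0$, the factor $r(\eta)\lesssim a^{\mu}v^{\sigma}$ is singular at $a=0$, and its product with $(c_{0}v^{\frac{2}{3}}-a)\partial_{a}\varphi$ behaves like $a^{\mu+1}v^{\sigma}$ plus lower-order powers of $a$. One uses the elementary inequality $a^{\mu+1}\leq a^{\mu}+a^{2}$ valid for all $a>0$ (since $a^{\mu+1}\leq a^{\mu}$ for $a\leq 1$ and $a^{\mu+1}\leq a^{2}$ for $a\geq 1$), and notes that the relation (\ref{same_rescaling}) together with $\mu<0$ yields $\sigma=\gamma-1-\tfrac{2}{3}\mu\geq -1\geq -\alpha-1$, so that $v^{\sigma}\lesssim v^{-\alpha-1}+v^{\max\{1+\tilde\epsilon,\sigma+2/3\}}$. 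The fusion integrand is therefore dominated by $(a^{\mu}+a^{2})(v^{-\alpha-1}+v^{\max\{1+\tilde\epsilon,\sigma+2/3\}})g(\eta,s)$ up to multiplicative constants, a quantity that is integrable on $[0,T]\times\mathbb{R}^{2}_{>0}$ precisely by (\ref{conditions_non-existence}). Dominated convergence then passes to the limit in the fusion term, and assembling the four types of contributions yields the identity (\ref{weak_form_time_dependent}) for the original $\varphi$.
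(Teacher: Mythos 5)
Your strategy—cutoffs $\chi_N$, dominated convergence, moment estimates from (\ref{cond2_non-existence})--(\ref{conditions_non-existence})—is exactly the one the paper uses. The one genuine variation is in the fusion-term domination: the paper converts $r(\eta)(c_0v^{2/3}-a)$ into $\lesssim av^{\gamma-1}$ via $a^{\mu}\le c_0^{\mu}v^{2\mu/3}$ on the isoperimetric support and then splits $v^{\gamma-1}\lesssim v^{\beta}+v^{-\alpha-1}$ by Young, landing on $M_{1,\beta}$ and $M_{1,-\alpha-1}$; you instead use $a^{\mu+1}\le a^{\mu}+a^{2}$ and $v^{\sigma}\lesssim v^{-\alpha-1}+v^{\max\{1+\tilde\epsilon,\sigma+2/3\}}$ to land directly on the integrand that appears verbatim in (\ref{conditions_non-existence}). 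Both are fine; yours is marginally tidier in that it uses the hypothesis integrand without a further interpolation step.

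There is however a gap that you create and then gloss over. You correctly deduce from $|\varphi+a\partial_a\varphi|\le Ca$ and $|\partial_v\varphi|\le C$ only the bound $|\partial_a\varphi|\le C(1+v/a^{2}+1/a^{2})$, which is \emph{not} uniformly bounded. Your final claim that the fusion integrand is ``dominated by $(a^{\mu}+a^{2})(v^{-\alpha-1}+v^{\max})g$'' silently drops the $1/a^{2}$ part of $\partial_a\varphi$; keeping it produces a term $\sim a^{\mu-1}v^{\sigma}$, which on the support yields $a^{\mu}v^{\sigma-2/3}$, and $\sigma-2/3\ge-\alpha-1$ is \emph{not} guaranteed by the assumptions (it can fail whenever $\sigma+\alpha<1/3$, e.g.\ for $\beta$ small and $|\mu|$ small, since $\sigma+\alpha=\beta-1+\tfrac{2}{3}|\mu|$). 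The same implicit assumption that $|\partial_a\varphi|$ and $|\varphi|/a$ are bounded is present in the paper's own proof (its Young step $av^{\gamma-1}|\partial_a\varphi|\lesssim av^{\beta}+av^{-\alpha-1}$ and the bound $|\varphi\,\partial_a\zeta_n|\le C$ both need it), and both arguments are perfectly correct for the only test function the Proposition is used with, $\varphi=a$. But since you explicitly computed a $\partial_a\varphi$-bound that blows up as $a\to 0$, you should either restrict the hypothesis (e.g.\ to $|\varphi|\le Ca$ and $|\partial_a\varphi|\le C$), or carry the $1/a^{2}$ contribution through and verify, under extra assumptions, that the resulting moments are finite; as written, the domination step does not follow from what you established.
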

\begin{proof}
Let $T>0$, $t\in[0,T]$. We construct a sequence of functions $\{\zeta_{n}\}_{n\in\mathbb{N}}\subset \textup{C}_{\textup{c}}^{1}(\mathbb{R}_{>0}^{2})$ such that $\zeta_{n}(\eta)=1$ when $\eta\in[\frac{1}{n},n]^{2}$ and $\zeta_{n}(\eta)=0$ when  $\eta\not\in[\frac{1}{2n},2n]^{2}$. The idea is to use Lebesgue's dominated convergence theorem in (\ref{weak_form_time_dependent}) for the functions $\varphi_{n}=\zeta_{n}\varphi$. We thus show below only the needed estimates for the proof. The term with the coagulation kernel in (\ref{weak_form_time_dependent}) can be bounded directly by
\begin{align*}
    \big|\langle \mathbb{K}[g](t),\varphi_{n}\rangle \big|\leq C\sup_{s\in[0,T]} M_{1,-\alpha}(g(s))\sup_{s\in[0,T]} M_{0,\beta}(g(s))+C\sup_{s\in[0,T]} M_{0,-\alpha}(g(s))\sup_{s\in[0,T]} M_{1,\beta}(g(s))\leq C(T).
\end{align*}
We now wish to control the fusion term in (\ref{weak_form_time_dependent}), namely
  \begin{align*}
\int_{(0,\infty)^{2}}|r(\eta)(c_{0}v^{\frac{2}{3}}-a)\partial_{a}\varphi_{n}(\eta)|g(\eta,s)\der \eta.
 \end{align*}

Notice that we can construct $\zeta_{n}$ such that $|a\partial_{a}\zeta_{n}(\eta)|\leq C,$ for some constant independent of $n\in\mathbb{N}.$ Moreover, we know that the fusion kernel satisfies (\ref{fusion_form}) and that $a\geq c_{0}v^{\frac{2}{3}}$. Thus
 \begin{align}\label{using the above inequality}
     |r(a,v)(c_{0}v^{\frac{2}{3}}-a)|\lesssim a^{\mu+1}v^{\sigma}+a^{\mu}v^{\sigma+\frac{2}{3}}\leq 2 a^{\mu+1}v^{\sigma}\leq 2c_{0}^{\mu}av^{\gamma-1}.
 \end{align} 
 Using the above inequality, we find that the following upper bound holds
 \begin{align*}
|r(\eta)(c_{0}v^{\frac{2}{3}}-a)\partial_{a}\varphi_{n}(\eta)|\lesssim av^{\gamma-1}|\partial_{a}\varphi(\eta)|+av^{\gamma-1}|\varphi(\eta)\partial_{a}\zeta_{n}(\eta)|
\end{align*}
up to a multiplicative constant. We then use Young's inequality to deduce that
\begin{align*}
    av^{\gamma-1}|\partial_{a}\varphi(\eta)|\lesssim av^{\beta}+av^{-\alpha-1}
\end{align*}
and
\begin{align*}
av^{\gamma-1}|\varphi(\eta)\partial_{a}\zeta_{n}(\eta)|\leq Cav^{\gamma-1}\lesssim av^{\beta}+av^{-\alpha-1}.
\end{align*}
Since we have that
\begin{align*}
\int_{(0,\infty)^{2}}(av^{\beta}+av^{-\alpha-1})g(\eta,s)\der \eta\leq\sup_{s\in[0,T]}M_{1,\beta}(g(s))+\sup_{s\in[0,T]}M_{1,-\alpha-1}(g(s))<\infty,
\end{align*}
the moment estimates in (\ref{conditions_non-existence}) suffice in order to conclude our proof.
\end{proof}
We now have all the necessary parts in order to prove Theorem \ref{remarkramification}. We first prove that the total area goes to infinity as $t\rightarrow\infty$. More precisely, we will obtain a lower bound for the total surface area that increases exponentially in time. 
\begin{proof}[Proof of Theorem \ref{remarkramification}]
From Proposition \ref{extension compact support}, we notice that the estimates (\ref{conditions_non-existence}) on $g$ allow us to test with $\varphi(a,v,t)=a$.

Since $\langle \mathbb{K}[g],a\rangle=0$ and using (\ref{fusion_form}), equation (\ref{weak_form_time_dependent}) becomes
\begin{align*}
\partial_{t}\int_{(0,\infty)^{2}}g(\eta,t)a\der \eta&=\int_{(0,\infty)^{2}}g(\eta,t)a\der\eta -\frac{2}{3}\int_{(0,\infty)^{2}}g(\eta,t)  a\der\eta+(1-\gamma)\int_{(0,\infty)^{2}} r( \eta)(c_{0}v^{\frac{2}{3}}-a)g(\eta,t)\der \eta \\
&\geq\frac{1}{3}\int_{(0,\infty)^{2}}g(\eta,t)a\der\eta-(1-\gamma)R_{1}\int_{(0,\infty)^{2}} a^{-|\mu|+1}v^{\sigma}g(\eta,t)\der \eta .
\end{align*}
We distinguish now two cases:
\begin{itemize}
    \item Case 1:
    
\
If $\mu\in(-1,0),$ we can apply Young's inequality to obtain:
\begin{align}\label{young_improv}
    a^{1+\mu}v^{\sigma}\leq\frac{\epsilon}{p}a+\frac{\epsilon^{-\frac{q}{p}}}{q}v^{\frac{\sigma}{|\mu|}},
\end{align}
where $p=\frac{1}{1-|\mu|}$ and $q=\frac{1}{|\mu|}$. Thus, there exists a constant $C_{\epsilon}>0$ for which:
\begin{align*}
    -\int_{(0,\infty)^{2}}a^{\mu+1}v^{\sigma}g(a,v,t)\der v \der a\gtrsim -\frac{\epsilon}{p}\int_{(0,\infty)^{2}}ag(a,v,t)\der v \der a - C_{\epsilon}\int_{(0,\infty)^{2}}v^{-\frac{\sigma}{\mu}}g(a,v,t)\der v\der a.
\end{align*}
Choose $\epsilon(R_{1})$ such that $(1-\gamma)R_{1}\frac{\epsilon(R_{1})}{p}<\tilde{\delta},$ for some $\tilde{\delta}\in(0,\frac{1}{3})$. Then, condition $(\ref{cond2_non-existence})$ implies:
\begin{align}\label{delta tilde}
\partial_{t}\int_{\mathbb{R}^{2}_{>0}}g(\eta,t)a\der \eta\geq&(\frac{1}{3}-\tilde{\delta})\int_{\mathbb{R}^{2}_{>0}}g(\eta,t)a\der\eta-\tilde{C}_{\epsilon},
\end{align}
where $\tilde{C}_{\epsilon}=(1-\gamma)R_{1}\frac{\epsilon(R_{1})^{-\frac{q}{p}}}{q}\sup_{t\geq 0}M_{0,\frac{\sigma}{|\mu|}}(g(t))$.
\item Case 2:

\
If $\mu\leq -1$, using (\ref{same_rescaling}), we have that  $a^{1+\mu}v^{\sigma}\leq c_{0}^{1+\mu}v^{\gamma-\frac{1}{3}}$ and the equation for $g$ becomes:
\begin{align*}
\partial_{t}\int_{\mathbb{R}^{2}_{>0}}g(\eta,t)a\der \eta\geq\frac{1}{3}\int_{\mathbb{R}^{2}_{>0}}g(\eta,t)a\der\eta -C\int_{(0,\infty)^{2}}v^{\gamma-\frac{1}{3}}g(\eta,t)\der \eta.
\end{align*}
The moment $M_{0,\gamma-\frac{1}{3}}$ is bounded from above, cf. (\ref{gamma-1pe3}), so we are in the same situation as in Case 1.
\end{itemize}

We define $C(R_{1}):=\frac{(1-\gamma)R_{1}}{\frac{1}{3}-\tilde{\delta}}\frac{\epsilon(R_{1})^{-\frac{q}{p}}}{q}\sup_{t\geq 0}M_{0,\frac{\sigma}{|\mu|}}(g(t))$, where $\tilde{\delta}$ is as in (\ref{delta tilde}), when $\mu\in(-1,0)$ and $C(R_{1}):=3(1-\gamma)R_{1}c_{0}^{1+\mu}\sup_{t\geq 0}M_{0,\gamma-\frac{1}{3}}(g(t)),$ when $\mu\leq -1$. 

We have that, if $\mu\in(-1,0)$, 
\begin{align*}
\partial_{t}\int_{\mathbb{R}^{2}_{>0}}g(\eta,t)a\der \eta\geq&(\frac{1}{3}-\tilde{\delta})\int_{\mathbb{R}^{2}_{>0}}g(\eta,t)a\der\eta-(\frac{1}{3}-\tilde{\delta})C(R_{1}).
\end{align*}
If $\mu\leq -1$, we have
\begin{align*}
\partial_{t}\int_{\mathbb{R}^{2}_{>0}}g(\eta,t)a\der \eta\geq&\frac{1}{3}\int_{\mathbb{R}^{2}_{>0}}g(\eta,t)a\der\eta-\frac{1}{3}C(R_{1}).
\end{align*}
Thus, if we choose an initial condition such that
\begin{align*}
    \int_{(0,\infty)^{2}} a g_{\text{in}}(a,v)\der v\der a\geq 2C(R_{1})
    \end{align*}
    then
    \begin{align}\label{initialexponential}
   \int_{(0,\infty)^{2}}ag(a,v,t)\der v\der a\geq e^{(\frac{1}{3}-\tilde{\delta})t}C(R_{1}),
\end{align}
if $\mu\in(-1,0)$ and
\begin{align*}
    \int_{(0,\infty)^{2}}ag(a,v,t)\der v\der a\geq e^{\frac{1}{3}t}C(R_{1}),
\end{align*}
when $\mu\leq -1$.

We now prove that we can improve the lower bound found in (\ref{initialexponential}) by removing the $\tilde{\delta}$ in the exponential. This will conclude the proof of Theorem \ref{remarkramification}.

 In the following, we do not write explicitly the constants that are not necessarily relevant for the proof. We want to improve the lower bound in (\ref{young_improv}) in the case when $\mu\in(-1,0)$. For $\mu\in(-1,0)$, we let $\epsilon$ to decrease to $0$ as $t\rightarrow\infty$  in (\ref{young_improv}).
 
 More precisely, we take a function $\epsilon(t):=\epsilon_{1}\textup{e}^{-\epsilon_{1}\frac{|\mu|}{1-|\mu|}t}\in(0,\epsilon_{1}],$ for some constant
\begin{align}\label{defepsilon1}
  \epsilon_{1}\in(0,\frac{1}{12}).
\end{align}
This means that $\epsilon(t)^{-\frac{q}{p}}=\epsilon_{1}^{-\frac{q}{p}}\textup{e}^{\epsilon_{1}t}$ in (\ref{young_improv}) and thus:
\begin{align*}
       a^{1+\mu}v^{\sigma}\leq\frac{\epsilon(t)}{p}a+C_{\epsilon_{1},\mu}\frac{\textup{e}^{\epsilon_{1}t}}{q}v^{\frac{\sigma}{|\mu|}},
\end{align*}
where $C_{\epsilon_{1},\mu}$ is a constant depending on $\epsilon_{1}$ and $\mu$. We denote $A(t):=\int_{\mathbb{R}^{2}_{>0}}g(\eta,t)a\der \eta$ and equation (\ref{weak_form_time_dependent}) then becomes: 
\begin{align}\label{eq11}
\partial_{t}A(t)\geq&\bigg(\frac{1}{3}-\epsilon(t)\bigg)A(t)-\tilde{C}\textup{e}^{\epsilon_{1}t},
\end{align}
where $\tilde{C}$ is a constant depending on $\epsilon_{1}$, $\mu$ and $M_{0,\frac{\sigma}{|\mu|}}(g(t))$. We note that $\int_{0}^{t}\epsilon(s)\der s=\frac{1-|\mu|}{|\mu|}\bigg(1-\textup{e}^{-\epsilon_{1}\frac{|\mu|}{1-|\mu|}t}\bigg)$. 

Since $\epsilon(t)\leq\epsilon_{1}$, for every $t\geq 0$, we have that $\int_{0}^{t}\epsilon(s)\der s\leq\epsilon_{1}t$. Multiplying (\ref{eq11}) with $\textup{e}^{-\frac{1}{3}t+\int_{0}^{t}\epsilon(s)\der s}$, we obtain:
\begin{align}\label{improv_epsilon}
\textup{e}^{-\frac{1}{3}t+\int_{0}^{t}\epsilon(s)\der s}\bigg[\partial_{t}A(t)-(\frac{1}{3}-\epsilon(t))A(t)\bigg]\geq-\tilde{C}\textup{e}^{\epsilon_{1}t}\textup{e}^{-\frac{1}{3}t+\int_{0}^{t}\epsilon(s)\der s}\geq-\tilde{C}\textup{e}^{2\epsilon_{1}t}\textup{e}^{-\frac{1}{3}t}\geq -\tilde{C}\textup{e}^{-\frac{1}{6}t} ,
\end{align}
since $\epsilon_{1}$ satisfies (\ref{defepsilon1}). Then we integrate in time from $0$ to $t$:
\begin{align*}
\textup{e}^{-\frac{1}{3}t+\int_{0}^{t}\epsilon(s)\der s}A(t)-A(0)\geq 6\tilde{C}\textup{e}^{-\frac{1}{6}t}-6\tilde{C}
\end{align*}
and thus
\begin{align*}
A(t)\geq A(0)\textup{e}^{\frac{1}{3}t-\int_{0}^{t}\epsilon(s)\der s}-6\tilde{C}\textup{e}^{\frac{1}{3}t-\int_{0}^{t}\epsilon(s)\der s}+6\tilde{C}\textup{e}^{\frac{1}{6}t-\int_{0}^{t}\epsilon(s)\der s}\geq A(0)\textup{e}^{\frac{1}{3}t-\int_{0}^{t}\epsilon(s)\der s}-6\tilde{C}\textup{e}^{\frac{1}{3}t-\int_{0}^{t}\epsilon(s)\der s}.
\end{align*}
Using the definition of $A(t)$, this can be written as
\begin{align*}
\int_{\mathbb{R}^{2}_{>0}}g(\eta,t)a\der \eta\geq\bigg(\int_{\mathbb{R}^{2}_{>0}}g(\eta,0)a\der \eta-6\tilde{C}\bigg)\textup{e}^{\frac{1}{3}t-\int_{0}^{t}\epsilon(s)\der s}.
\end{align*}
This implies that, if we start with sufficiently large total surface area, $\int_{\mathbb{R}^{2}_{>0}}g(\eta,0)a\der \eta\geq 12\tilde{C}$, we have:
\begin{align*}
    \int_{\mathbb{R}^{2}_{>0}}g(\eta,t)a\der \eta\geq6\tilde{C}\textup{e}^{\frac{1}{3}t-\int_{0}^{t}\epsilon(s)\der s}\gtrsim C \textup{e}^{\frac{1}{3}t} ,
\end{align*}
where we used that $1-\textup{e}^{-\epsilon_{1}\frac{|\mu|}{1-|\mu|}t}\in[0,1]$ and thus $\textup{e}^{-\int_{0}^{t}\epsilon(s)\der s}\geq \textup{e}^{-\frac{1-|\mu|}{|\mu|}}$. 

This concludes the proof of Theorem \ref{remarkramification}.
\end{proof}
\subsection{Self-similarity in the case of fast fusion}
In this subsection we prove Theorem \ref{self sim mu neg}.
\bigskip

We look at the following truncated version of equation (\ref{weak_form_time_dependent}):
\begin{align}\label{regularizedformequation mu negative}
\int_{(0,\infty)^{2}}g(\eta,t)\Theta_{\epsilon}(v)\varphi(\eta)\der\eta-\frac{2}{3}\int_{(0,\infty)^{2}}g(\eta,t)\tilde{\Theta}_{\epsilon}(a,v) a\partial_{a}\varphi(\eta)\der\eta-\int_{(0,\infty)^{2}}&g(\eta,t)\Theta_{\epsilon}(v)v\partial_{v}\varphi(\eta)\der \eta \nonumber \\
+(1-\gamma)\langle \mathbb{K}_{\epsilon,R}[g],\varphi\rangle+(1-\gamma)\int_{(0,\infty)^{2}}r(\eta)(c_{0}v^{\frac{2}{3}}-a)g(\eta,t)\partial_{a}\varphi(\eta)\der \eta =\partial_{t}&\int_{(0,\infty)^{2}}g(\eta,t)\varphi(\eta)\der \eta,
\end{align}
where $\mathbb{K}_{\epsilon,R}$ is the term in (\ref{kernel_term}) with coagulation kernel $K_{\epsilon,R}$ defined in (\ref{truncation_kernel_selfsimilar}),  $\Theta_{\epsilon}$ was defined in (\ref{thetaepsilon}) and $\varphi\in\textup{C}^{1}_{\textup{c}}(\mathbb{R}_{>0}^{2})$. We define a continuous function $\tilde{\Theta}_{\epsilon}:\mathbb{R}^{2}_{>0}\rightarrow [0,1]$ such that:
\begin{align}\label{new cut off}
  \tilde{\Theta}_{\epsilon}(a,v)=
  \begin{cases}
0 & a\leq1-\epsilon \textup{ and }v \leq\epsilon;\\
    \Theta_{\epsilon}(v) &a=c_{0}v^{\frac{2}{3}};\\
  1& a>1 \textup{ or } v \geq 2\epsilon,
  \end{cases}
\end{align}
which satisfies $\tilde{\Theta}_{\epsilon}(a,v)\geq \Theta_{\epsilon}(v).$ The choice in (\ref{new cut off}) was made in order for (\ref{regularizedformequation mu negative}) to preserve the isoperimetric inequality in time. This means that if $g$ solves (\ref{regularizedformequation mu negative}) and $g(0)\in\mathscr{M}_{+}^{I}(\mathbb{R}_{>0}^{2}),$ then $g(t)\in\mathscr{M}_{+}^{I}(\mathbb{R}_{>0}^{2})$, for any $t\geq 0$. Additionally, $\tilde{\Theta}_{\epsilon}$ does not vanish in the region $\{a>1\}$. This form will simplify the proof of the moment estimates.

Notice that, differently from Section \ref{sectionexistence}, we do not truncate the fusion rate $r(\eta)$ since $r(\eta)(c_{0}v^{\frac{2}{3}}-a)$ grows at most linearly.

The statement of Proposition \ref{propweaksol_generalcase} holds if we replace equation  (\ref{regularizedformequation}) with (\ref{regularizedformequation mu negative}). This is obtained using the same proof and the growth rate of $r(\eta)(c_{0}v^{\frac{2}{3}}-a)$ in (\ref{regularizedformequation mu negative}).     Thus, we prove the existence of solutions of (\ref{regularizedformequation mu negative}) as in Proposition \ref{propweaksol_generalcase} with $N=2$, which we will denote by $g_{\epsilon,R}$. The choice $N=2$ was made since we want to test equation (\ref{regularizedformequation mu negative}) with $a^{2}$ in order to obtain uniform moment bounds later in the proof.

To prove Theorem \ref{self sim mu neg} we will follow the strategy of Theorem \ref{themostimportanttheorem}. The main point is to find an invariant set that allows us to pass to the limit as $\epsilon\rightarrow 0$ and $R\rightarrow\infty$ in the two-dimensional coagulation equation. In this case, it will be harder to estimate the moment $M_{1,0}$ than in the case when $\mu>0$. This is because of the fact that, in the case of (\ref{regularizedformequation}), the moment estimates are a consequence of the fast growth of the fusion ratio $r(\eta)$. This fast growth does not take place in (\ref{regularizedformequation mu negative}). We will be able to replace the fast growth of $r(\eta)$ with the choice of a sufficiently large constant $\lambda$. 

We first prove some elementary inequalities that will be useful for the proof of Theorem \ref{self sim mu neg}.
\begin{prop}\label{propinequalities}
Let $\mu\leq 0$. For  $\delta_{1}\in(0,1)$, there exist $\lambda$ sufficiently large (depending on $\delta_{1}$) and some constants $m_{1}=\alpha(\mu-2)-\sigma,$ $m_{2}=\beta(2-\mu)-\sigma$ such that:
\begin{align}
av^{-\alpha}&\leq \frac{1-\gamma}{6}\lambda a^{\mu+1}v^{\sigma-\alpha}+\delta_{1} a^{2} + v^{m_{1}};\label{lambda big alpha}\\
av^{\beta}&\leq \frac{1-\gamma}{2(K_{0}c_{0,\beta-\alpha}+2)}\lambda a^{\mu+1}v^{\sigma+\beta}+\delta_{1} a^{2} +v^{m_{2}}, \label{lambda big beta}
\end{align}
where $c_{0,\beta-\alpha}$ is a constant that will be fixed in Lemma \ref{lemmaweaksol mu neg} and $K_{0}$ is as in (\ref{lower_bound_kernel}).
\end{prop}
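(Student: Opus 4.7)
The plan is to derive both inequalities from a single three-term weighted AM-GM (equivalently, Young) inequality, the key observation being that $m_1$ and $m_2$ are chosen precisely so that $av^{-\alpha}$ and $av^{\beta}$ admit a clean weighted geometric-mean decomposition into the three monomials appearing on the right-hand side.

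First I will look for exponents $t_1, t_2, t_3 \geq 0$ with $t_1+t_2+t_3 = 1$ such that
\begin{equation*}
av^{-\alpha} = \bigl(a^{\mu+1}v^{\sigma-\alpha}\bigr)^{t_1}\bigl(a^2\bigr)^{t_2}\bigl(v^{m_1}\bigr)^{t_3}.
\end{equation*}
Matching the exponents of $a$ and $v$ together with the normalization gives a linear $3\times 3$ system in the $t_i$. Substituting $m_1=\alpha(\mu-2)-\sigma$ leads, after a factor of $\sigma-\alpha\mu$ cancels between numerator and denominator, to the explicit solution $t_1=t_3=\frac{1}{3-\mu}$ and $t_2=\frac{1-\mu}{3-\mu}$. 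Since $\mu \leq 0$, we have $3-\mu \geq 3$ and $1-\mu \geq 1$, so the three weights lie in $(0,1)$. The identical calculation with $m_2=\beta(2-\mu)-\sigma$ and with $v^{\sigma-\alpha}$ replaced by $v^{\sigma+\beta}$ in the first factor yields the \emph{same} $t_i$; this is the essential algebraic check that $m_2$ is the correct exponent.

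Next, for any positive scaling constants $s_1,s_2,s_3$, weighted AM-GM gives
\begin{equation*}
av^{-\alpha} = \prod_{i=1}^{3}\Bigl(\frac{s_i X_i}{s_i}\Bigr)^{t_i} \leq s_1^{-t_1}s_2^{-t_2}s_3^{-t_3}\bigl(t_1 s_1 X_1 + t_2 s_2 X_2 + t_3 s_3 X_3\bigr),
\end{equation*}
with $X_1=a^{\mu+1}v^{\sigma-\alpha}$, $X_2=a^2$, $X_3=v^{m_1}$. I will choose the $s_i$ so that the three coefficients on the right become exactly $\frac{(1-\gamma)\lambda}{6}$, $\delta_1$ and $1$; this reduces to the single scalar constraint
\begin{equation*}
\Bigl(\tfrac{(1-\gamma)\lambda}{6 t_1}\Bigr)^{t_1}\Bigl(\tfrac{\delta_1}{t_2}\Bigr)^{t_2}\Bigl(\tfrac{1}{t_3}\Bigr)^{t_3} \geq 1,
\end{equation*}
which, because $t_1 > 0$, is ensured by taking $\lambda$ larger than an explicit constant depending only on $\delta_1$, $\mu$, and $\gamma$. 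The second inequality is proved verbatim with $X_1=a^{\mu+1}v^{\sigma+\beta}$, $X_3=v^{m_2}$, and target coefficient $\frac{(1-\gamma)\lambda}{2(K_0 c_{0,\beta-\alpha}+2)}$ on $X_1$. The only genuinely substantive step is the algebraic verification that the weights $t_i$ are positive and coincide for the two cases; once this is in hand, everything else is routine bookkeeping with Young's inequality and scaling.
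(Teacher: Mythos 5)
Your proof is correct and is essentially the same approach as the paper's: both hinge on Young's inequality to distribute $av^{-\alpha}$ (resp.\ $av^{\beta}$) onto the three target monomials, the algebraic check on $m_1$ and $m_2$ being the substantive step in each case. The paper performs two sequential two-term Young's inequalities passing through the intermediate monomial $a^{(\mu+1)/2}v^{-\alpha(3-\mu)/2}$, whereas you apply a single three-term weighted Young's inequality directly, which is a minor streamlining of the same argument.
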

\begin{proof}
 We prove (\ref{lambda big alpha}) in detail. The strategy to prove (\ref{lambda big alpha}) and (\ref{lambda big beta}) is the same. The proof is as follows.

For any $\delta_{2},\delta_{3}\in(0,1)$, there exist $C_{\delta_{2}}>0$ and $C_{\delta_{3}}>0$ such that
\begin{align*}
av^{-\alpha}\leq C_{\delta_{2}}a^{\frac{\mu+1}{2}}v^{-\alpha\frac{3-\mu}{2}}+\delta_{2}a^{2}
\end{align*}
and
\begin{align*}
a^{\frac{\mu+1}{2}}v^{-\alpha\frac{3-\mu}{2}}\leq C_{\delta_{3}}a^{\mu+1}v^{\sigma-\alpha}+\delta_{3}v^{m_{1}},
\end{align*}
with $m_{1}=\alpha(\mu-2)-\sigma$. Combining the two inequalities, we obtain
\begin{align*}
av^{-\alpha}\leq C_{\delta_{2}}C_{\delta_{3}}a^{\mu+1}v^{\sigma-\alpha}+C_{\delta_{2}}\delta_{3}v^{m_{1}}+\delta_{2}a^{2}.
\end{align*}
To conclude the argument, we choose $\lambda$ sufficiently large such that $\frac{1-\gamma}{6}\lambda\geq C_{\delta_{2}}C_{\delta_{3}}$ and choose $\delta_{3}$ such that $C_{\delta_{2}}\delta_{3}\leq 1$.
\end{proof}
We now make use of Proposition \ref{propinequalities} to prove that we have uniform bounds in $\epsilon$ and $R$ for the solutions of equation (\ref{regularizedformequation mu negative}).
\begin{lem}\label{lemmaweaksol mu neg}
Let $K_{\epsilon,R}$ be as in (\ref{truncation_kernel_selfsimilar}) and assume $r$ satisfies the assumptions of Theorem \ref{self sim mu neg}. Let $T>0$. We define a semigroup $S(t): U_{\epsilon,R,2}\rightarrow U_{\epsilon,R,2}$ in the following way: $S(t)g_{\textup{in},R}=g_{\epsilon,R}(\cdot,\cdot,t)$, for all $t\in[0,T]$ and with $U_{\epsilon,R,2}$ defined as in (\ref{existence_space_strong_general}) with $N=2$. Take $\mu\leq 0$, fix $\tilde{\epsilon}\in(0,1)$, let  $m_{i}\in\mathbb{R}$, $i=\overline{1,2}$, to be the coefficients found in Proposition \ref{propinequalities} and denote $\tilde{\alpha}:=\alpha+\tilde{\epsilon}$. Then there exist constants $c_{0,-\tilde{\alpha}},$ $c_{0,m_{i}},$ $c_{2,0},$ $c_{1,-\alpha},$ $c_{1,\beta}>0$, $i\in \{1,2\}$, for which the set
\begin{align*}
    \tilde{\omega}=&\{ M_{0,1}(g_{\epsilon,R})=1; M_{0,-\tilde{\alpha}}(g_{\epsilon,R})\leq c_{0,-\tilde{\alpha}}; M_{0,2}(g_{\epsilon,R})\leq c_{0,2};
    M_{0,m_{i}}(g_{\epsilon,R})\leq c_{0,m_{i}};\\
    &M_{2,0}(g_{\epsilon,R})\leq c_{2,0}; 
    M_{1,-\alpha}(g_{\epsilon,R})\leq c_{1,-\alpha}; M_{1,\beta}(g_{\epsilon,R})\leq c_{1,\beta}\},
\end{align*}
$i\in\{1,2\}$, is preserved in time uniformly in $\epsilon,R$ under equation (\ref{regularizedformequation mu negative}), or equivalently $S(t)\tilde{\omega}\subseteq\tilde{\omega},$ for all $t\in[0,T]$.
\end{lem}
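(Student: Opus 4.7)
The plan is to adapt the moment-propagation scheme of Proposition~\ref{lemmaweaksol} to the regime $\mu\leq 0$, where fusion no longer dominates coagulation for large $a$. The missing dissipation is supplied by the hypothesis $R_0\ge\lambda$ with $\lambda\gg 1$, whose role is precisely to activate the conversion inequalities of Proposition~\ref{propinequalities}. Existence of $g_{\epsilon,R}$ solving (\ref{regularizedformequation mu negative}) is available from the variant of Proposition~\ref{propweaksol_generalcase} with $N=2$, so the task reduces to showing that the sublevel set $\tilde{\omega}$ is forward invariant under the associated semigroup.

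First I would dispatch the moments depending only on $v$: $M_{0,1}$, $M_{0,-\tilde{\alpha}}$, $M_{0,2}$, $M_{0,m_1}$ and $M_{0,m_2}$. Testing (\ref{regularizedformequation mu negative}) with $\varphi=v^k$ annihilates the fusion term since $\partial_a\varphi=0$, and the coagulation kernel obeys the same one-dimensional bounds (\ref{lower_bound_kernel}) as in the $\mu>0$ analysis. Propositions~\ref{firstmoments}--\ref{m-lprop} therefore apply verbatim and produce $\epsilon,R$-independent constants $c_{0,-\tilde{\alpha}}$, $c_{0,2}$, $c_{0,m_1}$, $c_{0,m_2}$, together with $M_{0,1}(g_{\epsilon,R}(t))=1$; interpolation gives in particular the bound $c_{0,\beta-\alpha}$ entering (\ref{lambda big beta}).

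The substantive step concerns $M_{1,-\alpha}$, $M_{1,\beta}$ and $M_{2,0}$. Testing with $\varphi=av^{-\alpha}$, the coagulation contribution is non-positive by convexity of $v\mapsto v^{-\alpha}$, and with $\varphi=av^{\beta}$ it is controlled by $(1-\gamma)K_0[c_{0,\beta-\alpha}M_{1,\beta}+M_{0,2\beta}M_{1,-\alpha}]$ via (\ref{lower_bound_kernel}); the fusion contribution in both cases, after invoking (\ref{ode_fusion}) and the isoperimetric splitting of (\ref{we need some control over the drift term})--(\ref{surface area bound 3}), is dominated by
\begin{align*}
-\tfrac{(1-\gamma)R_0}{2}\int a^{\mu+1}v^{\sigma-\alpha}g_{\epsilon,R}\quad\text{and}\quad-\tfrac{(1-\gamma)R_0}{2}\int a^{\mu+1}v^{\sigma+\beta}g_{\epsilon,R}
\end{align*}
respectively, up to already-controlled remainders. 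After normalising $v_0=1$ so that $R_0=\lambda$, inserting (\ref{lambda big alpha})--(\ref{lambda big beta}) converts these into $-3M_{1,-\alpha}+3\delta_1 M_{2,0}+3M_{0,m_1}$ and the analogous expression involving $m_2$; the precise prefactors $\tfrac{1-\gamma}{6}$ and $\tfrac{1-\gamma}{2(K_0c_{0,\beta-\alpha}+2)}$ in Proposition~\ref{propinequalities} are calibrated so as to beat both the linear transport and the coagulation cross-term. Finally, testing with $\varphi=a^2$ yields $\partial_t M_{2,0}\le -\tfrac{1}{3}M_{2,0}+2(1-\gamma)K_0 M_{1,-\alpha}M_{1,\beta}$ plus a non-positive fusion term, and assembling the three estimates produces a closed system of the schematic form
\begin{align*}
\partial_t Y \le -B\,Y + Q(Y) + F,
\end{align*}
with $Y=(M_{1,-\alpha},M_{1,\beta},M_{2,0})^{\top}$, a matrix $B$ that is diagonally dominant once $\delta_1$ is small and $\lambda$ large, a quadratic correction $Q(Y)$ of order $M_{1,-\alpha}M_{1,\beta}$, and a source $F$ collecting the $v$-only moments already bounded.

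The main obstacle I foresee is the simultaneous closure of this coupled nonlinear system: $M_{2,0}$ is driven by the quadratic $M_{1,-\alpha}M_{1,\beta}$, yet the inequalities for $M_{1,-\alpha}$ and $M_{1,\beta}$ carry back Young-type $\delta_1 M_{2,0}$ contributions. The parameters must therefore be tuned in a definite order: first $\delta_1$ small enough to decouple the linear block, then $\lambda$ large enough to render $-B$ a stability matrix, and finally the radii $c_{1,-\alpha},c_{1,\beta},c_{2,0}$ consistent with a sublevel set of a suitable Lyapunov combination of $Y$, precisely mirroring the Riccati argument $\partial_t D\le -\tfrac{1}{6}D+(1-\gamma)D^2$ sketched after Theorem~\ref{self sim mu neg}. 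Once this tuning is in place, a standard comparison argument closes $\tilde{\omega}$ and completes the proof.
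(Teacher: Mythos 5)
Your proposal follows essentially the same route as the paper: the $v$-only moments are handled verbatim via Propositions~\ref{firstmoments}--\ref{m-lprop} (the fusion and $a$-drift vanish under test functions $v^{k}$), and the heart of the argument is the coupled system in $M_{1,-\alpha}$, $M_{1,\beta}$, $M_{2,0}$, obtained by testing with $av^{-\alpha}$, $av^{\beta}$, $a^{2}$ and converting the fusion dissipation through Proposition~\ref{propinequalities}, which is exactly the paper's~(\ref{equations})--(\ref{equations part 3}). The only substantive difference is the closure step: you propose finding a Lyapunov combination of $Y=(M_{1,-\alpha},M_{1,\beta},M_{2,0})$ mirroring the Riccati heuristic $\partial_{t}D\le -\tfrac16 D+(1-\gamma)D^{2}$, whereas the paper instead does a direct continuity bootstrap with explicit radii (first solving the $\delta_{1}=0$ system, then perturbing: assume a sublevel set, propagate it for short times at twice the radius, feed the resulting $M_{2,0}$ bound back into the $M_{1,\cdot}$ inequalities, and choose $\delta_{1}$ small enough to re-close). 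Both close the same quadratic feedback loop; your Lyapunov route would need a concrete choice of weights for $Y$ that dominates the off-diagonal coupling $K_{0}c_{0,2\beta}M_{1,-\alpha}$ and the quadratic source $M_{1,-\alpha}M_{1,\beta}$, which the bootstrap sidesteps by fixing the constants by hand. Two small imprecisions: the non-positivity of the coagulation contribution for $\varphi=av^{-\alpha}$ comes from monotonicity of $v\mapsto v^{-\alpha}$ (so $(v+v')^{-\alpha}<\min\{v^{-\alpha},v'^{-\alpha}\}$), not convexity; and the order of parameter choice is that $\lambda$ in Proposition~\ref{propinequalities} is chosen as a function of $\delta_{1}$, with $\delta_{1}$ itself fixed at the end so that the perturbative bootstrap re-closes, rather than the two being tuned independently.
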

\begin{proof}
Uniform bounds for moments of the form $M_{0,d},$ with $d\in\mathbb{R}$, are derived as in Propositions \ref{firstmoments} - \ref{m-lprop}. We now find uniform bounds for $M_{2,0}(g),$ $M_{1,-\alpha}(g)$ and $M_{1,\beta}(g)$. For $\varphi\in\textup{C}^{1}(\mathbb{R}^{2}_{>0})$, whose growth in $a$ is of order $1+a^{2}$  and with $\partial_{a}\varphi(a,v)\geq 0$, we have that:
\begin{align*}
\partial_{t}&\int_{(0,\infty)^{2}}g(\eta,t)\varphi(\eta)\der \eta\leq \int_{(0,\infty)^{2}}g(\eta,t)\Theta_{\epsilon}(v)\varphi(\eta)\der\eta-\frac{2}{3}\int_{(0,\infty)^{2}}g(\eta,t)\tilde{\Theta}_{\epsilon}(a,v) a\partial_{a}\varphi(\eta)\der\eta\\
&-\int_{(0,\infty)^{2}}g(\eta,t)\Theta_{\epsilon}(v)v\partial_{v}\varphi(\eta)\der \eta+(1-\gamma)\langle \mathbb{K}_{\epsilon,R}[g],\varphi\rangle
+\lambda(1-\gamma)\int_{(0,\infty)^{2}}a^{\mu}v^{\sigma}(c_{0}v^{\frac{2}{3}}-a)g(\eta,t)\partial_{a}\varphi(\eta)\der \eta.
\end{align*}
Thus we obtain:
\begin{align}
\partial_{t} M_{2,0}(g(t))&\leq -\frac{1}{3}\int_{(0,\infty)^{2}}g(\eta,t)a^{2}\der\eta+\frac{4}{3}\int_{\{a\leq 1\}}g(\eta,t)a^{2}\der\eta\nonumber\\
&+2K_{0}(1-\gamma)\int_{(0,\infty)^{2}}g(\eta,t)av^{-\alpha}\der\eta\int_{(0,\infty)^{2}}g(\eta',t)a'v'^{\beta}\der\eta'. \nonumber
\\
\partial_{t} M_{1,-\alpha}(g(t))&\leq (\frac{1}{3}+\alpha)\int_{(0,\infty)^{2}}g(\eta,t)\Theta_{\epsilon}(v)av^{-\alpha}\der\eta+(1-\gamma)\lambda\int_{(0,\infty)^{2}}a^{\mu}v^{\sigma-\alpha}(c_{0}v^{\frac{2}{3}}-a)g(\eta,t)\der \eta.\nonumber\\
\partial_{t}M_{1,\beta}(g(t)) &\leq (\frac{1}{3}-\beta)\int_{(0,\infty)^{2}}g(\eta,t)\Theta_{\epsilon}(v)av^{\beta}\der\eta+(1-\gamma)\lambda\int_{(0,\infty)^{2}}a^{\mu}v^{\sigma+\beta}(c_{0}v^{\frac{2}{3}}-a)g(\eta,t)\der \eta \nonumber \\
&+K_{0}(1-\gamma)\int_{(0,\infty)^{2}}g(\eta,t)av^{\beta}\der\eta\int_{(0,\infty)^{2}}g(\eta',t)v'^{\beta-\alpha}\der\eta'\nonumber \\
&+K_{0}(1-\gamma)\int_{(0,\infty)^{2}}g(\eta,t)av^{-\alpha}\der\eta\int_{(0,\infty)^{2}}g(\eta',t)v'^{2\beta}\der\eta'. \label{equations}
\end{align}
Notice that in (\ref{equations}) the truncation on the kernel does not create problems: for $M_{1,-\alpha}$ the term with the coagulation kernel is non-positive and for $M_{2,0}, M_{1,\beta}$ we used $K_{\epsilon,R}\leq K$. 

Making use of (\ref{lambda big alpha}) and (\ref{lambda big beta}), (\ref{equations}) becomes:
\begin{align}
\partial_{t} M_{2,0}(g(t))&\leq -\frac{1}{3}M_{2,0}(g(t))+\frac{4}{3}M_{0,0}(g(t))+2K_{0}(1-\gamma)M_{1,-\alpha}(g(t))M_{1,\beta}(g(t)). \nonumber
\\
\partial_{t} M_{1,-\alpha}(g(t))&\leq (\frac{1}{3}+\alpha)M_{1,-\alpha}(g(t))-\frac{(1-\gamma)\lambda}{2}\int_{\{a\geq2c_{0}v^{\frac{2}{3}}\}}a^{\mu+1}v^{\sigma-\alpha}g(\eta,t)\der \eta\nonumber\\
&\leq -M_{1,-\alpha}(g(t))+6c_{0}M_{0,\frac{2}{3}-\alpha}(g(t))\nonumber\\
&+3\int_{\{a\geq2c_{0}v^{\frac{2}{3}}\}}av^{-\alpha}g(\eta,t)\der\eta-\frac{(1-\gamma)\lambda}{2}\int_{\{a\geq2c_{0}v^{\frac{2}{3}}\}}a^{\mu+1}v^{\sigma-\alpha}g(\eta,t)\der \eta\nonumber\\
&\leq -M_{1,-\alpha}(g(t))+3\delta_{1} M_{2,0}(g(t))+3M_{0,m_{1}}(g(t))+6c_{0}M_{0,\frac{2}{3}-\alpha}(g(t)),\label{first two equations self similarity}
\end{align}
where we have used that $\alpha<1$ (see (\ref{alpha non neg})) and thus $2c_{0}(\alpha+\frac{4}{3})\leq 6c_{0}$ and $\alpha+\frac{4}{3}\leq 3$, and
\begin{align}
\partial_{t}M_{1,\beta}(g(t)) &\leq -M_{1,\beta}(g(t))+2M_{1,\beta}(g(t))-\frac{(1-\gamma)\lambda}{2}\int_{\{a\geq2c_{0}v^{\frac{2}{3}}\}}a^{\mu+1}v^{\sigma+\beta}g(\eta,t)\der \eta \nonumber \\
&+K_{0}(1-\gamma)c_{0,\beta-\alpha}\int_{(0,\infty)^{2}}g(\eta,t)av^{\beta}\der\eta+K_{0}(1-\gamma)c_{0,2\beta}\int_{(0,\infty)^{2}}g(\eta,t)av^{-\alpha}\der\eta\nonumber\\
&\leq -M_{1,\beta}(g(t))-\frac{(1-\gamma)\lambda}{2}\int_{\{a\geq2c_{0}v^{\frac{2}{3}}\}}a^{\mu+1}v^{\sigma+\beta}g(\eta,t)\der \eta+K_{0}c_{0,2\beta}M_{1,-\alpha}(g(t))\nonumber\\
&+(K_{0}c_{0,\beta-\alpha}+2)M_{1,\beta}(g(t))\nonumber\\
&\leq -M_{1,\beta}(g(t))+K_{0}c_{0,2\beta}M_{1,-\alpha}(g(t))+2C(c_{0,\beta-\alpha})c_{0}M_{0,\frac{2}{3}+\beta}(g(t))+ C(c_{0,\beta-\alpha})\delta_{1} M_{2,0}(g(t))\nonumber\\
&+  C(c_{0,\beta-\alpha}) M_{0,m_{2}}(g(t)),\label{equations part 2}
\end{align}
where $C(c_{0,\beta-\alpha})=K_{0}c_{0,\beta-\alpha}+2.$ As we have that $M_{0,0},$ $M_{0,\frac{2}{3}-\alpha},$ $M_{0,m_{1}},$ $M_{0,m_{2}},$ $M_{0,\frac{2}{3}+\beta}$ are uniformly bounded from above, (\ref{first two equations self similarity}) and (\ref{equations part 2}) become: 
\begin{align}
\partial_{t} M_{2,0}(g(t))&\leq -\frac{1}{3}M_{2,0}(g(t))+2K_{0}(1-\gamma)M_{1,-\alpha}(g(t))M_{1,\beta}(g(t))+C; \nonumber
\\
\partial_{t} M_{1,-\alpha}(g(t))&\leq 
 -M_{1,-\alpha}(g(t))+3\delta_{1} M_{2,0}(g(t))+C;\nonumber\\
\partial_{t}M_{1,\beta}(g(t))
&\leq -M_{1,\beta}(g(t))+K_{0}c_{0,2\beta}M_{1,-\alpha}(g(t))+ C(c_{0,\beta-\alpha})\delta_{1} M_{2,0}(g(t))+C,\label{equations part 3}
\end{align}
where $C>0$ is a constant depending only on $\gamma, c_{0}$ and the upper bound of some moments of the form $M_{0,d},$ with $d\in\mathbb{R}$. Using (\ref{equations part 3}) we are able to find a region that stays invariant in time. 

In order to give some insight about the proof, we consider first the case $\delta_{1}=0$, since the result for $\delta_{1}>0$, sufficiently small, follows by a perturbative argument. (\ref{equations part 3}) becomes
\begin{align}
\partial_{t} M_{2,0}(g(t))&\leq -\frac{1}{3}M_{2,0}(g(t))+2K_{0}(1-\gamma)M_{1,-\alpha}(g(t))M_{1,\beta}(g(t))+C; \nonumber
\\
\partial_{t} M_{1,-\alpha}(g(t))&\leq 
 -M_{1,-\alpha}(g(t))+C;\nonumber\\
\partial_{t}M_{1,\beta}(g(t))
&\leq -M_{1,\beta}(g(t))+K_{0}c_{0,2\beta}M_{1,-\alpha}(g(t))+C. \label{equations with delta zero}
\end{align}

Notice that from (\ref{equations with delta zero}) we deduce that the set $\{M_{1,-\alpha}(g)\leq C,$ $M_{1,\beta}(g)\leq C(K_{0}c_{0,2\beta}+1),$ $M_{2,0}(g)\leq 3[C+2(1-\gamma)K_{0}C^{2}(K_{0}c_{0,2\beta}+1)]\}$ stays invariant in time.

Let $\delta_{1}>0$. We prove that the region defined by $\{M_{1,-\alpha}(g)\leq 2C,$ $M_{1,\beta}(g)\leq 2C(K_{0}c_{0,2\beta}+1),$ $M_{2,0}(g)\leq 3[C+32(1-\gamma)K_{0}C^{2}(K_{0}c_{0,2\beta}+1)]\}$ is invariant in time.

Assume that $M_{1,-\alpha}(g(0))\leq 2C,$ $M_{1,\beta}(g(0))\leq 2C(K_{0}c_{0,2\beta}+1)$ and $M_{2,0}(g(0))\leq 3[C+32(1-\gamma)K_{0}C^{2}(K_{0}c_{0,2\beta}+1)]$. By continuity in time, for all $s$ sufficiently small, we obtain that  $M_{1,-\alpha}(g(s))\leq 4C,$ $M_{1,\beta}(g(s))\leq 4C(K_{0}c_{0,2\beta}+1)$. Plugging this in (\ref{equations part 3}), we obtain that
\begin{align}
\partial_{s} M_{2,0}(g(s))&\leq -\frac{1}{3}M_{2,0}(g(s))+32K_{0}(1-\gamma)C^{2}(K_{0}c_{0,2\beta}+1)+C, \label{bound for m2 a}
\end{align}
for all times $s$ small enough. Thus, the region $M_{2,0}(g(s))\leq 3[C+32(1-\gamma)K_{0}C^{2}(K_{0}c_{0,2\beta}+1)]$ is invariant for small enough times. We now make use again of (\ref{equations part 3}) and the newly obtained bound for $M_{2,0}$, to deduce that
\begin{align}
\partial_{s} M_{1,-\alpha}(g(s))&\leq 
 -M_{1,-\alpha}(g(s))+9\delta_{1} [C+32(1-\gamma)K_{0}C^{2}(K_{0}c_{0,2\beta}+1)]+C;\nonumber\\
\partial_{s}M_{1,\beta}(g(s))
&\leq -M_{1,\beta}(g(s))+K_{0}c_{0,2\beta}M_{1,-\alpha}(g(s))\nonumber\\
&+ 3C(c_{0,\beta-\alpha})\delta_{1} [C+32(1-\gamma)K_{0}C^{2}(K_{0}c_{0,2\beta}+1)]+C,\label{the other two moments}
\end{align}
for sufficiently small times. Choosing $\delta_{1}$ sufficiently small so that $9\delta_{1} [C+32(1-\gamma)K_{0}C^{2}(K_{0}c_{0,2\beta}+1)]\leq C$ and $3C(c_{0,\beta-\alpha})\delta_{1} [C+32(1-\gamma)K_{0}C^{2}(K_{0}c_{0,2\beta}+1)]\leq C$, we obtain that the regions $M_{1,-\alpha}(g(s))\leq 2C$ and $M_{1,\beta}(g(s))\leq 2C(K_{0}c_{0,2\beta}+1)$ are invariant in time for all $s$ sufficiently small. We are now able to iterate the argument, extending it to all times. These bounds are independent of $\epsilon,R$.
\end{proof}
We now have all the necessary parts to conclude the proof of Theorem \ref{self sim mu neg}. 
\begin{proof}[Proof of Theorem \ref{self sim mu neg}]
We can uniformly bound $M_{1,0}(g_{\epsilon,R}(t))\leq M_{1,-\alpha}(g_{\epsilon,R}(t))+M_{1,\beta}(g_{\epsilon,R}(t))$, since $M_{1,-\alpha}(g_{\epsilon,R}(t))$ and $M_{1,\beta}(g_{\epsilon,R}(t))$ are bounded. This is enough to enable us to follow the steps of the proof of Theorem \ref{themostimportanttheorem} to conclude that there exists a self-similar profile for the two-dimensional coagulation equation in this case. This finishes the proof of Theorem \ref{self sim mu neg}.
\end{proof}

\appendix
\section{Formal rescaling properties}\label{formal rescaling properties}
It is worthwhile to mention that the equation (\ref{strongfusioneq}) has some useful rescaling properties. More explicitly, if $f$ satisfies (\ref{strongfusioneq}) and $\int_{(0,\infty)^{2}}vf(a,v,t)\der v \der a=v_{0}$, then we can define a set of functions $\tilde{f}_{k}$ 
\begin{align}\label{resclaing volume}
f(a,v,t)=\frac{v_{0}}{k^{\frac{8}{3}}}\tilde{f}_{k}(\frac{a}{k^{\frac{2}{3}}},\frac{v}{k},k^{\gamma-1}v_{0}t).
\end{align}
The rescaling (\ref{resclaing volume}) can be used to remove the dependence on $v_{0}$, the problem being reduced to one where the total volume of particles is equal to $1$. We have that $\tilde{f}_{k}$ solves (\ref{strongfusioneq}) with  the fusion kernel $r$ replaced by $\frac{r}{v_{0}}$ and 
\begin{align*}
    \int_{(0,\infty)^{2}}v\tilde{f}_{k}(a,v,t)\der v \der a=1.
\end{align*}
Notice that, if $f$ satisfies (\ref{strongfusioneq}) and $g$ is defined as 
\begin{align}\label{from normal to similar}
f(a,v,t)=\frac{1}{t^{\frac{8}{3}\xi}}g(\frac{a}{t^{\frac{2}{3}\xi}}, \frac{v}{t^{\xi}}, \xi \log(t)),
\end{align} 
up to a translation in time, then $g$ satisfies (\ref{strong_self_sim}). Combining (\ref{resclaing volume}) and (\ref{from normal to similar}), we obtain that $\tilde{g}_{k}$, which can be expressed in terms of $g$ in the following manner
\begin{align}\label{scaling g g}
    \tilde{g}_{k}(\tilde{A},\tilde{V},\tilde{\tau})=v_{0}^{\frac{8}{3}\xi-1}g(v_{0}^{\frac{2}{3}\xi}\tilde{A},v_{0}^{\xi}\tilde{V},\tau),
\end{align}
where $\tau=\xi\log(t)$ and $\tilde{\tau}=\xi\log(k^{\gamma-1}v_{0}t)$, satisfies (\ref{strong_self_sim}) with total volume equal to $1$. In particular, if we choose $k=v_{0}^{\xi}$, we have $\tilde{\tau}=\tau$.

We obtain the following rescaling property for $\tilde{g}_{k}$ in terms of $g$:
\begin{align}\label{connection moments}
  M_{y_{1},y_{2}}(\tilde{g}_{k}(\tilde{\tau}))=v_{0}^{\frac{\gamma-\frac{2}{3}y_{1}-y_{2}}{1-\gamma}}M_{y_{1},y_{2}}(g(\tau)),
\end{align}
for $y_{1},y_{2}\in\mathbb{R}$, which we make use of in Proposition \ref{prop ramification} and Theorem \ref{remarkramification}. Notice that in the relation between the moments of $g$ and the moments of $\tilde{g}_{k}$ the dependence on the variable $k$ vanishes and the reason we choose to fix $k=v_{0}^{\xi}$ is in order to avoid the time translation.

The existence of a function $g_{\textup{in}}$ which satisfies (\ref{g in 2}), (\ref{initial area large}) and has volume $v_{0}$ is equivalent by means of the scaling (\ref{scaling g g}) to finding a function $\tilde{g}_{\textup{in}}(\tilde{A},\tilde{V})$ such that all these inequalities hold with $v_{0}=1$. The existence of such a function can be easily seen if we choose its support in an appropriate region contained in $\tilde{V}\in[\frac{1}{2},1]$ and with $\tilde{A}$ sufficiently large.

Thus, we can assume without loss of generality that the total volume of the particles is equal to $1$, keeping in mind that the constants $R_{0}$ and $R_{1}$ in (\ref{fusion_form}) change by a factor $v_{0}$. 

\section{Proof of some results used to obtain the existence of solutions for the truncated time-dependent problem}\label{appendix b}
\begin{proof}[Proof of Proposition \ref{contractivemap}]
The following remark is used in order to prove the stated properties:
\begin{align}
  \Phi(V,V',t)\textup{e}^{h_{\epsilon}(l_{2}(V,V',t),t)}=\textup{e}^{h_{\epsilon}(V,t)+h_{\epsilon}(V',t)}.\label{ceafacuteugenia}
\end{align}
We make use of (\ref{eq1}), (\ref{eq2}), the kernel bound and the inequality $|e^{-x_{1}}-e^{-x_{2}}|\leq |x_{1}-x_{2}|,$ for $x_{1},x_{2}\geq 0$. 

We first prove that the map stays in the ball of radius $2M$. In order to bound $J_{1}[G]$ in (\ref{definitionjf}), we use that $x_{t,V}(A)\leq A,$ for every $t\in[0,\tau]$ and the assumption $\tau\leq \ln{2}$. For the term $J_{2}[G]$, we notice that $1+x_{t,V}(A)+x_{t,V'}(A')\leq (1+x_{t,V}(A))(1+x_{t,V'}(A'))$. Thus, for every $\varphi\in\compactfun$ with $||\varphi||_{\infty}\leq 1$, we have:
\begin{align*}
    \int_{(0,\infty)^{2}}\textup{e}^{h_{\epsilon}(V,t)}(1+x_{t,V}(A))J[G](A,V,t)\varphi(A,V)\der V\der A&\leq 2||\varphi||_{\infty}\int_{(0,\infty)^{2}}(1+a)g_{\textup{in},R}(a,v) \der v \der a \\
    &+\frac{1-\gamma}{2}||K_{R}||_{\infty}||G||^{2}||\varphi||_{\infty}\tau\\
    &\leq M+(1-\gamma)2M^{2}||K_{R}||_{\infty}\tau\leq M+1\leq 2M.
\end{align*}
We now prove that the map $J$, which was defined in (\ref{definitionjf}), is contractive:
\begin{align*}
       \bigg|\int_{(0,\infty)^{2}}\textup{e}^{h_{\epsilon}(V,t)}(1&+x_{t,V}(A))[J[F](A,V,t)-J[G](A,V,t)]\varphi(A,V)\der V\der A\bigg|\\
       &\leq 2\int_{(0,\infty)^{2}}(1+a)g_{\textup{in},R}(a,v) \der v \der a||K_{R}||_{\infty} ||\varphi||_{\infty}||F-G||(\textup{e}^{\tau}-1)\\
       &+\frac{1-\gamma}{2}||K_{R}||_{\infty}(||F||+||G||)||\varphi||_{\infty}||F-G||\tau+\frac{1-\gamma}{2}||K_{R}||_{\infty}^{2}||G||^{2}||\varphi||_{\infty}||F-G||\tau(\textup{e}^{\tau}-1)\\
    &\leq (M||K_{R}||_{\infty}+2M(1-\gamma)||K_{R}||_{\infty}+||K_{R}||_{\infty})(\textup{e}^{\tau}-1)||F-G||<\frac{1}{2} ||F-G||.
\end{align*}
\end{proof}
\begin{proof}[Proof of Proposition \ref{extensionsolutionprop}]
By testing with $\textup{e}^{h_{\epsilon}(V,t)}x_{t,v},$ for $t\in[0,\tau],$ and using (\ref{ceafacuteugenia}), we obtain that:
\begin{align*}
   \int_{\mathbb{R}_{>0}^{2}}G_{\epsilon, R,\delta}(\eta,t)\textup{e}^{h_{\epsilon}(V,t)}x_{t,V}(A)\der \eta&\leq  2\int_{\mathbb{R}_{>0}^{2}}g_{\textup{in},R}(\eta)A\der \eta+\frac{1-\gamma}{2}\int_{0}^{t}\int_{\mathbb{R}_{>0}^{2}}\int_{\mathbb{R}_{>0}^{2}}K_{R}(\phi_{s}(\eta),\phi_{s}(\eta'))G_{\epsilon, R,\delta}(\der\eta,s) \\
   &G_{\epsilon, R,\delta}(\der\eta',s)\textup{e}^{h_{\epsilon}(V,s)+h_{\epsilon}(V',s)}[(x_{s,V}(A)+x_{s,V'}(A')) -x_{s,V}(A)-x_{s,V'}(A')]\der s.
\end{align*}
Thus:
\begin{align*}
   \int_{\mathbb{R}_{>0}^{2}}G_{\epsilon, R,\delta}(A,V,t)\textup{e}^{h_{\epsilon}(V,t)}x_{t,V}(A)\der V\der A&\leq  2\int_{\mathbb{R}_{>0}^{2}}g_{\textup{in},R}(A,V)A\der V\der A,
\end{align*}
which is bounded by assumption. Similarly, testing with $\textup{e}^{h_{\epsilon}(V,t)}$, we get:
\begin{align*}
   \int_{\mathbb{R}_{>0}^{2}}G_{\epsilon, R,\delta}(A,V,t)\textup{e}^{h_{\epsilon}(V,t)}\der V\der A&\leq  2\int_{\mathbb{R}_{>0}^{2}}g_{\textup{in},R}(A,V)\der V\der A.
\end{align*}
The condition imposed on $g_{\textup{in},R}$ in order to prove Proposition \ref{contractivemap} was $2\int_{(0,\infty)^{2}}(1+a)g_{\textup{in},R}(a,v)\der v \der a\leq M$. Thus, we can replace $g_{\textup{in},R}$ by $G_{\epsilon,R,\delta}(\cdot,\cdot,\tau)$ and then iterate the argument to extend the solution to all times.
\end{proof}
\begin{proof}[Proof of Proposition \ref{continuous time semigroup}]
 Let $s,t\in[0,T]$. We denote by $g(\cdot,t):= S(t)g(\cdot)$. Let $n\in\mathbb{N}$ be sufficiently large. Let $\varphi\in\textup{C}_{\textup{c}}(\mathbb{R}_{>0}^{2})$ and $\varphi_{n}\in\textup{C}^{1}_{\textup{c}}(\mathbb{R}_{>0}^{2})$ be such that $\sup_{(a,v)\in \mathbb{R}_{>0}^{2}}|\varphi_{n}(a,v)-\varphi(a,v)|\leq\frac{1}{n}$. Then
\begin{align}\label{initial continuous semigroup time}
    \bigg|\int_{(0,\infty)^{2}}&[S(t)g(\eta)-S(s)g(\eta)]\varphi(\eta)\der \eta\bigg|\leq\nonumber\\ &\leq\bigg|\int_{(0,\infty)^{2}}[g(\eta,t)-g(\eta,s)]\varphi_{n}(\eta)\der \eta\bigg|+ \bigg|\int_{(0,\infty)^{2}}[g(\eta,t)-g(\eta,s)][\varphi_{n}(\eta)-\varphi(\eta)]\der \eta\bigg|\nonumber\\
    &\leq \bigg|\int_{(0,\infty)^{2}}[g(\eta,t)-g(\eta,s)]\varphi_{n}(\eta)\der \eta\bigg|+ \frac{2}{n}\sup_{r\in[0,T]}\int_{(0,\infty)^{2}}g(\eta,r)\der \eta
    \nonumber\\
    &\leq \bigg|\int_{(0,\infty)^{2}}[g(\eta,t)-g(\eta,s)]\varphi_{n}(\eta)\der \eta\bigg|+ \frac{C}{n},   
\end{align}
where we used that we can bound $\sup_{r\in[0,T]}\int_{(0,\infty)^{2}}g(\eta,r)\der \eta$ from above. On the other hand,
\begin{align*}
\bigg|\int_{(0,\infty)^{2}}[g(\eta,t)-g(\eta,s)]\varphi_{n}(\eta)\der \eta\bigg|&\leq
\int_{s}^{t}\int_{(0,\infty)^{2}}|g(\eta,r)\varphi_{n}(\eta)|\der\eta\der r+\frac{2}{3}\int_{s}^{t}\int_{(0,\infty)^{2}}|g(\eta,r) a\partial_{a}\varphi_{n}(\eta)|\der\eta\der r\\
&+\int_{s}^{t}\int_{(0,\infty)^{2}}|g(\eta,r)v\partial_{v}\varphi_{n}(\eta)|\der \eta\der r+ (1-\gamma)\int_{s}^{t}|\langle \mathbb{K}_{\epsilon,R}[g](r),\varphi_{n}\rangle|\der r\\
&+(1-\gamma)\int_{s}^{t}\int_{(0,\infty)^{2}}r_{\delta}(\eta)|c_{0}v^{\frac{2}{3}}-a|g(\eta,r)|\partial_{a}\varphi_{n}(\eta)|\der \eta \der r.
\end{align*}
As $K_{\epsilon,R}$ is bounded and $g_{\epsilon,R,\delta}$ has compact support in the $v$ variable, we can find a constant $C(\epsilon,R,\delta)$, which depends on $\epsilon, R, \delta$ such that
\begin{align*}
\bigg|\int_{(0,\infty)^{2}}[g(\eta,t)-g(\eta,s)]\varphi_{n}(\eta)\der \eta\bigg|&\leq
\frac{2}{3}\int_{s}^{t}\int_{(0,\infty)^{2}}|g(\eta,r) a\partial_{a}\varphi_{n}(\eta)|\der\eta\der r+C(\epsilon,R,\delta)|t-s|\\
&+\frac{1}{\delta}C(\epsilon,R,\delta)\int_{s}^{t}\int_{(0,\infty)^{2}}|1+a|g(\eta,r)|\partial_{a}\varphi_{n}(\eta)|\der \eta \der r.
\end{align*}
As $\sup_{r\in[0,T]}\int_{(0,\infty)^{2}}(1+a)g_{\epsilon,R,\delta}(\eta,r)\der \eta$ is bounded from above, we find that there exists $C(\epsilon,R,\delta)$ depending on the written parameters, such that
\begin{align}\label{continuous time for compact support}
\bigg|\int_{(0,\infty)^{2}}&[g(\eta,t)-g(\eta,s)]\varphi_{n}(\eta)\der \eta\bigg|\leq C(\epsilon,R,\delta)|t-s|.
\end{align}
Combining (\ref{continuous time for compact support}) with (\ref{initial continuous semigroup time}), we obtain  the continuity in time of the map $t\rightarrow \int_{(0,\infty)^{2}}g(\eta,t)\varphi(\eta)\der \eta$, if $\varphi\in\textup{C}_{\textup{c}}(\mathbb{R}_{>0}^{2})$. We now extend the argument to all functions $\varphi\in\textup{C}_{0}(\mathbb{R}_{>0}^{2})$. Let $\varphi\in\textup{C}_{0}(\mathbb{R}_{>0}^{2}),$ with $||\varphi||_{\infty}\leq 1.$ Due to the support of $g\in U_{\epsilon,R},$ it is enough to cut the function $\varphi$ for large values of $a$ in order to make it compactly supported. Let $\chi_{M}(a)$ be as in (\ref{cut near infinity}). Then
\begin{align*}
 \bigg|\int_{(0,\infty)^{2}}&[g(\eta,t)-g(\eta,s)]\varphi(\eta)\der \eta\bigg|\leq\\ &\leq\bigg|\int_{(0,\infty)^{2}}[g(\eta,t)-g(\eta,s)]\varphi(\eta)\chi_{M}(a)\der \eta\bigg|+2\int_{(M,\infty)\times[\epsilon,2R]}\big|[g(\eta,t)-g(\eta,s)]\varphi(\eta)\big|\der \eta\\
 &\leq C|t-s|+2M^{-1}\int_{(M,\infty)\times[\epsilon,2R]}a[g(\eta,t)+g(\eta,s)]\der \eta\\
 &\leq C|t-s|+c(t)M^{-1}.
\end{align*}
We conclude the argument by taking $M$ sufficiently large.
\end{proof}
\begin{proof}[Proof of Proposition \ref{extensionalltime_generalcase}]
Let $n\in\mathbb{N}$. By Lemma \ref{inequalityforiterationargument}, we have that:
\begin{align*}
   \int_{\mathbb{R}_{>0}^{2}}G_{\epsilon, R,\delta}&(\eta,t)\textup{e}^{h_{\epsilon}(V,t)}x_{t,V}(A)^{n}\der \eta\leq \\
   \leq &2\int_{\mathbb{R}_{>0}^{2}}g_{\textup{in},R}(\eta)A^{n}\der \eta+\frac{1-\gamma}{2}\int_{0}^{t}\int_{\mathbb{R}_{>0}^{2}}\int_{\mathbb{R}_{>0}^{2}}K_{R}(\phi_{s}(A,V),\phi_{s}(A',V'))G_{\epsilon, R,\delta}(\eta',s)G_{\epsilon, R,\delta}(\eta,s) \\
   &\textup{e}^{h_{\epsilon}(V,s)+h_{\epsilon}(V',s)}[(x_{s,V}(A)+x_{s,V'}(A'))^{n} -x_{s,V}(A)^{n}-x_{s,V'}(A')^{n}]\der\eta' \der \eta \der s\\
    \leq &2\int_{\mathbb{R}_{>0}^{2}}g_{\textup{in},R}(\eta)A^{n}\der \eta+(1-\gamma)||K_{R}||_{\infty}\sum_{k=1}^{k_{n}}\binom{n}{k}\int_{0}^{t}\int_{\mathbb{R}_{>0}^{2}}\textup{e}^{h_{\epsilon}(V,s)}x_{s,V}^{k}(A)G_{\epsilon, R,\delta}(\eta,s)\der \eta \\
    &\int_{\mathbb{R}_{>0}^{2}}\textup{e}^{h_{\epsilon}(V',s)}x_{s,V'}^{n-k}(A')G_{\epsilon, R,\delta}(\eta',s)\der \eta'\der s,
\end{align*}
where $k_{n}$ is taken as in Lemma \ref{inequalityforiterationargument}. The above computations show that, in order to find an upper bound for $\int_{\mathbb{R}_{>0}^{2}}G_{\epsilon, R,\delta}(\eta,t)\textup{e}^{h_{\epsilon}(V,t)}x_{t,V}(A)^{n}\der \eta$, it is enough to estimate $\int_{\mathbb{R}_{>0}^{2}}G_{\epsilon, R,\delta}(\eta,t)\textup{e}^{h_{\epsilon}(V,t)}x_{t,V}(A)^{k}\der \eta$, where $k\in[1,n-1]$. As such, we can use an iteration argument for the exponents of $x_{t,V}(A)$. We then use that $$\int_{(0,\infty)^{2}}G_{\epsilon,R,\delta}(A,V,t,)e^{h_{\epsilon}(V,t)}x_{t,V}(A)\der V\der A\leq 2\int_{(0,\infty)^{2}}ag_{\textup{in},R}(a,v)\der v \der a$$ as in the proof of Proposition \ref{extensionsolutionprop}, which can be found in this appendix. In this manner, we derive suitable moment estimates which allow to extend the obtained solution to all times by iterating the argument.
\end{proof}
\section{Some technical results used to prove the existence of self-similar profiles}\label{appendix c}
\begin{rmk} 
 In order to simplify the notation we will write $g$ and $g_{\textup{in}}$ instead of $g_{\epsilon,R,\delta}$ and $g_{\textup{in},R}$, respectively, in the following computations. It is relevant to take into account that $g_{\epsilon,R,\delta}:=g$ is supported in the region $(a,v)\in [c_{0}\epsilon^{\frac{2}{3}},\infty)\times[\epsilon,2R)$.
\end{rmk}
\begin{proof}[Proof of Proposition \ref{firstmoments}]
We obtain (\ref{m1}) by testing (\ref{regularizedformequation}) with $\varphi(a,v)=v$. In order to derive (\ref{mlambda}), we test (\ref{regularizedformequation}) with $\varphi(a,v)=v^{\gamma}$.

By Remark \ref{kernel_lower_bound_welldef} and since $g$ is supported in the region $(a,v)\in [c_{0}\epsilon^{\frac{2}{3}},\infty)\times[\epsilon,2R)$, we can ignore the dependence of $K_{\epsilon,R}$ on $R$:
\begin{align*}
&\frac{(1-\gamma)}{2}\int_{(0,\infty)^{2}}\int_{(0,\infty)^{2}}K_{\epsilon,R}(a,v,a',v')\xi_{R}(v+v')g(a,v,t)g(a',v',t)[(v+v')^{\gamma}-v^{\gamma}-v'^{\gamma}]\der v' \der a' \der v \der a\\
\leq &\frac{K_{1}(1-\gamma)}{2}\int_{(0,\infty)^{2}}\int_{(0,\infty)^{2}}(v^{-\alpha}v'^{\beta}+v^{\beta}v'^{-\alpha})\xi_{R}(v+v')g(a,v,t)g(a',v',t)[(v+v')^{\gamma}-v^{\gamma}-v'^{\gamma}]\der v' \der a' \der v \der a\\
\leq&\frac{K_{1}(1-\gamma)}{2}\int_{\{v<\frac{R}{2}\}}\int_{\{v'<\frac{R}{2}\}}(v^{-\alpha}v'^{\beta}+v^{\beta}v'^{-\alpha})g(a,v,t)g(a',v',t)[(v+v')^{\gamma}-v^{\gamma}-v'^{\gamma}]\der v' \der a' \der v \der a.
\end{align*}

Suppose $v\geq v'$ and take $z:=\frac{v'}{v}\in(0,1]$. We have:
\begin{align}
(v^{-\alpha}v'^{\beta}+v^{\beta}v'^{-\alpha})((v+v')^{\gamma}-v^{\gamma}-v'^{\gamma})&\leq v^{\gamma}(z^{\beta}+z^{-\alpha})v^{\gamma}((1+z)^{\gamma}-1-z^{\gamma}) \nonumber
\\ &\leq -(1-\gamma)v^{2\gamma}z^{\gamma}=-(1-\gamma)v^{\gamma}v'^{\gamma}, \nonumber
\end{align}
since $(1+z)^{\gamma}-1-z^{\gamma}\leq 0$ and $z^{-\alpha}\geq 1$. We also used that:
$$ (1+z)^{\gamma}-1=\gamma\int_{1}^{1+z}s^{\gamma-1}\der s \leq \gamma z\leq \gamma z^{\gamma}.$$

Equation (\ref{regularizedformequation}) now becomes:
\begin{align*}
\frac{\der}{\der t}M_{0,\gamma}(g(t))&\leq (1-\gamma)\int_{(0,\infty)^{2}}v^{\gamma}\Theta_{\epsilon}(v)g(a,v,t)\der v \der a-\frac{(1-\gamma)^{2}K_{1}}{2}\bigg(\int_{\{v<\frac{R}{2}\}}v^{\gamma}g(a,v,t)\der v \der a\bigg)^{2}\\
&\leq(1-\gamma)M_{0,\gamma}(g(t))-\frac{(1-\gamma)^{2}K_{1}}{4}M_{0,\gamma}(g(t))^{2}+\frac{(1-\gamma)^{2}K_{1}}{2}\bigg(\int_{\{v>\frac{R}{2}\}}v^{\gamma}g(a,v,t)\der v \der a\bigg)^{2}.
\end{align*}
We can control the region where $v>\frac{R}{2}$ using (\ref{m1}), namely:
\begin{align*}
   \int_{\{v>\frac{R}{2}\}}v^{\gamma}g(a,v,t)\der v \der a\leq \bigg(\frac{R}{2}\bigg)^{\gamma-1}\int_{(0,\infty)^{2}}vg(a,v,t)\der v \der a\leq \bigg(\frac{R}{2}\bigg)^{\gamma-1}M_{0,1}(g_{\textup{in}}),
\end{align*}
thus finding a constant $C>0$, independent of $\epsilon\in(0,1), R>1$ and $\delta\in(0,1)$, for which
\begin{align*}
\frac{\der}{\der t}M_{0,\gamma}(g(t))\leq (1-\gamma)M_{0,\gamma}(g(t))-\frac{(1-\gamma)^{2}K_{1}}{4}M_{0,\gamma}(g(t))^{2}+C.
\end{align*}
We then conclude that there exists a constant $C_{0,\gamma}$, independent of $\epsilon,R$ and $\delta,$ for which the region $M_{0,\gamma}(g(t))\leq C_{0,\gamma}$ is invariant in time.
\end{proof}
\begin{proof}[Proof of Proposition \ref{prop m moment}]
We want to bound $$K(a,v,a',v')[(v+v')^{m}-v^{m}-v'^{m}].$$
Assume $v\geq v'$.  Denote $z:=\frac{v'}{v}\in(0,1]$ and observe that:
\begin{align}
K(a,v,a',v')[(v+v')^{m}-v^{m}-v'^{m}]\leq & K(a,v,a',v') v^{m}[(1+z)^{m}-1-z^{m}]\leq 2C_{m}K(a,v,a',v')  v^{m}z \nonumber \\
\leq & 2C_{m}K_{0}v^{\gamma}(z^{\beta}+z^{-\alpha}) v^{m}z \leq 4C_{m}K_{0}v^{\gamma+m}z^{-\alpha+1}\nonumber \\
\leq & 4C_{m}K_{0} (v^{\beta+m-1}v'^{-\alpha+1}+v'^{\beta+m-1}v^{-\alpha+1}), \label{onegreater}
\end{align}
 where we used that $$(1+z)^{m}-1=m\int_{1}^{1+z}s^{m-1}\der s\leq m(1+z)^{m-1}z\leq C_{m}(1+z^{m-1})z,$$ since $m>1$ and $z\in(0,1]$. By symmetry, $(\ref{onegreater})$ holds for all $(v,v')\in(0,\infty)^{2}$.

Equation (\ref{regularizedformequation}) becomes:
\begin{align}\label{choosingm}
\frac{\der}{\der t}M_{0,m}(g(t))&\leq -(m-1)M_{0,m}(g(t))+(m-1)\int_{\{v\leq2\epsilon\}}v^{m}g(\eta,t)\der \eta+2(1-\gamma)C_{m}K_{0}M_{0,\beta+m-1}(g)M_{0,-\alpha+1}(g)\nonumber\\
&\leq -(m-1)M_{0,m}(g(t))+(m-1)2^{m-1}M_{0,1}(g_{\textup{in}})+C M_{0,\beta+m-1}(g(t))M_{0,-\alpha+1}(g(t)).
\end{align}

By (\ref{m1}) and (\ref{mlambda}), we obtain that $M_{0,-\alpha+1}(g(t))$ is uniformly bounded as $-\alpha+1\in(\gamma,1]$.
\

Additionally, we have that $m+\beta-1\in(\gamma,m)$. Let $C_{0,\gamma}$ be the constant found in Proposition \ref{firstmoments}, then there exists $\theta\in(0,1)$ such that:
\begin{align}\label{choosingr}
M_{0,m+\beta-1}(g(t))\leq M_{0,\gamma}(g(t))^{1-\theta}M_{0,m}(g(t))^{\theta}\leq C_{0,\gamma}^{1-\theta}M_{0,m}(g(t))^{\theta}.
\end{align}

Hence, combining (\ref{choosingm}) and (\ref{choosingr}), we find an invariant region in time for the moment $M_{0,m}$.
\end{proof}
\begin{proof}[Proof of Proposition \ref{m-lprop}]
Let $C_{0,\tilde{m}}$ be the constant found in (\ref{mm}), with $\tilde{m}>1$ as in Proposition \ref{lemmaweaksol}, and let $\beta<1$ as in (\ref{lower_bound_kernel}). For all $t\geq 0:$
\begin{align} \label{lower bound is needed}
1&= M_{0,1}(g(t))\leq N^{1-\beta}\int_{(0,\infty)\times(0,N)}v^{\beta}g(a,v,t)\der v \der a+N^{1-\tilde{m}}\int_{(0,\infty)\times(N,\infty)}v^{\tilde{m}}g(a,v,t)\der v\der a \nonumber \\
&\leq N^{1-\beta}M_{0,\beta}(g(t))+N^{1-\tilde{m}}\max\{M_{0,\tilde{m}}(g_{\textup{in}}),C_{0,\tilde{m}}\}. 
\end{align}
Thus, for $N>0$ sufficiently large, we obtain
\begin{align}\label{lower bound moment beta}
    M_{0,\beta}(g(t))\geq\frac{1}{2N^{1-\beta}}.
\end{align}

We analyse the term
\begin{align*}
&\int_{(0,\infty)^{2}}\int_{(0,\infty)^{2}}K_{\epsilon,R}(a,v,a',v')\xi_{R}(v+v')g(a,v,t)g(a',v',t)[(v+v')^{-l}-v^{-l}-v'^{-l}]\der v' \der a' \der v \der a.
\end{align*}
We make use of Remark \ref{kernel_lower_bound_welldef}: From the definition of $K_{\epsilon,R}$ in (\ref{truncation_kernel_selfsimilar}) and the support of $g$, we have that $K_{\epsilon,R}=K$ when $v,v'\in[\epsilon, 2R)^{2}$. Since $(v+v')^{-l}-v^{-l}-v'^{-l}\leq 0$, we can use the lower bound for $K$ in (\ref{lower_bound_kernel}). We obtain:
\begin{align*}
&\int_{(0,\infty)^{2}}\int_{(0,\infty)^{2}}K_{\epsilon,R}(a,v,a',v')\xi_{R}(v+v')g(a,v,t)g(a',v',t)[(v+v')^{-l}-v^{-l}-v'^{-l}]\der v' \der a' \der v \der a\\
\leq&K_{1}\int_{\{v<\frac{R}{2}\}}\int_{\{v'<\frac{R}{2}\}}(v^{-\alpha}v'^{\beta}+v^{\beta}v'^{-\alpha})g(a,v,t)g(a',v',t)[(v+v')^{-l}-v^{-l}-v'^{-l}]\der v' \der a' \der v \der a\\
\leq&K_{1}\int_{\{v<\frac{R}{2}\}}\int_{\{v'<\frac{R}{2}\}}(-v^{-\alpha-l}v'^{\beta}-v'^{-\alpha-l}v^{\beta})g(a,v,t)g(a',v',t)\der v' \der a' \der v \der a
\end{align*} 
and (\ref{regularizedformequation}) thus becomes:
\begin{align}\label{firstform}
    \frac{\der}{\der t} M_{0,-l}(g(t))\leq (1+l)M_{0,-l}(g(t))-(1-\gamma)K_{1}\int_{\{v<\frac{R}{2}\}}v^{-\alpha-l}g(a,v,t)\der v \der a \int_{\{v<\frac{R}{2}\}}v^{\beta}g(a,v,t)\der v \der a. 
\end{align}
By (\ref{lower bound moment beta})
\begin{align}\label{BELOWBOUND}
   \int_{\{v<\frac{R}{2}\}}v^{\beta}g(a,v,t)\der v \der a\geq \frac{1}{2N^{1-\beta}} -\bigg(\frac{R}{2}\bigg)^{\beta-1}  \geq \frac{1}{4N^{1-\beta}},
\end{align}
for all $R>1$ that are sufficiently large. 

As $-l\in(-\alpha-l,1)$, there exists $\theta\in(0,1)$ for which $M_{0,-l}(g(t))\leq$ $ M_{0,1}(g(t))^{1-\theta}M_{0,-\alpha-l}(g(t))^{\theta}$ $\leq$  $M_{0,1}(g_{\textup{in}})^{1-\theta}M_{0,-\alpha-l}(g(t))^{\theta}$. Combining this with (\ref{firstform}) and (\ref{BELOWBOUND}), there exists a constant $C>0$ such that:
$$   \frac{\der}{\der t} M_{0,-l}(g(t))\leq (1+l)M_{0,-l}(g(t))-C\bigg(\int_{\{v<\frac{R}{2}\}}v^{-l}g(a,v,t)\der v \der a\bigg)^{\frac{1}{\theta}}.  $$

We use $x^\frac{1}{\theta}\geq 2^{1-\frac{1}{\theta}}(x+y)^{\frac{1}{\theta}}-y^{\frac{1}{\theta}}$ with $x=\int_{\{v<\frac{R}{2}\}}v^{-l}g(a,v,t)\der v \der a$ and $y=\int_{\{v>\frac{R}{2}\}}v^{-l}g(a,v,t)\der v \der a$. We can bound $\int_{\{v>\frac{R}{2}\}}v^{-l}g(a,v,t)\der v \der a$ from above because $R>1$ and $M_{0,1}(g(t))$ is uniformly bounded. Thus
\begin{align*}
    \frac{\der}{\der t} M_{0,-l}(g(t))\leq (1+l)M_{0,-l}(g(t))-C_{1}M_{0,-l}(g(t))^{\frac{1}{\theta}}+2^{l+1}CM_{0,1}(g_{\textup{in}})
\end{align*}
and we conclude using the uniform bound on $M_{0,1}$ and then a comparison argument.
\end{proof}
\subsection*{Acknowledgements}
The authors would like to thank B. Niethammer for useful suggestions regarding the contents of the paper. The authors gratefully acknowledge the financial support of the collaborative
research centre The mathematics of emerging effects (CRC 1060, Project-ID 211504053) and Bonn International Graduate School of Mathematics at the Hausdorff Center for Mathematics (EXC 2047/1, Project-ID 390685813) funded through the Deutsche Forschungsgemeinschaft (DFG, German Research Foundation).

\subsubsection*{Statements and Declarations}

\textbf{Conflict of interest} The authors declare that they have no conflict of interest.

\textbf{Data availability} Data sharing not applicable to this article as no datasets were generated or analysed during the current study.

\printbibliography

\end{document}